\documentclass[12pt, twoside]{amsart}


\usepackage{amssymb, amsthm, amsfonts, mathrsfs, graphicx,graphics,psfrag,mathtools, bm, bbm}

\usepackage[all]{xy}
\usepackage{tikz}
\usetikzlibrary{decorations.pathreplacing}

\usepackage[utf8]{inputenc}
\usepackage[bbscaled=.95,cal=boondoxo]{mathalfa}

\usepackage{hyperref}
\hypersetup{colorlinks}
\definecolor{darkred}{rgb}{0.5,0,0}
\definecolor{darkgreen}{rgb}{0,0.5,0}
\definecolor{darkblue}{rgb}{0,0,0.5}

\usepackage[left= 3.4 cm, right= 3.4 cm, top= 2.5 cm, bottom=2.7 cm, foot= 1.2 cm, marginparwidth=2.7 cm, marginparsep=0.5 cm]{geometry}
\linespread{1.2}



\newcommand{\on}{\operatorname}
\renewcommand{\d}{{\on{d}}}
\newcommand{\hol}{{\on{hol}}}

\newcommand\bs{\smallsetminus}

\newcommand\ev{\on{ev}}

\newcommand\sig{\sigma}
\newcommand\eps{\epsilon}
\newcommand\Om{\Omega}

\newcommand\qu{/\kern-.7ex/} 

\newcommand{\beq}{\begin{equation}}
\newcommand{\eeq}{\end{equation}}
\newcommand{\beqn}{\begin{equation*}}
\newcommand{\eeqn}{\end{equation*}}
\newcommand{\ov}{\overline}
\newcommand{\mb}{\mathbb}
\newcommand{\mc}{\mathcal}
\newcommand{\mf}{\mathfrak}
\newcommand{\ms}{\mathscr}

\newcommand{\wt}{\widetilde}

\newcommand{\dad}{d_a^{\,*}}
\newcommand{\cu}{\check{u}}
\newcommand{\ck}{\check}
\newcommand{\ud}{\underline}



\newtheorem{theorem}{Theorem}[section]
\newtheorem{assumption}[theorem]{Assumption}
\newtheorem{cor}[theorem]{Corollary}
\newtheorem{prop}[theorem]{Proposition}
\newtheorem{lemma}[theorem]{Lemma}
\newtheorem*{claim}{Claim}

\newtheorem{defn}[theorem]{Definition}
\theoremstyle{remark}
\newtheorem{remark}[theorem]{Remark}
\newtheorem{notation}[theorem]{Notation}

\makeatletter
\@addtoreset{theorem}{section}
\makeatother


\makeatletter 
\@addtoreset{equation}{section}
\makeatother  

\numberwithin{equation}{section}


\title{Local model for the moduli space of affine vortices}

\author[Venugopalan]{Sushmita Venugopalan}
\address{Institute of Mathematical Sciences, CIT Campus, Taramani, Chennai 600113, India}
\email{sushmita@imsc.res.in}

\author[Xu]{Guangbo Xu}
\address{Department of Mathematics, Princeton University, Fine Hall, Washington Road, Princeton, NJ 08544 USA}
\email{guangbox@math.princeton.edu}

\begin{document}

\begin{abstract} 
We show that the moduli space of regular affine vortices, which are solutions of the symplectic vortex equation over the complex plane, has the structure of a smooth manifold. The construction uses Ziltener's Fredholm theory results \cite{Ziltener_book}. We also extend the result to the case of affine vortices over the upper half plane. These results are necessary ingredients in defining the ``open quantum Kirwan map'' proposed by Woodward \cite{Woodward_toric}.

{\bf Keywords:} affine vortices, symplectic quotient, gauged Gromov-Witten theory, moduli space

\end{abstract}

\maketitle

\setcounter{tocdepth}{1}
\tableofcontents

\section{Introduction}

A vortex consists of a connection on a complex vector bundle over a Riemann surface and a section holomorphic with respect to that connection. They also satisfy an equation involving the curvature of the connection and a nonlinear term depending on the section.  Vortices initially arose in Ginzburg-Landau theory of superconductivity (\cite{Taubes_vortex}, \cite{Jaffe_Taubes}), but they have since appeared in various areas of mathematics and physics. For example, they are related to Weil's scheme of torsion quotients \cite{BCK}\cite{BDHW}, knot invariant \cite{dimofte:vortex} and Chern-Simons theory \cite{dupei:vortex}. In symplectic geometry they are also generalized to nonlinear target spaces \cite{Cieliebak_Gaio_Salamon_2000} \cite{Mundet_thesis, Mundet_2003}, which form the foundation of the symplectic gauged Gromov-Witten theory (see also \cite{Cieliebak_Gaio_Mundet_Salamon_2002} \cite{Mundet_Tian_2009}). 

The main objects we consider in this paper are affine vortices, which are vortices over the affine complex line ${\mb C}$. Our motivation comes from the role of affine vortices in the quantum Kirwan map conjecture proposed by D. Salamon and studied by Gaio-Salamon \cite{Gaio_Salamon_2005}, Woodward \cite{Woodward_15} and Ziltener \cite{Ziltener_book}. Affine vortices will also play a similar role in the project of the second named author and G. Tian (see \cite{Tian_Xu, Tian_Xu_2, Tian_Xu_3}) on the mathematical construction of the gauged linear $\sigma$-model. 

Our basic setting is as follows. Let $K$ be a compact Lie group and $X$ be a Hamiltonian symplectic $K$-manifold. An affine vortex is a pair ${\bf v} = (A, u)$, where $A$ is a connection on the trivial principal bundle $P:={\mb C} \times K$ and $u$ is a section on the associated bundle $P \times_K X$ that satisfies the holomorphicity and vortex equations:
\begin{align*}
&\ \ov\partial_A u=0, &\ F_A + \mu (u) ds dt =0. 
\end{align*}
Here $F_A$ is the curvature $A$ and $\mu :X  \simeq {\mf k}$ is the moment map. The above equation has a gauge symmetry and each gauge equivalence class of solutions represents an equivariant homology class $\beta \in H_2^K(X; {\mb Z})$ (see Remark \ref{rem28}). We can define a topology, called the compact convergence topology (see Definition \ref{defn24}) on the moduli space $M^K({\mb C}, X; \beta)$ of gauge equivalence classes of solutions representing the class $\beta$.

In this paper, we study the local structure of the moduli space of affine vortices. This is an important step in defining the counting of affine vortices, or more generally, the virtual integration over their moduli spaces. More specifically, for an affine vortex ${\bf v}$, we define a linear differential operator $\hat{\mc D}_{\bf v}$ (see \eqref{eqn25} below), which describes the infinitesimal deformation of affine vortex. ${\bf v}$ is called {\em regular} if $\hat{\mc D}_{\bf v}$ is surjective (after extending to a bounded operator between Sobolev spaces). The following is our main result.

\begin{theorem}\label{thm11} 
Let $(X, \omega_X)$ be a Hamiltonian $K$-manifold with moment map $\mu: X \to {\mf k}$ such that the $K$-action on $\mu^{-1}(0)$ is free. Given a $K$-invariant, $\omega_X$-compatible almost complex structure $J$ on $X$.
\begin{enumerate}
\item The moduli space $M^K({\mb C}, X; \beta)^{\rm reg}$ of gauge equivalence classes of regular affine vortices representing an equivariant homology class $\beta \in H_2^K (X; {\mb Z})$ has the structure of a smooth manifold of dimension $\dim(X)-2\dim(K) + 2\langle c_{\, 1}^K(TX), \beta\rangle$. Here $c_{\,1}^K(TX)\in H_K^2(X; {\mb Z})$ is the equivariant first Chern class of $X$.

\item The manifold topology coincides with the compact convergence topology.
\end{enumerate}
\end{theorem}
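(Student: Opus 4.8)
The plan is to prove this as a standard implicit-function-theorem / gluing-type construction for the moduli space of solutions to an elliptic PDE with gauge symmetry, so let me sketch the main structure.
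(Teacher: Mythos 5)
Your proposal is not yet a proof: it announces a strategy (``standard implicit-function-theorem / gluing-type construction'') but contains no actual steps, and the premise that the standard framework applies off the shelf is precisely what fails for affine vortices. The domain ${\mb C}$ is noncompact, and the Banach charts one must use (the weighted spaces $\hat{\mc B}_{\bf v}^{p,\delta}$ with norm controlling decay at infinity) are \emph{not} known a priori to contain the vortices near ${\bf v}$: a solution that is close to ${\bf v}$ in the compact convergence topology could, for all you have said, fail the decay conditions defining the chart. Closing that gap is a genuine regularity problem, and it is where essentially all of the work lies. Concretely, your outline is missing: (i) a Fredholm theory for the augmented linearized operator $\hat{\mc D}_{\bf v}$ between weighted Sobolev spaces, which requires controlling the asymptotic holonomy $e^{\lambda\theta}$ and an eigenvalue decomposition of ${\mf k}$ at the limit point $x\in\mu^{-1}(0)$ --- this is not a standard Riemann--Roch count on a compact surface; (ii) a local slice (Coulomb gauge) theorem over the \emph{noncompact} domain ${\mb C}$ in weighted norms, which in turn needs the invertibility of $s\mapsto \dad d_a s + d\mu(u)\cdot J{\mc X}_s$ on weighted spaces; (iii) a proof that c.c.t.\ convergence of vortices implies, after gauge transformations, convergence in the weighted chart norm (uniform bounds on connection forms, uniform decay estimates, and an interior elliptic estimate for connections); and (iv) injectivity of the chart maps, which uses unique continuation together with the uniqueness part of the Coulomb slice. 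Without (ii)--(iv) you cannot conclude that the chart maps are homeomorphisms onto c.c.t.-open sets, which is exactly part (2) of the theorem.

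A secondary point: ``gluing-type construction'' is a red herring here. No gluing of solutions is involved in putting a smooth structure on $M^K({\mb C},X;\beta)^{\rm reg}$; gluing arguments belong to compactification and to the quantum Kirwan map, not to this theorem. What is needed instead, beyond the implicit function theorem, is the comparison of two topologies (weak compact convergence versus the strong weighted-Sobolev chart topology) and the verification that transition functions between charts centered at different vortices are smooth --- the latter again requiring pointwise exponential-map identities to be promoted to estimates in the weighted norms. If you want to salvage the proposal, the place to start is to state precisely the Banach space $\hat{\mc B}_{\bf v}^{p,\delta}$, the section $\hat{\mc F}_{\bf v}$ whose zero set gives the chart, and then isolate as separate lemmas the decay, slice, and topology-comparison statements that the noncompactness of ${\mb C}$ forces on you.
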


\begin{remark}
The main result of this paper seems obvious to symplectic geometers, especially for that Ziltener has already obtained a Fredholm result and given the dimension formula in \cite{Ziltener_book}. We would like to explain in this remark the subtlety of our result. Typically, in a moduli problem such as defining Gromov-Witten invariants or Floer homology, a moduli space can be realized as the zero locus of a smooth section ${\mc F}: {\mc B} \to {\mc E}$ where ${\mc B}$ is a Banach manifold and ${\mc E}$ is a Banach bundle over ${\mc B}$. The way of setting up such a framework using Sobolev or weighted Sobolev spaces in Gromov-Witten and Floer theory has been standard knowledge. It is also an easy regularity result that the topology on the moduli space defined by a rather weak notion of convergence is equivalent to the topology induced from the Banach manifold ${\mc B}$. Therefore, in particular, if ${\mc F}$ is transverse, the moduli space inherits a smooth manifold structure.

For affine vortices, this stereotype cannot be taken for granted. In \cite{Ziltener_book}, Ziltener's study of a linear Fredholm operator ${\mc D}: {\mc X} \to {\mc Y}$ only describes the infinitesimal deformation {\it formally}, since he neither explicit defined any Banach manifold that has tangent space ${\mc X}$, nor proved that the kernel of ${\mc D}$ parametrizes a neighborhood of the moduli space in the transverse case. We would like to establish Theorem \ref{thm11} under essentially the same framework as \cite{Ziltener_book}, by adopting the Banach space ${\mc X}$ as a local chart of a Banach manifold. The difficult part is to show that the topology of the moduli space, which is defined by convergence (modulo gauge transformation) over compact subsets, coincides with the (stronger) topology induced from the Banach space ${\mc X}$, which has certain condition on the rate of decay at infinity. This is essentially a regularity result, whose proof occupies a large portion of this paper. For example, we need to establish the local slice theorem for the noncompact domain ${\mb C}$ and the specifically chosen weighted Sobolev norm. Only after proving those technical results, one can say that Ziltener's work on the linear level is indeed the correct one for studying infinitesimal deformations of affine vortices.
\end{remark}

\begin{remark}
Our theorem is proved for a fixed almost complex structure $J$ on $X$. In \cite{Xu_glue} and the forthcoming \cite{Woodward_Xu}, to achieve transversality, one needs to use almost complex structures which depend on the points of the domain, and which coincide with a fixed $J$ near infinity. The restriction of using a fixed $J$ won't invalidate our application. More generally, one can construct virtual cycles on the moduli spaces using abstract perturbations in the absence of transversality. Our regularity result is also a prerequisite for such construction.
\end{remark}

There is also an algebraic approach towards the local structure of the moduli space of affine vortices. In \cite{Woodward_15} Woodward used the Behrend-Fantechi machinery \cite{Behrend_Fantechi} to construct a perfect obstruction theory on the moduli space of affine vortices and proved Salamon's conjecture on the quantum Kirwan map in the algebraic case. Nonetheless, a symplectic geometric description of the moduli space has the advantage of being generalizable to the {\em open} case, i.e. to the case of vortices defined on the upper half of the complex plane with boundary in a Lagrangian submanifold. Conjectured by Woodward in \cite{Woodward_toric}, counting vortices on the half-plane leads to an {\em open quantum Kirwan map}, which is supposed to intertwine the $A_\infty$ structures on $X$ and its symplectic quotient $X \qu K$. The definition of this map is the ongoing project of the second named author with Woodward (see \cite{Wang_Xu, Xu_glue, Woodward_Xu}). Towards this direction, in the appendix of this paper, the second named author proves the analogue of Theorem \ref{thm11} in the open case.

The remaining of this paper is organized as follows. In Section \ref{section2} we recall the basic notions and preliminary results about affine vortices. In Section \ref{section3} we prove the main theorem, while leaving the proof of four technical results to Section \ref{section4}--\ref{section7}. In particular in Section \ref{section4} we reproduce the proof of Ziltener's Fredholm result. In the appendix the parallel discussion for affine vortices over the upper half plane is provided.

{\bf Acknowledgements.} We would like to thank Chris Woodward for sharing many ideas and many valuable discussions. We would like to thank Fabian Ziltener for valuable comments on the preliminary version of this paper. 

The first named author was a post-doctoral fellow at Chennai Mathematical Institute when part of the paper was written.
\section{Preliminaries}\label{section2} 

Let $K$ be a compact connected Lie group with Lie algebra ${\mf k}$. Suppose $K$ acts on a symplectic manifold $(X,\omega_X)$. Suppose the action is Hamiltonian, namely, there is a moment map, i.e., a $K$-equivariant map $\mu: X \to {\mf k}^\vee$ such that
\beqn
\omega_X ( {\mc X}_a,\cdot) = d \langle \mu, a \rangle , \quad \forall a \in {\mf k}.
\eeqn
Here ${\mc X}_a\in \Gamma(TX)$ is the infinitesimal action of $a$, and we follow the convention that $a \mapsto -{\mc X}_a$ is a Lie algebra homomorphism.

We choose, once and for all, a $K$-invariant, $\omega_X$-compatible almost complex structure $J$ on $X$. Let $G$ be the complexification of $K$, with Lie algebra ${\mf g} \simeq {\mf k}\otimes {\mb C}$. The infinitesimal action extends to a linear map ${\mf g} \to \Gamma(TX)$ as $
{\mf g} \ni a + {\bm i} b \mapsto {\mc X}_a + J {\mc X}_b$.

Choose on ${\mf k}$ an ${\rm Ad}$-invariant metric which identifies ${\mf k}$ with ${\mf k}^\vee$. We view $\mu$ as ${\mf k}$-valued. 

An important assumption throughout this paper is the following.	
\begin{assumption}\label{assumption21} 
The $K$-action on $\mu^{-1}(0)$ is free.
\end{assumption}

Then we obtain a smooth symplectic quotient 
\beqn
\bar{X}:= X\qu K:= \mu^{-1}(0)/K.
\eeqn

We also need a $K$-invariant Riemannian metric with respect to which $\mu^{-1}(0)$ is totally geodesic. To construct such a metric, start with an arbitrary one such that $\mu^{-1}(0)$ is totally geodesic, then integrate over $K$ against the Haar measure to obtain a $K$-invariant metric. Notice that the integration will preserve the totally geodesic condition. Let $\exp$ be its exponential map.

\subsection{Symplectic vortices} 

Let $B$ be a closed subset of ${\mb C}$ with smooth boundary (the bordered case will be discussed in Appendix \ref{appendixa}), on which one has the standard Euclidean coordinate $z = s + {\bm i} t$. In many cases we are particularly interested in $B = B_R$, the radius $R$ closed disk centered at the origin, and $B = C_R$, the complement of ${\rm Int}B_R$.
\begin{defn}\hfill
\begin{enumerate}
\item A {\bf gauged map} from $B$ to $X$ is a triple ${\bf v}= (u, \phi, \psi): B \to X \times {\mf k}\times {\mf k}$.

\item A {\bf vortex} on $B$ is a gauged map ${\bf v} = (u, \phi, \psi)$ that solves the equation 
\beq\label{eqn21}
\partial_s u + {\mc X}_\phi(u) + J (\partial_t u + {\mc X}_\psi (u)) = 0,\ \partial_s \psi - \partial_t \phi + [\phi, \psi] + \mu(u) = 0.
\eeq 

\item The {\bf energy} of a gauged map ${\bf v} = (u, \phi, \psi)$ is 
\beqn
E({\bf v}):= \frac{1}{2}  \left( \|\partial_s u +{\mc X}_\phi(u)\|_{L^2(B)}^2 + \|\partial_t u + {\mc X}_\psi(u) \|_{L^2(B)}^2 + \| \partial_s \psi - \partial_t \phi + [\phi, \psi] \|_{L^2(B)}^2 + \| \mu(u) \|_{L^2(B)}^2 \right). 
\eeqn

\item A gauged map ${\bf v} = (u, \phi, \psi)$ is called {\bf bounded} if it has finite energy and the image of $u$ has compact closure. A collection of gauged maps ${\bf v}_\alpha = (u_\alpha, \phi_\alpha,\psi_\alpha)$ is called {\bf uniformly bounded} if their energies have an upper bound and the images of all $u_\alpha$ are contained in a common compact set.

\item An {\bf affine vortex} is a bounded vortex on $B = {\mb C}$.

\end{enumerate}
\end{defn}

\begin{remark}
A few classification results of affine vortices are worth mentioning here. The first is due to Taubes (\cite{Taubes_vortex}, \cite{Jaffe_Taubes}) in the case when the target is the complex line with the standard action of $S^1$. This result was generalized to the case of the standard $S^1$ action on ${\mb C}^n$ by the second named author \cite{Guangbo_vortex}. A classification in the general toric case was provided by Gonz\'alez-Woodward \cite{GW_Toric}. In \cite{VW_affine} the first named author and Woodward proved a Hitchin-Kobayashi correspondence for affine vortices when the target has the structure of an affine or projective variety. 
\end{remark}

There is a coordinate-free formulation of \eqref{eqn21} over a general Riemann surface. However in this paper we do not need the general form but express everything in the local coordinates. A gauge map ${\bf v} = (u, \phi,\psi)$ is also denoted by a pair $(u, a)$ where $a = \phi ds + \psi dt$ is a ${\mf k}$-valued 1-form. We use these two notations interchangeably. The curvature of $a$ is 
\beqn
F_a = \partial_s \psi - \partial_t \phi + [\phi, \psi].
\eeqn 
Moreover, for any ${\mf k}$-valued 1-form $\alpha = \eta ds + \zeta dt$, write
\begin{align*}
&\ d_a \alpha = \partial_s \zeta + [\phi, \zeta] - \partial_t \eta - [\psi, \eta],&\  \dad \alpha = - \Big( \partial_s \eta + [\phi, \eta] + \partial_t \zeta + [\psi, \zeta]\Big).
\end{align*}

A map $g: B \to K$ is called a {\bf gauge transformation}, which acts (on the left) on a gauged map ${\bf v} = (u, \phi, \psi)$ as
\beqn
  g\cdot (u, \phi, \psi) = ( g\cdot u,\  g \phi g^{-1} - g^{-1} \partial_s g, g \psi g^{-1} - g^{-1} \partial_t g ).  
\eeqn
The infinitesimal gauge transformation for $h: B \to {\mf k}$ is 
\beq\label{eqn22}
h \mapsto ( {\mc X}_h(u), \ - \partial_s h - [\phi, h], -\partial_t h - [\psi, h])
\eeq

\begin{defn}\label{defn24}
A sequence of smooth vortices ${\bf v}_i$ on $B$ converges to a limit ${\bf v}_\infty$ in the {\bf compact convergence topology} (c.c.t. for short) if 1) the sequence is uniformly bounded; 2) there exist smooth gauge transformations $g_i: B \to K$ such that $g_i \cdot {\bf v}_i$ converges smoothly to ${\bf v}_\infty$ on compact subsets of $B$; 3) $E({\bf v}_i)$ converges to $E({\bf v}_\infty)$.
\end{defn}

The set of all smooth bounded vortices over $B$ is denoted by $\wt{M}^K (B, X)$. The quotient of $\wt{M}^K(B, X)$ by gauge equivalence is denoted by $M^K(B, X)$, equipped with the compact convergence topology. For ${\bf v}\in \wt{M}^K (B, X)$, denote by $[{\bf v}]\in M^K (B, X)$ the corresponding gauge equivalence class. Given $k \geq 1$ and $p>2$, we also consider $\wt{M}^K(B,X)^{k, p}_{\rm loc}$, i.e., the set of vortices of regularity $W^{k, p}_{\rm loc}$. 

\subsection{Linearized operators}

Abbreviate $E_X = TX$, $E_X^+ = TX \oplus {\mf k} \oplus {\mf k}$ where the last two factors are trivial bundles with fibre ${\mf k}$. For any continuous map $u: B \to X$, denote 
\beqn
E_u = u^* TX,\ E_u^+ = E_u \oplus {\mf k} \oplus {\mf k}.
\eeqn
For any gauged map ${\bf v} = (u, \phi, \psi)$ from $B$ to $X$ and an infinitesimal deformation ${\bm \xi} = (\xi, \eta, \zeta) \in \Gamma(B, E_u^+)$, one can identify a nearby gauged map ${\bf v}' = \exp_{{\bf v}} {\bm\xi}:= (\exp_u \xi, \phi + \eta, \psi  + \zeta)$ where $\exp$ is the exponential map of the metric we chose for which $\mu^{-1}(0)$ is totally geodesic. On the other hand, using the parallel transport of the Levi-Civita connection $\nabla$ of the metric $g$ determined by $\omega_X$ and $J$, one identifies $E_u$ with $E_{u'}$ and hence can differentiate the left-hand-side of \eqref{eqn21}.

The derivative of \eqref{eqn21} at a vortex ${\bf v} = (u, \phi, \psi)$ on $B \subset {\mb C}$ is given by 
\begin{multline*}
{\mc D}_{\bf v}:  \Gamma \big( B, E_u^+ \big) \to \Gamma\big( B, E_u \oplus {\mf k} \big),\\
(\xi, \eta, \zeta)  \mapsto \left( \begin{array}{c}  \partial_s \xi + \nabla_\xi {\mc X}_\phi  + J (\nabla_t \xi + \nabla_\xi {\mc X}_\psi) + (\nabla_\xi J) (\partial_t u + {\mc X}_\psi) + {\mc X}_\eta + J{\mc X}_\zeta\\
\partial_s \zeta + [\phi, \zeta] - \partial_t \eta - [\psi, \eta] + d\mu(u) \cdot \xi \end{array} \right).
\end{multline*}
We will also abbreviate
\beq\label{eqn23}
\begin{split}
\nabla^a \left( \begin{array}{c}\xi \\ \eta\\ \zeta \end{array} \right) = ds \otimes \left( \begin{array}{c} \nabla_s \xi + \nabla_\xi {\mc X}_\phi \\ \partial_s \eta + [\phi, \eta] \\ \partial_s \zeta + [\phi, \zeta] \end{array} \right) + dt \otimes \left( \begin{array}{c}  \nabla_t \xi + \nabla_\xi {\mc X}_\psi \\  \partial_t \eta + [\psi, \eta] \\ \partial_t \zeta + [\psi, \zeta] \end{array} \right)
\end{split}	
\eeq
$\nabla^a$ is then a metric connection on $E_u^+$ with respect to the metric $\omega_X( \cdot, J \cdot)$.

Since the moduli space is defined by quotienting the space of vortices by gauge transformations, we include a gauge fixing condition.
\begin{defn}\label{defn25}
Given a gauged map ${\bf v} = (u, a)$ from $B$ to $X$, another gauged map from $B$ to $X$ of the form ${\bf v}' = (\exp_u \xi, a + \alpha)$ is in {\bf Coulomb gauge} with respect to ${\bf v}$ if
\beq\label{eqn24}
\dad \alpha + d\mu(u) \cdot J \xi =0.
\eeq
\end{defn}
Here the left-hand-side is actually the formal adjoint of \eqref{eqn22}. The {\bf augmented linearized operator} $\hat {\mc D}_{\bf v}: \Gamma ( B, E_u^+ ) \to \Gamma ( B, E_u^+)$ is defined by
\beq
\label{eqn25}
\left( \begin{array}{c} \xi \\ \eta \\ \zeta \end{array}\right) \mapsto \left( \begin{array}{c}  \nabla_s \xi + \nabla_\xi {\mc X}_\phi + J (\nabla_t \xi + \nabla_\xi {\mc X}_\psi) + (\nabla_\xi J) (\partial_t u + {\mc X}_\psi) + {\mc X}_\eta + J {\mc X}_\zeta \\
\partial_s \eta + [\phi, \eta] + \partial_t \zeta + [\psi, \zeta] + d\mu(u) \cdot J \xi \\
\partial_s \zeta + [\phi, \zeta] - \partial_t \eta - [\psi, \eta] + d\mu(u) \cdot \xi 
\end{array} \right)
\eeq
where the left-hand-side of \eqref{eqn24} is inserted as the second entry.

\subsection{Asymptotic behavior of ${\mb C}$-vortices}

Given a gauged map ${\bf v} = (u, \phi, \psi)$, define its energy density function by
\beqn
e({\bf v}) = \frac{1}{2} \Big[ | \partial_s u + {\mc X}_{\phi}(u) |^2 + |\partial_t u + {\mc X}_{\psi} (u)|^2 + |\partial_s \psi - \partial_t \phi + [\phi, \psi] |^2 + |\mu(u)|^2 \Big].
\eeqn

\begin{prop}\label{prop26} \cite[Corollary 1.4]{Ziltener_decay} Given ${\bf v} \in \wt{M}^K(C_R, X)$. Then for any $\eps>0$,
\beq\label{eqn26}
\limsup_{z \to \infty} \Big[ |z|^{4- \epsilon} e({\bf v})(z) \Big] < +\infty.
\eeq
Further, if ${\bf v}_i\in \wt{M}^K(C_R, X)$ is a sequence that converges to ${\bf v}_\infty\in \wt{M}^K(C_R, X)$ in the c.c.t., then 
\beqn
\sup_{i} \limsup_{z \to \infty} \Big[ |z|^{4- \epsilon} e({\bf v}_i)(z) \Big] < +\infty.
\eeqn

\end{prop}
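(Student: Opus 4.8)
The plan is to obtain the pointwise decay of the energy density from two separate ingredients: a mean value ($\eps$-regularity) inequality bounding $e({\bf v})(z)$ by the local energy on a ball around $z$, and a decay estimate for that local energy as $z\to\infty$. Since the equation \eqref{eqn21} on $C_R$ is not scale invariant, the cleanest setting is the half-cylinder: pulling back by the biholomorphism $z=e^{\sigma+\ii\tau}$, with $\sigma\in[\log R,\infty)$ and $\tau\in{\mb R}/2\pi{\mb Z}$, the energy, being a density, converts with the area factor $e^{2\sigma}=|z|^2$, so that the cylindrical energy density $\mc E:=|z|^2\,e({\bf v})$ satisfies $e({\bf v})(z)=|z|^{-2}\mc E(\sigma,\tau)$ on the end. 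Once I show that $\mc E$ decays like $e^{-(2-\eps)\sigma}$, translating back gives $e({\bf v})(z)\lesssim |z|^{-2}\cdot|z|^{-(2-\eps)}=|z|^{-(4-\eps)}$, which is exactly \eqref{eqn26}.

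For the first ingredient, the Weitzenböck/Bochner computation for \eqref{eqn21} yields a differential inequality $\Delta e \ge -c_1 e - c_2 e^2$, where $c_1,c_2$ depend only on bounds for the curvature of $g$, for $\nabla J$, and for the second derivatives of $\mu$ over a compact set $\mc K\subset X$ containing $\overline{u(C_R)}$. A standard Heinz/Moser iteration then produces a threshold $\hbar>0$ and a constant $C$ (depending only on $\mc K$) such that, on a cylinder-ball $B$ of fixed radius, $\int_B \mc E < \hbar$ implies $\mc E\le C\int_B \mc E$ at the center; the $c_2e^2$ term is absorbed precisely because the local energy is below threshold. Since $E({\bf v})<\infty$, the tail energy $\int_{\{\sigma'\ge\sigma\}}\mc E$ tends to $0$, so the small-energy hypothesis holds for $\sigma$ large.

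The heart of the argument, and the main obstacle, is the exponential decay of the tail energy $m(\sigma):=\int_{\{\sigma'\ge\sigma\}}\mc E$. The subtlety is that in cylindrical coordinates the equation is \emph{not} autonomous: the moment-map term carries the growing weight $e^{2\sigma}$, which forces the curvature and $\mu(u)$ to decay faster than the holomorphic energy, so that the slowest-decaying contribution is that of a holomorphic map into $\bar X$ limiting to a point. I would establish an isoperimetric-type inequality for the loops $\tau\mapsto{\bf v}(\sigma,\tau)$ — valid once $m(\sigma)<\hbar$ and using Assumption \ref{assumption21}, so that the loop energy controls the $\tau$-derivative modulo gauge — yielding a differential inequality $m'(\sigma)\le-\lambda\,m(\sigma)$ and hence $m(\sigma)\le m(\sigma_0)e^{-\lambda(\sigma-\sigma_0)}$. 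The constant $\lambda$ may be taken arbitrarily close to (but below) $2$, the value dictated by the minimal vanishing order of the limiting holomorphic map at the puncture and equivalently by twice the first eigenvalue ($=1$) of $-\partial_\tau^2$ on loops; this loss of an arbitrarily small amount is the origin of the $\eps$ in \eqref{eqn26}. Feeding $m(\sigma)\lesssim e^{-(2-\eps)\sigma}$ into the mean value inequality gives $\mc E(\sigma,\tau)\lesssim e^{-(2-\eps)\sigma}$, the desired cylindrical decay.

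Finally, for the uniform statement over a c.c.t.-convergent sequence ${\bf v}_i\to{\bf v}_\infty$, every constant above is uniform in $i$: by uniform boundedness (Definition \ref{defn24}(1)) all $\overline{u_i(C_R)}$ lie in a common compact $\mc K$, so $c_1,c_2,\hbar,C$ and the isoperimetric constant are common to all $i$, and the energy bound $E({\bf v}_i)\le E_0$ forces $m_i(\sigma)\le E_0$. The only point requiring care is a \emph{uniform} capture radius $\sigma_0$ beyond which $m_i(\sigma_0)<\hbar$. Here I would invoke condition (3) of the c.c.t., the convergence of energies $E({\bf v}_i)\to E({\bf v}_\infty)$: combined with compact convergence it rules out energy escaping to infinity, so $\sup_i\int_{\{|z|\ge\rho\}}e({\bf v}_i)\to0$ as $\rho\to\infty$, giving a single $\sigma_0$ with $m_i(\sigma_0)<\hbar$ for all large $i$ (the finitely many remaining indices being handled individually). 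The uniform differential inequality then yields $\sup_i m_i(\sigma)\lesssim e^{-(2-\eps)\sigma}$, hence $\sup_i\limsup_{z\to\infty}|z|^{4-\eps}e({\bf v}_i)(z)<\infty$.
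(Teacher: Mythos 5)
Your proposal is correct and is essentially the argument the paper relies on: the paper disposes of this statement by citing Ziltener's Corollary 1.4 and remarking that the uniform statement over a c.c.t.-convergent sequence ``can be proved by the same method'' as \eqref{eqn26}. That method --- cylindrical coordinates, a mean value inequality coming from the Bochner-type differential inequality, exponential decay of the tail energy via an isoperimetric inequality for the local (equivariant) symplectic action with rate arbitrarily close to $2$, and uniformity in $i$ extracted from conditions (1) and (3) of the c.c.t. --- is exactly what you outline, and it is written out in the paper's Appendix (Proposition \ref{propa4}) for the half-plane analogue.
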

The last statement of the proposition is not included in \cite[Corollary 1.4]{Ziltener_decay}, but it can be proved by the same method of proving \eqref{eqn26}.

\begin{prop}\label{prop27} 
Given $p>2$ and $0 < \gamma <\frac 2 p$. Given ${\bf v} = (u, a)\in \wt{M}^K(C_R, X)$. Suppose ${\bf v}$ is in radial gauge, namely 
\beqn
a = f d\theta,\ f\in C^\infty(C_R, {\mf k}).
\eeqn
Denote $f_r (\theta) = f(r \cos \theta, r \sin \theta)$. Then there exist $x \in \mu^{-1}(0)$ and $k_0 \in W^{1,p}(S^1, K)$ satisfying the following conditions.
\begin{enumerate}

\item \label{prop27a} {\hfil $\displaystyle \lim_{r \to \infty}\max_{\theta \in S^1}d \Big( x, k_0 (\theta)^{-1} u(r \cos \theta, r \sin \theta) \Big) =0$;}

\item \label{prop27b} {\hfil $\displaystyle \limsup_{r \to +\infty}  \left[ r^\gamma \left\| f_r  + (\partial_\theta k_0 ) k_0^{-1}\right\|_{L^p(S^1)} \right] < +\infty$.}

\end{enumerate}
Further, given a sequence ${\bf v}_i = (u_i,  a_i)\in \wt{M}^K(C_R, X)$ such that $a_i = f_i d\theta$ and ${\bf v}_i$ converges to ${\bf v}_\infty = (u_\infty, a_\infty)\in \wt{M}^K(C_R, X)$ in c.c.t., then for all $i$ (including $i=\infty$) there exist $x_i \in \mu^{-1}(0)$ and $k_{0, i}\in W^{1, p}(S^1, K)$ such that $k_{0, i}^{-1}(\theta) u_i (r\cos\theta, r \sin \theta)$ converges to $x_i$ as $r\to \infty$ uniformly in $i$ and 
\beqn
\sup_{1\leq i\leq \infty} \limsup_{r \to +\infty}  \left[ r^\gamma \left\| f_{i, r} + (\partial_\theta k_{0, i}) k_{0, i}^{-1} \right\|_{L^p(S^1)} \right] < +\infty.
\eeqn
\end{prop}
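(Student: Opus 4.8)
The plan is to work in the given radial gauge throughout, exploiting that since the radial component of $a$ vanishes the radial covariant derivative of $u$ equals the ordinary radial derivative.

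First I would read off from Proposition~\ref{prop26} the pointwise bounds in polar coordinates $z=re^{\ii\theta}$. Abbreviating $D_s u=\partial_s u+{\mc X}_\phi(u)$ and $D_t u=\partial_t u+{\mc X}_\psi(u)$, the decay $e({\bf v})(z)\lesssim_\eps|z|^{-4+\eps}$ gives $|D_s u|,|D_t u|,|\mu(u)|\lesssim r^{-2+\eps/2}$. In radial gauge $D_r u=\cos\theta\,D_s u+\sin\theta\,D_t u=\partial_r u$ and $D_\theta u=-r\sin\theta\,D_s u+r\cos\theta\,D_t u$, so $|\partial_r u|\lesssim r^{-2+\eps/2}$ and $|D_\theta u|\lesssim r^{-1+\eps/2}$; moreover the curvature of $f\,d\theta$ is $\partial_r f\,dr\wedge d\theta$ and $ds\wedge dt=r\,dr\wedge d\theta$, so the vortex equation reads $\partial_r f=-r\,\mu(u)$, whence the crude bound $|f_r|\lesssim 1+\int_R^r\rho\,\rho^{-2+\eps/2}\,d\rho\lesssim r^{\eps/2}$. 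Since $\partial_r u$ is integrable in $r$, the loops $u_r(\theta):=u(r\cos\theta,r\sin\theta)$ converge uniformly to a loop $u_\infty\colon S^1\to X$ with $d(u_r(\theta),u_\infty(\theta))\lesssim r^{-1+\eps/2}$, and since $|\mu(u)|\to0$ with $u$ precompact, $u_\infty$ takes values in $\mu^{-1}(0)$.

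Next I would identify $u_\infty$ and define $x,k_0$. By Assumption~\ref{assumption21} and compactness there is a tubular neighbourhood of $\mu^{-1}(0)$ containing all $u_r$ for $r$ large, on which the action is free, $|{\mc X}_\xi(y)|\geq c|\xi|$, and the retraction $\pi$ to $\bar X:=X\qu K$ is defined. Since $\pi$ annihilates orbit directions, $|\partial_\theta(\pi\circ u_r)|\lesssim|D_\theta u_r|+|f_r|\,|\mu(u_r)|\lesssim r^{-1+\eps/2}+r^{-2+\eps}\to0$, so $\pi\circ u_\infty$ is constant and $u_\infty$ lies in a single free orbit $K\cdot x$ with $x:=u_\infty(0)\in\mu^{-1}(0)$. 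By freeness there is a unique continuous $k_0\colon S^1\to K$ with $u_\infty(\theta)=k_0(\theta)\cdot x$, and (once $u_\infty\in W^{1,p}$ is known below) $\partial_\theta u_\infty={\mc X}_{\zeta_0}(u_\infty)$ with $\zeta_0=(\partial_\theta k_0)k_0^{-1}$. Since $u_\infty$ is a genuine loop, $k_0(2\pi)\cdot x=x$ forces $k_0(2\pi)=e$, so $k_0$ descends to $S^1$; Conclusion~\ref{prop27a} follows from $k_0(\theta)^{-1}u_r(\theta)\to x$ uniformly.

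The bulk of the work is Conclusion~\ref{prop27b}, where the main obstacle is to control one angular derivative beyond the zeroth-order energy bound. From $D_\theta u=\partial_\theta u+{\mc X}_f(u)$ and $\partial_\theta u_\infty={\mc X}_{\zeta_0}(u_\infty)$ one has ${\mc X}_{f_r+\zeta_0}(u_r)=D_\theta u_r-\partial_\theta(u_r-u_\infty)+\big({\mc X}_{\zeta_0}(u_r)-{\mc X}_{\zeta_0}(u_\infty)\big)$, so by $|{\mc X}_\xi(y)|\geq c|\xi|$ it suffices to bound the three right-hand terms in $L^p(S^1)$. The first is $\lesssim r^{-1+\eps/2}$ and the third is $\lesssim\|\zeta_0\|_{L^p}\,r^{-1+\eps/2}$. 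The crux is $\partial_\theta(u_r-u_\infty)$; writing $\partial_\theta(u_r-u_\infty)=-\int_r^\infty\partial_\theta(D_\rho u)\,d\rho$ (valid since $\partial_\rho u=D_\rho u$ in radial gauge), the integrand is controlled once I upgrade the energy decay of Proposition~\ref{prop26} to the covariant-derivative decay $|\nabla^a(Du)|\lesssim r^{-3+\eps}$. This I would obtain by the same rescaling-and-elliptic-estimate method underlying Proposition~\ref{prop26}: rescaling $u$ on the annulus at radius $r$ turns the equation into a vortex equation with large parameter $r^2$ in front of $\mu$, for which elliptic regularity on a fixed annulus gains one power of $r$ per covariant derivative. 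Together with $|f_r|\lesssim r^{\eps/2}$ this yields $|\partial_\theta(D_\rho u)|\lesssim\rho^{-2+\eps}$, hence $\|\partial_\theta(u_r-u_\infty)\|_{L^p(S^1)}\lesssim r^{-1+\eps}$. Consequently $\|f_r+\zeta_0\|_{L^p(S^1)}\lesssim r^{-1+\eps}$, which for $\eps$ small beats $r^{-\gamma}$ for the given $\gamma<\tfrac2p$; in particular $\partial_\theta u_r\to\partial_\theta u_\infty$ in $L^p$, so $u_\infty,k_0\in W^{1,p}$ and $\zeta_0\in L^p$, closing the regularity left open above. That the orbit element is exactly $\zeta_0=-\lim f_r$ rather than some other element follows by passing to the limit in $\partial_\theta u_r=D_\theta u_r-{\mc X}_{f_r}(u_r)$ and using the injectivity of $\xi\mapsto{\mc X}_\xi(u_\infty)$.

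Finally, for a c.c.t.-convergent sequence ${\bf v}_i\to{\bf v}_\infty$ I would run the same construction for each $i$, using that every constant above depends only on the uniform energy bound and on the uniform decay constants furnished by the second part of Proposition~\ref{prop26}. Hence the radial convergence $u_{i,r}\to u_{\infty,i}$, the trapping in a fixed tubular neighbourhood, and the integral estimate for $\|f_{i,r}+\zeta_{0,i}\|_{L^p(S^1)}$ all hold with constants independent of $i$, giving the stated uniform bounds.
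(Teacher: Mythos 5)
Your construction of $u_\infty$, $x$ and $k_0$, and conclusion (1), are fine; the gap is in conclusion (2), and it is fatal as written. Everything there funnels through the bound $\|\partial_\theta(u_r-u_\infty)\|_{L^p(S^1)}\lesssim r^{-1+\eps}$, which you derive from the claimed decay $|\nabla^a(d_au)|\lesssim r^{-3+\eps}$, and the justification you offer for that decay --- rescale the annulus at radius $r$ to unit size and let elliptic regularity gain one power of $r$ per covariant derivative --- does not work. Rescaling does not reproduce the affine vortex equation: the rescaled pair $(\tilde u,\tilde a)$ on the unit annulus satisfies $F_{\tilde a}+r^2\mu(\tilde u)\,ds\,dt=0$, and since Proposition \ref{prop26} gives nothing better than $|\mu(u)|\lesssim r^{-2+\eps}$ pointwise, the rescaled curvature is only $O(r^{\eps})$, not small. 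When you differentiate the Cauchy--Riemann part of the equation to bootstrap, the terms controlled by $|d\tilde a|\gtrsim |F_{\tilde a}|$ enter the right-hand side at size $O(r^{\eps})$, so elliptic regularity yields at best $|\nabla^a(d_au)|\lesssim r^{-2+\eps}$ after scaling back, one full power short of your claim. With that (the only bound obtainable from the quoted inputs) one gets $|\partial_\theta\partial_\rho u|\lesssim \rho^{-1+\eps}$, and $\int_r^\infty \rho^{-1+\eps}d\rho$ diverges, so your radial-integration formula for $\partial_\theta(u_r-u_\infty)$ does not even converge. The same loss is already visible at order zero: in radial gauge $\partial_r f=-r\mu(u)$, and the pointwise bound gives only the non-integrable $|\partial_r f|\lesssim r^{-1+\eps}$ --- which is precisely why you could not show directly that $f_r$ converges and had to detour through $u_\infty$.

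The missing ingredient is an improved decay of the moment map that does not follow from the energy-density bound of Proposition \ref{prop26}, and it is the heart of the argument the paper relies on (Proposition \ref{prop27} itself is quoted from Ziltener's thesis and decay paper; the paper proves the half-plane analogue, Proposition \ref{propa5}, in the appendix). There, a Bochner-type differential inequality $\Delta e\geq c^{-1}e^{4\tau}|\mu(u)|^2-ce$ in cylindrical coordinates, combined with the Gaio--Salamon mean-value sublemma, gives $\int_{A_r}|z|^2|\mu(u)|^2\,dA\lesssim r^{-2+\eps}$ on the annulus $A_r=\{r\leq |z|\leq 2r\}$, i.e. $\|\mu(u)\|_{L^2(A_r)}\lesssim r^{-2+\eps}$, a full power of $r$ better than what the energy bound alone provides. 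Interpolating with $\|\mu(u)\|_{L^\infty(A_r)}\lesssim r^{-2+\eps}$ gives $\|\mu(u)\|_{L^p(A_r)}\lesssim r^{-2+\eps}$, and integrating $\partial_r f=-r\mu(u)$ (Minkowski, then H\"older in $\rho$) shows $f_r$ is Cauchy in $L^p(S^1)$ with rate $r^{-2/p+\eps}$; one then defines $k_0$ by solving $(\partial_\theta k_0)k_0^{-1}=-\lim_r f_r$, so $k_0\in W^{1,p}$ automatically, and conclusion (1) is deduced afterwards --- the reverse of your order. Note that the threshold $\gamma<\frac{2}{p}$ in the statement is exactly the exponent this $L^2$--$L^\infty$ interpolation produces; your chain of estimates ends with a $p$-independent rate $r^{-1+\eps}$, strictly stronger than what the proposition asserts or the literature proves, which should itself have been a warning sign. (Once the improved $\mu$-decay is in hand your covariant-derivative estimate could likely be recovered, since the rescaled curvature becomes $O(r^{-1+\eps})$; but at that point the direct integration of the curvature equation already finishes the proof and the detour is unnecessary.)
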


Proposition \ref{prop27} is an improved version of \cite[Proposition D.7]{Ziltener_thesis}. The improved constants are obtained by applying the results in \cite{Ziltener_decay} (see discussion in \cite[Section 5]{VW_affine}).

\begin{remark}\label{rem28} 
Take ${\bf v}= (u, a) \in \wt{M}^K({\mb C}, X)$. By Proposition \ref{prop27}, there is a $K$-bundle $P \to {\mb P}^1$ such that $u$ extends to a {\it continuous} section of $P(X)$, which represents a class $\beta \in H_2^K(X)$.  Using a calculation similar to \cite[Theorem 3.1]{Cieliebak_Gaio_Salamon_2000}, we can derive 
\beq\label{eqn27}
E({\bf v})= \omega_X^K(\beta), 
\eeq
where $\omega_X^K \in H^2_K (X)$ is represented by the equivariant 2-form $\omega_X -\mu$. Ziltener \cite{Ziltener_thesis} proves that when a sequence of affine vortices converges to a (possibly reducible) limit, the equivariant homology class is preserved in the limit. Our main theorem yields a weaker result as a corollary. The existence of the moduli space of regular vortices implies that if a sequence of regular vortices converges to a regular vortex in c.c.t., then the elements of the sequence and the limit have the same equivariant homology class, because they are part of a continuous family.	
\end{remark}
\begin{remark}\label{rem29}
Let $X_K$ be the Borel construction of $X$. The projection $X_K \to BK$ induces a map in equivariant homology $H_2^K(X) \to H_2^K({\rm pt})$. By the previous remark, an affine vortex ${\bf v}$ representing a class $\beta \in H_2^K(X)$ induces a class in $H_2^K({\rm pt})$, which classifies the $K$-bundle $P \to {\mb P}^1$. The topology of $P$ is uniquely determined by the homotopy class of a loop in $K$, hence there is map
\beq\label{eqn28}
\hol: M^K ({\mb C} ,X) \to \pi_1(K).
\eeq
Indeed, $\hol({\bf v})$ is the homotopy class of $k_0$ which is from Proposition \ref{prop27}. We call the class $[k_o]\in \pi_1(K)$ the {\em holonomy of ${\bf v}$}. 
\end{remark}

\begin{notation}
It is often convenient to use a geodesic representative of the class $[k_0] \in \pi_1(K)$, namely a loop $\theta \mapsto e^{\lambda \theta}$ homotopic to $k_0$, where $\lambda \in \frac 1 {2\pi} \exp^{-1}({\rm Id})$. We abbreviate
\beqn
\Lambda_K:= \frac{1}{2\pi} \exp^{-1}({\rm Id}) \subset {\mf k}.
\eeqn
\end{notation}

By Proposition \ref{prop27} one can define the ``evaluation at infinity'',
\beq\label{eqn29}
\ev_\infty:M^K ({\mb C},X) \to \bar{X}.
\eeq
Using Proposition \ref{prop27} it is easy to derive the continuity of $\hol$ and $\ev_\infty$ with respect to the compact convergence topology on $M^K ({\mb C},X)$. The proof is left to the reader.

\begin{cor}\label{cor211} 
Suppose a sequence ${\bf v}_i\in \wt{M}^K({\mb C}, X)$ converge to ${\bf v}_\infty$ in c.c.t.. Then,
\begin{enumerate}
\item \label{cor211a} $\ev_\infty({\bf v}_\infty) = \displaystyle \lim_{i \to \infty}\ev_\infty({\bf v}_i)$ and 

\item \label{cor211b} (for large $i$) $\hol( {\bf v}_i) = \hol( {\bf v}_\infty )\in \pi_1(K)$.
\end{enumerate}
\end{cor}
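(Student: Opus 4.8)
The plan is to use the \emph{uniform}-in-$i$ asymptotic control provided by the second half of Proposition \ref{prop27} to interchange the limit $r\to\infty$ (which defines $\ev_\infty$ and $\hol$) with the limit $i\to\infty$ (the c.c.t. convergence). Since both $\ev_\infty$ and $\hol$ are gauge invariant, and since the compact convergence topology is preserved under applying gauge transformations term by term — if $g_i\cdot{\bf v}_i\to{\bf v}_\infty$ smoothly on compact sets then $(h_\infty g_i h_i^{-1})\cdot(h_i\cdot{\bf v}_i)=h_\infty\cdot(g_i\cdot{\bf v}_i)\to h_\infty\cdot{\bf v}_\infty$ for any gauge transformations $h_i,h_\infty$ — I would first restrict to some $C_R$ and replace each ${\bf v}_i$ by its radial-gauge representative (obtained by integrating the radial parallel-transport ODE along rays). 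After this reduction the hypotheses of the ``Further'' part of Proposition \ref{prop27} hold, producing $x_i\in\mu^{-1}(0)$ and $k_{0,i}\in W^{1,p}(S^1,K)\subset C^0(S^1,K)$ with $k_{0,i}(\theta)^{-1}u_i(re^{\ii\theta})\to x_i$ uniformly in $i$ as $r\to\infty$, and with $\ev_\infty({\bf v}_i)=[x_i]$ and $\hol({\bf v}_i)=[k_{0,i}]$.

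For part \eqref{cor211a} I would use the continuous nearest-point retraction $\varpi$ onto $\bar X$, defined on a tubular neighborhood of $\mu^{-1}(0)$ (available because the chosen metric is $K$-invariant with $\mu^{-1}(0)$ totally geodesic). The uniform convergence gives, for each $\delta>0$, a radius $r_0$ so large that every $u_i(r_0e^{\ii\theta})$ lies in this neighborhood and $d_{\bar X}(\varpi(u_i(r_0e^{\ii\theta})),[x_i])<\delta$ for all $i,\theta$. Fixing such an $r_0$, the c.c.t. convergence yields $u_i(r_0e^{\ii\theta})\to u_\infty(r_0e^{\ii\theta})$ and hence $\varpi(u_i(r_0e^{\ii\theta}))\to\varpi(u_\infty(r_0e^{\ii\theta}))$ as $i\to\infty$. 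Chaining the three estimates gives $\limsup_i d_{\bar X}([x_i],[x_\infty])\le 2\delta$, and letting $\delta\to 0$ finishes.

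For part \eqref{cor211b} Assumption \ref{assumption21} enters decisively: freeness makes each orbit map $K\to K\cdot x_\infty$ a diffeomorphism, so its inverse is uniformly continuous on the compact orbit. Using part \eqref{cor211a} and the principal bundle $\mu^{-1}(0)\to\bar X$ I may arrange $x_i\to x_\infty$ (this changes $k_{0,i}$ only by right multiplication by a constant element of the connected group $K$, which preserves its homotopy class). Since $K$ acts by isometries, the uniform estimate reads $d(u_i(re^{\ii\theta}),k_{0,i}(\theta)x_i)\to 0$ uniformly; combining this at a fixed large $r_0$ with $u_i(r_0e^{\ii\theta})\to u_\infty(r_0e^{\ii\theta})$ and $x_i\to x_\infty$ gives $\sup_\theta d(k_{0,i}(\theta)x_\infty,k_{0,\infty}(\theta)x_\infty)\to 0$. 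Uniform continuity of the inverse orbit map upgrades this to $k_{0,i}\to k_{0,\infty}$ in $C^0(S^1,K)$, and two $C^0$-close loops in $K$ are homotopic via the short geodesic homotopy, so $[k_{0,i}]=[k_{0,\infty}]$ in $\pi_1(K)$ for large $i$.

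The crux is the legitimacy of exchanging the two limits, which is exactly what the uniform-in-$i$ conclusion of Proposition \ref{prop27} supplies; everything else is routine epsilon-chaining, apart from the one genuinely geometric input, the freeness Assumption \ref{assumption21}, which is what allows the loop data $k_{0,i}$ to be recovered from the maps $u_i$ and is therefore the real engine behind the locally constant behavior of $\hol$.
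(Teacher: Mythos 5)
The paper gives no written proof of this corollary (it is explicitly ``left to the reader,'' with Proposition \ref{prop27} named as the tool), and your overall strategy --- use the uniform-in-$i$ conclusion of Proposition \ref{prop27} to interchange the limits $r\to\infty$ and $i\to\infty$, handle $\ev_\infty$ via a $K$-invariant retraction, and recover $k_{0,i}\to k_{0,\infty}$ in $C^0$ from the freeness assumption --- is exactly the intended one. There is, however, one step that fails as written: the claim that after replacing each ${\bf v}_i$ by a radial-gauge representative, ``the c.c.t. convergence yields $u_i(r_0e^{\ii\theta})\to u_\infty(r_0e^{\ii\theta})$.'' Convergence in c.c.t. is only convergence modulo gauge: it supplies gauge transformations $\tilde{g}_i$ with $\tilde{g}_i\cdot {\bf v}_i \to {\bf v}_\infty$ on compact sets, and your reduction (which only argues that c.c.t. is preserved under term-by-term gauge changes) does not make the radial-gauge maps $u_i$ themselves converge pointwise. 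Concretely, if ${\bf v}_i = k_i\cdot {\bf v}_\infty$ for a non-convergent sequence of constants $k_i\in K$, then every ${\bf v}_i$ is in radial gauge and ${\bf v}_i\to{\bf v}_\infty$ in c.c.t., yet $u_i(r_0e^{\ii\theta}) = k_i u_\infty(r_0e^{\ii\theta})$ does not converge; with the perfectly legitimate choice $k_{0,i}=k_i k_{0,\infty}$, $x_i = x_\infty$ furnished by Proposition \ref{prop27}, your intermediate assertion $\sup_\theta d\big(k_{0,i}(\theta)x_\infty, k_{0,\infty}(\theta)x_\infty\big)\to 0$ is simply false. This does not damage part \eqref{cor211a}, because there you use $u_i(r_0e^{\ii\theta})$ only through $\varpi$, which is $K$-invariant and hence insensitive to the gauge ambiguity: $\varpi(u_i(z)) = \varpi(\tilde{g}_i(z)u_i(z)) \to \varpi(u_\infty(z))$ uniformly on $|z|=r_0$. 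But in part \eqref{cor211b} the quantity $d\big(k_{0,i}(\theta)x_\infty,k_{0,\infty}(\theta)x_\infty\big)$ is not gauge invariant, so there the gap is real.

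The repair is small but must be stated. Construct the radial-gauge representatives compatibly with the c.c.t. intertwiners: first apply the gauge transformations $g_i$ from Definition \ref{defn24}, so that $g_i\cdot{\bf v}_i\to{\bf v}_\infty$ in $C^\infty_{\rm loc}$, and only then pass to radial gauge by integrating the radial ODE outward from the fixed circle $\partial B_R$ with identity initial value (for every $i$, including $i=\infty$). By continuous dependence of ODE solutions on their coefficients, the resulting gauge transformations, and hence the resulting radial representatives, again converge in $C^\infty_{\rm loc}$; after this, every pointwise-convergence statement in your write-up is true and the rest of your argument (retraction chaining, freeness, short geodesic homotopy) goes through verbatim. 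Alternatively, keep the intertwiners $\tilde{g}_i$ in the estimates: one then concludes $\tilde{g}_i(r_0e^{\ii\cdot})k_{0,i}\to k_{0,\infty}$ in $C^0(S^1,K)$, and since the loops $\theta\mapsto \tilde{g}_i(r_0e^{\ii\theta})$ are null-homotopic (the c.c.t. gauge transformations extend over $B_{r_0}$, and radial-gauge transformations with identity initial value are homotopic along radii to a constant), the group structure of $K$ gives $[k_{0,i}]=[\tilde{g}_i(r_0e^{\ii\cdot})k_{0,i}]=[k_{0,\infty}]$ in $\pi_1(K)$ for large $i$.
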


\subsection{Weighted Sobolev spaces}

In this section, we define weighted Sobolev spaces for functions on ${\mb C}$, that are used to define the domain and target space of the vortex differential operator. Choose a rotationally symmetric smooth function $\rho: {\mb C} \to [1, \infty)$ which is equal to $|z|$ outside a compact subset. For $1<p<\infty$, $\delta \in {\mb R}$ and $k \in {\mb Z}_{\geq 0}$ and a compactly supported smooth function $\sig \in C^\infty_0({\mb C})$, define the following norms:
\beqn
\| \sig \|_{L^{p,\delta}}:= \| \rho^\delta  \sigma \|_{L^p};
\eeqn
\beqn
\| \sig \|_{W^{k,p,\delta}} := \sum_{i=0}^k \| \nabla^i \sig \|_{L^{p,\delta}}, \quad  \| \sig  \|_{L^{k,p,\delta}} :=\sum_{i=0}^k \|\nabla^i 
    \sig \|_{L^{p,\delta+i}}. 
\eeqn
The spaces $L^{p,\delta}({\mb C})$, $W^{k,p,\delta}({\mb C})$ and $L^{k,p,\delta}({\mb C})$ are the completions of $C^\infty_0({\mb C})$ under the norms $\| \cdot \|_{L^{p,\delta}}$, $\| \cdot \|_{W^{k,p,\delta}}$ and $\| \cdot \|_{L^{k,p,\delta}}$ respectively.

\begin{lemma}\label{lemma212}\cite[Proposition 91]{Ziltener_book}\cite[Proposition E.4]{Ziltener_thesis} Given $p>2$, $\delta>1-\frac 2 p$. If $f \in W^{1,p}_{\rm loc}({\mb C})$ satisfies $\| \nabla f \|_{L^{p,\delta}}<\infty$, then the limit $f(\infty):= \displaystyle \lim_{z \to \infty}f(z)$ exists and $f-f(\infty) \in L^{p,\delta-1}$. Moreover, we have the following estimates.
\begin{enumerate}
\item \label{lemma212a} There is a constant $c>0$ independent of $f$ such that 
\beq\label{eqn212}
\| f-f(\infty)  \|_{L^{p,\delta-1}} \leq c \| \nabla f \|_{L^{p,\delta}}.
\eeq
\beq\label{eqn213}
| f(x) - f(y) | \leq  c R^{1 - \frac{2}{p} - \delta} \| \nabla f \|_{L^{p, \delta}( C_R)},\ \forall R>0,\ x, y \in C_R.
\eeq

\item \label{lemma212b} There is an equivalence of norms:
\beq\label{eqn214}
\| f \|_{L^\infty} +  \| \nabla f \|_{L^{p,\delta}} \approx | f(\infty) | +  \| f - f(\infty) \|_{L^{1,p,\delta-1}}.
\eeq
\end{enumerate}
\end{lemma}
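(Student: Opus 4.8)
The plan is to reduce everything to two standard ingredients: a scale-invariant Morrey estimate on dyadic annuli (to produce $f(\infty)$ and the oscillation bound \eqref{eqn213}) and a one-dimensional weighted Hardy inequality in the radial variable (to produce \eqref{eqn212}); the norm equivalence \eqref{eqn214} then follows formally.

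First I would work on the dyadic annuli $A_R := \{R \le |z| \le 2R\}$. Since $p>2$, the Morrey embedding $W^{1,p} \hookrightarrow C^{0,1-2/p}$ on the unit annulus $A_1$ gives $\on{osc}_{A_1} g \le c\|\nabla g\|_{L^p(A_1)}$. Rescaling $g(z) = f(Rz)$ and using that $\rho \asymp R$ on $A_R$ yields
\beqn
\on{osc}_{A_R} f \le c\, R^{1-\frac 2 p - \delta} \|\nabla f\|_{L^{p,\delta}(A_R)}.
\eeqn
Because $\delta > 1 - \frac 2 p$ the exponent $\kappa := 1 - \frac 2 p - \delta$ is negative. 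Writing any two points of $C_R$ as endpoints of a chain running through the overlapping annuli $A_{2^n R}$, $n\ge 0$, and summing the oscillations, a H\"older estimate against $\sum_n (2^n R)^{\kappa p'} < \infty$ (the sign of $\kappa$ is exactly what makes this geometric series converge) shows the chain sums are Cauchy as $R\to\infty$. This simultaneously proves that $f(\infty) := \lim_{z\to\infty} f(z)$ exists, is independent of the direction of approach (the annuli are connected, so oscillation controls angular as well as radial variation), and yields \eqref{eqn213}.

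For \eqref{eqn212} I would pass to polar coordinates and freeze the angle. For a.e.\ $\theta$, absolute continuity on rays together with the limit just established gives $f(r,\theta)-f(\infty) = -\int_r^\infty \partial_\rho f(\rho,\theta)\,d\rho$, so $|f(r,\theta)-f(\infty)| \le \int_r^\infty |\nabla f|\,d\rho$. The desired bound is then the weighted Hardy inequality for the conjugate operator $h \mapsto \int_r^\infty h$, with weights $u(r) = r^{(\delta-1)p+1}$ on the left and $v(r)=r^{\delta p + 1}$ on the right (the extra power of $r$ being the Jacobian of $dA = r\,dr\,d\theta$). I would verify the Muckenhoupt condition $\sup_{r}\big(\int_0^r u\big)^{1/p}\big(\int_r^\infty v^{1-p'}\big)^{1/p'}<\infty$: both integrals converge precisely when $\delta > 1 - \frac 2 p$, and the resulting powers of $r$ cancel, so the condition holds with a finite constant. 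Integrating the one-dimensional inequality over $\theta\in S^1$ and dominating $|\partial_\rho f|\le|\nabla f|$ gives \eqref{eqn212}, after a routine comparison of the model weight $r$ with $\rho$ on the compact region where they differ.

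Finally, \eqref{eqn214} is bookkeeping. The forward inequality uses $\|f\|_{L^\infty}\le |f(\infty)| + \|f-f(\infty)\|_{L^\infty}$ together with the sup bound $\|f-f(\infty)\|_{L^\infty}\le c\|\nabla f\|_{L^{p,\delta}}$, read off from \eqref{eqn213} at $R=R_0$ and a Morrey estimate on the core $B_{R_0}$ (whose boundary values are already controlled); the reverse uses $|f(\infty)|\le\|f\|_{L^\infty}$, the identity $\nabla(f-f(\infty))=\nabla f$, and \eqref{eqn212}. I expect the main obstacle to be the Hardy step: pinning down the weight exponents and recognizing that the single hypothesis $\delta>1-\tfrac 2 p$ is exactly the joint integrability/Muckenhoupt threshold, while at the same time controlling the discrepancy between $\rho$ and $|z|$ on the compact core so that the model computation transfers to the genuine weighted norms.
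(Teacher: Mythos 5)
Your proposal is correct, but note that the paper itself contains no proof of this lemma to compare against: it is imported wholesale from Ziltener (\cite[Proposition 91]{Ziltener_book}, \cite[Proposition E.4]{Ziltener_thesis}), and the only original content in the paper is the remark that the equivalence \eqref{eqn214} ``can be easily deduced from \eqref{eqn212} and \eqref{eqn213}'' --- which is exactly the bookkeeping you carry out in your last paragraph. What you have written is therefore a self-contained substitute for the citation, and it checks out: the rescaled Morrey/Poincar\'e oscillation bound on dyadic annuli, summed by H\"older against the geometric series $\sum_n (2^nR)^{\kappa p'}$ with $\kappa = 1-\tfrac 2p-\delta<0$, does give both the existence of $f(\infty)$ and \eqref{eqn213}; and your Muckenhoupt verification for the conjugate Hardy operator is right --- both integrals $\int_0^r s^{(\delta-1)p+1}\,ds$ and $\int_r^\infty s^{-(\delta p+1)/(p-1)}\,ds$ converge exactly when $\delta>1-\tfrac 2p$, and the exponents $(\delta-1+\tfrac 2p)$ and $(1-\tfrac 2p-\delta)$ cancel, so \eqref{eqn212} follows by integrating the one-dimensional inequality over $\theta$. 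Two small points you wave at but should spell out if writing this up: (i) the representation $f(r,\theta)-f(\infty)=-\int_r^\infty \partial_\rho f\,d\rho$ needs the absolute convergence of the tail integral for a.e.\ $\theta$, which is again H\"older with the same threshold $\delta>1-\tfrac 2p$; (ii) the comparison between the model weight $|z|$ and the paper's $\rho\ge 1$ near the origin uses $\delta>0$ on the right-hand side and $\delta<1$ or $\delta\ge 1$ case analysis on the left, all harmless since $\delta>1-\tfrac 2p>0$ for $p>2$, and the same remark extends \eqref{eqn213} from $R\ge R_0$ to all $R>0$.
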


\begin{remark}
The equivalence \eqref{eqn214} was not explicitly stated in Ziltener's work, but can be easily deduced from \eqref{eqn212} and \eqref{eqn213}.
\end{remark}

\begin{prop}\label{prop214}
Suppose $\Om \subset {\mb C}$ be a non-compact connected set with smooth boundary.  Let $s_1$, $s_2 \in {\mb Z}_{\geq 0}$, $p_1$, $p_2>0$ and $\delta_1$, $\delta_2 \in {\mb R}$. Further, let $F: W^{s_1,p_1,\delta_1}(\Om) \to W^{s_2,p_2,\delta_2}(\Om)$ be a bounded linear differential operator satisfying the following conditions: 1) for any compact set $S \subset \Om$, the restriction $F|_S: W^{s_1,p_1}(S) \to W^{s_2,p_2}(S)$ is a compact operator; 2) for any $R$, the restriction $F|_{\Om \bs B_R}:W^{s_1,p_1,\delta_1}(\Om \bs B_R) \to W^{s_2,p_2,\delta_2} (\Om\bs B_R)$ is bounded and the operator norm $\| F|_{\Om \bs B_R} \|$ approaches $0$ as $R\to \infty$. Then, the operator $F:W^{s_1,p_1,\delta_1}(\Om) \to W^{s_2,p_2,\delta_2}(\Om)$ is compact.
\end{prop}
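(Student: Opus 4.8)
The plan is to prove compactness in the usual sequential sense: given a bounded sequence $\{u_n\}$ in $W^{s_1,p_1,\delta_1}(\Om)$, say $\|u_n\|_{W^{s_1,p_1,\delta_1}(\Om)} \le M$, I would show that $\{Fu_n\}$ has a subsequence converging in $W^{s_2,p_2,\delta_2}(\Om)$. The two hypotheses are tailored to the two natural pieces of the domain: on the bounded region $\Om \cap B_R$ condition (1) supplies genuine compactness, while on the tail $\Om \setminus B_R$ condition (2) supplies a uniformly small operator norm. The whole argument is the standard ``local compactness plus decay at infinity'' scheme.

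First I would extract a subsequence by a diagonal argument. Set $S_k := \Om \cap \ov{B_k}$, a compact exhaustion of $\Om$. On each $S_k$ the weight $\rho$ is bounded above and below (it is continuous and $\geq 1$), so the weighted norms are equivalent to the unweighted ones there; in particular $\{u_n|_{S_k}\}$ is bounded in $W^{s_1,p_1}(S_k)$. By hypothesis (1) the operator $F|_{S_k}$ is compact, so $\{(Fu_n)|_{S_k}\}$ has a convergent subsequence in $W^{s_2,p_2}(S_k)$. Passing to a diagonal subsequence (still denoted $\{u_n\}$), I arrange that $\{(Fu_n)|_{S_k}\}$ converges in $W^{s_2,p_2}(S_k)$ for every $k$.

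Next I would show that $\{Fu_n\}$ is Cauchy in $W^{s_2,p_2,\delta_2}(\Om)$, which suffices since that space is complete. Fix $\eps>0$. Splitting the $L^{p_2}$ integrals defining each summand of the norm over the disjoint pieces $\Om \cap B_R$ and $\Om \setminus B_R$ gives
\[
\|Fu_m - Fu_n\|_{W^{s_2,p_2,\delta_2}(\Om)} \le \|Fu_m - Fu_n\|_{W^{s_2,p_2,\delta_2}(\Om \cap B_R)} + \|Fu_m - Fu_n\|_{W^{s_2,p_2,\delta_2}(\Om \setminus B_R)}.
\]
Because $F$ is a differential operator it is local, so $(Fu)|_{\Om \setminus B_R} = F|_{\Om \setminus B_R}(u|_{\Om \setminus B_R})$, and the tail term is bounded by $\|F|_{\Om \setminus B_R}\| \cdot \|u_m - u_n\|_{W^{s_1,p_1,\delta_1}(\Om \setminus B_R)} \le 2M\,\|F|_{\Om \setminus B_R}\|$. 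By hypothesis (2) I choose $R$ large enough that this is $<\eps/2$, uniformly in $m,n$. With $R$ now fixed, the first term is $\le C_R \|(Fu_m - Fu_n)|_{S_R}\|_{W^{s_2,p_2}(S_R)}$ by the norm equivalence on the compact set $S_R$; since $\{(Fu_n)|_{S_R}\}$ is Cauchy this is $<\eps/2$ for $m,n$ large. Combining the two bounds shows $\{Fu_n\}$ is Cauchy.

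The only genuinely delicate point, and the step I would verify most carefully, is the locality identity $(Fu)|_{\Om \setminus B_R} = F|_{\Om \setminus B_R}(u|_{\Om \setminus B_R})$ together with the compatibility of the restricted norms: one must check that cutting the domain at $\partial B_R$ produces no spurious boundary contribution (it does not, since a differential operator acts pointwise and the Sobolev norms ignore the measure-zero interface) and that the restriction maps are norm-nonincreasing, so that indeed $\|u_m - u_n\|_{W^{s_1,p_1,\delta_1}(\Om \setminus B_R)} \le 2M$. Everything else, namely the diagonal extraction and the norm equivalence over compacta, is routine.
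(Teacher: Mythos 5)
Your proof is correct, and it shares the paper's basic decomposition: hypothesis (2) makes the tail $\Om \bs B_R$ uniformly small, while hypothesis (1) handles the compact piece, where the weight $\rho$ is bounded and hence irrelevant. Where you genuinely diverge is in how the convergent subsequence is produced. The paper passes, via the Banach--Alaoglu theorem, to a subsequence converging \emph{weakly} to some $\sig_\infty \in W^{s_1,p_1,\delta_1}(\Om)$, and then uses the fact that a compact operator maps weakly convergent sequences to norm-convergent ones, so that on $\Om \cap B_R$ one gets $F\sig_i \to F\sig_\infty$ strongly; the candidate limit is thus identified from the outset. You avoid weak topologies entirely: a diagonal extraction over the compact exhaustion $S_k = \Om \cap \ov{B_k}$, using only sequential compactness of each $F|_{S_k}$, followed by a Cauchy estimate in the full weighted norm and completeness of the target space. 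Your route is more elementary and sidesteps an unstated assumption in the paper's one-line appeal to Banach--Alaoglu (weak sequential compactness of bounded sequences needs reflexivity of the weighted Sobolev space, which holds for $1<p<\infty$ but is not literally guaranteed by the hypothesis $p_1, p_2 > 0$); the price is a two-stage argument that produces the limit abstractly rather than as $F\sig_\infty$, which compactness does not require anyway. Both proofs rely --- tacitly in the paper, explicitly in your last paragraph --- on the locality of the differential operator $F$ to make sense of the restricted operators $F|_S$ and $F|_{\Om \bs B_R}$, and on the splitting of the weighted norm across $\partial B_R$; you are right that these are the only points needing care, and both hold in the setting at hand.
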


The proof is similar to Lemma 2.1 in \cite{choquet} and Proposition
E.6 (v) in \cite{Ziltener_thesis}, but is reproduced for completeness.

\begin{proof}[Proof of Proposition \ref{prop214}] Suppose $\sig_i$ is a bounded sequence in $W^{s_1,p_1,\delta_1}(\Om)$. We assume $\| \sig_i \|_{W^{s_1,p_1,\delta_1}(\Om)} \leq c$ for all $i$. By the Banach-Alaoglu theorem, after passing to a subsequence, the sequence $\sig_i$ has a weak limit $\sig_\infty \in W^{s_1,p_1,\delta_1}(\Om)$. We will show that $F\sig_i$ strongly converges to $F\sig_\infty$ in $W^{s_2,p_2,\delta_2}({\mb C})$. Let $\eps>0$ be an arbitrary small number. Choose $R$ such that $\| F|_{\Om \bs B_R} \|<\frac \eps {2c}$. This implies, in $W^{s_2,p_2,\delta_2}(\Om \bs B_R)$, 
\beq\label{eqn215}
\| F\sig_i-F\sig_\infty \| \leq \| F\sig_i \| +  \| F\sig_\infty \|  < \frac \eps 2.
\eeq
On the compact set $\Om \cap B_R$, the compactness of $F|_{\Om \cap B_R}$ implies that, $F\sig_i|_{\Om \cap B_R}$ converges to $F\sig_\infty|_{\Om \cap B_R}$. Together with \eqref{eqn215}, this proves the proposition.
\end{proof}

The following result is an application of Proposition \ref{prop214}.
\begin{lemma} \label{lemma215}
Suppose $k_1>k_2$, $\delta_1>\delta_2$ and $p>1$. The inclusion $W^{k_1,p,\delta_1} \hookrightarrow W^{k_2,p,\delta_2}$ is compact.
\end{lemma}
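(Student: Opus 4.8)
The plan is to apply Proposition \ref{prop214} with $\Om = {\mb C}$, taking $F$ to be the inclusion map $\iota$, which is a zeroth-order differential operator. First I would record the elementary monotonicity of the weight: since $\rho \geq 1$ everywhere and $\delta_2 < \delta_1$, one has $\rho^{\delta_2} \leq \rho^{\delta_1}$ pointwise, hence $\| \nabla^i \sig \|_{L^{p,\delta_2}} \leq \| \nabla^i \sig \|_{L^{p,\delta_1}}$ for each $i$; summing over $i = 0, \dots, k_2$ and using $k_2 < k_1$ shows $\| \sig \|_{W^{k_2,p,\delta_2}} \leq \| \sig \|_{W^{k_1,p,\delta_1}}$, so that $\iota$ is indeed a bounded linear operator between the relevant spaces.

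Next I would verify the two hypotheses of Proposition \ref{prop214}. For hypothesis (1), on a compact set of the form $S = B_R$ the weight $\rho$ is bounded between two positive constants, so the weighted norms are equivalent to the ordinary Sobolev norms on $S$; the restriction $\iota|_S : W^{k_1,p}(S) \to W^{k_2,p}(S)$ is then the usual inclusion, which is compact by the Rellich--Kondrachov theorem, valid because $k_1 > k_2$ and $B_R$ has smooth boundary.

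For hypothesis (2), I would exploit that $\rho(z) = |z| \geq R$ on $C_R$ once $R$ is large enough that $\rho$ agrees with $|z|$ there. Writing $\rho^{\delta_2} = \rho^{\delta_2 - \delta_1}\, \rho^{\delta_1}$ and using $\delta_2 - \delta_1 < 0$ gives, on $C_R$,
\beqn
\| \nabla^i \sig \|_{L^{p,\delta_2}(C_R)} = \big\| \rho^{\delta_2-\delta_1}\, \rho^{\delta_1} \nabla^i \sig \big\|_{L^p(C_R)} \leq R^{\delta_2-\delta_1} \| \nabla^i \sig \|_{L^{p,\delta_1}(C_R)}.
\eeqn
Summing over $i = 0, \dots, k_2$ yields $\| \iota|_{C_R} \| \leq R^{\delta_2 - \delta_1}$, which tends to $0$ as $R \to \infty$ precisely because $\delta_1 > \delta_2$. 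With both hypotheses confirmed, Proposition \ref{prop214} delivers the compactness of $\iota$.

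The whole argument is a bookkeeping of the two inequalities $\rho \geq 1$ (global) and $\rho \geq R$ (on $C_R$) against the sign of $\delta_1 - \delta_2$, so I do not anticipate any genuine obstacle beyond checking that the hypotheses of Proposition \ref{prop214} are met verbatim. The only point deserving a word of care is hypothesis (1): one must observe that on a compact piece the weighted and unweighted norms are comparable, so that the classical Rellich--Kondrachov compactness may legitimately be invoked.
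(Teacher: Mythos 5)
Your proof is correct and follows exactly the route the paper intends: the paper states Lemma \ref{lemma215} as ``an application of Proposition \ref{prop214}'' without spelling out the details, and your verification of the two hypotheses (Rellich--Kondrachov on compact pieces after noting the equivalence of weighted and unweighted norms there, and the operator-norm decay $\| \iota|_{C_R} \| \leq R^{\delta_2-\delta_1} \to 0$ on the tails) is precisely that application.
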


We also need to discuss weighted Sobolev spaces with negative orders. For $k \in {\mb Z}_{\geq 0}$, $1<p<\infty$ and $\delta \in {\mb R}$, and $\sig \in C^\infty_0({\mb C})$, define
\beqn
  \| \sig \|_{W^{-k,p,\delta}}:=\sup \Big\{  \langle f, \sigma \rangle_{L^2} \ |\ f \in C_0^\infty, \| f \|_{W^{k,q,-\delta}}=1 \Big\},
\eeqn
where $q:=\frac p {p-1}$. The space $W^{-k,p, \delta}({\mb C})$ is the completion of $C_0^\infty({\mb C})$ under the above norm, which is the dual Banach space of $W^{k,q,-\delta}({\mb C})$. For any first order differential operator $D$ which gives a bounded linear operator $D: W^{1, p, \delta}({\mb C}) \to L^{p, \delta}({\mb C})$, one has the distributional dual
\beqn
D^\dagger: L^{q, -\delta}({\mb C}) \to W^{-1, q, -\delta}({\mb C}) \simeq ( W^{1, p, \delta}({\mb C}))^*
\eeqn
defined by $\langle D^\dagger u, \varphi \rangle = \langle u, D \varphi \rangle$ for all test functions $\varphi \in C_0^\infty({\mb C})$.

Lastly we would like to compare Sobolev norms defined by using different connections. 

\begin{lemma}\label{lemma216}
Given $p>2$, $\delta > 0$ and $\Omega \subset {\mb C}$. Let $E \to \Omega$ be an Euclidean vector bundle equipped with a covariant derivative $\nabla^E$ which preserves the metric. Suppose $a \in W^{1, p, \delta}( \Omega, \Lambda^1 \otimes {\mf o}(E))$. Then for $\sigma \in W^{1, p}_{\rm loc}(E)$, there are the following two equivalences of norms.
\beqn
\| \sigma \|_{L^{p, \delta}(\Omega)} + \| \nabla^E \sig  \|_{L^{p,\delta}(\Omega)} \approx \|\sigma\|_{L^{p, \delta}(\Omega)} + \| \nabla^E \sigma + a(\sigma) \|_{L^{p, \delta}(\Omega)},
\eeqn
\beqn
\| \sigma \|_{L^\infty(\Omega)} + \| \nabla^E \sigma \|_{L^{p,\delta}(\Omega)} \approx \|\sigma \|_{L^\infty(\Omega)} +  \| \nabla^E \sigma + a(\sigma) \|_{L^{p,\delta}(\Omega)}.
\eeqn

\end{lemma}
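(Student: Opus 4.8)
The plan is to reduce both equivalences to a single pointwise product estimate for the zeroth-order term $a(\sigma)$, after which the only substantial input is an $L^\infty$ bound on the coefficient $a$ itself. Throughout, $\nabla^E + a$ differs from $\nabla^E$ only by the bundle map $\sigma \mapsto a(\sigma)$, which is controlled pointwise by $|a(\sigma)| \le |a|\,|\sigma|$ (operator norm of $a$ times $|\sigma|$). Consequently the reverse triangle inequality gives, in both directions,
\[
\bigl|\, \|\nabla^E \sigma + a(\sigma)\|_{L^{p,\delta}(\Omega)} - \|\nabla^E \sigma\|_{L^{p,\delta}(\Omega)} \,\bigr| \le \|a(\sigma)\|_{L^{p,\delta}(\Omega)},
\]
so everything comes down to bounding $\|a(\sigma)\|_{L^{p,\delta}(\Omega)}$ by the relevant right-hand side.

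First I would establish the auxiliary fact that $a \in L^\infty(\Omega)$ with $\|a\|_{L^\infty(\Omega)} \le C\|a\|_{W^{1,p,\delta}(\Omega)}$. The key observation is that since $\rho \ge 1$ and $\delta > 0$ we have $\rho^\delta \ge 1$ pointwise, so the weighted norm dominates the unweighted one:
\[
\|a\|_{W^{1,p}(\Omega)} \le \|a\|_{W^{1,p,\delta}(\Omega)} < \infty.
\]
Applying Kato's inequality $\bigl|\nabla|a|\bigr| \le |\nabla^E a|$ (valid because the induced connection on $\Lambda^1\otimes{\mf o}(E)$ is metric) shows that the scalar function $|a|$ lies in the unweighted space $W^{1,p}(\Omega)$. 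Since $p > 2 = \dim_{\mb R}\Omega$, the Morrey embedding $W^{1,p}(\Omega) \hookrightarrow L^\infty(\Omega)$ then yields the claimed bound. For the noncompact domains considered here — complements of disks and half-plane regions with smooth boundary — one applies Morrey after a bounded extension to ${\mb R}^2$, or equivalently via uniform local estimates on unit balls; the smoothness of $\partial\Omega$ makes the embedding constant uniform.

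With $\|a\|_{L^\infty(\Omega)} < \infty$ in hand, the first equivalence is immediate: from $|a(\sigma)| \le |a|\,|\sigma|$ one gets $\|a(\sigma)\|_{L^{p,\delta}(\Omega)} \le \|a\|_{L^\infty(\Omega)}\|\sigma\|_{L^{p,\delta}(\Omega)}$, and feeding this into the displayed inequality (and adding $\|\sigma\|_{L^{p,\delta}(\Omega)}$ to both sides) gives the two-sided comparison with constant $1 + \|a\|_{L^\infty(\Omega)}$. For the second equivalence I would instead keep the weight on $a$: since $a \in W^{1,p,\delta}$ we have $\|a\|_{L^{p,\delta}(\Omega)} < \infty$ directly, and
\[
\|a(\sigma)\|_{L^{p,\delta}(\Omega)} = \|\rho^\delta a(\sigma)\|_{L^p(\Omega)} \le \|\sigma\|_{L^\infty(\Omega)}\,\|\rho^\delta a\|_{L^p(\Omega)} = \|\sigma\|_{L^\infty(\Omega)}\,\|a\|_{L^{p,\delta}(\Omega)},
\]
which again closes up via the reverse triangle inequality (adding $\|\sigma\|_{L^\infty(\Omega)}$ to both sides) with constant $1 + \|a\|_{L^{p,\delta}(\Omega)}$.

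The main obstacle is genuinely just the $L^\infty$ bound on $a$, and within it the only nonroutine point is making the Morrey embedding uniform over the noncompact domain $\Omega$; once $\delta > 0$ is used to absorb the weight and pass to the unweighted $W^{1,p}$, the remainder is bookkeeping with the triangle and Hölder inequalities. I would emphasize that no decay of $a$ at infinity is actually required: the hypothesis $\delta > 0$ enters solely to guarantee $\rho^\delta \ge 1$, so that finiteness of the weighted norm forces finiteness of the unweighted one.
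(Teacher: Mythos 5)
Your proof is correct, and since the paper itself offers no argument for this lemma (it is explicitly ``left to the reader'' as a straightforward calculation), your write-up is precisely the intended one: reduce everything by the triangle inequality to bounding $\| a(\sigma)\|_{L^{p,\delta}(\Omega)}$, then use $\| a \|_{L^\infty(\Omega)} \leq C \| a \|_{W^{1,p,\delta}(\Omega)}$ (from $\rho \geq 1$ with $\delta>0$, Kato's inequality, and the Morrey embedding for $p>2$) for the first equivalence, and the trivial pointwise bound $\| a(\sigma)\|_{L^{p,\delta}} \leq \|\sigma\|_{L^\infty} \| a \|_{L^{p,\delta}}$ for the second. The one point requiring care --- uniformity of the Sobolev embedding over the noncompact domain --- you identify and handle correctly, and it holds for the domains (${\mb C}$, $C_R$, half-plane regions) to which the lemma is actually applied in the paper.
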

\begin{proof}
This lemma can be proved by straightforward calculation and we left the detail to the reader.
\end{proof}

\section{Proof of Main Theorem}\label{section3}

Given an affine vortex ${\bf v} = (u, \phi, \psi)$, we first describe a Banach space parametrizing gauged maps near ${\bf v}$. Given $p>2$ and $\delta \in (1 -\frac 2 p,1)$, we define the following norm for ${\bm \xi} = (\xi, \eta, \zeta) \in W^{1, p}_{\rm loc} ({\mb C}, E_u^+ )$ as
\begin{multline}\label{eqn31}
\|  {\bm \xi} \|_{p,\delta}:= \| \eta \|_{L^{p,\delta}} +  \|\zeta\|_{L^{p, \delta}} + \|\nabla^a \eta \|_{L^{p,\delta}}  + \| \nabla^a \zeta \|_{L^{p, \delta}} \\
+  \| \xi \|_{L^\infty} + \| d\mu(u) \cdot \xi \|_{L^{p,\delta}} +  \| d\mu(u) \cdot J\xi  \|_{L^{p,\delta} } +  \| \nabla^a \xi \|_{L^{p,\delta}}.
\end{multline}
This norm is equivalent to the norm defined in \cite[Equation (1.18)]{Ziltener_book} although the latter has a different expression. Define
\beqn
\hat{\mc B}_{\bf v}^{p, \delta}:= \Big\{ {\bm \xi} \in W^{1, p}_{\rm loc} ( {\mb C}, E_u^+ )\ |\ \|  {\bm \xi} \|_{p,\delta} < \infty \Big\}.
\eeqn

\begin{lemma}\label{lemma31}
The norm \eqref{eqn31} is complete and gauge invariant. 
\end{lemma}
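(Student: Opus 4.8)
The plan is to prove the two assertions — completeness and gauge invariance — separately, starting with gauge invariance since it is conceptually cleaner and will inform how I handle completeness.

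\textbf{Gauge invariance.} First I would make precise what gauge invariance of the norm means: if $g: {\mb C} \to K$ is a gauge transformation (of appropriate regularity, say $W^{2,p}_{\rm loc}$ with suitable decay so that the action preserves the underlying space), then for ${\bm\xi} \in \Gamma({\mb C}, E_u^+)$ the infinitesimally transported deformation $g_* {\bm\xi}$ over the gauge-transformed vortex $g \cdot {\bf v}$ satisfies $\| g_* {\bm\xi} \|_{p,\delta} = \| {\bm\xi} \|_{p,\delta}$. The key observation is that every building block of \eqref{eqn31} is constructed from $K$-invariant data: the connection $\nabla^a$ defined in \eqref{eqn23} is gauge-equivariant (conjugating $\phi,\psi$ and transporting $\xi$ by $g$ sends $\nabla^a$ to the connection built from $g\cdot a$), the metric $\omega_X(\cdot, J\cdot)$ is $K$-invariant by our choice of $J$ and the metric, the weight $\rho$ is a function of $|z|$ only and hence untouched by $g$, and the map $d\mu(u)$ transforms equivariantly because $\mu$ is $K$-equivariant (so $d\mu(g\cdot u)\cdot(g_*\xi) = \mathrm{Ad}_g\, d\mu(u)\cdot\xi$ up to the identification, and $|\mathrm{Ad}_g(\cdot)|$ is preserved by the $\mathrm{Ad}$-invariance of the chosen metric on ${\mf k}$). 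I would check term by term that each $L^{p,\delta}$ and $L^\infty$ norm appearing in \eqref{eqn31} is therefore unchanged; the pointwise fiber norms are all $K$-invariant and the integration weight is rotation-only, so each summand matches. This reduces gauge invariance to the equivariance of $\nabla^a$ and $d\mu$, which are straightforward.

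\textbf{Completeness.} For completeness, let $\{{\bm\xi}_n\}$ be Cauchy in $\|\cdot\|_{p,\delta}$; I must produce a limit ${\bm\xi}_\infty \in \hat{\mc B}_{\bf v}^{p,\delta}$. The strategy is to extract limits of the individual pieces in their respective complete spaces and then verify that these pieces assemble into a genuine $W^{1,p}_{\rm loc}$ section of $E_u^+$ with the right norm. The ${\mf k}$-valued components $\eta,\zeta$ are controlled by $\|\cdot\|_{L^{p,\delta}} + \|\nabla^a(\cdot)\|_{L^{p,\delta}}$, which (comparing to the unweighted twisted norm via Lemma \ref{lemma216}) is a complete norm; so $\eta_n, \zeta_n$ converge to limits $\eta_\infty,\zeta_\infty$. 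For the $\xi$-component the situation is subtler because the norm mixes an $L^\infty$ bound on $\xi$ itself with $L^{p,\delta}$ bounds on $\nabla^a\xi$ and on the two ``algebraic'' projections $d\mu(u)\cdot\xi$ and $d\mu(u)\cdot J\xi$. I would first use $\| \xi_n\|_{L^\infty} + \|\nabla^a \xi_n\|_{L^{p,\delta}}$ Cauchy, together with the equivalence in Lemma \ref{lemma216} and the weighted Sobolev structure of Lemma \ref{lemma212}, to obtain a limit value $\xi_\infty(\infty)$ at infinity and $W^{1,p}_{\rm loc}$ convergence of $\xi_n \to \xi_\infty$ with $\|\xi_\infty\|_{L^\infty}<\infty$; the two moment-map terms pass to the limit since $d\mu(u)\cdot$ and $d\mu(u)\cdot J$ are fixed bounded bundle maps, so $d\mu(u)\cdot\xi_n \to d\mu(u)\cdot\xi_\infty$ in $L^{p,\delta}$. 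Collecting all pieces, $\|{\bm\xi}_\infty\|_{p,\delta}<\infty$ and ${\bm\xi}_n \to {\bm\xi}_\infty$, giving completeness.

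\textbf{Main obstacle.} The delicate point is the $\xi$-component, whose norm is \emph{not} the full $W^{1,p,\delta}$ norm on $\xi$ but only controls $\nabla^a\xi$ in $L^{p,\delta}$ together with an $L^\infty$ bound and the two degenerate first-order-in-$\mu$ projections. The mismatch between the weak $L^{p,\delta}$ control on $\nabla^a\xi$ and the mere $L^\infty$ (not $L^{p,\delta-1}$) control on $\xi$ means I cannot directly invoke a standard complete-norm statement; instead I must route the argument through Lemma \ref{lemma212}, which precisely identifies $\|\xi\|_{L^\infty}+\|\nabla\xi\|_{L^{p,\delta}}$ with $|\xi(\infty)| + \|\xi-\xi(\infty)\|_{L^{1,p,\delta-1}}$ for scalar/component functions, after reducing the twisted connection $\nabla^a$ to the flat one via Lemma \ref{lemma216}. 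Establishing that this reduction is legitimate for the $X$-valued component $\xi$ (where $\nabla^a$ genuinely differs from a trivial connection because of the $\nabla_\xi{\mc X}_\phi$ terms and the curvature of $E_u$) and confirming that the limiting section is again of class $W^{1,p}_{\rm loc}$ with finite norm is where the real work lies; the verification that the limit of $d\mu(u)\cdot\xi_n$ is $d\mu(u)\cdot\xi_\infty$ on the nose, rather than some weaker distributional limit, must be done carefully using local elliptic/Sobolev estimates so that local strong convergence is guaranteed.
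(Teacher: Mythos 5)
Your proposal is correct in outline, but note that the paper does not actually prove Lemma \ref{lemma31}: its entire ``proof'' is a citation to Ziltener \cite[Theorem 4, part (i)]{Ziltener_book}, together with a footnote that Ziltener's hypothesis ${\rm dim}\, X > 2\, {\rm dim}\, K$ is unnecessary. So your direct verification is by definition a different route, and it is the natural one: gauge invariance reduces, exactly as you say, to the equivariance of $\nabla^a$ and $d\mu$, the $K$-invariance of $\omega_X(\cdot, J\cdot)$, the ${\rm Ad}$-invariance of the metric on ${\mf k}$, and the fact that gauge transformations act fiberwise so the weight $\rho$ is untouched; completeness follows by componentwise limits. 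Two points of execution deserve care. First, your sentence ``the two moment-map terms pass to the limit since $d\mu(u)\cdot$ and $d\mu(u)\cdot J$ are fixed bounded bundle maps'' is not by itself an argument: boundedness of $d\mu(u)$ plus $L^\infty$ convergence of $\xi_n$ gives only uniform convergence of $d\mu(u)\cdot\xi_n$, which does not control $L^{p,\delta}$ (constants are not in $L^{p,\delta}$). The correct mechanism is that $d\mu(u)\cdot\xi_n$ is itself Cauchy in the complete space $L^{p,\delta}$ (this is part of the hypothesis), hence converges there, and its limit coincides with $d\mu(u)\cdot\xi_\infty$ almost everywhere by the pointwise convergence; you do flag this issue in your final paragraph, so this is a fixable imprecision rather than a gap. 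Second, the detour through Lemma \ref{lemma212} and Lemma \ref{lemma216} is unnecessary for completeness and slightly awkward, since Lemma \ref{lemma212} is stated for functions and applying it to the section $\xi$ of $E_u$ requires trivializing the bundle near infinity. The direct route is shorter: uniform convergence of $\xi_n$ gives a continuous limit $\xi_\infty$ with $\|\xi_\infty\|_{L^\infty}<\infty$; $\nabla^a \xi_n$ converges in $L^{p,\delta}$, and since the connection coefficients of $\nabla^a$ are locally bounded, the limit is identified distributionally with $\nabla^a\xi_\infty$, which yields $\xi_\infty \in W^{1,p}_{\rm loc}$ with finite norm; the same identification handles $\eta$ and $\zeta$. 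What the paper's citation buys is economy; what your argument buys is a self-contained proof, essentially the same verification Ziltener carries out.
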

This lemma was proved by Ziltener (see \cite[Theorem 4, part (i)]{Ziltener_book}\footnote{However, we remark that the condition ${\rm dim} X > 2 {\rm dim} K$ assumed by Ziltener is unnecessary for all the statements of \cite[Theorem 4]{Ziltener_book}.}). 

When $p$ and $\delta$ are understood from the context, we will drop it from the notation. For $\epsilon>0$ sufficiently small, let $\hat{\mc B}_{\bf v}^\epsilon \subset \hat{\mc B}_{\bf v}$ be the $\epsilon$-ball centered at the origin. We identify ${\bm \xi}  \in \hat{\mc B}_{{\bf v}}^\epsilon$ with the gauged map ${\bf v}' := \exp_{{\bf v}} {\bm \xi}:= (u', \phi', \psi'):= ( \exp_u \xi, \phi + \eta, \psi + \zeta)$. Consider the infinite-dimensional vector bundle $\hat{\mc E}:= \hat{\mc E}^{p, \delta} \to \hat{\mc B}_{\bf v}^\epsilon$, whose fiber over the point ${\bf v}'$ is
\beqn
\hat{\mc E}_{{\bf v}'}: = \hat{\mc E}_{\bm \xi}:= L^{p,\delta} ( {\mb C},  E_{u'}^+ )  = L^{p, \delta}({\mb C}, (u')^* TX \oplus {\mf k}\oplus {\mf k}).
\eeqn
Consider the section $\hat{\mc F}_{\bf v}: \hat{\mc B}_{\bf v}^\epsilon \to \hat{\mc E}$ defined by
\beq\label{eqn32}
\hat{\mc F}_{\bf v} ({\bf v}') = \left( \begin{array}{c}  \partial_s u' + {\mc X}_{\phi'}(u') + J (\partial_t u' + {\mc X}_{\psi'}(u'))\\
 \partial_s \psi' - \partial_t \phi' + [\phi', \psi'] + \mu(u')\\ 
\partial_s \eta + [\phi, \eta] + \partial_t \zeta + [\psi, \zeta] + \d \mu(u) \cdot  J \xi \end{array} \right).
\eeq
One can trivialize $\hat{\mc E}$ by using parallel transport to identify $(u')^* TX$ with $u^* TX$, and $\hat{\mc F}_{\bf v}$ then becomes a smooth map\footnote{The smoothness of $\hat{\mc F}_{\bf v}$ is not very obvious. Indeed, one can use Proposition \ref{prop36} below to transform ${\bf v}$ to a good gauge. Then the norm \eqref{eqn31} is equivalent to the Sobolev norm using the Levi-Civita connection. Then one can argue that each term of $\hat{\mc F}_{\bf v}$ depends smoothly on points of $\hat{\mc B}_{\bf v}^\epsilon$ with respect to this new, gauge-dependent norm.} from $\hat{\mc B}_{\bf v}^\epsilon$ to $\hat{\mc E}_{\bf v}$. Its linearization at the origin has the same expression as the operator $\hat{\mc D}_{\bf v}$ given by \eqref{eqn25}. It is not hard to see that $\hat{\mc D}_{\bf v}$ extends to a bounded linear operator (which is denoted by the same symbol)
\beq\label{eqn33}
\hat{\mc D}_{\bf v}: \hat{\mc B}_{\bf v} \to \hat{\mc E}_{\bf v}.
\eeq

For constructing a manifold chart around regular affine vortices using the zero locus of $\hat{\mc F}_{\bf v}$, a necessary ingredient is the linear Fredholm theory of the linearized operator. In Section \ref{section4} we reprove the following results of Ziltener.

\begin{prop}\label{prop32}\cite{Ziltener_book}
Suppose ${\bf v}\in \wt{M}^K({\mb C},X; \beta)$. Then for any $p>2$ and $\delta \in (1-\frac{2}{p}, 1)$, the operator \eqref{eqn33} is Fredholm and its real index is
\beq\label{eqn34}
{\rm ind} \big( \hat{\mc D}_{\bf v} \big) = \dim(\bar{X}) + 2 \langle c_{\, 1}^K(TX), \beta \rangle.
\eeq
\end{prop}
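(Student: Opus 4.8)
\section*{Proof proposal}

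The plan is to establish the Fredholm property by the standard machinery for first-order elliptic operators on the plane with a conical (equivalently, cylindrical) end at infinity, and then to compute the index by a homotopy to a complex-linear model combined with the Riemann--Roch theorem on the compactification ${\mb P}^1$. First I would localize by writing ${\mb C} = B_R \cup C_R$. On the compact piece $B_R$ the operator $\hat{\mc D}_{\bf v}$ is elliptic, so interior elliptic estimates together with the compact Sobolev embedding control the low-frequency behavior. The real work is on the end $C_R$. After using Proposition \ref{prop27} to put ${\bf v}$ in radial gauge and identifying $C_R$ with the half-cylinder $[\log R, \infty) \times S^1$ via $z = e^{s + \ii t}$ (under which the polynomial weight $\rho^\delta$ becomes the exponential weight $e^{\delta s}$), the coefficients of $\hat{\mc D}_{\bf v}$ converge as $s \to \infty$ to those of a translation-invariant model operator $\hat{\mc D}_\infty$ of the form $\partial_s + A_\lambda$. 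Here $A_\lambda$ is a self-adjoint elliptic operator on the cross-section $S^1$ acting on sections of $(T_x X) \oplus {\mf k} \oplus {\mf k}$, built from the limiting data of Proposition \ref{prop27}: the base point $x \in \mu^{-1}(0)$ and the asymptotic holonomy $\lambda \in \Lambda_K$. The decay rates furnished by Propositions \ref{prop26} and \ref{prop27} are precisely what is needed to show, via Proposition \ref{prop214} and Lemma \ref{lemma215}, that $\hat{\mc D}_{\bf v} - \hat{\mc D}_\infty$ is a compact perturbation between the relevant weighted spaces.

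The model operator $\partial_s + A_\lambda$ on the cylinder with exponential weight $e^{\delta s}$ is invertible, hence Fredholm, precisely when $\delta$ avoids the eigenvalues of $A_\lambda$, i.e.\ the indicial roots. This is where Assumption \ref{assumption21} enters decisively: the freeness of the $K$-action on $\mu^{-1}(0)$ makes the infinitesimal action ${\mf k} \to T_x X$ injective, which opens a spectral gap of $A_\lambda$ around the window prescribed by $\delta \in (1 - \frac 2 p, 1)$. The gauge-orbit and moment-map directions, which would otherwise produce indicial roots at zero, are thereby separated from the chosen weight, so the model is invertible at infinity. Patching the invertible model on $C_R$ against the interior estimate on $B_R$ by a cutoff and parametrix argument yields left and right parametrices modulo compact operators, establishing that the operator \eqref{eqn33} is Fredholm.

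For the index I would use homotopy invariance. Deforming the $J$-dependent lower-order terms and scaling the zeroth-order couplings through Fredholm operators (the weight window is preserved, since the indicial roots move continuously and never cross $\delta$) reduces $\hat{\mc D}_{\bf v}$ to a complex-linear operator that splits into a Cauchy--Riemann operator $\bar\partial_a$ on $E_u = u^* TX$ and a Dolbeault-type operator on the trivial bundle ${\mf k} \oplus {\mf k} = {\mf k} \otimes {\mb C}$. By Remark \ref{rem28}, $u$ extends to a continuous section of $P(X) \to {\mb P}^1$ representing $\beta$, so Riemann--Roch on ${\mb P}^1$ gives the real index of the $\bar\partial_a$ piece as $\dim(X) + 2\langle c_{\,1}^K(TX), \beta\rangle$. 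The ${\mf k} \otimes {\mb C}$ piece, carrying the Coulomb condition \eqref{eqn24} and the linearized moment-map coupling, is an isomorphism in the chosen weight window except for a $2\dim(K)$-dimensional deficiency accounting for the gauge orbit and the moment-map constraint; its real index is $-2\dim(K)$. Adding the two contributions produces $\dim(X) - 2\dim(K) + 2\langle c_{\,1}^K(TX), \beta\rangle$, which equals the right-hand side of \eqref{eqn34}.

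The main obstacle I anticipate is the analysis at infinity: correctly identifying the cross-sectional operator $A_\lambda$, pinning down its spectrum in terms of the holonomy $\lambda$ and the splitting of $T_x X$ under the free $K$-action, and verifying that the weight window $(1 - \frac 2 p, 1)$ lies strictly between consecutive indicial roots. This is exactly where Assumption \ref{assumption21} and the sharp decay constants of Propositions \ref{prop26} and \ref{prop27} must be deployed in full; without the spectral gap the model operator fails to be invertible and neither the Fredholm property nor the clean formula \eqref{eqn34} survives. A secondary delicate point is the bookkeeping in the index computation, namely tracking how the weight and the nontrivial holonomy shift the Riemann--Roch count for the gauge directions so that the deficiency is exactly $2\dim(K)$.
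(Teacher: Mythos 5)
Your final formula is right and the broad strategy (perturb to a model operator, then Riemann--Roch on ${\mb P}^1$) is the same as the paper's, but the core of your index computation contains a genuine gap: the decoupling homotopy. The zeroth-order couplings $(\eta,\zeta)\mapsto {\mc X}_\eta + J{\mc X}_\zeta$ and $\xi \mapsto d\mu(u)\cdot\xi,\, d\mu(u)\cdot J\xi$ do \emph{not} decay at infinity: they converge to the injective map $L_x$ and its adjoint $L_x^*$ at the limit point $x\in\mu^{-1}(0)$ (Remark \ref{remark44}), and they are precisely what makes $\hat{\mc D}_{\bf v}$ Fredholm in the gauge-orbit directions. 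Scaling them to zero leaves the Fredholm class, so homotopy invariance of the index cannot be invoked. Concretely, in the norm \eqref{eqn31} the components of $\xi$ along the gauge orbit, and the components $\eta,\zeta$, are forced into $W^{1,p,\delta}$ (decay required, no limit at infinity allowed), and the decoupled operator on such a component is essentially $\ov\partial\colon W^{1,p,\delta}({\mb C},{\mb C}) \to L^{p,\delta}({\mb C},{\mb C})$, which is \emph{not} Fredholm for $\delta\in(1-\frac 2p,1)$: if $g$ is compactly supported with $\int_{\mb C} g \neq 0$, every solution of $\ov\partial f = g$ with $f(\infty)=0$ behaves like $c/z$ with $c\neq 0$, and $c/z \notin L^{p,\delta}$ exactly because $\delta > 1-\frac 2p$; on the other hand the annihilator of the range in $L^{q,-\delta}$ (where $q = \frac{p}{p-1}$) consists of entire functions lying in $L^{q,-\delta}$, and there are none (constants fail since $\delta < 2 - \frac 2p$, and nonconstant entire functions grow at least linearly), so the range is dense and proper, hence not closed. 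For the same reason your claim that the decoupled ${\mf k}\otimes{\mb C}$ piece has index $-2\dim K$ is unfounded: in the correct accounting the gauge block contributes index $0$, and $u^*TX$ contributes $\dim \bar X + 2d$ rather than $\dim X + 2d$ (the gauge directions of $u^*TX$ get no ``$+2$'' because constants at infinity are excluded there); your total agrees only because these two errors cancel.

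A second, related misstep is identifying the asymptotic model as a translation-invariant cylindrical operator $\partial_s + A_\lambda$. Because the couplings above are bounded and non-decaying on the plane, the conformal change to the half-cylinder multiplies them by $e^\tau$ relative to the derivative terms; the linearization is therefore not asymptotically translation invariant, and the Lockhart--McOwen indicial-root picture you invoke does not apply in the form stated. Freeness of the action (Assumption \ref{assumption21}) enters not as a spectral gap of a cross-sectional operator but as a mass gap for a Dirac-type operator on the plane: the paper splits $u^*TX$ near infinity using the eigenbasis of $L_x^*L_x$ (Remark \ref{remark44}) and keeps the $m$ gauge-orbit directions coupled with the connection and moment-map deformations in the block
\beqn
\left( \begin{array}{cc} \ov\partial & L_X \\ L_X^{-1} & \partial \end{array} \right)
\eeqn
acting on $W^{1,p,\delta}$-spaces, which is Fredholm of index zero (Lemma \ref{lemma46}, due to Ziltener); the weighted Riemann--Roch theorem (Lemma \ref{lemma42}) is applied only to the remaining $\bar{n}$ quotient directions, which carry the $L^{1,p,\delta}_+$ norm (limits at infinity allowed) and whose degrees sum to $d$, yielding $\dim\bar{X} + 2\langle c_{\,1}^K(TX),\beta\rangle$; and the comparison with $\hat{\mc D}_{\bf v}$ is a compact perturbation (Lemma \ref{lemma45}), not a homotopy through Fredholm operators. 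To repair your argument you would need to keep this coupled block intact throughout, and also handle $\dim\bar{X}=0$ separately (the paper does this by stabilizing with $X\times{\mb C}$).
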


\begin{defn}\label{defn33}
A bounded affine vortex ${\bf v}$ is called {\bf regular} if the operator \eqref{eqn33} is surjective for all $p>2$ and $\delta \in (1-\frac{2}{p}, 1)$. Let $\wt{M}^K ({\mb C}, X; \beta)^{\rm reg}$ be the set of all smooth, regular, bounded affine vortices representing $\beta\in H_2^K (X; {\mb Z})$. Let $M^K ({\mb C}, X; \beta)^{\rm reg}$ be the quotient set of $\wt{M}^K ({\mb C}, X; \beta)^{\rm reg}$ modulo smooth gauge transformations, equipped with the compact convergence topology.
\end{defn}

\subsection{Constructing the local charts}

Given ${\bf v} \in \wt{M}^K ({\mb C}, X; \beta)^{\rm reg}$. By the smoothness of $\hat{\mc F}_{\bf v}$, for $\epsilon$ small enough
\beqn
{\bf v}' \in \hat{\mc B}_{\bf v}^\epsilon \Longrightarrow {\rm coker} \big( \d \hat{\mc F}_{\bf v} |_{{\bf v}'} \big) = \{0\}.
\eeqn
Denote
\beqn
U_{\bf v}^\epsilon: = \hat{\mc F}_{\bf v}^{-1}(0) \cap \hat{\mc B}_{\bf v}^\epsilon.
\eeqn
By the implicit function theorem and Proposition \ref{prop32}, for $\epsilon$ sufficiently small, $U_{\bf v}^\epsilon$ is a smooth manifold, whose dimension is equal to ${\rm ind} ( \hat{\mc D}_{\bf v} )$. Therefore, it induces an (obviously continuous) map 
\begin{align*}
& \hat\phi_{\bf v}^\epsilon: U_{\bf v}^\epsilon \to M^K ({\mb C}, X; \beta),  &  {\bf v}' \mapsto \big[ {\bf v}' \big].
\end{align*}

In order to make $\hat\phi_{\bf v}^\epsilon$ a chart of smooth manifold, we have to show it is one-to-one and its image contains a neighborhood of $[ {\bf v} ]$ in $M^K({\mb C}, X; \beta)^{\rm reg}$. This type of question is nearly trivial when the domain is compact, for example, when discussing the moduli space of regular holomorphic curves or the moduli space of regular symplectic vortices over a compact Riemann surface (see \cite{Cieliebak_Gaio_Mundet_Salamon_2002}). In our case the difficulty appears because smooth objects {\it a priori} may not have the desired regularity at infinity. The following two propositions are proved in Subsection \ref{ss32}, relying on a few results whose proof occupy the remaining sections.

\begin{prop}\label{prop34}
There exists $\epsilon_0>0$, which may depend on $p$ and $\delta$, such that $\hat\phi_{\bf v}^{\epsilon_0}$ is one-to-one.
\end{prop}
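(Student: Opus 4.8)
The plan is to show that if two vortices ${\bf v}', {\bf v}''$ lie in $U_{\bf v}^{\epsilon_0}$ and are gauge equivalent — say ${\bf v}'' = g \cdot {\bf v}'$ for some smooth gauge transformation $g : {\mb C} \to K$ — then in fact ${\bf v}' = {\bf v}''$, provided $\epsilon_0$ is chosen small enough. Both ${\bf v}'$ and ${\bf v}''$ are of the form $\exp_{\bf v} {\bm\xi}$ with $\|{\bm\xi}\|_{p,\delta} < \epsilon_0$, so both are $W^{1,p,\delta}$-close to the common reference vortex ${\bf v}$. The essential point is that the gauge group element $g$ relating them must itself be close to the identity in a suitable sense, and the gauge-fixing built into $\hat{\mc F}_{\bf v}$ (its third component, the Coulomb condition \eqref{eqn24}) forces $g$ to be trivial.

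First I would argue that $g$ must be asymptotically constant, valued in the stabilizer of the common evaluation-at-infinity point. By Corollary \ref{cor211}, gauge-equivalent elements of $U_{\bf v}^{\epsilon_0}$ have the same $\ev_\infty$ and the same holonomy $\hol \in \pi_1(K)$; combined with the asymptotic description of Proposition \ref{prop27}, this constrains the boundary behaviour of $g$. Because Assumption \ref{assumption21} guarantees the $K$-action on $\mu^{-1}(0)$ is free, the stabilizer of the limit point $x \in \mu^{-1}(0)$ is trivial, so the only constant limit available is the identity. Thus $g \to {\rm Id}$ at infinity, and after controlling its derivative via the weighted norms one sees that $g - {\rm Id}$ lies in the relevant weighted Sobolev space with small norm once $\epsilon_0$ is small.

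Next I would reduce to a local-slice / uniqueness-in-Coulomb-gauge statement. Both ${\bf v}'$ and ${\bf v}''$ satisfy the Coulomb gauge condition relative to ${\bf v}$ (this is exactly the vanishing of the second entry of $\hat{\mc F}_{\bf v}$ in \eqref{eqn32}), yet they are gauge equivalent and both $\epsilon_0$-close to ${\bf v}$. The standard local slice theorem says that within a small enough ball a gauge orbit meets the Coulomb slice in at most one point, so ${\bf v}' = {\bf v}''$ and $g = {\rm Id}$. The content is to establish this local slice theorem in the present noncompact, weighted setting: one writes $g = \exp(h)$ for $h : {\mb C} \to {\mf k}$ in the appropriate weighted space, expands the Coulomb condition for both vortices, and obtains an elliptic equation for $h$ of the schematic form $\dad d_a h = (\text{quadratic and } {\bm\xi}\text{-dependent error terms})$. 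The linearization here is (up to lower-order gauge terms) the operator governing infinitesimal gauge transformations, whose invertibility on the weighted spaces follows from the Fredholm theory underlying Proposition \ref{prop32} together with the freeness of the action; a contraction-mapping or implicit-function argument then yields $h = 0$ for $\epsilon_0$ small.

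The main obstacle I anticipate is precisely the local slice theorem on the noncompact domain with the chosen weighted norm. Unlike the compact case, one cannot invoke a standard elliptic estimate directly: one must verify that $\dad d_a$ (or its augmentation) is an isomorphism between the weighted spaces $W^{1,p,\delta}$ and $L^{p,\delta}$, using the decay rates from Propositions \ref{prop26} and \ref{prop27} and the norm equivalences of Lemmas \ref{lemma212} and \ref{lemma216} to absorb the connection-dependent terms. Controlling the gauge transformation uniformly at infinity — ensuring $g - {\rm Id}$ stays in the weighted space and that the error terms in the fixed-point equation are genuinely small multiples of $\|{\bm\xi}\|_{p,\delta}$ — is where the noncompactness bites, and this is the step the subsequent technical sections (invoked as Proposition \ref{prop36} and the results of Sections \ref{section4}--\ref{section7}) are presumably designed to supply.
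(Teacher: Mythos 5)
Your overall architecture matches the paper's: both reduce injectivity to (i) showing the gauge transformation $g = e^{s}$ relating two points of the Coulomb slice is small in a weighted norm, and (ii) invoking the uniqueness part of the noncompact Coulomb-gauge slice theorem (Proposition \ref{prop38}, proved in Section \ref{section7} by exactly the invertibility-plus-implicit-function argument you sketch). Point (ii) of your proposal is sound, and you correctly identify the noncompact slice theorem as the technical heart.

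The gap is in point (i). Your argument for smallness of $g - {\rm Id}$ rests on freeness of the $K$-action, but Assumption \ref{assumption21} gives freeness only on $\mu^{-1}(0)$, hence on a neighborhood of it containing $u(C_R)$ for $R$ large. On the compact part $B_R$ the map $u$ may pass through points with nontrivial (even full) stabilizer, so pointwise closeness of $u'$ and $u'' = g\cdot u'$ there imposes no constraint on $g$; and the relation between the connection forms, $(dg)g^{-1} = g(a+\alpha')g^{-1} - (a+\alpha'')$, only \emph{bounds} $dg$ on $B_R$ (the conjugation term $gag^{-1}-a$ is not small) rather than making it small. Direct estimates therefore give: $s$ small in $W^{1,p,\delta}(C_R)$ but merely bounded in $W^{1,p,\delta}({\mb C})$ --- not the global smallness needed to place $s$ inside the uniqueness ball of Proposition \ref{prop38}, so your contraction argument cannot start. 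The paper closes exactly this hole with a step your proposal lacks: it argues by contradiction along sequences $\epsilon_i \to 0$, extracts a weak $W^{1,p,\delta}$-limit $s_\infty$ of the gauge parameters using the compact embedding of Lemma \ref{lemma215}, observes that $s_\infty$ is a weak solution of a second-order elliptic equation that vanishes on $C_R$, and invokes unique continuation to conclude $s_\infty \equiv 0$; this upgrades boundedness to strong convergence $\| s_i \|_{W^{1,p,\delta_-}} \to 0$ for a slightly smaller weight $\delta_- \in (1-\tfrac{2}{p},\delta)$, after which the uniqueness in Proposition \ref{prop38} (applied with weight $\delta_-$) yields the contradiction. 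Without some substitute for this compactness-plus-unique-continuation argument, the proof does not go through.
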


\begin{prop}\label{prop35}
For each $\epsilon\in (0, \epsilon_0]$, there is a neighborhood of $[{\bf v}]$ in $M^K ({\mb C}, X; \beta)$ contained in the image of $\hat{\phi}_{\bf v}^\epsilon$.
\end{prop}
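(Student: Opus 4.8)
The plan is to argue by contradiction. Suppose that no c.c.t.-neighborhood of $[{\bf v}]$ is contained in the image of $\hat\phi_{\bf v}^\epsilon$. Then there is a sequence ${\bf v}_i \in \wt M^K({\mb C}, X; \beta)$ with $[{\bf v}_i] \to [{\bf v}]$ in the compact convergence topology but $[{\bf v}_i] \notin \on{im}\hat\phi_{\bf v}^\epsilon$ for every $i$. By Definition \ref{defn24}, after replacing each ${\bf v}_i$ by a gauge-equivalent representative I may assume ${\bf v}_i \to {\bf v}$ in $C^\infty_{\rm loc}({\mb C})$ with $E({\bf v}_i)\to E({\bf v})$ (the energies in fact all equal $\omega_X^K(\beta)$ by Remark \ref{rem28}). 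The goal is to produce, for all large $i$, a gauge transformation $h_i$ with $h_i\cdot{\bf v}_i = \exp_{\bf v}{\bm\xi}_i$, where ${\bm\xi}_i$ is in Coulomb gauge relative to ${\bf v}$ (Definition \ref{defn25}, i.e. the third, gauge-fixing component of $\hat{\mc F}_{\bf v}$ vanishes) and $\|{\bm\xi}_i\|_{p,\delta}\to 0$. Once this is done, for large $i$ one has ${\bm\xi}_i \in \hat{\mc B}_{\bf v}^\epsilon$; the vortex equations together with the gauge-fixing condition give $\hat{\mc F}_{\bf v}(\exp_{\bf v}{\bm\xi}_i)=0$, so ${\bm\xi}_i \in U_{\bf v}^\epsilon$ and $[{\bf v}_i] = \hat\phi_{\bf v}^\epsilon({\bm\xi}_i)$ lies in the image, the desired contradiction.

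First I would align the asymptotic data. Since all ${\bf v}_i$ represent $\beta$, their holonomies agree with $\hol({\bf v})$ by Remark \ref{rem29}, and by Corollary \ref{cor211} one has $\ev_\infty({\bf v}_i)\to\ev_\infty({\bf v})$ in $\bar X$. Using the radial-gauge normal form of Proposition \ref{prop27}, I would adjust the representatives near infinity so that the asymptotic limits $x_i \in \mu^{-1}(0)$ converge to the limit $x_\infty$ of ${\bf v}$ and the holonomy loops match; this is exactly what lets one write $h_i\cdot{\bf v}_i$ as a perturbation $\exp_{\bf v}{\bm\xi}_i$ of ${\bf v}$ by a section ${\bm\xi}_i$ that \emph{decays} at infinity rather than merely staying bounded. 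I would then invoke the local slice theorem on the noncompact domain (the good-gauge result Proposition \ref{prop36}, one of the technical results deferred to the later sections) to enforce the Coulomb gauge condition \eqref{eqn24} while keeping $h_i$ close to the identity.

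The heart of the matter, and the step I expect to be the main obstacle, is the uniform decay estimate $\|{\bm\xi}_i\|_{p,\delta}\to 0$. The compact convergence topology controls the ${\bf v}_i$ only on compact subsets, whereas the norm \eqref{eqn31} weights the behavior at infinity by $\rho^\delta$; bridging this gap is precisely the regularity issue emphasized in the introduction. I would split ${\mb C} = B_R \cup C_R$ and treat the two regions separately. On the compact disk $B_R$ the weight $\rho^\delta$ is bounded, so $C^\infty_{\rm loc}$-convergence of $h_i\cdot{\bf v}_i$ to ${\bf v}$ forces the restriction of $\|{\bm\xi}_i\|_{p,\delta}$ to $B_R$ to tend to $0$. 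On the exterior $C_R$ I would feed the uniform pointwise energy-density decay of Proposition \ref{prop26} and the uniform radial decay of Proposition \ref{prop27} into the weighted Sobolev estimates of Lemma \ref{lemma212} and the connection-comparison of Lemma \ref{lemma216}, so as to bound the tail $\|{\bm\xi}_i\|_{p,\delta}(C_R)$ by a quantity tending to $0$ as $R\to\infty$ \emph{uniformly in $i$}. A two-step argument, first choosing $R$ large to kill the tail uniformly and then letting $i\to\infty$ on the fixed disk $B_R$, then yields $\|{\bm\xi}_i\|_{p,\delta}\to 0$.

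With the decay in hand the contradiction is immediate as described in the first paragraph. The delicate points to watch are that the gauge transformations $h_i$ imposing the Coulomb condition must not spoil the decay rate $\delta \in (1-\tfrac2p,1)$ secured by Proposition \ref{prop27}, and that all asymptotic estimates are genuinely uniform in $i$, which is why the \emph{uniform} final statements of Propositions \ref{prop26} and \ref{prop27} are needed rather than their single-vortex counterparts.
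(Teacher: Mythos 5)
Your overall skeleton matches the paper's: argue by contradiction, gauge the sequence so that ${\bf v}_i = \exp_{\bf v}{\bm\xi}_i$ with ${\bm\xi}_i$ in Coulomb gauge relative to ${\bf v}$, show $\|{\bm\xi}_i\|_{p,\delta}\to 0$, and conclude ${\bm\xi}_i \in U_{\bf v}^\epsilon$ for large $i$. But there is a genuine gap at the step you yourself identify as the heart of the matter. The norm \eqref{eqn31} contains the terms $\|\nabla^a\eta\|_{L^{p,\delta}}+\|\nabla^a\zeta\|_{L^{p,\delta}}$, i.e.\ first derivatives of the $1$-form part $\alpha_i = a_i - a_\infty$, and none of the inputs you feed into your tail estimate controls them. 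The energy-density decay of Proposition \ref{prop26} bounds the \emph{curvature} $F_{a_i}$ (a gauge-invariant quantity), not the gauge-dependent derivative $\nabla\alpha_i$; the radial decay of Proposition \ref{prop27} bounds only the $L^p$ size of the connection form on circles; and the weak-compactness/compact-embedding arguments that give strong convergence of $\alpha_i$ in $L^{p,\delta}$ (from uniform $W^{1,p,\delta_+}$ bounds) necessarily lose a derivative, so they cannot give strong $W^{1,p,\delta}$ convergence. This is precisely the point where the paper's proof does something you omit: after Proposition \ref{prop37} delivers $L^{p,\delta}$ convergence of $\alpha_i$, of the curvatures $F_{a_i}-F_{a_\infty}$, and of the $\xi$-terms, it applies the $k=0$ case of Proposition \ref{prop38} to pass to Coulomb gauge, and then invokes the elliptic estimate Lemma \ref{lemma310} for $d\oplus d^{\,*}$ on pairs of disks $(B_2(z_k),B_1(z_k))$ covering ${\mb C}$, summing the local estimates against the weight, to convert ``$\wt\alpha_i$, $F_{\wt a_i}-F_{a_\infty}$ and $d_{a_\infty}^{\,*}\wt\alpha_i$ small in $L^{p,\delta}$'' into ``$\wt\alpha_i$ small in $W^{1,p,\delta}$''. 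Without this regularity step (or an equivalent), you cannot conclude $\|{\bm\xi}_i\|_{p,\delta}\to 0$, and the contradiction never materializes.

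A secondary but related problem is your gauge-fixing step. You cite Proposition \ref{prop36} as ``the local slice theorem'' used to enforce the Coulomb condition; Proposition \ref{prop36} only produces gauges with uniform $W^{1,p,\delta}$ bounds on the connection and does not fix a slice. The slice theorem on ${\mb C}$ is Proposition \ref{prop38}, and its hypothesis \eqref{eqn36} demands smallness of $\|\alpha_i\|_{L^{p,\delta}}$, $\|\xi_i\|_{L^\infty}$ and $\|d_a^{\,*}\alpha_i + d\mu(u)\cdot J\xi_i\|_{W^{-1,p,\delta}}$ \emph{before} it can be applied. Consequently the logical order must be: first establish the weighted convergence of Proposition \ref{prop37} (this is where your $B_R$/$C_R$ splitting and the uniform statements of Propositions \ref{prop26} and \ref{prop27} genuinely enter), then apply the slice theorem, then recover the derivatives of $\wt\alpha_i$ by Lemma \ref{lemma310}. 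Your proposal inverts the first two steps, so even the Coulomb gauging is not justified as written.
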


It is easy to prove that $M^K ({\mb C}, X; \beta)$ is Hausdorff. Hence Proposition \ref{prop34} and Proposition \ref{prop35} imply that for $\epsilon$ sufficiently small, $\hat\phi_{\bf v}^\epsilon$ is a homeomorphism onto its image, which is denoted by $V_{\bf v}^\epsilon$. Therefore it gives a chart of smooth manifold of the correct dimension on $M^K ({\mb C}, X; \beta)$ over a neighborhood of $[{\bf v}]$. Let $\epsilon({\bf v})$ be the maximal $\epsilon$ such that $\hat\phi_{\bf v}^\epsilon$ is a homeomorphism.

So far we have constructed a collection of charts (which depend on $p$ and $\delta$)
\beqn
{\ms C}^\infty:= \Big\{  \Big(  U_{\bf v}^\epsilon, \hat\phi_{\bf v}^\epsilon \Big)\ |\ {\bf v} \in \wt{M}^K ( {\mb C}, X; \beta),\ \epsilon \in (0, \epsilon({\bf v}))\Big\}.
\eeqn
Moreover, since every bounded affine vortex of regularity $W^{1, p}_{\rm loc}$ is gauge equivalent via a gauge transformation of regularity $W^{2, p}_{\rm loc}$ to a smooth one, we can enlarge the collection by including charts centered at vortices ${\bf v}$ of regularity $W^{1, p}_{\rm loc}$. The enlarged collection is denoted by ${\ms C}^{1, p}_{\rm loc}$. Let $V_{\bf v}^\epsilon\subset M^K({\mb C}, X;\beta)$ be the image of $\hat\phi_{\bf v}^\epsilon$. Every $g \in W^{2,p}_{\rm loc}({\mb C}, K)$ induces a diffeomorphism $g: U_{\bf v}^\epsilon \to U_{g{\bf v}}^\epsilon$ such that the following diagram commutes
\beq\label{eqn35}
\xymatrix{ U_{\bf v}^\epsilon \ar[r]^{\hat\phi_{\bf v}^\epsilon } \ar[d]^g & V_{\bf v}^\epsilon \ar[d]^{\rm id}\\
           U_{g{\bf v}}^\epsilon \ar[r]^{\hat\phi_{g{\bf v}}^\epsilon}        & V_{g{\bf v}}^\epsilon }.
\eeq

\subsection{Proof of Proposition \ref{prop34} and Proposition \ref{prop35}}\label{ss32}

The proofs depend on the following few technical results. We always assume that $p>2$ and $1- \frac{2}{p} < \delta < 1$. Firstly, in Section \ref{section5}, we prove the following uniform bound on the connection form.

\begin{prop}\label{prop36}
Given $R\geq 1$ and a sequence ${\bf v}_i = (u_i, a_i)\in \wt{M}^K(C_R, X)$ that converges to ${\bf v}_\infty = (u_\infty, a_\infty)\in \wt{M}^K(C_R, X)$ in c.c.t., then there exist a sequence $\check{g}_i \in {\mc K}^{2, p}_{\rm loc}(C_R)$ 
(including $i = \infty$) such that, if we denote $\check{\bf v}_i = ( \check{u}_i, \check{a}_i):= \check{g}_i \cdot {\bf v}_i$, then 
\beqn
\displaystyle \sup_{1 \leq i \leq \infty} \| \check{a}_i\|_{W^{1,p, \delta}(C_R)}  < +\infty
\eeqn
\end{prop}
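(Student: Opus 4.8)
The plan is to put each ${\bf v}_i$ (including $i=\infty$) successively into three gauges: first a gauge realizing the c.c.t.\ convergence, then a radial gauge, and finally a gauge that cancels the asymptotic loop produced by Proposition \ref{prop27}. The desired bound on $\|\check{a}_i\|_{W^{1,p,\delta}(C_R)}$ then splits into a zeroth-order part, controlled by Proposition \ref{prop27}, and a first-order part, controlled by the curvature equation together with the uniform energy decay of Proposition \ref{prop26}. Throughout, uniformity in $i$ is inherited from the uniform (sequential) versions of Propositions \ref{prop26} and \ref{prop27}, which is precisely why the limit $i=\infty$ requires no separate argument.

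First I would reduce to radial gauge \emph{without destroying} c.c.t.\ convergence. By Definition \ref{defn24} there are gauge transformations $h_i$ with $h_i\cdot{\bf v}_i\to{\bf v}_\infty$ smoothly on compact subsets of $C_R$. Given a connection $a=a_r\,dr+a_\theta\,d\theta$, the transformation into radial gauge is obtained by solving the linear ODE $\partial_r g=g\,a_r$ along each ray with $g|_{r=R}={\rm Id}$; since each $a_i$ is smooth this produces $r_i\in{\mc K}^{2,p}_{\rm loc}(C_R)$, and because the solution depends continuously on $a_r$, the compact convergence $h_i\cdot{\bf v}_i\to{\bf v}_\infty$ forces $r_i\cdot(h_i\cdot{\bf v}_i)\to r_\infty\cdot{\bf v}_\infty$ smoothly on compacts as well. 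Hence, after replacing ${\bf v}_i$ by $r_ih_i\cdot{\bf v}_i$, I may assume each ${\bf v}_i$ is in radial gauge $a_i=f_i\,d\theta$ and still converges in c.c.t.; this is exactly the hypothesis of the sequential part of Proposition \ref{prop27}, which supplies $x_i\in\mu^{-1}(0)$ and $k_{0,i}\in W^{1,p}(S^1,K)$ with $\sup_i\limsup_{r\to\infty}\big[r^\gamma\|f_{i,r}+(\partial_\theta k_{0,i})k_{0,i}^{-1}\|_{L^p(S^1)}\big]<\infty$ for any fixed $0<\gamma<\tfrac2p$. I then apply (an extension of) $k_{0,i}$ as a further, purely $\theta$-dependent gauge transformation. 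This keeps the radial gauge and replaces $f_{i,r}$ by $\check{f}_{i,r}={\rm Ad}_{k_{0,i}}\big(f_{i,r}+(\partial_\theta k_{0,i})k_{0,i}^{-1}\big)$, whose $L^p(S^1)$-norm is $\lesssim r^{-\gamma}$ uniformly in $i$ (the adjoint action is an isometry of ${\mf k}$). Since $|\check{a}_i|=|\check{f}_i|/r$ in the Euclidean metric, $\|\check{a}_i\|_{L^{p,\delta}(C_R)}^p\lesssim\int_R^\infty r^{p\delta+1-p-p\gamma}\,dr$, which is finite and uniformly bounded as soon as $\gamma$ is chosen in the nonempty interval $(\delta-1+\tfrac2p,\tfrac2p)$; here $\delta>1-\tfrac2p$ makes the lower endpoint positive and $\delta<1$ keeps the interval nonempty. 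This handles the zeroth-order part of the norm.

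The first-order part is the main obstacle. In radial gauge the curvature equation reads $\partial_r\check{f}_i=-r\,\mu(\check{u}_i)$ (the bracket term vanishes since $\check{a}_i$ has only a $d\theta$-component), and $\mu(\check{u}_i)=\mu(u_i)$ is gauge invariant, so its $L^{p,\delta}$-size is governed by the uniform energy decay of Proposition \ref{prop26}; this bounds the radial derivative of $\check{a}_i$ in $L^{p,\delta}$, up to the term $|\check{a}_i|/r$ already controlled above. The angular derivative $\partial_\theta\check{f}_i$ is more delicate, since Proposition \ref{prop27} controls $\check{f}_{i,r}$ only in $L^p(S^1)$ and not in $W^{1,p}(S^1)$. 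I would obtain it by differentiating the curvature equation in $\theta$, writing $\partial_\theta\mu(\check{u}_i)=d\mu(\check{u}_i)\cdot\nabla^{\check{a}}_\theta\check{u}_i-[\mu(\check{u}_i),\check{f}_i]$, where the first term is bounded by the energy density (via $|\nabla^{\check{a}}_\theta\check{u}_i|\leq|d_{\check a_i}\check{u}_i|$) and the second is quadratically small, being a product of the decaying $\mu$ and the decaying connection. Integrating radially in the \emph{weighted} norm through the estimate \eqref{eqn212} of Lemma \ref{lemma212}, rather than pointwise (a pointwise integration would diverge logarithmically), then produces the uniform bound on $\partial_\theta\check{f}_i$, and assembling the radial and angular pieces gives $\sup_i\|\nabla\check{a}_i\|_{L^{p,\delta}}<\infty$.

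The technical heart of the argument is precisely this upgrade of the $L^p(S^1)$-decay of Proposition \ref{prop27} to a uniform weighted $W^{1,p,\delta}$ bound on the first derivative, carried out simultaneously for the whole sequence and its limit; the weighted estimates of Lemma \ref{lemma212} and the uniform decay of Proposition \ref{prop26} are its essential inputs. A secondary subtlety is to arrange the composite gauge transformation $\check{g}_i$ in ${\mc K}^{2,p}_{\rm loc}(C_R)$: since Proposition \ref{prop27} only yields $k_{0,i}\in W^{1,p}(S^1,K)$, the naive constant-in-$r$ extension is merely $W^{1,p}_{\rm loc}$, so I would instead mollify $k_{0,i}$ at an $r$-dependent scale shrinking as $r\to\infty$, fast enough that the mollification error still decays in $r$ (uniformly in $i$, using a uniform $W^{1,p}(S^1)$-bound on the $k_{0,i}$) while keeping $\check{g}_i$ smooth on every compact subset. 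With these two points settled, the uniform bound follows for all $1\le i\le\infty$.
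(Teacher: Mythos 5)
Your zeroth-order estimate is fine and matches the paper's final bookkeeping (the exponent window $\gamma\in(\delta-1+\tfrac2p,\tfrac2p)$ is exactly the one used there), but the first-order part of your argument has a genuine gap that cannot be repaired along the lines you propose. In radial gauge the curvature determines \emph{only} the radial derivative of $f$: the equation reads $\partial_r f=-r\mu(u)$, and the angular derivative $\partial_\theta f$ is precisely the residual gauge freedom of radial gauge. Indeed, any $\theta$-dependent gauge transformation $g(\theta)$ preserves radial gauge, preserves the curvature, and preserves the hypotheses supplied by Proposition \ref{prop27} (replacing $k_0$ by $gk_0$ leaves $\|f_r+(\partial_\theta k_0)k_0^{-1}\|_{L^p(S^1)}$ unchanged up to the isometric adjoint action), yet it shifts $\partial_\theta f$ by $\partial_\theta\bigl((\partial_\theta g)g^{-1}\bigr)$, which is arbitrary. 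So no argument using only the curvature equation and the $L^p(S^1)$ decay of $f_r$ can bound $\partial_\theta \check f$. Concretely, your radial integration of $\partial_r\partial_\theta\check f=-r\,\partial_\theta[\mu(\check u)]$ needs boundary data: the integration ``constant'' is an arbitrary function of $\theta$, and the only candidate datum at infinity is $\partial_\theta$ of the limiting loop $-(\partial_\theta k_0)k_0^{-1}$, which does not exist even distributionally in a usable class since $k_0$ is merely $W^{1,p}(S^1)$ (note also that $\check f_r\to 0$ in $L^p(S^1)$ says nothing about $\partial_\theta\check f_r$, which may blow up, e.g.\ $r^{-\gamma}\sin(r\theta)$). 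The appeal to Lemma \ref{lemma212} is circular: that lemma presupposes a finite weighted norm of the gradient, which is what you are trying to establish. Your mollification of $k_{0,i}$ does not rescue this: the second angular derivatives of the mollified loop grow like $\eps(r)^{-1}$, the $L^p$ mollification error has no rate for a general $W^{1,p}$ loop (so the $r^{-\gamma}$ decay from Proposition \ref{prop27} is destroyed), and the $r$-dependence of the mollification scale produces a $dr$-component, breaking radial gauge.

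What is missing is a genuine gauge-fixing input that supplies angular-derivative control, and this is exactly how the paper proceeds: it covers the cylindrical end by overlapping bands $S_n$, uses Uhlenbeck's local slice theorem (Lemma \ref{lemma51}, quoted from Wehrheim) to put $k_0^{-1}\cdot\wt\psi\,d\theta$ into \emph{Coulomb gauge} on each band, and then the elliptic system $d\oplus d^{\,*}$ with boundary condition (Lemma \ref{lemma52}) converts the $L^p$ smallness of the connection plus the curvature bound of Proposition \ref{prop26} into a full $W^{1,p}$ bound on each band. The band-wise gauges are then patched by cut-off functions, with quantitative $W^{1,p}$ and $W^{2,p}$ estimates on the transition functions $e^{s_n}$; smoothness of the resulting gauge transformations comes for free from elliptic regularity of the vortex equation in Coulomb gauge, not from mollification. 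The Coulomb condition $d^{\,*}\check a=0$ is precisely the replacement for the uncontrollable residual freedom in radial gauge, and without some such condition the statement you are trying to prove in your chosen gauge is false.
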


In Section \ref{section6} we prove the following result, which says that a sequence of affine vortices converging in c.c.t. will also converge, modulo gauge, in the topology induced by a weighted Sobolev norm.

\begin{prop}\label{prop37}
Suppose a sequence ${\bf v}_i^o\in \wt{M}^K({\mb C}, X)$ converges to a limit ${\bf v}_\infty^o$ in c.c.t.. Then there exist $\lambda \in \Lambda_K$, and a sequence of gauge transformations $g_i \in {\mc K}^{2,p}_{\rm loc}({\mb C})$ 
(including $i= \infty$) such that the following are satisfied. Denote $(u_i, a_i) = g_i \cdot (u_i^o, a_i^o)$. Then $u_i$ converges to $u_\infty$ uniformly. Moreover, for $i$ sufficiently large, define $\xi_i \in W^{2, p}_{\rm loc}( {\mb C}, E_{u_\infty})$ by $\xi_i = \exp_{u_\infty}^{-1} u_i$. Then 
\beqn
\lim_{i \to \infty} \Big( \| a_i - a_\infty \|_{L^{p, \delta}} +  \| F_{a_i} - F_{a_\infty} \|_{L^{p, \delta}} + \| \nabla^{a_\infty} \xi_i\|_{L^{p,\delta}} +  \| d\mu(u_\infty) \cdot J\xi_i \|_{L^{p, \delta}} + \| d\mu(u_\infty) \cdot \xi_i \|_{L^{p, \delta}} \Big) = 0.
\eeqn
\end{prop}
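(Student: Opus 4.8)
The statement is precisely the topology-comparison result flagged in the introduction: since, up to the harmless $L^\infty$ term, the displayed quantity is the norm \eqref{eqn31} of $\exp_{{\bf v}_\infty}^{-1}{\bf v}_i$, the proposition asserts that c.c.t.\ convergence upgrades, after a gauge change, to convergence in the weighted Banach norm. The plan is to fix $\eps>0$ and split ${\mb C} = B_R \cup C_R$ for a large radius $R$ to be chosen first, estimating the compact core $B_R$ and the tail $C_R$ by completely different means. On $B_R$ the weight $\rho^\delta$ is bounded above and below, so the weighted norms are comparable to ordinary Sobolev norms and the issue is the standard one of compact convergence; on $C_R$, where c.c.t.\ gives no information whatsoever, every bound must be extracted from the asymptotic-decay machinery and must be uniform in $i$.

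First I would fix the gauge and the constant $\lambda$. By Corollary \ref{cor211}, $\hol({\bf v}_i^o) = \hol({\bf v}_\infty^o)$ for large $i$, so the vortices share a common asymptotic loop, and I take $\lambda\in\Lambda_K$ to be its geodesic representative. Applying Proposition \ref{prop36} to the restrictions to $C_R$ yields gauge transformations $\check g_i\in {\mc K}^{2,p}_{\rm loc}(C_R)$ (including $i=\infty$) with $\sup_i \|\check a_i\|_{W^{1,p,\delta}(C_R)}<\infty$, and Proposition \ref{prop27} then gives, with an $i$-independent constant, the $r^\gamma$-decay of the connection forms and the uniform convergence of the rescaled sections to their asymptotic values in $\mu^{-1}(0)$. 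Independently, the c.c.t.\ supplies gauge transformations $h_i$ on $B_{2R}$ with $h_i\cdot{\bf v}_i^o\to{\bf v}_\infty^o$ smoothly. On the overlap annulus $B_{2R}\setminus B_R$ the transition $\check g_i h_i^{-1}$ converges smoothly, so I would interpolate the two gauges across the annulus by a cutoff, producing a single $g_i\in{\mc K}^{2,p}_{\rm loc}({\mb C})$; the interpolation error is supported on the annulus and controlled by the smooth convergence of the transition. Setting $(u_i,a_i)=g_i\cdot(u_i^o,a_i^o)$, the uniform convergence $u_i\to u_\infty$ follows by combining the compact-set convergence on $B_{2R}$ with the uniform convergence to asymptotic values on $C_R$.

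On $B_R$, the vortex equation is elliptic modulo gauge, so I would bootstrap the smooth c.c.t.\ convergence to get $a_i\to a_\infty$, $F_{a_i}\to F_{a_\infty}$ and $\xi_i\to 0$ in $W^{1,p}(B_R)$; by the norm comparison on the compact set, each of the five terms restricted to $B_R$ tends to $0$ as $i\to\infty$. For the tail I would exploit that on solutions the second vortex equation reads $F_a=-\mu(u)$, so $F_{a_i}-F_{a_\infty}=-\big(\mu(u_i)-\mu(u_\infty)\big)$, and the first-order Taylor estimate $|\mu(u_i)-\mu(u_\infty)-d\mu(u_\infty)\cdot\xi_i|\le C|\xi_i|^2$ shows that the curvature term and the term $d\mu(u_\infty)\cdot\xi_i$ both reduce, up to a higher-order contribution, to $\|\mu(u_i)\|_{L^{p,\delta}(C_R)}+\|\mu(u_\infty)\|_{L^{p,\delta}(C_R)}$. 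Since $|\mu(u)|^2\le 2e({\bf v})$ and, by the uniform version of Proposition \ref{prop26}, $e({\bf v}_i)=O(|z|^{-4+\eps})$ with an $i$-independent constant, and since $\delta<1<2-\frac 2 p$, the weight $\rho^{\delta p}$ against this decay is integrable and the corresponding tail integrals are uniformly small for $R$ large. The connection terms $\|a_i-a_\infty\|_{L^{p,\delta}(C_R)}$ are handled the same way from Proposition \ref{prop27}, and the genuinely new term $\|\nabla^{a_\infty}\xi_i\|_{L^{p,\delta}(C_R)}$ is bounded, after passing between $\nabla^{a_\infty}$ and the Levi-Civita connection via Lemma \ref{lemma216}, by the uniformly decaying covariant derivatives $d_{a_i}u_i$, $d_{a_\infty}u_\infty$ of the sections together with $|\xi_i|\,|\nabla u_\infty|$, all of which decay like $\sqrt{e}$ and are hence uniformly small on the tail. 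Choosing $R$ first to make the total tail contribution $<\eps/2$, and then $i$ large to make the $B_R$ contribution $<\eps/2$, completes the estimate.

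The main obstacle is the tail analysis together with the gauge reconciliation. Because c.c.t.\ carries no information at infinity, the uniformity in $i$ of every tail bound is essential, which is exactly why the uniform versions of Propositions \ref{prop26}, \ref{prop27} and \ref{prop36} were recorded. The subtlest point is that the gauge produced by Proposition \ref{prop36} on $C_R$ is adapted to the weighted-decay norm, whereas the gauge produced by the c.c.t.\ on $B_{2R}$ is adapted to smooth compact convergence; the interpolation across the annulus must preserve $W^{2,p}_{\rm loc}$ regularity, must not destroy the $r^\gamma$-decay of $a_i$ already established on the tail, and must be compatible with the common holonomy recorded by $\lambda$. Verifying that a single global $g_i$ can be chosen with all three properties, uniformly in $i$, is where the real work lies.
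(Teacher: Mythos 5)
Your overall strategy---uniform tail estimates from the decay machinery plus compact convergence on $B_R$---does work for the curvature term, and (if run in the gauge twisted by $e^{-\lambda\theta}$) it can be made to work for $\| a_i - a_\infty\|_{L^{p,\delta}}$ and $\|\nabla^{a_\infty}\xi_i\|_{L^{p,\delta}}$; for those terms the paper actually uses a different mechanism (uniform $W^{1,p,\delta_+}$ bounds with $\delta_+>\delta$, weak compactness, and the compact embedding of Lemma \ref{lemma215}), so that part of your route is a legitimate alternative. But the proposal has a genuine gap at the two moment-map terms, and both failures have the same source: $\xi_i$ does \emph{not} decay at infinity. In any gauge, $u_i \to x_i$ and $u_\infty \to x_\infty$ with $x_i, x_\infty \in \mu^{-1}(0)$ lying on \emph{different} $K$-orbits in general (Corollary \ref{cor211} only gives $[x_i]\to[x_\infty]$ in $\bar{X}$), so $|\xi_i(z)|$ tends to the nonzero constant $d(x_i,x_\infty)$ as $z \to \infty$. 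Consequently your Taylor bound $|\mu(u_i)-\mu(u_\infty)-d\mu(u_\infty)\cdot\xi_i|\le C|\xi_i|^2$ is useless on the tail: the weight $\rho^\delta$ is unbounded while $|\xi_i|^2$ does not decay, so $\| \, |\xi_i|^2 \|_{L^{p,\delta}(C_R)} = +\infty$ for every $R$ and every $i$ with $x_i \neq x_\infty$, and no choice of "$R$ first, then $i$ large" can repair this. The paper's Lemma \ref{lemma63} avoids the quadratic error entirely by a \emph{linear} pointwise inequality $|d\mu(x)\cdot v| \le c\,|\mu(\exp_x v)-\mu(x)|$, valid near $\mu^{-1}(0)$ for the adapted metric (the one making $\mu^{-1}(0)$ totally geodesic), which reduces the term directly to $\|\mu(u_i)\|_{L^{p,\delta}}+\|\mu(u_\infty)\|_{L^{p,\delta}}$.

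Worse, you never address the remaining term $\|d\mu(u_\infty)\cdot J\xi_i\|_{L^{p,\delta}}$, and this one cannot be rescued by any decay estimate: $d\mu(u_\infty)\cdot J\xi_i$ pairs $\xi_i$ with the orbit directions along $u_\infty$, and the asymptotic vector $\exp_{x_\infty}^{-1}(x_i)$ generically has a nonzero component along the $K$-orbit through $x_\infty$ (orbits lie inside $\mu^{-1}(0)$, so the totally geodesic property does not kill this component). Hence $d\mu(u_\infty)\cdot J\xi_i$ tends to a nonzero constant at infinity and its $L^{p,\delta}$ norm over any $C_R$ is infinite. This is precisely why the paper inserts an extra gauge-fixing step (Lemma \ref{lemma62}): using a local slice of the $K$-action, one corrects the gauge on the tail so that $\xi_i$ is \emph{pointwise} orthogonal to the infinitesimal $K$-action, i.e. $d\mu(u_\infty)\cdot J\xi_i \equiv 0$ on $C_{R'}$, after which the term lives only on a compact set where c.c.t.\ controls it. This slice gauge-fixing at infinity is the key idea missing from your proposal; without it (or a substitute) the displayed quantity is not even finite, let alone small. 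A smaller inaccuracy in the same spirit: $\nabla u_\infty$ does not decay like $\sqrt{e}$---it contains the rotation term ${\mc X}_\lambda(u_\infty)$ paired with $d\theta$, of size $\sim 1/r$, which is not in $L^{p,\delta}(C_R)$ since $\delta > 1-\frac{2}{p}$---so all derivative estimates on the tail must be performed on the twisted objects $\check{u}_i = e^{-\lambda\theta}u_i$, $\check{a}_i = a_i + \lambda\,d\theta$.
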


By this proposition one can define ${\bm \xi}_i = (\xi_i, a_i - a_\infty)$ such that at least formally $\exp_{{\bf v}_\infty} {\bm \xi}_i = {\bf v}_i$. To prove that ${\bm \xi}_i\in \hat{\mc B}_{{\bf v}_\infty}$ and to control its norm, one has to estimate the derivatives of $a_i - a_\infty$, which requires certain gauge fixing. The following proposition says that one can transform vortices into Coulomb gauge relative to the limit over the {\it unbounded} domain ${\mb C}$. It will be proved in Section \ref{section7}.
\begin{prop}\label{prop38}
Suppose $k=0,1$. Suppose ${\bf v} = (u, a) \in \wt{M}^K({\mb C}, X)^{1, p}_{\rm loc}$ such that $a + \lambda d\theta \in W^{1, p, \delta}(C_R,\Lambda^1 \otimes {\mf k})$ for some $\lambda \in \Lambda_K$ and all $R \geq 1$. Then there are constants $\epsilon_{\rm coul}, c_{\rm coul}, \delta_{\rm coul}>0$ satisfying the following condition. For any ${\bm \xi} = (\xi, \alpha)$ where $\xi \in W^{k, p}_{\rm loc}({\mb C}, E_u)$, $\alpha \in W_{\rm loc}^{k, p}({\mb C}, \Lambda^1 \otimes {\mf k})$, which satisfy
\beq\label{eqn36}
\| \alpha \|_{W^{k,p,\delta}}  + \| \xi \|_{L^\infty} + \|  d_a^{\,*} \alpha + d\mu(u) \cdot J \xi \|_{W^{k-1,p,\delta}} \leq \epsilon_{\rm coul},    
\eeq
there exists a unique $s \in W^{k+1, p, \delta}({\mb C}, {\mf k})$ satisfying the following conditions

\begin{enumerate}
\item $e^s \exp_{{\bf v}} {\bm \xi}$ is in Coulomb gauge with respect to ${\bf v}$.

\item $\| s \|_{W^{k+1, p, \delta}} \leq \delta_{\rm coul}$.
\end{enumerate}
Further, $s$ satisfies the estimate
\beqn
\| s \|_{W^{k+1,p,\delta}} \leq c_{\rm coul} \| \dad \alpha + d\mu(u) \cdot J \xi \|_{W^{k-1,p,\delta}}.
\eeqn

\end{prop}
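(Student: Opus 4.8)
The plan is to realize the Coulomb gauge condition as the vanishing of a smooth map and apply an implicit-function/Newton argument built on the invertibility of a single linearized operator. Concretely, for $s \in W^{k+1,p,\delta}({\mb C},{\mf k})$, I would define the map
\begin{equation*}
\Phi(s) := \dad\!\big( e^s \cdot (a + \alpha) \big) + d\mu\big( (e^s \cdot \exp_u\xi) \big) \cdot J\big( \exp_u^{-1}(e^s\cdot\exp_u\xi)\big),
\end{equation*}
so that condition (1) is exactly $\Phi(s) = 0$, interpreted via Definition \ref{defn25}. The quantity $\Phi(0)$ is precisely $\dad\alpha + d\mu(u)\cdot J\xi$ up to the exponential identification, so the final estimate on $\|s\|$ should follow from the standard contraction-mapping bound once $\Phi$ is shown to be a smooth map $W^{k+1,p,\delta}\to W^{k-1,p,\delta}$ with invertible linearization at $s=0$.

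\emph{Key steps, in order.} First I would compute the linearization $L := d\Phi|_{s=0}$. Along the gauge direction $e^s$ acting on $a$, the leading term of $\dad(e^s\cdot a)$ in $s$ is $\dad d_a s$ (an augmented Laplacian), and the variation of the second term contributes a zeroth-order piece involving $d\mu(u)\cdot J{\mc X}_s$; thus $L$ has the schematic form $\dad d_a + (\text{curvature/moment-map terms})$, which is the Laplace-type operator attached to the gauge group action on $\mu^{-1}(0)$. Second, I would prove that $L: W^{k+1,p,\delta}({\mb C},{\mf k}) \to W^{k-1,p,\delta}({\mb C},{\mf k})$ is an isomorphism. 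This is the heart of the matter: the hypothesis $a+\lambda d\theta\in W^{1,p,\delta}(C_R)$ together with Assumption \ref{assumption21} (freeness on $\mu^{-1}(0)$, so the infinitesimal action is injective and ${\mc X}_s$ is coercive near infinity) guarantees that the zeroth-order term $|{\mc X}_s|^2$ dominates at infinity, forcing $L$ to be injective and surjective on the weighted spaces. I would establish this by an elliptic estimate $\|s\|_{W^{k+1,p,\delta}} \le C(\|Ls\|_{W^{k-1,p,\delta}} + \|s\|_{L^{p,\delta}(B_R)})$ combined with Lemma \ref{lemma215} (compact inclusion) to remove the compact error, and a parallel estimate for the formal adjoint to obtain surjectivity. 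Third, with $L$ invertible, I would write $\Phi(s) = \Phi(0) + Ls + N(s)$ where $N$ collects the higher-order terms, verify via Lemma \ref{lemma216} and the Sobolev multiplication/composition properties of the weighted spaces that $N$ is smooth with $N(0)=0$, $dN|_0=0$, and then solve $\Phi(s)=0$ by the contraction $s \mapsto -L^{-1}(\Phi(0) + N(s))$ on a small ball. The smallness hypothesis \eqref{eqn36} ensures $\|\Phi(0)\|_{W^{k-1,p,\delta}}$ is small, yielding the unique fixed point $s$ together with the bound $\|s\|_{W^{k+1,p,\delta}}\le c_{\rm coul}\|\Phi(0)\|_{W^{k-1,p,\delta}}$.

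\emph{The main obstacle} is the invertibility of $L$ on the weighted Sobolev spaces over the noncompact domain ${\mb C}$. Unlike the compact case, injectivity and surjectivity do not follow from general elliptic theory; one must exploit the specific decay of the connection (from the radial-gauge asymptotics of Proposition \ref{prop27}) and the coercivity furnished by the free action to control the operator near infinity, and one must check that the weight exponent $\delta\in(1-\tfrac2p,1)$ lies in the range for which $L$ is an isomorphism rather than merely Fredholm. I expect this to require a careful model-operator analysis on the end $C_R$, comparing $L$ with the constant-coefficient operator $\dad d_a + |{\mc X}_{\,\cdot}|^2$ and invoking the weighted estimates of Lemma \ref{lemma212}; controlling the $s$-dependence of $\Phi$ uniformly so that the contraction constant is independent of the particular vortex in the convergent sequence is the secondary technical point that must be handled with care.
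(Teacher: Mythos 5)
Your overall strategy is the same as the paper's: the paper also recasts the relative Coulomb gauge condition as the zero set of a nonlinear Coulomb operator
\beqn
{\mc T}^{\bm \xi}(s) = \dad \big( e^s \cdot (a+\alpha) - a \big) + d\mu(u) \cdot J \exp_u^{-1} ( e^s \exp_u \xi ),
\eeqn
proves that the linearization at the origin, ${\mc L}_{\bf v}(s) = \dad d_a s + d\mu(u)\cdot J {\mc X}_s$, is an isomorphism $W^{k+1,p,\delta} \to W^{k-1,p,\delta}$ (Proposition \ref{prop72}), and concludes by a quantitative implicit function theorem (Proposition \ref{prop76}) together with a perturbation estimate (Lemma \ref{lemma71}) giving uniform invertibility of $D{\mc T}^{\bm\xi}_s$ for small $({\bm\xi}, s)$. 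Where you genuinely diverge is in how the linear isomorphism is established. The paper does not run an a priori estimate plus adjoint argument: it compares ${\mc L}_{\bf v}$ with the operator of an explicit model pair ($u = e^{\lambda\theta}x$, $a = -\lambda d\theta$ on $C_R$), whose operator over the end is ${\rm Ad}_{e^{\lambda\theta}}(-\Delta + L_x^*L_x){\rm Ad}_{e^{-\lambda\theta}}$ (Remark \ref{remark44}); this diagonalizes into scalar operators $-\Delta + c_i$ with $c_i>0$, invertible by Calderon's theorem (Lemma \ref{lemma73}), and an inverse over all of ${\mb C}$ is patched from this and a Dirichlet problem on a ball (Lemma \ref{lemma74}). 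Since ${\mc L}_{\bf v} - {\mc L}_{{\bf v}'}$ is compact (Proposition \ref{prop214}), ${\mc L}_{\bf v}$ is Fredholm of index zero, and injectivity (integration by parts, made legitimate by the inclusion $L^{p,\delta}\subset L^{q,-\delta}$ of Lemma \ref{lemma75}) then yields surjectivity for free. Your route hides the same model analysis inside the estimate at infinity, so it is not more elementary; the paper's index-zero argument is what lets it avoid ever analyzing an adjoint.

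Two points in your plan need repair. First, a formula-level slip: your $\Phi$ is not the condition of Definition \ref{defn25}. The Coulomb condition is relative to ${\bf v}$, so the first term must be $\dad\big(e^s\cdot(a+\alpha) - a\big)$ rather than $\dad\big(e^s\cdot(a+\alpha)\big)$ (otherwise $\Phi(0)$ contains the fixed term $\dad a$ and is not small), and the moment map must be evaluated at $u$, i.e. $d\mu(u)\cdot J\exp_u^{-1}(\cdot)$, not at the deformed point $e^s\exp_u\xi$; your own assertion that $\Phi(0) = \dad\alpha + d\mu(u)\cdot J\xi$ is consistent only with the corrected formula. Second, and more substantively: your surjectivity step, ``a parallel estimate for the formal adjoint,'' is not parallel. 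The adjoint of $L: W^{k+1,p,\delta}\to W^{k-1,p,\delta}$ acts between the dual spaces $W^{1-k,q,-\delta}\to W^{-1-k,q,-\delta}$, which carry the \emph{opposite} (negative) weight, so kernel elements of the adjoint are a priori allowed to grow at infinity; ruling them out is strictly harder than the injectivity you establish on the positive-weight side, and the coercivity/integration-by-parts argument does not apply to them directly. This is exactly the difficulty the paper's construction is designed to bypass: index zero obtained from a compact perturbation of an explicitly inverted model converts injectivity into surjectivity without touching negative-weight spaces. You should either adopt that index argument or add an asymptotic decay analysis for distributional kernel elements of the adjoint; as written this step would not go through. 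Two smaller remarks: for $k=0$ the target $W^{-1,p,\delta}$ is a negative-order space defined by duality, so $\dad$ must be read distributionally and your multiplication/composition claims checked there; and the uniformity ``over the convergent sequence'' you worry about at the end is not required --- the proposition fixes ${\bf v}$, and all constants are allowed to depend on it.
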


\begin{remark}
When $k=0$, one has the distributional adjoint $d_a^{\,\dagger}: L^{p, \delta}(\Lambda^1 \otimes {\mf k}) \to W^{-1, p, \delta}({\mf k})$ of $d_a$. The restriction of $d_a^{\, \dagger}$ to $W^{1, p}_{\rm loc} \cap L^{p, \delta}$ coincides with the formal adjoint $\dad$, so we abused the notation in the statement of the above proposition.
\end{remark}

The last piece is the following local interior regularity of connections, which is proved at the end of this section.
\begin{lemma}\label{lemma310}
Suppose $\Om', \Om$ are compact sets with smooth boundary in ${\mb H}$, such that $\Om' \subset \on{int}(\Om)$. Suppose $p>2$. Then for any $M >0$, there exists a constant $c( M )>0$, also depending on $\Omega, \Omega'$ and $p$, that satisfies the following conditions.

Suppose there are two connections forms $a, b \in W^{1, p}_{\rm loc}(\Omega, \Lambda^1 \otimes {\mf k})$, satisfying $
* (a - b)|_{\Omega\cap {\mb R}} = 0$ and
\beq\label{eqn37}
\| a - b\|_{L^p(\Omega)} + \| F_a - F_b \|_{L^p(\Omega)} + \| \dad (a - b) \|_{L^p(\Omega)} \leq M.
\eeq
\beq\label{eqn38}
\| a \|_{L^\infty(\Omega)} \leq M.
\eeq
Then
\beq\label{eqn39}
\big\| a - b \big\|_{W^{1, p}(\Omega')} \leq c(M) \Big[ \| a - b \|_{L^p(\Omega)}+ \| F_a - F_b \|_{L^p(\Omega)} + \| \dad (a- b) \|_{L^p(\Om)} \Big].
\eeq
\end{lemma}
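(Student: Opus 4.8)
Write $\alpha := a - b = \eta\,ds + \zeta\,dt$, so that the boundary condition $*(a-b)|_{\Om\cap{\mb R}}=0$ reads $\zeta|_{\Om\cap{\mb R}}=0$. The plan is to recognize $(F_a - F_b, \dad\alpha)$ as a first-order elliptic operator applied to $\alpha$, up to algebraic terms of order zero. A direct computation from the formulas for $F_a$, $d_a$ and $\dad$ gives
\beqn
F_a - F_b = d_a\alpha - [\eta,\zeta], \qquad \dad\alpha = -\big(\partial_s\eta + \partial_t\zeta\big) - [\phi,\eta] - [\psi,\zeta].
\eeqn
Introducing the complex combination $w := \zeta + \ii\eta$, the principal parts assemble into the Cauchy--Riemann operator, since $2\ov\partial w = (\partial_s\zeta - \partial_t\eta) + \ii(\partial_s\eta + \partial_t\zeta)$. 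Hence
\beqn
2\ov\partial w = \big(F_a - F_b + [\eta,\zeta] - [\phi,\zeta] + [\psi,\eta]\big) - \ii\big(\dad\alpha + [\phi,\eta] + [\psi,\zeta]\big),
\eeqn
and the boundary condition becomes the totally real condition $\on{Re}w|_{\Om\cap{\mb R}} = 0$ (when $\Om\cap{\mb R}=\emptyset$ this is vacuous and only an interior estimate is needed).

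Next I would apply the interior $L^p$ estimate for $\ov\partial$. Fix a cutoff $\chi$ with $\chi\equiv 1$ on $\Om'$ and $\on{supp}\chi\subset\on{int}(\Om)$, so that $\ov\partial(\chi w) = \chi\ov\partial w + (\ov\partial\chi)w$. Away from ${\mb R}$ this is the Calder\'on--Zygmund estimate $\|\chi w\|_{W^{1,p}}\le c\|\ov\partial(\chi w)\|_{L^p}$; near $\Om\cap{\mb R}$ I would extend $w$ across the axis by the Schwarz reflection $\tilde w(s,t) := -\ov{w(s,-t)}$ for $t<0$, which is continuous and of class $W^{1,p}$ precisely because $\on{Re}w=0$ on the axis, and satisfies $\ov\partial\tilde w(s,t) = -\ov{(\ov\partial w)(s,-t)}$. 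The reflected problem is interior, so Calder\'on--Zygmund on the doubled domain yields the half-plane boundary estimate. Combining these,
\beqn
\|\alpha\|_{W^{1,p}(\Om')} \le c\Big(\|F_a - F_b\|_{L^p(\Om)} + \|\dad\alpha\|_{L^p(\Om)} + \|\alpha\|_{L^p(\Om)} + \|[\eta,\zeta]\|_{L^p(\Om)} + R\Big),
\eeqn
where $R$ collects the $L^p(\Om)$-norms of the commutators $[\phi,\zeta]$, $[\psi,\eta]$, $[\phi,\eta]$, $[\psi,\zeta]$.

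Finally I would dispose of the algebraic error terms. The commutators linear in $\alpha$ gathered in $R$ are bounded by $c\|a\|_{L^\infty(\Om)}\|\alpha\|_{L^p(\Om)}\le cM\|\alpha\|_{L^p(\Om)}$ using \eqref{eqn38}, and so are absorbed into the right-hand side of \eqref{eqn39}. The genuinely quadratic term $[\eta,\zeta]$ is the main obstacle, since no $L^\infty$ bound on $b$ (equivalently on $\alpha$) is assumed. I would control it by $\|[\eta,\zeta]\|_{L^p(\Om)}\le c\|\alpha\|_{L^{2p}(\Om)}^2$ together with the two-dimensional Gagliardo--Nirenberg inequality $\|\alpha\|_{L^{2p}}\le c\|\alpha\|_{W^{1,p}}^{1/p}\|\alpha\|_{L^p}^{1-1/p}$, which yields $\|[\eta,\zeta]\|_{L^p}\le c\|\alpha\|_{W^{1,p}}^{2/p}\|\alpha\|_{L^p}^{2-2/p}$. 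Since $p>2$ the exponent $2/p<1$, so Young's inequality lets me absorb a small multiple of $\|\alpha\|_{W^{1,p}(\Om')}$ (available on the full support of $\chi$ rather than on a smaller domain) into the left-hand side; the leftover factor $\|\alpha\|_{L^p(\Om)}^{(2-2/p)/(1-2/p)}$ is reduced to a single power $\|\alpha\|_{L^p(\Om)}$ at the expense of powers of $M$, using the a priori bound $\|\alpha\|_{L^p(\Om)}\le M$ from \eqref{eqn37}. Folding all constants into $c(M)$ produces \eqref{eqn39}. The crux throughout is this absorption of the quadratic curvature term; the ellipticity of the totally real boundary condition, handled by reflection, and the linear commutator bounds are routine.
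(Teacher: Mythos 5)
Your reduction of the linear part is correct, and it is a genuinely different (and nicely self-contained) route from the paper's: writing $w = \zeta + \ii\eta$ does assemble $(F_a - F_b,\ \dad(a-b))$ into $\ov\partial w$ plus zeroth-order commutators, the boundary condition $*(a-b)|_{\Om\cap{\mb R}} = 0$ becomes ${\rm Re}\,w|_{\Om\cap{\mb R}} = 0$, and the reflection $\tilde w(s,t) = -\ov{w(s,-t)}$ correctly converts the boundary estimate into an interior Calder\'on--Zygmund estimate, whereas the paper simply invokes elliptic regularity for $d \oplus d^{\,*}$. The commutator terms linear in $\alpha$ are handled exactly as in the paper, using \eqref{eqn38}.

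The gap is in the step you yourself call the crux. Gagliardo--Nirenberg gives $\| [\eta,\zeta] \|_{L^p(\Om)} \le c\, \| \alpha \|_{W^{1,p}(\Om)}^{2/p}\, \| \alpha \|_{L^p(\Om)}^{2-2/p}$, and Young's inequality then produces a term $\epsilon \| \alpha \|_{W^{1,p}(\Om)}$ --- the $W^{1,p}$ norm on the \emph{large} domain $\Om$ (or at best on $\on{supp}\chi$). But the left-hand side of your cutoff estimate is $\| \chi w \|_{W^{1,p}}$, which controls $\alpha$ only where $\chi = 1$, i.e.\ on $\Om'$; it gives no control of $\alpha$ where $\chi < 1$. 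The quantity you want to absorb is therefore not dominated by the quantity on the left, and the absorption is illegitimate; your parenthetical claim that the left-hand side is ``available on the full support of $\chi$'' is exactly what fails. Nor can this be fixed by applying Gagliardo--Nirenberg to $\chi\alpha$ instead: the nonlinearity enters as $\chi\,|\alpha|^2$, not $|\chi\alpha|^2$, and $\chi \ge \chi^2$ goes the wrong way. This mismatch is precisely why the paper's proof uses no absorption at all: it bootstraps integrability over a nested family $\Om = \Om_0 \supset \Om_1 \supset \cdots \supset \Om_m \supset \Om''$, where at each step the quadratic term $\| \alpha \|_{L^{q_j}(\Om_j)}^2$ is bounded by the \emph{previously established} constant $C_j(M)$ rather than by the unknown being estimated; after finitely many steps (their number is why the cases $2<p<4$, $p=4$, $p>4$ appear) one obtains $\| \alpha \|_{L^\infty(\Om'')} \le c_1(M)$, and only then is the final estimate run, with the quadratic term linearized via $\| \alpha \|_{L^{2p}}^2 \le \| \alpha \|_{L^\infty} \| \alpha \|_{L^p}$. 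Your scheme can be repaired in the same spirit --- either by adopting this bootstrap, or by running your Young-inequality step on a continuum of nested domains and invoking a Giaquinta-type iteration lemma, which needs only the qualitative finiteness of $\| \alpha \|_{W^{1,p}}$ on compact subsets (available since $a, b \in W^{1,p}_{\rm loc}$) --- but as written the single-cutoff absorption does not close.
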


Now we can prove Proposition \ref{prop34} and Proposition \ref{prop35}.

\begin{proof}[Proof of Proposition \ref{prop34}]
We prove by contradiction. Suppose there exist a sequence $\epsilon_i \to 0$ and two sequences ${\bm \xi}_i, {\bm \xi}_i' \in U_{\bf v}^{\epsilon_i}$, such that, denoting ${\bf v}_i = \exp_{{\bf v}} {\bm \xi}_i = (u_i, a_i)$, ${\bf v}_i' = \exp_{{\bf v}_i} {\bm \xi}_i' = (u_i', a_i')$,
\beqn
[{\bf v}_i] = [ {\bf v}_i'] \in M^K ({\mb C}, X; \beta).
\eeqn
Then there exist a sequence of smooth gauge transformations $g_i = e^{s_i}$ such that 
\beq\label{eqn310}
e^{s_i} {\bf v}_i = {\bf v}_i'.
\eeq
To derive a contradiction, firstly we would like to show that (for large $i$) $s_i \in W^{1, p, \delta}$. 

Since the norm of $\hat{\mc B}_{\bf v}$ controls the $L^\infty$ norm, there exists $c_1>0$ such that, 
\beq\label{eqn311}
\sup_{z\in {\mb C}} {\rm dist} ( u_i(z), u_i'(z)) \leq c_1 \Big[ \| \xi_i \|_{L^\infty} + \| \xi_i' \|_{L^\infty}\Big] \leq 2 c_1 \epsilon_i.
\eeq
Let $R>0$ such that $u(C_R)$ is contained in a neighborhood of $\mu^{-1}(0)$ where the $K$-action is free. Since locally ${\rm ker} (d\mu \circ J)$ parametrizes a slice of the $K$-action, for $i$ sufficiently large, there exist unique $\wt{s}_i, \wt{s}_i': C_R \to {\mf g}$, $\wt\xi_i, \wt\xi_i' \in \Gamma( C_R, {\rm ker} ( d \mu(u) \circ J) )$ such that 
\beqn
u_i = e^{\wt{s}_i} \exp_u \wt\xi_i,\ u_i' = e^{\wt{s}_i'} \exp_u \wt\xi_i'.
\eeqn
Moreover, there exists $c_2>0$ such that $| \wt{s}_i| \leq c_2| d\mu(u) \cdot J \xi_i|$, $|\wt{s}_i'| \leq c_2 |d\mu( u) \cdot J \xi_i'|$ for all large $i$. By \eqref{eqn310}, $e^{s_i} u_i = u_i'$ and hence over $C_R$,
\beqn
\wt\xi_i = \wt\xi_i',\ \wt{s}_i = \wt{s}_i' + s_i.
\eeqn
Therefore 
\beqn
\| s_i \|_{L^{p, \delta}(C_R)} \leq \| \wt{s}_i \|_{L^{p, \delta}(C_R)} + \| \wt{s}_i' \|_{L^{p, \delta}(C_R)} \leq c_2 \Big[ \| d \mu(u) \cdot J \xi_i \|_{L^{p, \delta}} + \| d \mu(u) \cdot J \xi_i' \|_{L^{p, \delta}} \Big] \leq 2c_2 \epsilon_i.
\eeqn

On the other hand, by using a smooth gauge transformation, we may assume $\| a \|_{L^\infty} < +\infty$. Since we have
\beq\label{eqn312}
a + \alpha_i' = - d s_i + e^{s_i}( a + \alpha_i)e^{-s_i},
\eeq
there are constants $c_3, c_4, c_5>0$ such that 
\beqn
\| d s_i \|_{L^{p, \delta}(C_R)} \leq \| \alpha_i \|_{L^{p, \delta}} + \| \alpha_i' \|_{L^{p, \delta}} +	c_3 \| s_i \|_{L^{p, \delta}(C_R)} \leq c_4 \epsilon_i.
\eeqn
\beqn
\| d s_i \|_{L^p(B_R)} \leq \| \alpha_i \|_{L^{p, \delta}} + \| \alpha_i' \|_{L^{p, \delta}} + 2 \| a \|_{W^{1, p}(B_R)} \leq c_5.
\eeqn
Therefore there is $c_6>0$ such that 
\beqn
\| s_i \|_{W^{1, p, \delta}(C_R)} \leq c_6 \epsilon_i,\ \| s_i \|_{W^{1,p, \delta}} \leq c_6.
\eeqn

Take $\delta_- \in ( 1- \frac{2}{p}, \delta)$. Then as $i \to \infty$, by Lemma \ref{lemma215}, there exists a subsequence of $s_i$ (still indexed by $i$) which converge to $s_\infty$ weakly in $W^{1,p, \delta}$ and strongly in $L^{p, \delta_-}$. Moreover,  since
\beqn
0 = \dad \alpha_i' + d\mu(u) \cdot J \xi_i'   = \dad ( - d s_i + e^{s_i} ( a + \alpha_i) e^{-s_i}  - a) + d\mu(u) \cdot J\xi_i',
\eeqn
$s_\infty$ is a weak solution of a second order linear equation. Since $s_\infty$ vanishes on $C_R$, by the unique continuation principle, $s_\infty \equiv 0$. Hence $\| s_i \|_{L^{p, \delta_-}} \to 0$. Then using \eqref{eqn312} again, we have
\beqn
\| d s_i \|_{W^{1, p, \delta_-}} \leq \| \alpha_i \|_{L^{p, \delta_-}} + \| \alpha_i' \|_{L^{p, \delta_-}} + c_3 \| s_i \|_{L^{p, \delta_-}}  \to 0.
\eeqn
Therefore $\| s_i \|_{W^{1, p, \delta_-}} \to 0$. 	
However, for $i$ large enough, this contradicts the uniqueness part of the $k=1$ case of Proposition \ref{prop38} (with $\delta$ replaced by $\delta_-$).
\end{proof}

\begin{proof}[Proof of Proposition \ref{prop35}]
Suppose the statement is not true. Then there exists a sequence of affine vortices ${\bf v}_i^o$ which converges to ${\bf v}_\infty^o = {\bf v}$ in c.c.t. but none of $[{\bf v}_i^o]$ lies in the image of $\hat\phi_{{\bf v}}^\epsilon$ for any $\epsilon > 0$. By Proposition \ref{prop37}, there exist $i_0\in{\mb N}$ such that, for all $i \geq i_0$ (including $i = \infty$) we can transform ${\bf v}_i^o$ via a gauge transformation of regularity $W^{2, p}_{\rm loc}$ to affine vortices ${\bf v}_i = (u_i, a_i)$ such that $u_i$ converges to $u_\infty$ uniformly and, if denote $\xi_i = \exp_{u_\infty}^{-1} u_i$, $\alpha_i = a_i - a_\infty$ for $i\geq i_0$, then 
\beq\label{eqn313}
\lim_{i \to \infty} \Big[ \| \xi_i\|_{L^\infty} + \| \alpha_i\|_{L^{p, \delta}} + \| \nabla^{a_\infty} \xi_i \|_{L^{p, \delta}} + \| d\mu(u_\infty) \cdot J \xi_i \|_{L^{p, \delta}} + \| d\mu(u_\infty) \cdot \xi_i \|_{L^{p, \delta}} \Big] = 0.
\eeq
Denote ${\bm \xi}_i = (\xi_i, \alpha_i)$. We would like to show that ${\bm \xi}_i\in \hat{\mc B}_{{\bf v}_\infty}$ and estimate its norm. The only missing piece is the norm of the derivatives of $\alpha_i$. Indeed, \eqref{eqn313} implies that 
\begin{multline*}
\lim_{i \to \infty}\Big[ \| \xi_i \|_{L^\infty} +  \| \alpha_i \|_{L^{p, \delta}}  + \| d_{a_\infty}^{\, *} \alpha_i + d\mu(u) \cdot J \xi_i \|_{W^{-1, p, \delta}({\mb C})} \Big] \\
\leq \lim_{i \to \infty} \Big[ \| \xi_i \|_{L^\infty} + \| \alpha_i \|_{L^{p, \delta}({\mb C})} + \| a_\infty \|_{L^\infty} \| \alpha_i \|_{L^{p, \delta}({\mb C})} +  \| d\mu(u) \cdot J \xi_i \|_{L^{p, \delta}({\mb C})} \Big] = 0.
\end{multline*}
Hence by the $k=0$ case of Proposition \ref{prop38}, for large $i$, one can turn ${\bf v}_i$ into Coulomb gauge relative to ${\bf v}_\infty$, i.e., there are gauge transformations $e^{s_i}$ with $s_i \in W^{1, p, \delta}$, such that
\beq\label{eqn314}
\| s_i \|_{W^{1, p, \delta}({\mb C})} \leq c_{\rm coul} \| d_{a_\infty}^{\, *} \alpha_i + d\mu(u) \cdot J \xi_i \|_{W^{-1, p, \delta}({\mb C})},
\eeq
and, if we denote $\wt{\bf v}_i = (\wt{u}_i, \wt{a}_i) = e^{s_i} \cdot {\bf v}_i = \exp_{{\bf v}} \wt{\bm \xi}_i$, $\wt{\bm \xi}_i = (\wt\xi_i, \wt\alpha_i)$, then  
\beq\label{eqn315}
d_{a_\infty}^{\,*} \wt\alpha_i + d\mu(u) \cdot J \wt\xi_i = 0. 
\eeq
Moreover, since $a_i$ and $a_\infty$ are both of regularity $W^{1, p}_{\rm loc}$, $e^{s_i}$ is indeed of regularity $W^{2, p}_{\rm loc}$ and $\wt{\bm \xi}_i$ is of regularity $W^{1, p}_{\rm loc}$. We would like to show that $\| \wt{\bm \xi}_i \|_{p, \delta}$ (which could be infinity {\it a priori}) converges to zero. 

We first estimate $\wt\alpha_i$. Since the right-hand-side of \eqref{eqn314} converges to zero, we have
\beq\label{eqn316}
\lim_{i \to \infty} \| s_i \|_{W^{1, p, \delta}} = 0.
\eeq
Moreover, by Proposition \ref{prop37}, $\| F_{a_i} - F_{a_\infty} \|_{L^{p, \delta}}$ converges to zero. So  
\beq\label{eqn317}
\lim_{i \to \infty} \Big[ \| \wt\alpha_i \|_{L^{p, \delta}} + \| F_{\wt{a}_i} - F_a \|_{L^{p, \delta}} + \| d_{a_\infty}^{\,*} \wt\alpha_i \|_{L^{p, \delta}} \Big] = 0.
\eeq
For any $z \in {\mb C}$, we apply the $(\Omega, \Omega') = (B_2(z), B_1(z))$ case of Lemma \ref{lemma310} to $\wt{a}_i$ and $a_\infty$. Notice that by \eqref{eqn317}, the prerequisites \eqref{eqn37} and \eqref{eqn38} hold for certain $M >0$ independent of $z$ and $i$. Therefore there is a constant $c > 0$ (independent of $z$ and $i$) such that 
\beqn
 \| \wt\alpha_i \|_{W^{1, p}(B_1(z))} \leq c \Big[ \| \wt\alpha_i \|_{L^p(B_2(z))} + \| F_{\wt{a}_i} - F_{a_\infty} \|_{L^p(B_2(z))} + \| d_{a_\infty}^{\,*} \wt\alpha_i \|_{L^p(B_2(z))} \Big].
\eeqn
Cover ${\mb C}$ by countably many unit disks $B_1(z_k)$ ($k=1, 2, \ldots$), we see that there are constants $c', c'', c'''>0$ (independent of $i$) such that 
\begin{multline*}
\| \wt\alpha_i \|_{W^{1, p, \delta}({\mb C})} \leq c' \sum_{k = 1}^\infty  ( 1 + |z_k|)^{\delta} \|  \wt\alpha_i \|_{W^{1, p}(B_1(z_k))}\\
\leq c'' \sum_{k=1}^\infty ( 1 + |z_k|)^{\delta} \Big[ \| \wt\alpha_i \|_{L^p(B_2(z_k))} + \| F_{\wt{a}_i} - F_{a_\infty} \|_{L^p(B_2(z_k))} + \| d_{a_\infty}^{\,*} \wt\alpha_i \|_{L^p(B_2(z_k))} \Big] \\
\leq  c''' \Big[ \| \wt\alpha_i \|_{L^{p, \delta}({\mb C})} + \| F_{\wt{a}_i} - F_{a_\infty} \|_{L^{p, \delta}({\mb C})} + \| d_{a_\infty}^{\,*}  \wt\alpha_i \|_{L^{p, \delta}({\mb C})} \Big].
\end{multline*}
By \eqref{eqn317}, $\| \wt\alpha_i \|_{W^{1, p, \delta}}$ converges to zero. Since $|a_\infty|_{L^\infty} < +\infty$, it follows that $\| \wt\alpha_i \|_{L^{p, \delta}} + \| \nabla^{a_\infty} \wt\alpha_i\|_{L^{p, \delta}}$ is comparable to $\| \wt\alpha_i\|_{W^{1, p, \delta}}$. Hence 
\beq\label{eqn318}
\lim_{i \to \infty} \Big[ \| \wt\alpha_i \|_{L^{p, \delta}} + \| \nabla^{a_\infty} \wt\alpha_i \|_{L^{p, \delta}}\Big] = 0.
\eeq

On the other hand, from \eqref{eqn313}, \eqref{eqn316} one can see that 
\beq\label{eqn319}
\lim_{i \to \infty} \Big[ \| \wt\xi_i\|_{L^\infty} + \|\nabla^{a_\infty} \wt\xi_i\|_{L^{p, \delta}} + \| d\mu(u)\cdot J \wt\xi_i \|_{L^{p, \delta}} + \| d\mu(u) \cdot \wt\xi_i \|_{L^{p, \delta}} \Big] = 0.
\eeq
\eqref{eqn318}, \eqref{eqn319} and the above convergence show that $\wt{\bm \xi}_i \to {\bm 0}$ in $\hat{\mc B}_{{\bf v}_\infty}$. By the implicit function theorem, for each $\epsilon>0$ and $i$ sufficiently large, $\wt{\bm \xi}_i \in U_{{\bf v}_\infty}^\epsilon$. So $[{\bf v}_i^o] = [{\bf v}_i] = [\wt{\bf v}_i]\in {\rm Im} ( \hat\phi_{\bf v}^\epsilon)$, which is a contraction. This finishes the proof of Proposition \ref{prop35}.
\end{proof}

\subsection{Constructing transition functions}\label{ss33}

The last step of proving the main theorem is to show that the various charts of $M^K ({\mb C}, X; \beta)$ constructed so far can be glued smoothly into a manifold atlas. 

We first need to construct transition functions among the Banach charts. Let ${\bf v} = (u, a)$ be an affine vortices (not necessarily regular). Denote
\beqn
{\mc B}_{\bf v}:= \Big\{ {\bm \xi} = (\xi, \alpha) \in \hat{\mc B}_{{\bf v}}\ |\ \dad \alpha + d\mu(u) \cdot J \xi = 0\Big\}.
\eeqn
Since $(\xi, \alpha) \mapsto \dad \alpha + d\mu(u) \cdot J \xi$ is a bounded linear map, ${\mc B}_{\bf v}$ is a closed subspace of $\hat{\mc B}_{{\bf v}}$ and hence also a Banach space. For $\epsilon>0$ sufficiently small, denote ${\mc B}_{\bf v}^\epsilon:= {\mc B}_{\bf v} \cap \hat{\mc B}_{\bf v}^\epsilon$. 

Suppose two charts ${\mc B}_{{\bf v}_1}^{\epsilon_1}$ and ${\mc B}_{{\bf v}_2}^{\epsilon_2}$ have an overlap
\beqn
{\mc U}_{12}:= \Big\{ ( {\bm \xi}_1, {\bm \xi}_2) \in {\mc B}_{{\bf v}_1}^{\epsilon_1} \times {\mc B}_{{\bf v}_2}^{\epsilon_2}\ |\ \exp_{{\bf v}_1} {\bm \xi}_1\ {\rm is\ gauge\ equivalent\ to\ } \exp_{{\bf v}_2} {\bm \xi}_2 \Big\}.
\eeqn
There are obvious inclusion maps $\varphi_i: {\mc U}_{12} \hookrightarrow {\mc B}_{{\bf v}_i}^{\epsilon_i}$ for $i = 1, 2$. 

\begin{prop}
$\varphi_1({\mc U}_{12})$ and $\varphi_2({\mc U}_{12})$ are open and the map $\varphi_1 \circ \varphi_2^{-1}: \varphi_2({\mc U}_{12}) \to \varphi_1({\mc U}_{12})$ is a diffeomorphism between open subsets of Banach spaces.
\end{prop}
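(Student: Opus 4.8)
The plan is to show that the transition map between the two Coulomb-gauge Banach charts is given by a gauge transformation that depends smoothly on the base point, and then to apply the local slice theorem (Proposition \ref{prop38}) as the mechanism that produces this gauge transformation and controls its regularity. First I would observe that a point $({\bm \xi}_1, {\bm \xi}_2) \in {\mc U}_{12}$ records that $\exp_{{\bf v}_1} {\bm \xi}_1$ and $\exp_{{\bf v}_2} {\bm \xi}_2$ are gauge equivalent; fixing a background gauge transformation $g_0 \in W^{2,p}_{\rm loc}({\mb C}, K)$ with $g_0 \cdot {\bf v}_1 = {\bf v}_2$ (which exists on $\varphi_1({\mc U}_{12})$ since the two charts overlap, so $[{\bf v}_1] = [{\bf v}_2]$), the commuting diagram \eqref{eqn35} reduces the problem to the case ${\bf v}_1 = {\bf v}_2 =: {\bf v}$. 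Thus I may assume both charts are built around the same vortex ${\bf v}$ and the transition map compares two elements of ${\mc B}_{\bf v}$ that are gauge equivalent to a common gauged map.

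Next I would set up the transition explicitly. Given ${\bm \xi} \in {\mc B}_{\bf v}^\epsilon$, the gauged map ${\bf v}' = \exp_{\bf v} {\bm \xi}$ need not itself lie in Coulomb gauge relative to ${\bf v}$ after a small perturbation; but any gauged map nearby and gauge equivalent to ${\bf v}'$ can be put into Coulomb gauge relative to ${\bf v}$ by a \emph{unique} small gauge transformation $e^s$, and this is exactly what Proposition \ref{prop38} (the $k=1$ case) provides: for ${\bm \xi}$ small enough that the left-hand side of \eqref{eqn36} is below $\epsilon_{\rm coul}$, there is a unique $s \in W^{2,p,\delta}({\mb C}, {\mf k})$ with $\|s\|_{W^{2,p,\delta}} \leq \delta_{\rm coul}$ making $e^s \exp_{\bf v} {\bm \xi}$ Coulomb. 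The transition function $\varphi_1 \circ \varphi_2^{-1}$ then sends $\varphi_2^{-1}$ of a given overlap point to the $\hat{\mc B}_{\bf v}$-coordinates of this re-gauged map, which automatically lands in ${\mc B}_{\bf v}$ by construction. Openness of $\varphi_1({\mc U}_{12})$ and $\varphi_2({\mc U}_{12})$ follows because the Coulomb condition \eqref{eqn24} is an open condition on the relevant Banach spaces and the uniqueness in Proposition \ref{prop38} shows the correspondence is locally a graph.

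For smoothness, I would view the assignment ${\bm \xi} \mapsto s({\bm \xi})$ as the solution of the nonlinear equation $\dad_{\bf v}\bigl( \text{Coulomb defect of } e^s \exp_{\bf v} {\bm \xi} \bigr) = 0$, and apply the implicit function theorem in Banach spaces. The linearization in $s$ at $s = 0$ is the Laplace-type operator $\dad d_a + (d\mu \circ J)(\text{infinitesimal action})$ appearing in the gauge-fixing equation, which Proposition \ref{prop38} guarantees to be an isomorphism on the weighted spaces (that is the content of existence and uniqueness together with the norm bound). Since all the ingredients — $\exp$, parallel transport, the curvature and $\dad$ terms — depend smoothly on ${\bm \xi}$ in the $\hat{\mc B}_{\bf v}$ norm (using the footnoted smoothness of $\hat{\mc F}_{\bf v}$ after passing to a good gauge via Proposition \ref{prop36}), the implicit function theorem yields a smooth dependence $s({\bm \xi})$, and hence a smooth transition map with smooth inverse obtained by swapping the roles of the two charts.

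The main obstacle I expect is the regularity and weighted-norm bookkeeping at infinity: one must verify that the re-gauging map genuinely preserves the weighted Sobolev space $\hat{\mc B}_{\bf v}$ (with the specific $\delta \in (1-\tfrac 2 p, 1)$) rather than merely the local $W^{1,p}_{\rm loc}$ regularity, and that the dependence on ${\bm \xi}$ is smooth \emph{in that weighted norm}. This is precisely where the decay estimates of Proposition \ref{prop27}, the uniform connection bound of Proposition \ref{prop36}, and the full strength of the local slice theorem over the noncompact domain ${\mb C}$ (Proposition \ref{prop38}) are indispensable; the compact-domain version of this statement is routine, but here the nontrivial step is controlling the tail behavior so that the implicit function theorem can be applied in the Banach space $\hat{\mc B}_{\bf v}$ and not just locally.
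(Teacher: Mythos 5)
Your argument has a genuine gap at its first step, and it is the step that removes the entire difficulty of the proposition. You claim that because the two charts overlap, $[{\bf v}_1] = [{\bf v}_2]$, so that a background gauge transformation $g_0$ with $g_0 \cdot {\bf v}_1 = {\bf v}_2$ lets you assume ${\bf v}_1 = {\bf v}_2$. This is false: ${\mc U}_{12} \neq \emptyset$ means only that some \emph{deformed} vortices agree up to gauge, i.e.\ $\exp_{{\bf v}_1} {\bm \xi}_1$ is gauge equivalent to $\exp_{{\bf v}_2} {\bm \xi}_2$ for some pair $({\bm \xi}_1, {\bm \xi}_2)$ — exactly as two coordinate charts on a manifold can overlap while being centered at different points. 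The correct gauge-fixing reduction (the one the paper makes) is to normalize the gauge transformation relating the two deformed objects to be the identity, so that $a_2 + \alpha_2 = a_1 + \alpha_1$ and $\exp_{u_2}\xi_2 = \exp_{u_1}\xi_1$, while ${\bf v}_1$ and ${\bf v}_2$ remain distinct. Once the base vortices differ, the core of the proof is to transfer a perturbation ${\bm \xi}_2'$ based at ${\bf v}_2$ to a perturbation ${\bm \xi}_1'$ based at ${\bf v}_1$: the paper does this by the pointwise equation $\exp_{u_2(z)}(\xi_2(z) + \xi_2'(z)) = \exp_{u_1(z)}(\xi_1(z) + \xi_1'(z))$ and then spends most of the proof establishing the Claim that this pointwise correspondence actually maps into $\hat{\mc B}_{{\bf v}_1}$ with controlled weighted norm (the estimates \eqref{eqn323}, \eqref{eqn326}, \eqref{eqn327}, including the twisting by $e^{-\lambda\theta}$ near infinity) and is smooth between the weighted Sobolev spaces. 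None of this appears in your proposal, because your (incorrect) reduction made the two centers coincide and hence made the transfer map trivially the identity.

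A secondary problem is your openness argument: you assert that the Coulomb condition \eqref{eqn24} ``is an open condition,'' but it is the opposite — it is the zero set of a bounded linear map, hence a closed condition cutting out the slice ${\mc B}_{\bf v}$. Openness of $\varphi_i({\mc U}_{12})$ has to be proved by showing that every sufficiently small perturbation $\wt{\bm \xi}_2 = {\bm \xi}_2 + {\bm \xi}_2' \in {\mc B}_{{\bf v}_2}$ of an overlap point can be matched in the other chart: set $\alpha_1' = \alpha_2'$, solve pointwise for $\xi_1'$, verify the weighted-norm membership via the Claim, and only then apply Proposition \ref{prop38} to re-impose the Coulomb condition by a unique small $e^{s_1}$. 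Your final ingredient — Proposition \ref{prop38} combined with the implicit function theorem to produce the re-gauging $s$ and its smooth dependence — does coincide with the last step of the paper's proof, and your closing paragraph correctly names the weighted-norm bookkeeping at infinity as the crux; but as written the proposal never supplies that bookkeeping, and the reduction it rests on is unsound.
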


\begin{proof}
Suppose $({\bm \xi}_1, {\bm \xi}_2)\in {\mc U}_{12}$. Then there exists $g\in W^{2, p}_{\rm loc}({\mb C},K)$ such that 
\beqn
g \cdot \exp_{{\bf v}_2} {\bm \xi}_2 = \exp_{{\bf v}_1} {\bm \xi}_1.
\eeqn
Since the norm of $\hat{\mc B}_{{\bf v}_2}$, the exponential map, and the Coulomb gauge condition are all gauge invariant, one can assume that $ g= {\rm id}$. Then 
\beq\label{eqn320}
a_2 + \alpha_2 = a_1 + \alpha_1,
\eeq
\beq\label{eqn321}
\exp_{u_2} \xi_2 = \exp_{u_1} \xi_1
\eeq

To prove the openness of $\varphi_i({\mc U}_{12})$, we would like to show that if we perturb $(\xi_2, \alpha_2)$ a little, then one can still solve the above equation for $(\xi_1, \alpha_1) \in {\mc B}_{{\bf v}_1}$. Take $\wt{\bm \xi}_2' \in {\mc B}_{{\bf v}_2}$ such that the norm of ${\bm \xi}_2':= \wt{\bm \xi}_2 - {\bm \xi}_2 = (\xi_2',\alpha_2')$ is sufficiently small. Define $\alpha_1' = \alpha_2'$. On the other hand, for each $z \in {\mb C}$, there exists a unique $\xi_1'$ along $u_1$ such that 
\beq\label{eqn322}
\exp_{u_2 (z) }\big( \xi_2 (z) + \xi_2'(z) \big) = \exp_{u_1(z)} \big( \xi_1(z) + \xi_1'(z) \big).
\eeq
Denote ${\bm \xi}_1' = (\xi_1', \alpha_1')$. Although we do not know its regularity yet, pointwise one has
\beqn
\exp_{{\bf v}_1} ( {\bm \xi}_1 + {\bm \xi}_1') = \exp_{{\bf v}_2} ( {\bm \xi}_2 + {\bm \xi}_2').
\eeqn

\begin{claim}
There exists $\epsilon_2'>0$ such that for all ${\bm \xi}_2'$ with $\| {\bm \xi}_2'\|_{p, \delta} < \epsilon_2'$, ${\bm \xi}_1'\in \hat{\mc B}_{{\bf v}_1}$ and the map ${\bm \xi}_2' \mapsto {\bm \xi}_1'$ is a smooth map from ${\mc B}_{{\bf v}_2}^{\epsilon_2'}$ to ${\mc B}_{{\bf v}_1}$.
\end{claim}

If this claim is proved, for $\| {\bm \xi}_2'\|$ sufficiently small, take the corresponding ${\bm \xi}_1' = (\xi_1', \alpha_1')$. Then ${\bm \xi}_1 + {\bm \xi}_1'$ satisfies the hypothesis of Proposition \ref{prop38}, i.e., 
\beqn
\| \alpha_1 + \alpha_1'\|_{W^{1, p, \delta}} + \| \xi_1 + \xi_1'\|_{L^\infty} + \| d_{a_1}^{\,*}(\alpha_1 + \alpha_1') + d\mu(u_1) \cdot (J \xi_1 + J \xi_1') \|_{L^{p, \delta}} \leq \epsilon_{\rm coul}({\bf v}_1).
\eeqn
Then there exists a unique $s_1\in W^{2, p,\delta}({\mb C}, {\mf k})$ such that 
\beqn
\| s_1 \|_{W^{1, p, \delta}}\leq \delta_{\rm coul}({\bf v}_1),\  e^{s_1} \cdot \exp_{{\bf v}_1} ({\bm \xi}_1 + {\bm \xi}_1') \in {\mc B}_{{\bf v}_1}.
\eeqn
We can write $e^{s_1} \cdot \exp_{{\bf v}_1} ({\bm \xi}_1 + {\bm \xi}_1') = \exp_{{\bf v}_1} \wt{\bm \xi}_1$. Therefore $(\wt{\bm \xi}_1, \wt{\bm \xi}_2)\in {\mc U}_{12}$ and this proves the openness of $\varphi_2({\mc U}_{12})$. $\varphi_1({\mc U}_{12})$ is open in ${\mc B}_{{\bf v}_1}^{\epsilon_1}$ for the same reason. Moreover, since $s_1$ is constructed using the implicit function theorem, which depends smoothly on ${\bm \xi}_1'$. Therefore the map $\varphi_1\circ \varphi_2^{-1}: \wt{\bm \xi}_2\mapsto \wt{\bm \xi}_1$ is smooth. 

Now we prove the above claim. Since the norm of $\hat{\mc B}_{{\bf v}_i}$ can control the $L^\infty$ norm, one has $\|a_1 - a_2\|_{L^\infty} = \| \alpha_1 - \alpha_2\|_{L^\infty} <\infty$. Therefore 
\beqn
\|\alpha_1'\|_{L^{p, \delta}} + \| \nabla^{a_1} \alpha_1'\|_{L^{p, \delta}} \leq \| \nabla^{a_2} \alpha_2'\|_{L^{p, \delta}} + ( 1 + \| a_2 - a_1\|_{L^\infty}) \| \alpha_2' \|_{L^{p, \delta}}.
\eeqn
To control the norm of $\xi_1'$, notice that for each $z \in {\mb C}$, \eqref{eqn322} defines a map $\xi_2'(z) \mapsto \xi_1'(z)$ from $T_{u_2(z)}^{\ \delta} X$ to $T_{u_1(z)}^{\ \delta} X$ for sufficiently small $\delta$. This map depends smoothly on $u_1(z), \xi_1(z), u_2(z), \xi_2(z)$ and is the identity if $\xi_1(z) = \xi_2(z) = 0$. Therefore, there exists $c_1 > 0$ such that when $|\xi_2'(z)|$ is small enough, 
\beqn
|\xi_1'(z)| \leq c_1 |\xi_2'(z)|;
\eeqn
\beqn
|d\mu(u_1(z)) \cdot \xi_1'| + |d\mu(u_1(z)) \cdot J \xi_1'| \leq c_1 \Big[ |d\mu(u_2(z) ) \cdot \xi_2'| + |d\mu(u_2(z)) \cdot J \xi_2' |  + (|\xi_1 | + |\xi_2|) |\xi_2'| \Big].
\eeqn
Therefore, for appropriate value of $c_2>0$ (which is independent of $\xi_2'$)
\begin{multline}\label{eqn323}
\| \xi_1' \|_{L^\infty} + \| \d\mu(u_1) \cdot J \xi_1' \|_{L^{p, \delta}} + \| \d\mu(u_1) \cdot \xi_1' \|_{L^{p, \delta}} \\
\leq c_1 \Big[ \| \xi_2' \|_{L^\infty} + \| \d\mu(u_2) \cdot J \xi_2' \|_{L^{p, \delta}} + \| \d\mu(u_2) \cdot \xi_2' \|_{L^{p, \delta}} + \| \xi_2'\|_{L^\infty} \|\xi_1 \|_{L^{p, \delta}} + \| \xi_2'\|_{L^\infty} \| \xi_2\|_{L^{p, \delta}} \Big]\\
\leq c_2 \Big[ \| \xi_2' \|_{L^\infty} + \| \d\mu(u_2) \cdot J \xi_2' \|_{L^{p, \delta}} + \| \d\mu(u_2) \cdot \xi_2' \|_{L^{p, \delta}} \Big].
\end{multline}

We also need to estimate the norm of the derivative of $\xi_1'$. Represent the derivative of the exponential map as 
\beqn
d \exp_x \xi = E_1(x, \xi) dx + E_2(x, \xi) \nabla \xi,\ E_1(x, \xi), E_2(x, \xi)\in {\rm Hom}( T_x X, T_{\exp_x \xi} X).
\eeqn
Then differentiating both sides of \eqref{eqn322} and \eqref{eqn321} with respect to $z$, we obtain
\begin{multline}\label{eqn324}
E_1 (u_2, \xi_2 + \xi_2' ) d u_2 + E_2 ( u_2, \xi_2 + \xi_2' ) ( \nabla \xi_2 + \nabla \xi_2' ) \\
 = E_1 (u_1, \xi_1 + \xi_1' ) \d u_1 + E_2( u_1, \xi_1 + \xi_1' ) ( \nabla \xi_1 + \nabla \xi_1' ),
\end{multline}
\beq\label{eqn325}
E_1 ( u_2, \xi_2) d u_2 + E_2 ( u_2, \xi_2 ) \nabla \xi_2  = E_1 ( u_1, \xi_1 ) d u_1 + E_2 ( u_1, \xi_1) \nabla \xi_1.
\eeq
Let $P$ be the parallel transport from the tangent space at $\exp_{u_1} \xi_1 = \exp_{u_2} \xi_2$ to the tangent space at $\exp_{u_1} (\xi_1 + \xi_1') = \exp_{u_2} (\xi_2 + \xi_2')$. Then by \eqref{eqn324} and \eqref{eqn325}, we have
\begin{multline*}
E_2(u_1, \xi_1 + \xi_1') \nabla \xi_1' \\
=  E_1 (u_2, \xi_2 + \xi_2') d u_2 - E_1(u_1, \xi_1 + \xi_1') d u_1 + E_2(u_2, \xi_2 + \xi_2') ( \nabla \xi_2 + \nabla \xi_2') - E_2 (u_1, \xi_1 + \xi_1') \nabla \xi_1\\
= \Big( E_1(u_2, \xi_2 + \xi_2') d u_2 - P E_1(u_2, \xi_2) d u_2 \Big) - \Big( E_1(u_1, \xi_1 + \xi_1') d u_1 -  P E_1 (u_1, \xi_1) d u_1 \Big) \\
 + \Big( E_2(u_2, \xi_2 + \xi_2') \nabla \xi_2  - P E_2( u_2, \xi_2) \nabla \xi_2 \Big) -  \Big( E_2 (u_1, \xi_1 + \xi_1') \nabla \xi_1 - P E_2( u_1,  \xi_1) \nabla \xi_1 \Big) \\
 + E_2(u_2, \xi_2 + \xi_2') \nabla \xi_2'.
\end{multline*}
Since $E_1, E_2$ are differentiable, there exists $c_3 >0$ such that 
\beq\label{eqn326}
\| \nabla \xi_1' \|_{L^p(B_R)} \leq c_3 \sum_{j=1}^2 \| \xi_j' \|_{L^\infty} \Big[ \| d u_j \|_{L^p(B_R)} + \| \nabla \xi_j \|_{L^p(B_R)} \Big] + c_3 \| \nabla \xi_2' \|_{L^p(B_R)}.
\eeq
To estimate the $L^{p, \delta}$ norm of the derivative of $\xi_1'$ over $C_R$, one can twist the maps by $e^{-\lambda \theta}$ and apply the same argument. Thus there exists $c_3>0$ such that 
\beq\label{eqn327}
\| \nabla^{a_1} \xi_1'  \|_{L^{p, \delta}(C_R)} \leq c_3 \Big[ \| \xi_1' \|_{L^\infty} + \| \xi_2' \|_{L^\infty} +  \| \nabla^{a_2} \xi_2' \|_{L^{p, \delta}(C_R)} \Big].
\eeq

\eqref{eqn323}, \eqref{eqn326} and \eqref{eqn327} together show that if $\| {\bm \xi}_2' \|_{p, \delta}$ is sufficiently small, ${\bm \xi}_1' \in \hat{\mc B}_{{\bf v}_1}$ and $\|{\bm \xi}_1' \|_{p, \delta} \leq c_4 \| {\bm \xi}_1'\|_{p, \delta}$ for some $c_4>0$. So we have proved the first part of the claim and the continuity of the map ${\bm \xi}_1' \mapsto {\bm \xi}_2'$. 

As for the smoothness, the nontrivial part is about the map $\varphi: \xi_2'\mapsto \xi_1'$, which is defined by \eqref{eqn322}. Since everything happens in a compact subset of $X$, all constants from the geometry of $X$ can be uniformly bounded. Moreover, \eqref{eqn322} is a pointwise relation. So for any unit disk $B_1(z) \subset {\mb C}$, if we denote $\varphi_z: W^{1, p}(B_1(z), u_2^* TX) \to W^{1, p}(B_1(z), u_1^* TX)$ by restricting $\varphi$ to $B_1(z)$, then $\varphi_z$ is smooth and its derivatives can be bounded by numbers that are independent of $z$. Moreover, take $R \geq 1$. Restricting \eqref{eqn322} to $C_R$ is equivalent to 
\beqn
\exp_{\check{u}_2} (\check\xi_2 + \check\xi_2') = \exp_{\check{u}_1} (\check\xi_1 + \check\xi_1'),
\eeqn
where objects with a $\check{}$ are obtained by twisting by $e^{-\lambda \theta}$. We also denote the map $\check\xi_2' \mapsto \check\xi_1'$ by $\check\varphi_z$. Its derivatives can also be bounded by numbers that are independent of $z$. Take a countable cover $\{ B_1(z_k)\}_{k=1}^\infty$ of $C_R$. Since we have the following equivalences of norms
\beqn
\| \xi \|_{L^{p, \delta}} + \| \nabla^{a_i} \xi \|_{L^{p, \delta}} \approx \| \xi\|_{W^{1, p}(B_R)} +  \| \check\xi \|_{W^{1, p, \delta}(C_R)} \approx  \| \xi\|_{W^{1, p}(B_R)} + \sum_{k=1}^\infty |z_k|^\delta \| \check\xi\|_{W^{1, p}(B_1(z_k))} 
\eeqn
(for $i=1, 2$), it leads to the smoothness of $\varphi$ with respect to the norms of ${\mc B}_{{\bf v}_1}$ and ${\mc B}_{{\bf v}_2}$. \end{proof}

The transition between ${\mc B}_{{\bf v}_1}$ and ${\mc B}_{{\bf v}_2}$ immediately gives a transition function between the finite dimensional charts $(U_1, \hat\phi_1): = (U_{{\bf v}_1}^{\epsilon_1}, \hat\phi_{{\bf v}_1})$ and $(U_2, \hat\phi_2):= ( U_{{\bf v}_2}^{\epsilon_2}, \hat\phi_{{\bf v}_2})$. Indeed, if $\hat\phi_1(U_1) \cap \hat\phi_2(U_2) \neq \emptyset$, then the restriction of $\varphi_1 \circ \varphi_2^{-1}$ to $U_2\subset {\mc B}_{{\bf v}_2}^{\epsilon_2}$ is smooth and its image is in $U_1$. Because $\varphi_1\circ \varphi_2^{-1}$ is a diffeomorphism and $U_1$, $U_2$ have the same dimension, the restriction of $\varphi_1 \circ \varphi_2^{-1}$, which coincides with $\hat\phi_1^{-1} \circ \hat\phi_2$, is a diffeomorphism between the charts. Therefore, over the regular part $M^K({\mb C}, X; \beta)^{\rm reg}$, we have constructed a collection of charts ${\ms C}^\infty$ and a collection of transition functions. The cocycle condition is immediate. Therefore, it gives a smooth atlas on $M^K ({\mb C}, X; \beta)^{\rm reg}$ which induces a structure of smooth manifold. Therefore we finish proving the main theorem.

\subsection{Proof of Lemma \ref{lemma310}}

The proof is by elliptic regularity of the operator $d \oplus d^{\, *}$ on $\alpha:= a- b$. Firstly we have
\begin{multline}\label{eqn328}
\| d^{\,*} \alpha \|_{L^p(\Omega)} = \| \dad \alpha - * [ a \wedge * \alpha] \|_{L^p(\Omega)} \leq \| \dad \alpha \|_{L^p(\Omega)} + \| a \|_{L^\infty(\Omega)} \| \alpha \|_{L^p(\Omega)} \\
\leq \Big[ 1+ \| a \|_{L^\infty(\Omega)} \Big]  \Big[ \| \dad \alpha \|_{L^p(\Omega)} + \| \alpha \|_{L^p(\Omega)} \Big].
\end{multline}
We also need to bound the $L^p$ norm of $\d \alpha$ over a domain larger than $\Omega'$. We have
\beqn
F_a = F_b + d \alpha + [a \wedge \alpha] + [\alpha \wedge \alpha].
\eeqn
Then for $1\leq q\leq p$ and any compact domain $B \subset \Omega$, we have
\beq\label{eqn329}
\| d \alpha \|_{L^q(B)} \leq \Big[ 1 + \| a \|_{L^\infty(B)} \Big] \Big[  \| F_a - F_b \|_{L^q (B)} + \| \alpha \|_{L^q(B)} + \| \alpha \|_{L^{2q}(B)}^2 \Big].
\eeq
Enlarge $\Omega'$ a little to $\Omega''$, which is still contained in the interior of $\Omega$. We would like to prove that, there is a constant $c_1(M)>0$, also depending on $\Omega, \Omega''$ and $p$, such that 
\beq\label{eqn330}
\| \alpha\|_{L^\infty(\Omega'')} \leq c_1(M),
\eeq
which, together with \eqref{eqn328} and the $q = p$, $B = \Omega''$ case of \eqref{eqn329}, will imply 
\beqn
\| d \alpha \|_{L^p(\Omega'')}  + \| \dad \alpha \|_{L^p(\Omega'')} \leq c_2(M) \Big[ \| F_a - F_b \|_{L^p(\Omega)} +  \| \dad \alpha \|_{L^p(\Omega)} + \| \alpha \|_{L^p(\Omega)} \Big]
\eeqn
for certain $c_2(M)>0$. Then elliptic regularity for $d \oplus d^{\,*}$ implies \eqref{eqn39}.

By Sobolev embedding, to prove \eqref{eqn330}, it suffices to prove 
\beq\label{eqn331}
\| \alpha \|_{W^{1, q}(\Omega'')} \leq c_3(M)
\eeq
for some $q > 2$ and $c_3(M)>0$. Depending on the value of $p$, we argue in the following three cases.
\begin{enumerate}
\item $2<p<4$. Set $q_0:=p$. There exists $m \geq 1$ and a sequence $2<q_0<\dots<q_{m-1}\leq 4$, $q_m>4$ such that
\beq\label{eqn332}
q_0 = p,\quad q_j <\frac {2q_{j - 1}} {4-q_{j -1}} \quad j = 1, \ldots, m.
\eeq
We also fix a sequence of domains 
\begin{align*}
&\ \Om=\Om_0 \supset \dots \supset \Om_m  \supset \Om'',&\  \on{int}(\Om_k) \supset \Om_{k+1}.
\end{align*}
We prove inductively that for all $j = 0, 1, \ldots, m$, there exist constants $C_j(M)>0$ such that 
\beq\label{eqn333}
\| \alpha \|_{L^{q_j}(\Omega_j)} \leq C_j(M).
\eeq
The $j = 0$ case is given by the hypothesis. Assuming for $j<m$, there is $C_j(M)>0$ such that \eqref{eqn333} holds. Then the $q = q_j/2$, $B = \Omega_j$ case of \eqref{eqn329} implies that $d \alpha |_{\Omega_j} \in L^{q_j /2}(\Omega_j)$ with 
\beqn
\| d \alpha\|_{L^{q_j/2}(\Omega_j)} \leq C_j'(M).
\eeqn
Since $q_j/2 < p$, applying elliptic regularity with the estimates \eqref{eqn328}, the $q = q_j/2$, $B = \Omega_{j+1}$ case of \eqref{eqn329}, and the $L^p$ bound on $\alpha$, we get 
\beqn
\| \alpha \|_{W^{1, q_j/2}(\Omega_{j+1})} \leq C_j''(M).
\eeqn
By Sobolev embedding $W^{1,q_j /2} \hookrightarrow L^{q_{j+1}}$ (which is valid by \eqref{eqn332}), we obtain \eqref{eqn333} with $j$ replaced by $j+1$. By repeating this procedure, we end up with \eqref{eqn333} with $j =m$. Using the elliptic regularity again, we obtain \eqref{eqn331}.

\item If $p=4$, then modify \eqref{eqn332} by choosing $q_0$ slightly smaller than $4$ with $q_1 > 4$. One step of the above induction can prove \eqref{eqn333} for $j = m = 1$ and hence \eqref{eqn331}. 

\item  if $p>4$, then take $m = 0$. \eqref{eqn331} is also obtained.
\end{enumerate}
Therefore \eqref{eqn331} holds for certain $q>2$. This finishes the proof of Lemma \ref{lemma310}.

\section{Proof of Proposition \ref{prop32}}\label{section4}

In this section, we prove that the augmented linearized operator $\hat{\mc D}_{\bf v}: \hat{\mc B}_{\bf v} \to \hat{\mc E}_{\bf v}$ at ${\bf v}\in \wt{M}^K({\mb C}, X)$ is a Fredholm operator of the expected index. This result is proved by Ziltener as \cite[Theorem 4]{Ziltener_book}, but we provide an alternate presentation. Here we remark again that the condition ${\rm dim} \bar{X} > 0$ assumed by Ziltener is unnecessary.

\subsection{Bundle completions to ${\mb P}^1$}

We fix $p>2$ and $R>1$. Let $E\to {\mb C}$ be a rank $n$ Hermitian vector bundle of class $W^{1, p}_{\rm loc}$ (that means, it consists of a collection of local charts with $U(n)$-valued transition functions of regularity $W^{1, p}_{\rm loc}$). A {\bf completion} of $E$ consists of $R > 0$ and a trivialization
\beqn
\psi_R: E|_{C_R} \to C_R \times {\mb C}^n
\eeqn
of regularity $W^{1, p}_{\rm loc}$. The pair $(E, \psi_R)$ is called a {\bf completed bundle}, which induces a Hermitian vector bundle $\ov{E}(\psi_R) \to {\mb P}^1$, by adding a new chart over ${\mb P}^1 \bs B_R$ together with the gluing map $\psi_R$. The degree of $\ov{E}(\psi_R)$ is called the degree of the completed bundle, denoted by ${\rm deg} (E, \psi_R) \in {\mb Z}$. 

Given $\delta \in (1- \frac{2}{p}, 1)$, define
\beqn
{\mc A}^{1, p, \delta}(E, \psi_R): = \Big\{ B \in {\mc A}^{1, p}_{\rm loc}(E) \ |\  \psi_R \circ (B|_{C_R}) \circ \psi_R^{-1} - d \in W^{1, p, \delta} \big( C_R, {\mf u}(n) \big) \Big\}.
\eeqn
For any $B \in {\mc A}^{1,p, \delta} (E, \psi_R)$, define the norm
\beq\label{eqn41}
\| \sig\|_{p, \delta}^B:= \|\sig \|_{L^\infty} + \| \nabla^{B} \sig \|_{L^{p,\delta}}.
\eeq
The Sobolev completion of sections under the above norm is denoted by $L_+^{1, p, \delta}(E, \psi_R)^B$. 

It is routine to check the following lemma.
\begin{lemma}\label{lemma41} Let $(E, \psi_R)$ be a completed bundle.
\begin{enumerate}
\item \label{lemma41b} Given $B\in {\mc A}^{1, p, \delta}(E, \psi_R)$, the covariant derivative $\nabla^{B}$ extends to a bounded map between Sobolev completions 
\beqn
\nabla^B: L_+^{1, p, \delta}(E, \psi_R)^B \to L^{p,\delta}({\mb C}, \Lambda^1 \otimes E).
\eeqn

\item \label{lemma41a} If $A, B \in {\mc A}^{1, p, \delta} (E, \psi_R)$, then $\| \cdot \|_{p, \delta}^A$ and $\| \cdot \|_{p, \delta}^{B}$ are equivalent.

\item \label{lemma41c} If $\delta_+ \in (\delta, 1)$ and $A, B \in {\mc A}^{1, p, \delta_+} (E, \psi_R)$, then 
\beqn
\nabla^B - \nabla^A: L_+^{1, p, \delta}(E, \psi_R)^B \to L^{p,\delta}({\mb C}, \Lambda^1 \otimes E)
\eeqn
is a compact operator.
\end{enumerate}
\end{lemma}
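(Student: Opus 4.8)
The plan is to handle the three statements separately; each reduces to weighted-Sobolev bookkeeping once the correct earlier result is invoked, and the only genuine content is the compactness in the third item. The first statement is immediate from the definition of the norm \eqref{eqn41}: by construction $\| \nabla^B \sig \|_{L^{p,\delta}} \leq \| \sig \|_{p,\delta}^B$, so $\nabla^B$ has operator norm at most $1$ on the dense space of sections used to define $L_+^{1,p,\delta}(E,\psi_R)^B$, and therefore extends by continuity to the completion. Nothing beyond the definition is needed here.

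For the second statement I would split ${\mb C} = B_R \cup C_R$ and compare $\nabla^A$ with $\nabla^B$ through the difference of connection one-forms $a := A - B$, using $\nabla^A \sig = \nabla^B \sig + a(\sig)$. Over the end $C_R$, since $A, B \in {\mc A}^{1,p,\delta}(E,\psi_R)$ both differ from $d$ by a $W^{1,p,\delta}$ term in the trivialization $\psi_R$, we have $a \in W^{1,p,\delta}(C_R, \Lambda^1 \otimes {\mf o}(E))$ (the skew-Hermitian values of $a$ lie in ${\mf o}(E)$ because both connections are metric). The second norm equivalence of Lemma \ref{lemma216}, applied with $\nabla^E = \nabla^B$, then gives $\| \sig \|_{L^\infty(C_R)} + \| \nabla^B \sig \|_{L^{p,\delta}(C_R)} \approx \| \sig \|_{L^\infty(C_R)} + \| \nabla^A \sig \|_{L^{p,\delta}(C_R)}$. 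Over the compact piece $B_R$ the weight $\rho$ is bounded above and below, so $L^{p,\delta}(B_R) \approx L^p(B_R)$, and $a \in W^{1,p}_{\rm loc} \subset L^\infty(B_R)$ by Sobolev embedding ($p>2$); the triangle inequality then bounds $\| \nabla^A \sig \|_{L^p(B_R)}$ by $\| \nabla^B \sig \|_{L^p(B_R)} + \| \sig \|_{L^\infty}$ and vice versa. Combining the two regions yields $\| \cdot \|_{p,\delta}^A \approx \| \cdot \|_{p,\delta}^B$.

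For the third statement, the operator $\nabla^B - \nabla^A$ is the zeroth-order multiplication $\sig \mapsto (B-A)(\sig)$, where now $c := B - A \in W^{1,p,\delta_+}(C_R)$ with $\delta_+ > \delta$. I would prove compactness by the mechanism of Proposition \ref{prop214}. On any compact $S$, multiplication by $c$ factors as $W^{1,p}(S) \hookrightarrow L^p(S) \xrightarrow{\,c\,\cdot\,} L^p(S)$, which is compact by Rellich (the inclusion) followed by a bounded operator (multiplication by $c \in L^\infty(S)$), and the source norm dominates the $W^{1,p}(S)$-norm on $S$. On an end $C_{R'}$ the operator norm decays, since
\beqn
\| c(\sig) \|_{L^{p,\delta}(C_{R'})} \leq \| \sig \|_{L^\infty}\, \| c \|_{L^{p,\delta}(C_{R'})} \leq (R')^{\delta - \delta_+}\, \| c \|_{L^{p,\delta_+}(C_R)}\, \| \sig \|_{p,\delta}^B,
\eeqn
using $\rho^{\delta} = \rho^{\delta_+} \rho^{\delta-\delta_+} \leq \rho^{\delta_+} (R')^{\delta-\delta_+}$ on $C_{R'}$. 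Thus the tail operator norm is $\lesssim (R')^{\delta-\delta_+} \to 0$. A diagonal subsequence argument over $R' \to \infty$ (tail uniformly small, together with $L^p$-convergence on each $B_{R'}$) then shows every bounded sequence has image Cauchy in $L^{p,\delta}({\mb C})$, giving compactness.

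The only step that is not pure bookkeeping is the third item, and there the essential input — and the reason the hypothesis $\delta_+ > \delta$ is imposed — is the tail estimate: the surplus weight $\delta_+ - \delta > 0$ is exactly what turns the otherwise merely bounded multiplication operator into a compact one. Everything else is the standard split into a compact interior part (handled by Rellich) and an end (handled by the decaying operator norm), a dichotomy already packaged in Proposition \ref{prop214}.
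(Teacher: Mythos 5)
Your proof is correct. The paper gives no argument for this lemma at all --- it is dismissed with ``It is routine to check'' --- and your write-up supplies exactly the verification the authors evidently had in mind: part (1) is immediate from the definition of the norm \eqref{eqn41}, part (2) follows from the second equivalence of Lemma \ref{lemma216} over the end $C_R$ together with Sobolev embedding and the boundedness of the weight on the compact piece, and part (3) uses the interior-Rellich-plus-decaying-tail mechanism of Proposition \ref{prop214} (the same mechanism the paper invokes for Lemma \ref{lemma215} and in the proof of Lemma \ref{lemma45}), with the surplus weight $\delta_+ - \delta > 0$ being precisely what forces the tail operator norm to zero.
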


Therefore we can remove the symbol $B$ from the notations $\| \cdot \|_{p, \delta}^B$ and $L_+^{1, p, \delta}({\mb C}, E)^B$. 
	
\begin{lemma}\label{lemma42}
Given $p>2$, $\delta \in (1- \frac{2}{p}, 1)$ and $\delta_+\in (\delta, 1)$. Suppose $(E, \psi_R)$ is a completed line bundle and $B \in {\mc A}^{1, p, \delta_+}(E, \psi_R)$, then 
\beqn
\ov\partial_B: L_+^{1, p, \delta}(E, \psi_R) \to L^{p, \delta}({\mb C}, \Lambda^{0,1}\otimes E)
\eeqn
is Fredholm with real index ${\rm ind} (\ov\partial_B) = 2 {\rm deg} (E, \psi_R) + 2$.
\end{lemma}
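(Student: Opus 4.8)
The plan is to prove Fredholmness first, by establishing a weighted elliptic estimate with a compact remainder, and then to compute the index by reducing, through homotopy invariance, to a model bundle of the given degree, for which the index is the classical Riemann--Roch number on ${\mb P}^1$.

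For Fredholmness I would establish an a priori estimate of the form $\| \sig \|_{p, \delta} \leq c \big( \| \ov\partial_B \sig \|_{L^{p, \delta}} + \| \sig \|_{L^p(B_{R'})} \big)$ for a suitably large $R'$. Over the compact piece $B_{R'}$ this is the standard interior elliptic estimate for $\ov\partial$. Near infinity, in the trivialization $\psi_R$, the operator $\ov\partial_B$ differs from the flat operator $\ov\partial_d$ by the zeroth-order term coming from $\psi_R \circ (B - d) \circ \psi_R^{-1} \in W^{1, p, \delta_+}$, which is subordinate in the weighted norm; it therefore suffices to have the weighted estimate for the model operator on the end $C_R$, valid because the chosen weight $\delta$ avoids the indicial roots of $\ov\partial_d$. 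Patching the interior and exterior estimates with cutoff functions yields the global estimate. Since the error term $\| \sig \|_{L^p(B_{R'})}$ factors through a compact embedding (Rellich over the compact set $B_{R'}$, or Lemma \ref{lemma215}), the kernel is finite-dimensional and the range is closed; running the same argument for the formal adjoint gives a finite-dimensional cokernel, so $\ov\partial_B$ is Fredholm.

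For the index I would first observe that by Lemma \ref{lemma41} (part \ref{lemma41c}) any two connections $A, B \in {\mc A}^{1, p, \delta_+}(E, \psi_R)$ differ by a compact operator, so the index is independent of $B$ and depends only on the completed line bundle $(E, \psi_R)$, that is, on its degree $d := {\rm deg}(E, \psi_R)$. Since line bundles over ${\mb P}^1$ are classified by degree, I would reduce to a convenient model of winding number $d$ carrying a model connection, and then identify the analytic kernel and cokernel with Dolbeault cohomology: by Lemma \ref{lemma212} sections in $L_+^{1, p, \delta}$ extend continuously over the point at infinity, and the window $\delta \in (1 - \tfrac 2 p, 1)$ forces the kernel of $\ov\partial_B$ to consist of precisely the holomorphic sections that extend across $\infty$, i.e.\ $H^0({\mb P}^1, \ov{E}(\psi_R))$, while the formal adjoint together with Serre duality on ${\mb P}^1$ identifies the cokernel with $H^1({\mb P}^1, \ov{E}(\psi_R))$. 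Riemann--Roch on ${\mb P}^1$ (genus $0$) then gives $\dim_{\mb C} H^0 - \dim_{\mb C} H^1 = d + 1$, whence the real index is $2d + 2$. Alternatively the model index can be computed completely explicitly by separating variables into Fourier modes $e^{\ii n \theta}$ on the end and counting the modes admitted by the weight $\delta$, which reproduces the same count.

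The main obstacle is the behavior at infinity in both halves of the argument. On the analytic side one must verify that, in the weighted norm with $\delta$ in the prescribed interval, the model operator on the end is invertible---equivalently that $\delta$ is not an indicial root---and that the connection difference is genuinely subordinate rather than merely bounded, which is where the hypothesis $B \in {\mc A}^{1, p, \delta_+}$ with $\delta_+ > \delta$ enters. On the index side the crux is the precise matching of the weighted kernel and cokernel with $H^0$ and $H^1$: one must check that $\delta < 1$ admits no spurious kernel elements of mild growth and that $\delta > 1 - \tfrac 2 p$ discards none of the genuine holomorphic sections. This is exactly the role of the interval $(1 - \tfrac 2 p, 1)$, and it is what pins down the ``$+2$'' in the index formula.
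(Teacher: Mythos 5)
Your strategy coincides with the paper's where they overlap: both arguments use Lemma \ref{lemma41} to make the index independent of the connection, and then transport the operator, via a bundle isomorphism compatible with the trivialization at infinity, to a model line bundle of degree $d := {\rm deg}(E, \psi_R)$. The paper performs this reduction explicitly (constructing the smooth model $L$ and the isomorphism $\mu$, and noting that the conjugated operator differs from $\ov\partial_L$ by a compactly supported, hence compact, term) and then \emph{cites} the model computation as a standard result; you instead sketch a proof of that model computation. Your Fredholmness argument (interior elliptic estimate, parametrix on the end valid because the weight avoids indicial roots, patching, Rellich on the compact piece) is the standard and correct way to obtain what the paper cites.

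The gap is in your index computation, at the step ``the formal adjoint together with Serre duality identifies the cokernel with $H^1$.'' As described, this step fails whenever $d \le -1$ --- a case that genuinely occurs, since in Lemma \ref{lemma43} and Proposition \ref{prop32} one of the summands has degree $\langle c_{\,1}^K(TX), \beta\rangle$, which may be negative. The reason is that compactly supported sections are \emph{not} dense in the domain $L^{1,p,\delta}_+$: by the equivalence \eqref{eqn214} this space splits as (sections decaying at infinity) $\oplus$ (asymptotic constants), and the $L^\infty$ part of the norm keeps the second summand at distance $\ge 1$ from $C^\infty_0$. Consequently the cokernel, i.e.\ the annihilator of the actual range, is strictly smaller than the kernel of the formal (distributional) adjoint in $L^{q,-\delta}$. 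Concretely, for the model of degree $d \le -1$ the formal adjoint kernel corresponds, via the Hermitian metric, to holomorphic sections $g\, dz \otimes (s_0^L)^\vee$ of $K_{{\mb P}^1} \otimes \ov{E}(\psi_R)^\vee$ over ${\mb C}$ with $g$ a polynomial of degree $\le -d-1$; this space has complex dimension $-d$, whereas $h^1({\mb P}^1, {\mc O}(d)) = -d-1$, so your count would give real index $2d$ instead of $2d+2$. The discrepancy is exactly the ``residue mode,'' which near infinity looks like $z^{-1} dz \otimes (s_\infty^L)^\vee$: it lies in $L^{q,-\delta}$ precisely because $\delta > 1 - \frac 2p$, and it solves the formal adjoint equation, but it is \emph{not} orthogonal to the range. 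Indeed, pairing it with $\ov\partial_B(\chi s_\infty^L)$, where $\chi$ is a cutoff equal to $1$ near infinity (an admissible domain element with nonzero limit at infinity), gives $2 \int_{\mb C} z^{-1} \partial_{\ov z} \chi \, ds\, dt = 2\pi \ne 0$ by Stokes' theorem. For $d \ge 0$ your computation happens to give the correct answer, which is what makes this easy to miss; the same caveat applies to your Fourier-mode alternative.

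The repair is short but is a genuinely missing idea: write the domain as $X_0 \oplus {\mb C}$, where $X_0$ is the closure of $C^\infty_0$ (decaying sections) and the ${\mb C}$ factor records the value at infinity. Then ${\rm ind}(\ov\partial_B) = {\rm ind}(\ov\partial_B|_{X_0}) + 1$ in complex dimensions, and on $X_0$ the duality argument is legitimate because $C^\infty_0$ is dense there: the kernel of $\ov\partial_B|_{X_0}$ consists of decaying holomorphic sections (dimension $\max(d,0)$) and its cokernel is the full formal-adjoint kernel (dimension $\max(-d,0)$), so ${\rm ind}(\ov\partial_B|_{X_0}) = d$ and the total complex index is $d+1$, i.e.\ real index $2d+2$, in all degrees. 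Equivalently, one can keep your formulation but impose, in addition to the formal adjoint equation, orthogonality against $\ov\partial_B$ of asymptotically constant sections; this is the one linear (residue) condition that cuts the adjoint kernel down to $H^1$.
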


\begin{proof}
By Lemma \ref{lemma41}, it is enough to prove the result for a specific $B \in {\mc A}^{1, p, \delta_+}(E, \psi_R)$. So, we assume that $B$ is flat over ${\mb P}^1 \bs B_R$, with a flat nonvanishing section $s_\infty^E$ of unit length. Denote $d = {\rm deg} (E, \psi_R)$. Then the section $z^{-d} s_\infty^E \in W^{1, p}_{\rm loc}(C_R, E)$ can be extended to a nowhere vanishing $W^{1, p}_{\rm loc}$ section $s_0^E$ over ${\mb C}$. Regard $E$ be defined as the collection $\{ s_0^E|_{B_{2R}}, s_\infty^E|_{{\mb P}^1 \bs B_R}\}$ with the transition function $z^d$. 

We construct a smooth Hermitian line bundle $L$, consisting of a chart over ${\mb P}^1\bs B_R$ generated by a unit section $s_\infty^L$, another chart over $B_{2R}$ generated by a unit section $s_0^L$ such that on the overlap $s_0^L = z^{-d} s_\infty^L$. Then $L$ is a holomorphic Hermitian line bundle with an operator 
\beq\label{eqn42}
\ov\partial_L: L_+^{1, p, \delta}({\mb C}, L) \to L^{p, \delta}({\mb C}, L).
\eeq

The formulae $\mu_0 (s_0^E) = s_0^L$, $\mu_\infty(s_\infty^E) = s_\infty^L$ define a bundle isomorphism $\mu: E \to L$ of regularity $W^{1, p}_{\rm loc}$, which induces bounded, invertible linear maps
\begin{align*}
& \mu_0: L^{1, p, \delta}_+ ({\mb C}, E) \to L^{1, p, \delta}_+ ({\mb C}, L), & \mu_1: L^{p, \delta}({\mb C}, E) \to L^{p, \delta}({\mb C}, L).
\end{align*}
It is easy to see that $\mu_0 \circ \ov\partial_B \circ \mu_1^{-1} - \ov\partial_L$ has compact support, hence a compact operator. Therefore, it suffices to consider the operator \eqref{eqn42}. It is a standard result that when $\delta \in ( 1- \frac{2}{p}, 1)$, the operator $\ov\partial_L: L^{1, p, \delta}_+({\mb C}, L) \to L^{p, \delta}({\mb C}, L)$ is Fredholm with real index $2 + 2d$.
\end{proof}

The following lemma related to Hermitian vector bundles on ${\mb P}^1$ is also used in the proof of the Fredholm result.

\begin{lemma}\label{lemma43}
Given a completed Hermitian vector bundle $(E, \psi_R) \to {\mb P}^1$, an orthonormal frame of $\{s_i:{\mb P}^1 \bs B_R \to \ov{E}_R(\psi_R): 1 \leq i \leq n \}$ over the chart at infinity, and integers $d_1, \dots,d_n$ such that $\sum_i d_i=d:= {\rm deg} (E)$, there exist completed line bundles
\beqn
(\ell_k, \psi_{R, k}) \to {\mb P}^1
\eeqn
such that $(E, \psi_R) = \bigoplus_{k=1}^n ( \ell_k, \psi_{R, k})$, ${\rm deg} \ell_k = d_k$ and $\ell_k$ is spanned by $s_k$ over ${\mb P}^1 \bs B_R$. 
\end{lemma}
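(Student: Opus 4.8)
The plan is to reduce the splitting to a clutching-function problem over the standard two-chart cover ${\mb P}^1 = B_{2R} \cup ({\mb P}^1 \setminus B_R)$, whose overlap is an annulus $A$ homotopy equivalent to $S^1$, and then settle it by a winding-number computation. First I would trivialize $E$ over the chart at infinity by the given orthonormal frame $\{s_i\}$, and choose \emph{any} $W^{1,p}_{\rm loc}$ trivialization $\{e_i\}$ of $E$ over $B_{2R}$, which exists because $B_{2R}$ is contractible and hence every complex bundle over it is trivial. Writing $s_k = \sum_i g_{ik}\, e_i$ on $A$ produces a transition function $g \colon A \to GL_n({\mb C})$ whose determinant has winding number equal to $\deg(E, \psi_R) = d$.

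Next I would build the candidate line bundles directly, imitating the construction in the proof of Lemma \ref{lemma42}. For each $k$ let $(\ell_k, \psi_{R,k})$ be the completed line bundle with a chart over ${\mb P}^1 \setminus B_R$ generated by the unit section $s_k$, a chart over $B_{2R}$ generated by a unit section $\tau_k$, and transition $s_k = z^{d_k} \tau_k$ on $A$. By construction $\ell_k$ is spanned by $s_k$ over ${\mb P}^1 \setminus B_R$ and $\deg \ell_k = d_k$, so the direct sum $\bigoplus_k \ell_k$ has rank $n$ and degree $\sum_k d_k = d$.

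The main step is to produce an isomorphism $\Phi \colon \bigoplus_k \ell_k \to E$ of completed bundles that is the identity over the chart at infinity, i.e.\ $\Phi(s_k) = s_k$. Over $B_{2R}$ this forces the matrix of $\Phi$ in the frames $\{\tau_k\}$ and $\{e_i\}$ to be $M := g \cdot \mathrm{diag}(z^{-d_1}, \dots, z^{-d_n})$ on $A$, and the entire content is to check that $M$ extends over all of $B_{2R}$ as a $W^{1,p}_{\rm loc}$ map into $GL_n({\mb C})$. This is precisely where the hypothesis $\sum_k d_k = d$ is used: since $p>2$ the function $g$ is continuous, and $\det M = \det g \cdot z^{-d}$ has winding number $d - d = 0$, so $M|_A \colon S^1 \to GL_n({\mb C})$ is null-homotopic because $\pi_1(GL_n({\mb C})) \cong {\mb Z}$ is detected by the winding of the determinant. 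A null-homotopy extends $M$ over the inner disk while staying in $GL_n({\mb C})$, and this extension may be taken of class $W^{1,p}_{\rm loc}$. I expect this null-homotopy/extension step to be the main obstacle, since it is the only place the prescribed degrees enter and it is exactly what would fail if $\sum_k d_k \neq d$.

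Finally, the resulting $\Phi$ is a $W^{1,p}_{\rm loc}$ bundle isomorphism equal to the identity near infinity, so it identifies $(E, \psi_R)$ with $\bigoplus_k (\ell_k, \psi_{R,k})$ with the asserted degrees and frame. To upgrade this to an orthogonal Hermitian direct sum I would apply fibrewise Gram-Schmidt to the line subbundles $\Phi(\ell_k)$; since these already coincide with the orthogonal lines $\mathrm{span}(s_k)$ over the chart at infinity, Gram-Schmidt changes nothing there, and being a deformation through isomorphisms it preserves each degree $d_k$. The remaining points, namely that all constructions respect the $W^{1,p}_{\rm loc}$ regularity and the induced completions $\psi_{R,k}$, are routine.
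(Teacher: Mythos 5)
Your proposal is correct and is essentially the paper's own argument: both proofs rest on the fact that $\deg(E,\psi_R)$ is the class of the transition loop in $\pi_1(U(n))\cong{\mb Z}$ detected by the determinant's winding number, twist by ${\rm diag}(z^{-d_1},\dots,z^{-d_n})$, and use $\sum_k d_k = d$ to conclude the twisted loop is null-homotopic and hence extends over the inner disk. The paper phrases this as extending the twisted sections $e^{-2\pi {\bm i} d_k\theta}s_k$ to a $W^{1,p}_{\rm loc}$ frame of $E$ over ${\mb C}$ and taking the line subbundles they span, which is the same content as your isomorphism matrix $M$ plus extension (your final Gram--Schmidt step is an extra refinement the paper does not need, since the lemma only asserts a direct sum decomposition of completed bundles).
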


\begin{proof} Recall that ${\rm deg} (E)$ is determined by the homotopy class in $\pi_1(U(n))$ of the transition function between frames of $E$ around $0$ and around $\infty$. Since ${\rm deg} (E) = d$, this homotopy class can be represented by the loop of matrices 
\beqn
\theta \mapsto {\rm diag} \big( e^{-2 \pi{\bm i} d_1 \theta}, \ldots, e^{ - 2\pi {\bm i} d_n \theta} \big)
\eeqn
This means the sections $s_k':= e^{- 2\pi {\bm i} d_k \theta} s_k$ of $E|_{C_R}$ can be extended to a frame of $E$ over ${\mb C}$ (of regularity $W^{1, p}_{\rm loc}$). This determines sub line bundles $\ell_k \subset E|_{\mb C}$ with completions that have the degrees $d_k$.
\end{proof}

\subsection{Proof of Fredholm result}

With above preparations, now we can prove Proposition \ref{prop32}. We fix some notation
\beqn 
m:=\dim K, \quad n:=\frac{1}{2} \dim X, \quad \bar{n}:=n - m= \frac{1}{2} \dim \bar{X}, \quad d:= \langle c_{\,1}^K(TX), \beta\rangle.
\eeqn
Firstly, the operator $\hat{\mc D}_{\bf v}$ is equivariant with respect to gauge transformations. That is, if ${\bf v}, {\bf v}'\in \wt{M}^K({\mb C}, X)^{1, p}_{\rm loc}$ are related by a gauge transformation $g\in {\mc K}^{2, p}_{\rm loc}$, then the following diagram commutes.
\beqn
\xymatrix{ \hat{\mc B}_{\bf v} \ar[r]^{\hat{\mc D}_{\bf v}} \ar[d]^{g} & {\mc E}_{\bf v} \ar[d]^{g} \\
           \hat{\mc B}_{{\bf v}'} \ar[r]^{\hat{\mc D}_{{\bf v}'}} & {\mc E}_{{\bf v}'}}.
\eeqn
Therefore, by Lemma \ref{lemma61} (which is independent of the proposition we are proving), one can make the following assumptions. ${\bf v} = (u, a)$ is of regularity $W^{1, p}_{\rm loc}$ and there exist $\lambda \in \Lambda_K$ and $x \in \mu^{-1}(0)$ such that 
\beqn
a|_{C_R} + \lambda d\theta \in W^{1,p, \delta_+}(C_R, \Lambda^1 \otimes {\mf k}),\ \lim_{z \to \infty} e^{-\lambda \theta} u(z) = x.
\eeqn
where $\delta_+$ is slightly bigger than $\delta$. Moreover, if we write $e^{-\lambda \theta} u (z) = \exp_x \xi_u (z)$, then $\nabla \xi_u \in W^{1, p, \delta_+}(C_R, T_x X)$. 

One constructs a completion of $u^* TX$ as follows. There is a neighborhood $U_x$ of $x$ and a smooth trivialization $\psi_x: (TU_x, J) \simeq (U_x \times T_{x} X, J_\infty )$ which is the identity at $x$ and which maps $G_X$ onto $U_\infty \times G_{X, x}$. On the other hand, for large $R$, $u(C_R) \subset U_\infty$. The trivialization $\psi_x$ then induces the chart 
\beqn
\psi_R = \psi_x (u): u^* TX|_{C_R} \to C_R \times T_x X.
\eeqn
Since $u$ is of class $W^{1, p}_{\rm loc}$, this gives a completion of $u^* TX$. By the asymptotic property of the gauge we chose, it is easy to see that under the trivialization $\psi_R$, the connection matrix of $\nabla^a$ on $u^* TX|_{C_R}$ is of class $W^{1, p, \delta_+}$.

The following remark shows a crucial fact related to the asymptotic behavior of the operator $\hat{\mc D}_{\bf v}$ at infinity.

\begin{remark}\label{remark44}{\rm(An eigenvalue decomposition of ${\mf k}$)} 
For any point $x \in \mu^{-1}(0) \subset X$, denote the infinitesimal action of ${\mf k}$ by
\begin{align}\label{eqn43}
&\ L_x: {\mf k} \to K_{X, x},   & \zeta \mapsto {\mc X}_\zeta (x).
\end{align}
Its adjoint with respect to the metric $\omega_X (\cdot,J\cdot)$ at $T_x X$ is 
\beqn
L_x^* (\xi) = d \mu(x) \cdot J \xi.
\eeqn
Therefore $L_x^*L_x$ is symmetric and positive definite, having eigenvalues $\lambda_1,\dots,\lambda_m > 0$. 
\end{remark}

We choose eigenvectors $\{ e_1,\ldots, e_m \} \subset {\mf k}$ corresponding to the eigenvalues of $L_x^* L_x$, such that $L_x(e_1), \ldots, L_x(e_m)$ form an orthonormal basis of $K_{X, x}$. They become a Hermitian orthonormal basis of the complexification $G_{X, x}$. We extend them into a Hermitian orthonormal basis $L_x(e_1), \ldots, L_x(e_m), v_{m+1}, \ldots, v_n$ of $T_x X$. Via $\psi_R$, these vectors extends an orthonormal frame $s_1, \ldots, s_n$ of $\ov{u^* TX}(\psi_R)$ over ${\mb P}^1 \bs B_R$. 

Now we separate the argument into two cases: $\bar{n}>0$ and $\bar{n} = 0$. When $\bar{n}>0$, by Lemma \ref{lemma43}, there are completed line bundles $(\ell_k, \psi_{R, k}) \to {\mb P}^1$ for $1 \leq k \leq n$ and an isomorphism  
\beq\label{eqn44}
\rho_1: (u^* TX, \psi_R) \simeq \bigoplus_{k=1}^n (\ell_k, \psi_{R, k}),
\eeq
\beqn
\ell_k|_{{\mb P}^1 \bs B_R} = {\rm span}_{\mb C} \big\{ \rho_1(s_k) \big\},\ {\rm deg} (\ell_k, \psi_{R, k}) = \left\{ \begin{array}{cc} 0, &\ k < n;\\ d, &\ k=n \end{array}\right.
\eeqn
We view $\ell_k \to {\mb C}$ as a trivial line bundle for $k< n$. We choose a connection $B \in {\mc A}^{1, p, \delta_+} ( u^* TX, \psi_R)$ satisfying the following conditions.
\begin{enumerate}
\item Over $C_R$, $\psi_R \circ \nabla^B \circ \psi_R^{-1} = d$, where the latter is the trivial connection on $C_R \times T_x X$. 

\item $B$ is diagonal with respect to the splitting \eqref{eqn44} and its restriction to $\ell_k$ for $k < n$ is the trivial connection. Denote the restriction of $\ov\partial_B$ onto $\ell_k$ by $\ov\partial_{B, \ell_k}$.
\end{enumerate}

For $k= 1, \ldots, m$, let $\ell_k'\to {\mb C}$ be a copy of the trivial complex line bundle generated by $e_k$. Then one has the isomorphism $L_X: \bigoplus_{k=1}^m \ell_k' \simeq \bigoplus_{k=1}^m \ell_k$. Since $\ell_k$ and $\ell_k'$ are all trivial, we can extend $L_X$ to an isomorphism over ${\mb C}$, which is still denoted by $
L_X$. 

We would like to identify the bundle $\Lambda^1 \otimes {\mf k}$, i.e., the bundle of deformations of the connection form, with $\bigoplus_{k=1}^m \ell_k' \simeq {\mf g}$. Define over $C_R$
\begin{align*}
&\ \rho_2: \Lambda^1 \otimes {\mf k} \simeq \bigoplus_{k=1}^m \ell_k',\ & \rho_2(\eta ds + \zeta dt) = {\rm Ad}_{e^{-\lambda \theta}} (\eta + {\bm i} \zeta) = e^{-\lambda \theta} ( \eta+ {\bm i} \zeta ) e^{\lambda \theta}.
\end{align*}
Since the adjoint action on ${\mf g}$ is real, the determinant of ${\rm Ad}_{e^{-\lambda \theta}}$ is always $\pm 1$. So as a loop in ${\rm Aut}({\mf g})$, ${\rm Ad}_{e^{-\lambda\theta}}$ is homotopically trivial, therefore it can be extended to an isomorphism $\rho_2: \Lambda^1 \otimes {\mf k} \to \bigoplus_{k=1}^m \ell_k'$ over ${\mb C}$. We also identify the last two ${\mf k}$ factors of $\hat{\mc E}_{\bf v}$ with $\bigoplus_{k=1}^m \ell_k'$ by an isomorphism $\rho_3: {\mf k} \oplus {\mf k} \to \bigoplus_{k=1}^m \ell_k' \simeq {\mf g}$ whose restriction to $C_R$ is 
\beqn
\rho_3 (\kappa, \varsigma) = {\rm Ad}_{e^{-\lambda \theta}} (\eta + {\bm i} \zeta).
\eeqn
Then $\rho_1$, $\rho_2$, $\rho_3$ induce bundle isomorphisms 
\beqn
{\bm \rho} = (\rho_1, \rho_2): u^* TX	\oplus {\mf k} \oplus {\mf k} \simeq \bigoplus_{k=1}^{n} \ell_k \oplus \bigoplus_{k=1}^m \ell_k';
\eeqn
\beqn
{\bm \rho}' = (\rho_1, \rho_3): u^* TX  \oplus {\mf k} \oplus {\mf k} \simeq \bigoplus_{k=1}^n \ell_k \oplus \bigoplus_{k=1}^m \ell_k'.
\eeqn

\begin{lemma}
${\bm \rho}$ and ${\bm \rho}'$ induce equivalences of Banach spaces
\beqn
{\bm \rho}: \hat{\mc B}_{\bf v} \sim \bigoplus_{k={m+1}}^n L^{1, p, \delta}_+ (\ell_k)  \oplus \bigoplus_{k=1}^m W^{1, p, \delta}(\ell_k \oplus \ell_k').
\eeqn
\beqn
{\bm \rho}: \hat{\mc E}_{{\bf v}} \sim \bigoplus_{k=m+1}^n L^{p, \delta} (\ell_k) \oplus \bigoplus_{k=1}^m L^{p, \delta}(\ell_k \oplus \ell_k').
\eeqn
Here the norm of $W^{1, p, \delta}(\ell_k \oplus \ell_k')$ is defined without ambiguity since $\ell_k$ and $\ell_k'$ are trivial.
\end{lemma}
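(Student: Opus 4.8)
The plan is to prove the two equivalences separately, disposing of the target $\hat{\mc E}_{\bf v}$ first since it involves no derivatives. After twisting by $e^{-\lambda\theta}$ and using the chosen gauge (so that over $C_R$ the connection matrix of $\nabla^a$ lies in $W^{1,p,\delta_+}$ and $e^{-\lambda\theta}u\to x$), the bundle maps $\rho_1$ and $\rho_3$ are fibrewise isometries at infinity and continuous isomorphisms with uniformly bounded inverse over all of ${\mb C}$. Since a bounded bundle automorphism with bounded inverse preserves $L^{p,\delta}$ up to equivalence of norms, applying ${\bm\rho}'=(\rho_1,\rho_3)$ to $L^{p,\delta}(u^*TX\oplus{\mf k}\oplus{\mf k})$ and regrouping the $m$ action factors of $u^*TX$ (the $\ell_k$ with $k\leq m$) with the $\ell_k'$ coming from the two ${\mf k}$-factors yields the stated splitting of $\hat{\mc E}_{\bf v}$. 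This part is routine.

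For the domain I would examine the norm \eqref{eqn31} block by block under ${\bm\rho}=(\rho_1,\rho_2)$. The connection-form block $\|\eta\|_{L^{p,\delta}}+\|\zeta\|_{L^{p,\delta}}+\|\nabla^a\eta\|_{L^{p,\delta}}+\|\nabla^a\zeta\|_{L^{p,\delta}}$ is, up to the bounded isomorphism $\rho_2$, the $W^{1,p,\delta}$ norm of the $\ell_k'$ components: the twist by ${\rm Ad}_{e^{-\lambda\theta}}$ is orthogonal (determinant $\pm1$), so it preserves pointwise norms, while the difference between $\nabla^a$ and the trivial connection on $\bigoplus_k\ell_k'$ is, after the twist, a $W^{1,p,\delta_+}$ connection matrix. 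Hence Lemma \ref{lemma216} gives the equivalence of the $\nabla^a$-Sobolev norm with the plain $W^{1,p,\delta}$ norm, accounting for the $\ell_k'$ summands.

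The heart of the argument is the $\xi$ block, $\|\xi\|_{L^\infty}+\|d\mu(u)\cdot\xi\|_{L^{p,\delta}}+\|d\mu(u)\cdot J\xi\|_{L^{p,\delta}}+\|\nabla^a\xi\|_{L^{p,\delta}}$, together with the distinction between the action directions $k\leq m$ and the transverse directions $k>m$. Using the splitting $\rho_1$ and the eigenvalue decomposition of Remark \ref{remark44}, at the limit point $x$ one has $L_x^*=d\mu(x)\cdot J$ with $L_x^*L_x$ diagonalized by the chosen frame with eigenvalues $\lambda_1,\dots,\lambda_m>0$, while $L_x^*$ annihilates the transverse span $\{v_{m+1},\dots,v_n\}$ and its $J$-image. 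For the action components I would show that $\|d\mu(u)\cdot J\xi\|_{L^{p,\delta}}$ is, up to lower-order terms controlled by $\|u-x\|_{L^\infty}$, comparable to $\|\xi_{\mathrm{action}}\|_{L^{p,\delta}}$ by positive definiteness of $L_x^*L_x$, so that $\|\xi\|_{L^\infty}$, $\|d\mu(u)\cdot J\xi\|_{L^{p,\delta}}$ and $\|\nabla^a\xi\|_{L^{p,\delta}}$ together reproduce the full $W^{1,p,\delta}$ norm on $\ell_k$ for $k\leq m$. For the transverse components, by contrast, $d\mu(u)\cdot\xi$ and $d\mu(u)\cdot J\xi$ vanish to leading order at infinity, so these two terms are dominated by $\|\xi\|_{L^\infty}+\|\nabla^a\xi\|_{L^{p,\delta}}$, leaving exactly the norm \eqref{eqn41} of $L^{1,p,\delta}_+(\ell_k)$ for $k>m$; Lemma \ref{lemma212} supplies the decay and Lemma \ref{lemma41} the connection-independence of these norms. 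Finally the action $\ell_k$ ($k\leq m$) and the connection $\ell_k'$ are combined into $W^{1,p,\delta}(\ell_k\oplus\ell_k')$.

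The main obstacle I expect is the uniform bookkeeping of the off-diagonal and error terms. The frame $s_1,\dots,s_n$ is orthonormal only at $x$, so the splitting $\rho_1$ differs from the $L_x^*L_x$-eigendecomposition by $W^{1,p,\delta_+}$ quantities; likewise $d\mu(u)-d\mu(x)$ and $u-x$ decay but must be shown to send the transverse $\xi$-terms into $L^{p,\delta}$ with small norm and to produce only a lower-order coupling between the action and transverse blocks. Because $\delta_+>\delta$, all such error terms are genuinely lower order — this is exactly where the gain $\delta_+-\delta$ and part \ref{lemma41c} of Lemma \ref{lemma41} enter — so they do not affect the two-sided norm equivalence, and assembling them into clean inequalities is the only real work.
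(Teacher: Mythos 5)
The architecture you propose (dispose of $\hat{\mc E}_{\bf v}$ first, then split the norm \eqref{eqn31} into the connection block, the action block $k\leq m$, and the transverse block $k>m$, using Remark \ref{remark44}, Lemma \ref{lemma41} and Lemma \ref{lemma216}) is the natural one; the paper itself records this lemma with no proof at all, so the only question is whether your sketch actually closes. The $\hat{\mc E}_{\bf v}$ part, the connection block, and the diagonal part of the action block are fine. The gap is in the transverse block, and it is quantitative, not bookkeeping.

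You claim that since $d\mu(u)\cdot\xi$ and $d\mu(u)\cdot J\xi$ ``vanish to leading order at infinity,'' these terms are dominated by $\|\xi\|_{L^\infty}+\|\nabla^a\xi\|_{L^{p,\delta}}$, the errors being lower order because of the gain $\delta_+-\delta$. First--order vanishing only gives $|d\mu(u)\cdot s_k|\leq C|u-x|$ for a transverse trivializing section $s_k$, and the available decay of $|u-x|$ is $O(|z|^{1-\frac{2}{p}-\delta_+})$ from the gauge assumption via \eqref{eqn213}, or at best $O(|z|^{-1+\epsilon})$ from Proposition \ref{prop26}. Multiplying a merely bounded function by either of these does \emph{not} land in $L^{p,\delta}(C_R)$: one would need $\rho^{\delta}|z|^{-1+\epsilon}\in L^p(C_R)$, i.e. $\delta<1-\tfrac{2}{p}-\epsilon$, which is exactly what the standing hypothesis $\delta>1-\tfrac{2}{p}$ forbids (and the gauge-theoretic rate would even require $\delta<\delta_+-1<0$). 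Since $L^{1,p,\delta}_+(\ell_k)$ contains sections with \emph{nonzero} limit at infinity --- precisely the sections the index formula counts --- no decay-of-$(u-x)$ argument can bound $\|d\mu(u)\cdot\xi\|_{L^{p,\delta}}$ by $\|\xi\|_{L^\infty}+\|\nabla^a\xi\|_{L^{p,\delta}}$, and the cross terms between the action and transverse blocks fail for the same reason; so neither direction of the claimed equivalence follows from your estimates, and the gain $\delta_+-\delta<1$ cannot repair a deficit of size $1$.

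The mechanism that makes the lemma true is exact, not asymptotic. For every $y$ and every $v$ that is $g$-orthogonal to $G_{X,y}=K_{X,y}\oplus JK_{X,y}$ (with $g=\omega_X(\cdot,J\cdot)$) one has $\langle d\mu(y)\cdot v,a\rangle=\omega_X({\mc X}_a(y),v)=g(J{\mc X}_a(y),v)=0$ and $\langle d\mu(y)\cdot Jv,a\rangle=g({\mc X}_a(y),v)=0$, identically in $y$, with no smallness of $|u-x|$ or $|\mu(u)|$ needed. Hence if the trivialization $\psi_x$ is chosen \emph{adapted to the orthogonal splitting} $TX|_{U_x}=G_X\oplus G_X^{\perp}$ --- a strictly stronger requirement than the paper's ``maps $G_X$ onto $U_x\times G_{X,x}$'', but arrangeable since $G_X^{\perp}$ is $J$-invariant --- then the moment-map terms in \eqref{eqn31} annihilate the transverse summand of ${\bm\rho}$ pointwise, the action/transverse cross terms vanish identically, and on the action summand the pair $(d\mu(u)\cdot\,,\,d\mu(u)\cdot J)$ is uniformly injective on $G_{X,u}$, giving the $L^{p,\delta}$ control of the components. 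The only residual off-diagonal contributions are those by which $\nabla^a$ fails to preserve the splitting; by $K$-invariance of $G_X$ and of $g$ these are bounded pointwise by $|d_a u|$, which lies in $L^{p,\delta}(C_R)$ by Proposition \ref{prop26}, so they are harmless. Without this pointwise vanishing (and the adapted choice of $\psi_x$ it requires), your transverse estimates cannot close.
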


Then consider the operator $\hat{\mc D}_B:  \hat{\mc B}_{\bf v} \to \hat{\mc E}_{\bf v}$ defined by
\beq\label{eqn45}
\hat{\mc D}_B:= ({\bm \rho}')^{-1} \circ \left( \begin{array}{ccc}  \displaystyle \bigoplus_{k= m +1}^n \ov\partial_{B, \ell_k} & 0 & 0 \\
                                                                          0 &  \displaystyle \bigoplus_{k=1}^m \ov\partial_{B, \ell_k} &  L_X \\
                                                                          0 & L_X^{-1} & \displaystyle \bigoplus_{k=1}^m \partial_{\ell_k'} 
																			  \end{array}  \right) \circ {\bm \rho}
\eeq

One can see that $\hat{\mc D}_B$ is a first order elliptic operator, whose restriction to $C_R$ reads
\beq\label{eqn46}
\hat{\mc D}_B \left( \begin{array}{c} \xi \\ \eta\\ \zeta \end{array} \right) = \left( \begin{array}{c}  \psi_R^{-1} \partial_{\ov{z}}( \psi_R(\xi))+  \psi_R^{-1} \big( {\mc X}_\eta(x) + J {\mc X}_\zeta(x) \big) \\  \partial_s \eta + [\phi_\lambda, \eta] + \partial_t \zeta + [\psi_\lambda, \zeta]  + d\mu(x) \cdot \psi_R( J \xi) \\  \partial_s \zeta  + [ \phi_\lambda, \zeta] - \partial_t \eta - [\psi_\lambda, \eta] + d\mu(x) \cdot \psi_R( \xi) 
\end{array} \right).
\eeq
Here $\phi_\lambda$ and $\psi_\lambda$ are defined by $-\lambda d\theta = \phi_\lambda ds + \psi_\lambda dt$. 

\begin{lemma}\label{lemma45}
$\hat{\mc D}_B - \hat{\mc D}_{\bf v}$ is a compact operator.
\end{lemma}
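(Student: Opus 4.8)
The plan is to show that $\hat{\mc D}_B$ and $\hat{\mc D}_{\bf v}$ have the \emph{same principal symbol}, so that the difference $F := \hat{\mc D}_B - \hat{\mc D}_{\bf v}$ is an operator of order zero, i.e.\ multiplication by a section $c$ of $\on{End}(E_u^+)$, and then to verify that $c$ is bounded and decays at infinity, so that the compactness criterion of Proposition \ref{prop214} applies. Since $\hat{\mc D}_{\bf v}$ and $\hat{\mc D}_B$ are both bounded (the former by \eqref{eqn33}, the latter by Lemma \ref{lemma41}), $F$ is automatically bounded; the content is compactness.

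First I would check the cancellation of the top-order terms. Over the compact region $B_R$ both operators are first-order operators carrying the Dolbeault symbol determined by the fiberwise complex structure $J$ and the standard complex structure of ${\mb C}$, and these agree by the block construction \eqref{eqn45} of $\hat{\mc D}_B$. Over $C_R$ the decisive point is that the trivialization $\psi_R = \psi_x(u)$ was chosen to intertwine the fiberwise complex structure $J(u(z))$ with the \emph{constant} $J_\infty$. Hence, writing the first line of \eqref{eqn25} in the frame $\psi_R$, the leading part $\nabla^a_s\xi + J(u)\nabla^a_t\xi$ (recall the expression \eqref{eqn23} for $\nabla^a$) becomes $2\partial_{\ov z}(\psi_R\xi)$ plus multiplication by the connection matrix of $\nabla^a$, and this coincides, modulo that connection matrix, with the leading term of \eqref{eqn46}. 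The same cancellation in the ${\mf k}\oplus{\mf k}$ slots is immediate from \eqref{eqn45}. This is exactly what rules out a residual first-order term (such a term, even with small coefficient, would fail to be compact), and it yields that $F$ is multiplication by some $c \in W^{1,p}_{\rm loc}({\mb C}, \on{End}(E_u^+))$.

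Next I would read off $c$ by comparing \eqref{eqn25} with \eqref{eqn46} and establish decay. The entries of $c$ are precisely the differences between the coefficients of $\hat{\mc D}_{\bf v}$ and the limiting coefficients built into $\hat{\mc D}_B$: the connection matrix of $\nabla^a$ in the frame $\psi_R$, which lies in $W^{1,p,\delta_+}(C_R)\subset L^{p,\delta}$; the commutator coefficients $[\phi-\phi_\lambda,\cdot]$ and $[\psi-\psi_\lambda,\cdot]$, bounded by $|a+\lambda\,d\theta|$; the term $(\nabla_\xi J)(\partial_t u + {\mc X}_\psi)$, bounded by $e({\bf v})^{1/2}$; and the differences ${\mc X}_\eta(u)-\psi_R^{-1}{\mc X}_\eta(x)$ and $d\mu(u)-d\mu(x)\circ\psi_R$, bounded by $d(u(z),K\!\cdot\! x)$, hence by $|\xi_u|$. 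Each controlling quantity tends to zero uniformly as $|z|\to\infty$: $a+\lambda\,d\theta$ and $\nabla\xi_u$ lie in $W^{1,p,\delta_+}$ with $\delta_+>1-\tfrac2p$, so \eqref{eqn213} forces pointwise decay; $\xi_u\to 0$ by Proposition \ref{prop27} together with the gauge normalization of Lemma \ref{lemma61}; and $e({\bf v})(z)=O(|z|^{-4+\epsilon})$ by Proposition \ref{prop26}. Consequently $c\in L^\infty({\mb C})$, the part of $c$ acting on $(\eta,\zeta)$ satisfies $\|c\|_{L^\infty(C_R)}\to 0$, and the part acting on $\xi$ lies in $L^{p,\delta}({\mb C})$ so that its tail $\|c_\xi\|_{L^{p,\delta}(C_R)}\to 0$.

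Finally I would invoke Proposition \ref{prop214}. For condition (1), on a compact $S$ multiplication by the bounded (hence, since $p>2$, continuous) endomorphism $c$ composed with the Rellich embedding $W^{1,p}(S)\hookrightarrow L^p(S)$ is compact, so $F|_S$ is compact. For condition (2), multiplication by $c$ preserves the weight, so the operator norm of $F$ over $C_R$ is controlled by $\|c\|_{L^\infty(C_R)}$ on the $(\eta,\zeta)$-slots and by $\|c_\xi\|_{L^{p,\delta}(C_R)}$ on the $\xi$-slot (using that $\|\xi\|_{L^\infty}$ is part of the domain norm), both of which vanish as $R\to\infty$; for the connection contribution one may instead cite the third part of Lemma \ref{lemma41} directly. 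Therefore $F=\hat{\mc D}_B-\hat{\mc D}_{\bf v}$ is compact. The main obstacle is the analysis behind the order-zero claim and the uniform decay: one must confirm that $\psi_R$ genuinely annihilates the top-order discrepancy, and then assemble the decay of every coefficient difference from Propositions \ref{prop26} and \ref{prop27}; the remainder is a routine application of Proposition \ref{prop214}.
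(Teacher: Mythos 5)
Your overall strategy is the same as the paper's: compare \eqref{eqn25} with \eqref{eqn46}, observe that the derivative terms cancel so that $\hat{\mc D}_B - \hat{\mc D}_{\bf v}$ is a sum of multiplication operators, and dispose of each summand via Proposition \ref{prop214}, with Lemma \ref{lemma41} part \eqref{lemma41c} available for the connection-matrix part. Your treatment of three of the four summands --- the connection matrix of $\nabla^a$ in the frame $\psi_R$ (class $W^{1,p,\delta_+}\subset L^{p,\delta}$), the term $\xi\mapsto(\nabla_\xi J)(\partial_t u+{\mc X}_\psi(u))$ (coefficient $O(|z|^{-2+\eps})$ by Proposition \ref{prop26}, hence in $L^{p,\delta}$), and the commutators $[\phi-\phi_\lambda,\cdot]$, $[\psi-\psi_\lambda,\cdot]$ (uniformly decaying coefficients acting on $\eta,\zeta$, which do carry $L^{p,\delta}$ norms) --- is correct and matches the paper's.

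There is, however, a genuine gap at the fourth summand, the moment-map difference
\beqn
\xi \mapsto \Big( d\mu(x)\cdot\psi_R(J\xi) - d\mu(u)\cdot J\xi,\ d\mu(x)\cdot\psi_R(\xi) - d\mu(u)\cdot\xi \Big),
\eeqn
which you file under ``the part of $c$ acting on $\xi$ lies in $L^{p,\delta}({\mb C})$''. Its coefficient is only of size $d(e^{-\lambda\theta}u(z),x)=|\xi_u(z)|$, and the gauge normalization gives merely $\nabla\xi_u\in W^{1,p,\delta_+}(C_R)$, whence \eqref{eqn213} yields $|\xi_u(z)|\lesssim|z|^{1-\frac{2}{p}-\delta_+}$; even feeding in the energy decay of Proposition \ref{prop26} one gets at best $|\xi_u(z)|=O(|z|^{-1+\eps})$. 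Neither rate is in $L^{p,\delta}(C_R)$: a function comparable to $|z|^{-\alpha}$ lies in $L^{p,\delta}(C_R)$ only when $\alpha>\delta+\frac{2}{p}$, and $\delta+\frac{2}{p}>1$ throughout the range $\delta\in(1-\frac{2}{p},1)$. Uniform decay of the coefficient does not suffice either, because the norm \eqref{eqn31} controls $\xi$ only in $L^\infty$ --- there is no $\|\xi\|_{L^{p,\delta}}$ term --- so your estimate $\|c_\xi\xi\|_{L^{p,\delta}(C_R)}\le\|c_\xi\|_{L^{p,\delta}(C_R)}\|\xi\|_{L^\infty}$ has nothing to run on; without further input this summand is not even visibly a bounded map into $L^{p,\delta}$.

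What rescues this step is structural rather than quantitative, and it is exactly what the paper's choice of trivialization encodes: $\psi_x$ maps $G_X$ onto $U_x\times G_{X,x}$, is complex linear, and (being a trivialization of Hermitian bundles) carries $G_{X,u}^{\perp}$ to $G_{X,x}^{\perp}$. Decomposing $\xi=\xi^G+\xi^H$ into orbit directions $G_{X,u}$ and their orthogonal complement, both $d\mu(u)$, $d\mu(u)\circ J$ and $d\mu(x)\circ\psi_R$, $d\mu(x)\circ J_\infty\circ\psi_R$ annihilate $\xi^H$, since $d\mu$ kills the orthogonal complement of $G_{X,\cdot}$. On the surviving piece one has $|\xi^G|\lesssim|d\mu(u)\cdot\xi|+|d\mu(u)\cdot J\xi|$ because the $K$-action is free near $u(C_R)$, so $\|\xi^G\|_{L^{p,\delta}}\lesssim\|{\bm\xi}\|_{p,\delta}$; multiplication by the uniformly decaying coefficient then has tail operator norm tending to zero, and Proposition \ref{prop214} applies. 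This splitting is also the content of the paper's unnumbered lemma on the equivalences induced by ${\bm\rho}$, which assigns the full $W^{1,p,\delta}$ norm to the orbit components $\ell_1,\dots,\ell_m$ but only the $L^{1,p,\delta}_+$ norm to the quotient components; your proposal never invokes it, and without it the verification of condition (2) of Proposition \ref{prop214} fails for this term.
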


\begin{proof}
Comparing \eqref{eqn46} with \eqref{eqn25}, there are the following terms which contribute to the difference $\hat{\mc D}_B - \hat{\mc D}_{\bf v}$. We show that they are all compact operators.
\begin{enumerate}
\item The operator $\xi \mapsto \psi_R^{-1} (  \partial_{\ov{z}} (\psi_R(\xi))) - \nabla^a_s \xi - J \nabla^a_t \xi$ is compact by the property of the connection form $a$ in this trivialization and Lemma \ref{lemma41} Part \eqref{lemma41c}.

\item By Proposition \ref{prop214}, the operator $\xi \mapsto (\nabla_\xi J) (\partial_t u + {\mc X}_\psi(u))$ is compact since $\partial_t u + {\mc X}_\psi(u)$ converges to zero as $z \to \infty$.

\item By Proposition \ref{prop214}, the operator $(\eta, \zeta) \mapsto \psi_R^{-1} ({\mc X}_\eta(x) + J {\mc X}_\zeta(x)) - {\mc X}_\eta(u) - J {\mc X}_\zeta(u)$ and the operator 
\beqn
\xi \mapsto \left( \begin{array}{c}  d\mu(x) \cdot (\psi_R(J\xi)) - d\mu(u) \cdot J\xi \\
                                      d\mu(x) \cdot (\psi_R \xi) - d\mu(u) \cdot J \xi 
\end{array}  \right)
\eeqn
are compact because of the convergence $e^{-\lambda \theta} u(z) = x$. 
  
\item If $a = \phi ds + \psi dt$, then the last part of the difference of $\hat{\mc D}_B - \hat{\mc D}_{\bf v}$ on $C_R$ is 
\beqn
\left( \begin{array}{c} \eta \\ \zeta 
\end{array} \right) \mapsto \left( \begin{array}{c} [ \phi_\lambda - \phi, \eta] + [\psi_\lambda - \psi, \zeta] \\
                                                      {[} \phi_\lambda - \phi, \zeta] - [\psi_\lambda - \psi, \eta]
\end{array} \right).
\eeqn
Since $a + \lambda d\theta$ decays to zero as $z \to \infty$, the above is also a compact operator.
\end{enumerate}
The lemma is then proved.
\end{proof}

Therefore, the Fredholm theory of $\hat{\mc D}_{\bf v}$ is equivalent to that of $\hat{\mc D}_B$ where the latter is ``diagonalized''. Denote 
\beqn
\begin{split}
\ov\partial_K: W^{1, p, \delta} \Big( \bigoplus_{k=1}^m (\ell_k \oplus \ell_k')\Big) &\ \to L^{p, \delta}\Big( \bigoplus_{k=1}^m (\ell_k \oplus \ell_k') \Big)\\
                  \left( \begin{array}{cc} h \\ h' \end{array} \right)  &\ \mapsto  \left( \begin{array}{cc} \partial_{\ov{z}} & L_X \\ L_X^{-1} & \partial_z \end{array} \right) \left( \begin{array}{c} h \\ h'
										\end{array} \right).
\end{split}
\eeqn

\begin{lemma}\label{lemma46}{\rm (\cite[Proposition 97]{Ziltener_book})}
$\ov\partial_K$ is Fredholm with real index zero.
\end{lemma}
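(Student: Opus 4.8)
The plan is to reduce $\ov\partial_K$ to a constant-coefficient model by a compact perturbation and then invert that model explicitly. For $1 \le k \le m$ each of $\ell_k$ and $\ell_k'$ is a trivial line bundle, and by the choice of $B$ the connection on $\ell_k$ is trivial near infinity. I would therefore trivialize $\bigoplus_{k=1}^m(\ell_k \oplus \ell_k')$ and view $\ov\partial_K$ as an operator on $\mb C^{2m}$-valued functions given by the matrix $\begin{pmatrix} \partial_{\ov z} & L_X \\ L_X^{-1} & \partial_z \end{pmatrix}$. As $z \to \infty$ the basepoint $u(z)$ tends to $x$, so in this trivialization $L_X$ tends to a constant invertible endomorphism $L_\infty$, the discrepancy $L_X - L_\infty$ (together with the interior correction of the connection) decaying to $0$ at infinity. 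Hence the difference between $\ov\partial_K$ and
\[
D_\infty := \begin{pmatrix} \partial_{\ov z} & L_\infty \\ L_\infty^{-1} & \partial_z \end{pmatrix}
\]
is a zeroth order operator whose coefficient vanishes at infinity, which is compact by Proposition \ref{prop214}. Thus ${\rm ind}(\ov\partial_K) = {\rm ind}(D_\infty)$ and it suffices to analyze $D_\infty$.

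Next I would prove that $D_\infty \colon W^{1,p,\delta}(\mb C, \mb C^{2m}) \to L^{p,\delta}(\mb C, \mb C^{2m})$ is an isomorphism, which gives ${\rm ind}(D_\infty) = 0$ at once. Introduce the conjugate operator
\[
\wt D_\infty := \begin{pmatrix} \partial_z & -L_\infty \\ -L_\infty^{-1} & \partial_{\ov z} \end{pmatrix}.
\]
Because $L_\infty$ is constant and invertible, it commutes with $\partial_z, \partial_{\ov z}$ and satisfies $L_\infty L_\infty^{-1} = {\rm Id}$; using $\partial_z \partial_{\ov z} = \tfrac14 \Delta$ one computes
\[
D_\infty \wt D_\infty = \wt D_\infty D_\infty = \Big( \tfrac14 \Delta - 1 \Big)\, {\rm Id} =: S .
\]
The analytic heart of the matter is that this massive operator $S$ is an isomorphism $W^{2,p,\delta}(\mb C) \to L^{p,\delta}(\mb C)$ for every weight $\delta$: its fundamental solution decays exponentially, so convolution against it is bounded on each polynomially weighted $L^p$ space and gains two derivatives, yielding a two-sided inverse $S^{-1}$. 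Granting this, $\wt D_\infty S^{-1}$ is a right inverse of $D_\infty$, so $D_\infty$ is onto; and if $D_\infty f = 0$ with $f \in W^{1,p,\delta}$, then $S f = \wt D_\infty D_\infty f = 0$ in the distributional sense, whence $f = S^{-1}(0) = 0$ by boundedness of $S^{-1}$. Therefore $D_\infty$ is bijective and ${\rm ind}(\ov\partial_K) = {\rm ind}(D_\infty) = 0$.

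The step I expect to be the main obstacle is making these two softer points precise: verifying that $S = \tfrac14\Delta - 1$ is invertible between the weighted spaces for the full range of $p$ and $\delta$ --- this is exactly where the exponential (rather than merely polynomial) decay of the Green's function removes the exceptional weights that afflict $\Delta$ alone --- and checking that $\ov\partial_K - D_\infty$ meets the hypotheses of Proposition \ref{prop214}, namely compactness of the relevant restrictions to compact sets together with decay of the tail operator norm as the coefficient $L_X - L_\infty$ vanishes at infinity. As a consistency check, the index-zero conclusion is forced by the dimension count: by Lemma \ref{lemma42} the transverse summands $\ov\partial_{B,\ell_k}$ for $m+1 \le k \le n$ contribute $\sum_{k=m+1}^n (2\,{\rm deg}\,\ell_k + 2) = 2\bar n + 2d$ to ${\rm ind}(\hat{\mc D}_B)$, so the group-direction block $\ov\partial_K$ must contribute nothing, in agreement with the lemma.
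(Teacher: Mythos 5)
You should know at the outset that the paper itself contains no proof of this lemma: it is quoted directly from Ziltener's book (Proposition 97), so your argument is by necessity a different route --- and it is a correct one. The factorization trick is sound: in the paper's construction $L_X$ is actually \emph{constant} in the chosen frames over $C_R$ (it is defined there as the frame map $e_k \mapsto s_k$ and only then extended arbitrarily over the compact part), so $\ov\partial_K - D_\infty$ is a compactly supported zeroth-order multiplication operator, even better than the "decaying coefficient" you assume; either way Proposition \ref{prop214} applies exactly as you say. The identity $D_\infty \wt D_\infty = \wt D_\infty D_\infty = \bigl(\tfrac14\Delta - 1\bigr){\rm Id}$ checks out, and it works precisely because the off-diagonal entries are $L_\infty$ and $L_\infty^{-1}$, so the cross terms cancel ($L_\infty$ constant) and the diagonal produces the scalar mass term $L_\infty L_\infty^{-1} = {\rm Id}$. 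What your route buys is that the lemma reduces entirely to invertibility of the massive Laplacian on weighted spaces, which is an ingredient the paper already develops for its own purposes in Section \ref{section7}: Lemma \ref{lemma73} (Calderon) gives the unweighted statement, and the conjugation-by-$\rho^\delta$-plus-compact-perturbation argument in the proof of Lemma \ref{lemma74} extends it to $W^{2,p,\delta} \to L^{p,\delta}$ (Fredholm of index zero, then injectivity from $W^{2,p,\delta} \subset W^{2,p}$, valid since $\delta > 0$). So your proof can be assembled from the paper's own toolkit, making the external citation dispensable.

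Two details need tightening. First, the injectivity step "$f = S^{-1}(0) = 0$" applies $S^{-1}$ to an element known only to lie in $W^{1,p,\delta}$; you must first upgrade its regularity. Either note that $\Delta f = 4f \in L^{p,\delta}$ and run interior elliptic estimates on unit disks, multiplied by the weight and summed --- the same patching argument used in the proofs of Proposition \ref{prop35} and Lemma \ref{lemma61} --- to get $f \in W^{2,p,\delta}$, or argue directly: $|f|^2$ is subharmonic since $\Delta |f|^2 = 2|\nabla f|^2 + 8|f|^2 \geq 0$, and $f$ tends to $0$ at infinity by Lemma \ref{lemma212}, so $f \equiv 0$ by the maximum principle. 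Second, your Young's-inequality sketch for $S^{-1}$ gives the weighted $L^p \to L^p$ bound cleanly (exponential decay of the Bessel kernel beats any polynomial weight), but recovering the two weighted derivatives requires either Calder\'on--Zygmund theory for the kernel or, more simply, the conjugation argument of Lemma \ref{lemma74}; since you flagged this as the main obstacle, be aware the cleaner fix is already in the paper. Finally, your closing "consistency check" is circular --- the index formula \eqref{eqn34} is derived \emph{using} this lemma --- but since you offer it only as a sanity check and not as part of the proof, that is fair.
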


On the other hand, by the Riemann-Roch formula (Lemma \ref{lemma42}), we have
\beqn
{\rm ind} \Big( \ov\partial_{B, \ell_k}: L^{1, p, \delta}_+({\mb C}, \ell_k) \to L^{p, \delta}({\mb C}, \ell_k) \Big) = 2\ {\rm deg}\ \ell_k + 2,\ k = m +1, \ldots, n.
\eeqn
Together with Lemma \ref{lemma46}, we see $\hat{\mc D}_{\bf v}$ is Fredholm with index 
\beqn
{\rm ind} ( \hat{\mc D}_{\bf v} ) = \sum_{k=m +1}^n {\rm ind} (\ov\partial_{B, k})  = \sum_{k=m +1}^n 2\ {\rm deg}\ \ell_k + (2n - 2m) = {\rm dim}\bar{X} + 2 \langle c_{\, 1}^K(TX), \beta \rangle.
\eeqn
By Lemma \ref{lemma45}, this finishes the proof of Proposition \ref{prop32} in the case ${\rm dim}\bar{X}  > 0$. 

Finally, we consider the case when the symplectic quotient has zero dimension. We artificially increase the dimension of $X$, by defining $\hat X:=X \times {\mb C}$ and let $K$ act trivially on the second factor. This gives a new Hamiltonian $K$-manifold and we view the original vortex ${\bf v}$ as a vortex in $\hat{X}$ mapped to $X \times \{0\}$. Denote the vortex differential operator with respect $\hat X$ by $\hat{\mc D}_{\bf v}^+$, which has an extra block
\beqn
\frac{\partial}{\partial \ov{z}}: L_+^{1, p, \delta}({\mb C}, {\mb C}) \to L^{p, \delta}({\mb C}, {\mb C}).
\eeqn
Therefore $\hat{\mc D}_{\bf v}$ is Fredholm and
\beqn
{\rm ind} \big( \hat{\mc D}_{\bf v} \big) = {\rm ind} \big( \hat{\mc D}_{\bf v}^+ \big) - {\rm ind} \Big( \partial_{\ov{z}}: L_+^{1, p, \delta}({\mb C}, {\mb C}) \to L^{p, \delta}({\mb C}, {\mb C}) \Big).
\eeqn
By Lemma \ref{lemma42}, the last term above is equal to $2$. Therefore the formula for $\hat{\mc D}_{\bf v}$ is obtained by using the previous case for $\hat{X}$.

\section{Proof of Proposition \ref{prop36}}\label{section5}

Let $U\subset {\mb C}$ be a precompact open subset with smooth boundary. Given $p>2$. We say that $a \in W^{1, p}(U, \Lambda^1 \otimes {\mf k})$ is in {\bf Coulomb gauge} if
\begin{align*}
  & d^{\,*} a = 0,\ & * a|_{\partial U}=0.
\end{align*}
We say that $a \in L^p(U, \Lambda^1\otimes {\mf k})$ is in {\bf Coulomb gauge} if 
\beqn
\int_U \langle a, d \psi \rangle = 0, \quad \forall \psi \in C^\infty(U,{\mf k}). 
\eeqn

\begin{lemma}\label{lemma51}\cite[Theorem 8.1, Theorem 8.3]{Wehrheim_Uhlenbeck}
Let $k$ be $0$ or $1$. There exist constants $\delta_U > 0$ and $c_U >0$ such that for any $a \in W^{k, p}(U, \Lambda^1 \otimes {\mf k})$ satisfying $\| a \|_{W^{k,p}} \leq \delta_U$, there exists $g \in {\mc K}^{k+1, p}(U)$ such that $g\cdot a$ is in Coulomb gauge and
\beq\label{eqn51}
\| g \cdot a \|_{W^{k,p}} \leq c_U \| a \|_{W^{k,p}}.  
\eeq
\end{lemma}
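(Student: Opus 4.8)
The plan is to construct the gauge transformation in the form $g = \exp(\xi)$ for a function $\xi : U \to \mf k$ of regularity $W^{k+1, p}$ and to solve the resulting nonlinear elliptic problem for $\xi$ by the implicit function theorem, exploiting that its linearization at $a = 0$ is the Neumann Laplacian. Throughout, the decisive analytic input is that $\dim U = 2$ and $p > 2$, so $W^{k+1, p}(U)$ embeds into $C^0(U)$ and is a Banach algebra; this is exactly what makes all the nonlinear expressions below well-defined between the relevant Sobolev spaces and controls them quadratically near the origin.

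First I would set up the gauge-fixing equation. With the convention of \eqref{eqn22}, writing $g = \exp(\xi)$ gives an expansion $g \cdot a = a - d\xi + Q(\xi, a)$, where $Q$ collects the bilinear term $-[a, \xi]$ together with all higher brackets; $Q$ is a smooth map that vanishes to second order at $(\xi, a) = (0, 0)$. Since for a $1$-form the condition $*\alpha|_{\partial U} = 0$ is the vanishing of the normal component $\alpha(\nu)$, integration by parts shows that the two Coulomb conditions $d^{\,*}(g \cdot a) = 0$ in $U$ and $*(g \cdot a)|_{\partial U} = 0$ on $\partial U$ together constitute the natural (Neumann) boundary value problem for the operator $d^{\,*}d$ on $\mf k$-valued functions. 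Differentiating in $\xi$ at $(\xi, a) = (0, 0)$ yields the linearization $\xi \mapsto d^{\,*}d\,\xi = \Delta \xi$ with Neumann condition $\partial_\nu \xi|_{\partial U} = 0$. For $U$ connected with smooth boundary, the standard $L^p$ elliptic theory for the Neumann Laplacian shows $\Delta$ is Fredholm with kernel and cokernel equal to the constants, hence restricts to a Banach-space isomorphism from the mean-zero subspace of $W^{k+1, p}(U, \mf k)$ onto a complement of the constants in $W^{k-1, p}(U, \mf k)$. The constant kernel corresponds to constant gauge transformations, which preserve the Coulomb condition, so I would remove this ambiguity by seeking $\xi$ mean-zero.

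With these two ingredients, I would close the argument by the implicit function theorem. Because $W^{k+1, p}(U)$ is a Banach algebra in $C^0$, the map $(\xi, a) \mapsto d^{\,*}(g \cdot a)$, with its boundary data, is a smooth map of the appropriate Sobolev spaces whose partial derivative in $\xi$ at the origin is the isomorphism just described. The implicit function theorem then produces, for every $a$ with $\|a\|_{W^{k,p}} \leq \delta_U$, a unique mean-zero $\xi$ with $\|\xi\|_{W^{k+1, p}} \leq C \|a\|_{W^{k,p}}$ solving the gauge-fixing problem; the linear bound holds because the solution map $a \mapsto \xi$ is $C^1$ and vanishes at $a = 0$ with bounded derivative $(\Delta_N)^{-1} d^{\,*}$. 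Setting $g = \exp(\xi) \in \mc K^{k+1, p}(U)$ and using $g \cdot a = a - d\xi + Q(\xi, a)$ gives $\|g \cdot a\|_{W^{k,p}} \leq \|a\|_{W^{k,p}} + \|d\xi\|_{W^{k,p}} + \|Q(\xi, a)\|_{W^{k,p}} \leq c_U \|a\|_{W^{k,p}}$, which is \eqref{eqn51}.

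The main obstacle is the low-regularity case $k = 0$. There $a \in L^p$ and $g \in W^{1, p}$, so $g \cdot a = g a g^{-1} - g^{-1} dg$ must be interpreted through $W^{1, p} \hookrightarrow C^0$, the Coulomb condition is the weak one $\int_U \langle g \cdot a, d\psi \rangle = 0$, and the linearized equation $\Delta \xi = d^{\,*} a$ must be solved in the dual formulation $\Delta : W^{1, p} \to (W^{1, q})^*$ with the Neumann condition absorbed into the weak pairing. Checking that the linearization remains an isomorphism at this regularity, and that the nonlinear remainder $Q$ is a contraction in these weaker norms, is the technical heart of the matter; this is precisely the content of \cite[Theorem 8.3]{Wehrheim_Uhlenbeck} (the $k = 1$ statement being \cite[Theorem 8.1]{Wehrheim_Uhlenbeck}), which I would ultimately invoke to conclude.
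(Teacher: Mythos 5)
The paper offers no proof of this lemma---it is quoted verbatim from Wehrheim's book (Theorems 8.1 and 8.3)---and your proposal ultimately rests on that same citation, so it is consistent with the paper. Your preliminary sketch of the implicit-function-theorem argument (linearizing at the Neumann Laplacian, restricting to mean-zero $\xi$, and using that $W^{k+1,p}(U)\hookrightarrow C^0(U)$ in two dimensions with $p>2$ makes every nonlinear term subcritical and quadratically small) is moreover a correct outline of how the cited theorems are actually proved in this regime, including your correct identification of the $k=0$ case, with its weak formulation of the Coulomb condition and the dual-space Neumann problem, as the technical heart of the matter.
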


The following results are used in the proof of Proposition \ref{prop36}.
\begin{lemma}\label{lemma52}\cite[Theorem 5.1]{Wehrheim_Uhlenbeck}
Let $p>2$ and $U$ be as above. Then there exists $c>0$ such that, for any $a \in W^{1, p}(U, \Lambda^1)$ satisfying $*a|_{\partial U}=0$,
\beq\label{eqn52} 
\| a \|_{W^{1,p}(U)} \leq c \Big( \| d a\|_{L^p(U)} +  \| d^{\, *} a \|_{L^p(U)} +  \| a \|_{L^p(U)} \Big).
\eeq
\end{lemma}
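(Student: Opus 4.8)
I would recognize \eqref{eqn52} as the standard $L^p$ a priori estimate for the first-order constant-coefficient elliptic operator $D := d \oplus d^{\,*}$ on $\Lambda^1$-valued forms, subject to the relative boundary condition $*a|_{\partial U}=0$. Since $D$ uses the trivial connection, the $\mf k$-valued case reduces componentwise to the scalar one, so I treat $a$ as $\mb R^2$-valued. In the coordinate $z = s + \ii t$, writing $a = a_1\, ds + a_2\, dt$ and $\alpha := a_1 - \ii a_2$, a direct computation gives $2\partial_{\ov z}\alpha = -\,d^{\,*}a - \ii\,(*da)$, whence $\|\partial_{\ov z}\alpha\|_{L^p(U)} \leq \|da\|_{L^p(U)} + \|d^{\,*}a\|_{L^p(U)}$; moreover $*a|_{\partial U}=0$ is exactly the vanishing of the normal component of $a$, which in complex notation reads $\on{Re}(\nu\,\alpha) = 0$ on $\partial U$, where $\nu$ denotes the outward unit normal viewed as a unit complex number. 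The whole statement thus reduces to the $L^p$ regularity of a Riemann-Hilbert boundary value problem for $\partial_{\ov z}$.

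The plan is to prove a local estimate near each point of $\ov U$ and then patch. For an interior ball $U' \Subset U$, the Calderón--Zygmund inequality for $\partial_{\ov z}$ (equivalently, interior elliptic regularity for $D$) gives $\|a\|_{W^{1,p}(U')} \leq c\big(\|da\|_{L^p(U)} + \|d^{\,*}a\|_{L^p(U)} + \|a\|_{L^p(U)}\big)$, the last term arising from the commutator with a cutoff function.

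For a boundary point I would flatten $\partial U$ by a local diffeomorphism and verify that the pair consisting of the principal symbol of $D$ and the boundary operator $a \mapsto *a|_{\partial U}$ satisfies the Lopatinski--Shapiro complementing condition. Concretely, this is the assertion that the model problem $\partial_{\ov z}\alpha = 0$ on a half-plane with $\on{Re}(\nu\alpha)=0$ on the bounding line is non-degenerate, which holds because $\nu$ is a nowhere-vanishing unit-modulus coefficient. With the complementing condition in hand, the Agmon--Douglis--Nirenberg $L^p$ estimate for elliptic boundary value problems yields the half-space version of \eqref{eqn52} in a boundary collar.

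Finally I would combine these local estimates through a finite partition of unity subordinate to a cover of the compact set $\ov U$ by interior balls and boundary collars; the zeroth-order error terms produced by differentiating the cutoff functions are absorbed into the $\|a\|_{L^p(U)}$ term on the right-hand side. I expect the boundary step to be the main obstacle: the only nontrivial point is checking the complementing condition for the operator $*\cdot|_{\partial U}$ and correctly invoking the ADN theory with that boundary operator, after which the interior estimate and the patching are routine. This scheme is precisely the one carried out in \cite[Theorem 5.1]{Wehrheim_Uhlenbeck}.
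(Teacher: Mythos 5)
The paper does not actually prove this lemma: it imports it verbatim from \cite[Theorem 5.1]{Wehrheim_Uhlenbeck}, so your proposal has to be compared with that cited proof. Your reduction is algebraically correct: with $a = a_1\,ds + a_2\,dt$ and $\alpha = a_1 - \ii a_2$ one indeed gets $2\partial_{\ov z}\alpha = (\partial_s a_1 + \partial_t a_2) + \ii(\partial_t a_1 - \partial_s a_2) = -d^{\,*}a - \ii\,(*da)$, and $*a|_{\partial U}=0$ is exactly $\on{Re}(\nu\alpha)=0$, i.e.\ $\alpha$ takes boundary values in a real line subbundle of ${\mb C}$. The interior Calder\'on--Zygmund step and the partition-of-unity patching are routine, as you say. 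However, your closing claim that this scheme is ``precisely'' Wehrheim's is not accurate: her proof is valid in every dimension (where no complex-variable reduction is available) and proceeds through the Hodge Laplacian $dd^{\,*}+d^{\,*}d$ and $L^p$ regularity for weak solutions of scalar Neumann and Dirichlet problems, not through first-order Agmon--Douglis--Nirenberg theory. Your route is special to dimension two, which is all this paper needs, and is arguably shorter given the holomorphic-curve toolbox; hers is more general.

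The one step that needs more care is exactly the one you flag as the main obstacle, and your justification (``non-degenerate because $\nu$ is nowhere-vanishing'') glosses over the real point. In dimension two, proper ellipticity is \emph{not} automatic, and the scalar complex operator $\partial_{\ov z}$ fails it: on the model half-plane $\{t>0\}$ with tangential frequency $\xi$, the decaying solutions of $\partial_{\ov z}\alpha=0$ form a one-complex-dimensional space for $\xi>0$ and are trivial for $\xi<0$, so the single real condition $\on{Re}(\nu\alpha(0))=0$ can never match the decaying modes frequency by frequency, and ADN applied to the complex scalar equation is meaningless. The Lopatinski--Shapiro check must instead be run for the real $2\times 2$ system $D = d\oplus d^{\,*}$ (whose complexification is the pair $\partial_{\ov z}\alpha = 0$, $\partial_z\beta = 0$ with $\beta$ \emph{independent} of $\alpha$): its symbol determinant is $-\tfrac14|\xi|^2$, so it is properly elliptic with exactly one decaying mode $(a_1,a_2) = \bigl(1, \ii\,\on{sgn}(\xi)\bigr)e^{-|\xi|t}$ per tangential frequency, and the boundary map $a \mapsto a_2(0)$ is an isomorphism on that mode; this is the complementing condition. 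Alternatively, and more in keeping with the tools already cited in this paper, you can bypass ADN entirely: $\on{Re}(\nu\alpha)=0$ is a totally real boundary condition for $\partial_{\ov z}$, and the desired local estimate is the standard boundary estimate for the Cauchy--Riemann operator with totally real boundary values \cite[Appendix C]{McDuff_Salamon_2004}. With either repair your argument is complete.
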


\begin{proof}[Proof of Proposition \ref{prop36}]
We remark that a special case of this proposition is when $a_i = a$ is independent of $i$. For the general case, all constants involved in the estimates (which come from Propositions \ref{prop26} and \ref{prop27}) can be chosen independent of $i$. To save notations, we carry out the proof for a single $a$.

The cylindrical coordinates on $C_R$ are defined via the exponential map
\begin{align*}
&\ \exp:[\log R,\infty) \times S^1 \to C_R, &\ (\tau,\theta) \mapsto e^{\tau + {\bm i} \theta}.
\end{align*}
We assume $a$ is in radial gauge, i.e., $\Psi^* a = \wt\psi(\tau, \theta) d \theta$. Let $k_0 \in W^{1,p}(S^1,K)$ be the map from Proposition \ref{prop27}. We view $k_0 : [\log R, \infty) \times S^1 \to K$ as a gauge transformation that is independent of the first coordinate. We fix a constant $ \frac{1}{2}  < \sig < 1$ and consider bands on the cylinder $S_n:=[n\sig,n\sig+1] \times S^1$ equipped with the cylindrical metric. In the following discussion, the constants denoted by $c$ are functions of $\sig$, but are independent of $n$. 

By Proposition \ref{prop27}, for any $0<\gamma<\frac 2 p$,
\beq\label{eqn53}
\| k_0^{-1} \cdot \wt\psi d \theta \|_{L^p(S_n)} \leq ce^{-\gamma  n \sigma }.
\eeq
By Lemma \ref{lemma51}, for any sufficiently large $n$, there exists a gauge transformation $g_n \in W^{1,p}(S_n, K)$ that transforms $k_0^{-1} \cdot \wt\psi d\theta$ to Coulomb gauge on $S_n$. By elliptic regularity for the vortex equation plus the Coulomb gauge condition, $( k_0 g_n)^{-1} \cdot \wt\psi d\theta$ is smooth. Because $\wt\psi$ is smooth, $k_0 g_n$ is also smooth. Denote the new connection form by
\beqn
\wt{a}_n:= ( k_0 g_n)^{-1} \cdot \wt\psi d\theta \in \Omega^1(S_n, {\mf k}).
\eeqn
\eqref{eqn51} and \eqref{eqn53} imply that 
\beq\label{eqn54}
\| \wt{a}_n \|_{L^p(S_n)} \leq c \| k_0\cdot  \wt\psi d \theta \|_{L^p(S_n)} \leq  ce^{-\gamma n\sigma}.
\eeq
A similar computation can be carried out by replacing $(p,\gamma)$ with $(2p,\gamma/2)$ to obtain
\beq\label{eqn55}
\| \wt{a}_n \|_{L^{2p}(S_n)} \leq  ce^{-\gamma n\sigma /2}.
\eeq

Now we estimate the derivative of $\wt{a}_n$. By Proposition \ref{prop26}, for any $\eps>0$, there is a constant $c$ such that
\beq\label{eqn56}
\sup_{S_n} | F_{\wt{a}_n} | \leq ce^{\eps n\sigma}.
\eeq
By Lemma \ref{lemma52} and the Coulomb gauge condition, we get
\beq\label{eqn57}
\begin{split}  
\| \wt{a}_n \|_{W^{1,p}(S_n)} &\leq c \Big( \| d \wt{a}_n \|_{L^p(S_n)} + \| \wt{a}_n\|_{L^p(S_n)} \Big) \\
                                &\leq c \Big( \| F_{\wt{a}_n} \|_{L^p(S_n)} + \| \wt{a}_n  \|_{L^{2p}(S_n)}^2 +  \| \wt{a}_n \|_{L^p(S_n)} \Big) \\
                                &\leq c \Big( e^{\eps n\sigma } +e^{-\gamma n} \Big) \\
																&\leq c e ^{\eps n\sigma},
\end{split}
\eeq
where the second last inequality comes from \eqref{eqn54}, \eqref{eqn55} and \eqref{eqn56}.

Next, we glue together the gauge transformations $k_0 g_n$ defined on the bands $S_n$. Notice that multiplying $g_n$ by a constant element does not alter the above estimates. So we may assume that for every $n$, there is a point $x_n \in S_n \cap S_{n+1}$ at which $g_n(x_n)=g_{n+1}(x_n)$. 

\noindent{\bf Claim.} {\it There exists $N_0 \in {\mb N}$ such that for $n \geq N_0$, there exists a smooth function $s_n: S_n \cap S_{n+1} \to {\mf k}$ such that $k_0 g_{n+1}  = k_0 g_n e^{s_n}$ and
\beq\label{eqn58}
\| s_n \|_{W^{1,p}(S_n \cap S_{n+1})} \leq ce^{-\gamma n \sigma }, \quad \| s_n \|_{W^{2,p}(S_n \cap S_{n+1})} \leq ce^{\eps n\sigma }.
\eeq
}

\noindent{\it Proof of the claim.} Define the quotient map 
\beqn
\pi_n: B_n:=[ n \sigma + \sigma, n \sigma + 1]\times [0, 2\pi] \to S_n \cap  S_{n+1}
\eeqn
by identifying $(s, 0)$ with $(s, 2\pi)$. Then there exists $s: B_n \to {\mf g}$ such that $k_0 g_{n+1} = k_0 g_n e^{s_n}$ and $s_n$ vanishes at a point of $B_n$. By this relation, we have
\beqn
\pi_n^* \wt{a}_{n+1} = e^{- s_n} ( \pi_n^* \wt{a}_n ) e^{s_n} + d s_n.
\eeqn
Therefore by \eqref{eqn54}, we have
\beqn
\|  d s_n \|_{L^p(B_n)} \leq  \| \wt{a}_n\|_{L^p(S_n)} + \| \wt{a}_{n+1}\|_{L^p(S_{n+1})} \leq c e^{-\gamma n \sigma}.
\eeqn
Since $s_n$ vanishes at one point, $\| s_n \|_{W^{1, p}(B_n)}$ has similar bound with possibly different value of $c$. Therefore, by Sobolev embedding, for $n$ sufficiently large, $\| s_n\|_{C^0(B_n)}$ is very small and hence it descends to a smooth function on $S_n \cap S_{n+1}$. Then the first inequality of \eqref{eqn58} follows. On the other hand, by \eqref{eqn57} and Sobolev embedding, one has
\begin{multline*}
\| d s_n \|_{W^{1, p}(S_n\cap S_{n+1})} \\
\leq \| \wt{a}_n \|_{W^{1, p}(S_n)} + \| \wt{a}_{n+1}\|_{W^{1, p}(S_{n+1})} +  \| d s_n \|_{L^p (S_n\cap S_{n+1})} \| \wt{a}_n \|_{L^\infty(S_n)} \leq c e^{\epsilon n \sigma}.
\end{multline*}
The claim is then proved.

Suppose $\beta_n:S_n \cap S_{n+1} \to [0,1]$ be a cut-off function that is $1$ in the neighborhood of $\partial S_{n+1}$ and $0$ in the neighborhood of $\partial S_n$. Further, for different values of $n$, the functions $\beta_n$ are translations of each other. We define a gauge transformation  
\begin{align}\label{eqn510}
&\ \wt{g}: \bigcup_{n \geq N_0} S_n \to K,\ & \wt{g}(z):= \left\{ \begin{array}{cc} k_0 g_n,\ & z\in S_n\bs (S_{n-1} \cup S_{n+1})\\
                       k_0 g_n e^{\beta_n s_n} & z \in S_n \cap S_{n+1}.
											\end{array} \right.
\end{align}
Extend $\wt{g}$ arbitrarily to $S_n$ for $n < N_0$. On the intersection $S_n \cap S_{n+1}$, $e^{\beta_n s_n}$ satisfies similar bounds as $e^{s_n}$ (see \eqref{eqn58}). Therefore, after applying the global gauge transformation $\wt{g} : [\log R,\infty) \times S^1 \to K$, the connection $\wt{a}:= \wt{g}^{-1} \cdot  \wt\psi d\theta$ satisfies the same asymptotic bounds as $\wt{a}_n$, namely,
\beq\label{eqn511}
\| \wt{a} \|_{W^{1, p}(S_n)} \leq c e ^{\eps n \sigma }, \quad  \| \wt{a} \|_{L^p(S_n)} \leq c e^{-\gamma n \sigma}.
\eeq

Lastly, denote the pullback of $\wt{a}_n$ to $C_R$ via the exponential map by $\ck a$, one has
\beqn
\| \check{a} \|_{L^p(\exp(S_n))} \leq c e^{(-1 + \frac 2 p -\gamma)n\sigma}, \quad \| \nabla \check{a} \|_{L^p(\exp(S_n))}  \leq c e^{(-2+\frac 2 p + \eps)n\sigma }.  
\eeqn
Then, given $\delta \in ( 1- \frac{2}{p}, 1)$, if we choose $\gamma>0$ and $\epsilon>0$ such that  
\beqn
\delta - 1 + \frac 2 p -\gamma<0, \quad \delta -2+ \frac{2}{p} + \eps<0.
\eeqn
then $a \in W^{1, p, \delta}(C_R, \Lambda^1 \otimes {\mf g})$. This finishes the proof of Proposition \ref{prop36}.
\end{proof}

\section{Proof of Proposition \ref{prop37}}\label{section6}

To proceed, we need to bound the derivatives $d u_i$ (after appropriate gauge transformations). For convenience we choose a smooth embedding $X \hookrightarrow {\mb R}^N$ (or just an embedding of a $K$-invariant, precompact open subset of $X$ which contains the images of all $u_i$) and view each $u_i$ as an ${\mb R}^N$-valued function.

\begin{lemma}\label{lemma61} 
Suppose a sequence ${\bf v}_i = (u_i, a_i) \in \wt{M}^K(C_R, X)$ converge to ${\bf v}_\infty = (u_\infty, a_\infty)$ in c.c.t.. Then, for all $i$ including $i = \infty$, there are gauge transformations $\check{g}_i \in {\mc K}^{2, p}_{\rm loc}(C_R)$ such that the following are satisfied. Denote $\check{\bf v}_i = (\check{u}_i, \check{a}_i):= \check{g}_i \cdot {\bf v}_i$. 
\begin{enumerate}
\item \label{lemma61a} {\hfil $\displaystyle \sup_i \| \check{a}_i \|_{W^{1, p, \delta}(C_R)} < +\infty$.}

\item \label{lemma61b} $\displaystyle \lim_{z \to \infty} \check{u}_i(z) = x_i$ uniformly in $i$ (including $i = \infty$) and $\displaystyle \lim_{i\to \infty} x_i = x_\infty$.

\item \label{lemma61c} {\hfil $\displaystyle \sup_i \| \nabla \cu_i\|_{W^{1,p,\delta}(C_R)} < +\infty$.}

\end{enumerate}
\end{lemma}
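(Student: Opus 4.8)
The plan is to use the very gauge transformations $\check{g}_i \in {\mc K}^{2,p}_{\rm loc}(C_R)$ produced by Proposition \ref{prop36}: with this choice part \eqref{lemma61a} is exactly the conclusion of that proposition, and it remains only to verify \eqref{lemma61b} and \eqref{lemma61c} for the same $\check{g}_i$. Because those gauge transformations are assembled in Proposition \ref{prop36} from the maps $k_{0,i}$ of Proposition \ref{prop27} together with local Coulomb corrections that vary continuously with the vortex, one may further arrange that $\check{\bf v}_i$ converges to $\check{\bf v}_\infty$ smoothly on compact subsets of $C_R$; I will use this below. Throughout I regard each $\check{u}_i$ as an ${\mb R}^N$-valued map via the fixed embedding $X \hookrightarrow {\mb R}^N$ set up at the start of the section, so that scalar analytic tools apply componentwise.

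First I would prove the $L^{p,\delta}$ half of \eqref{lemma61c}, a uniform bound on $\| \nabla \check{u}_i \|_{L^{p,\delta}(C_R)}$. Writing $\nabla \check{u}_i = d_{\check{a}_i} \check{u}_i - {\mc X}_{\check{a}_i}(\check{u}_i)$, the covariant term is pointwise bounded by $\sqrt{2\, e(\check{\bf v}_i)}$, which by Proposition \ref{prop26} (in its uniform-in-$i$ form) decays like $|z|^{-2+\eps}$; since $\delta < 1 < 2 - \frac 2 p$, a direct integration shows $d_{\check{a}_i}\check{u}_i \in L^{p,\delta}$ with a bound independent of $i$. The algebraic term satisfies $|{\mc X}_{\check{a}_i}(\check{u}_i)| \le c\, |\check{a}_i|$ because all $\check{u}_i$ have image in a common compact set, so its $L^{p,\delta}$ norm is controlled by $\| \check{a}_i \|_{L^{p,\delta}}$, uniformly bounded by \eqref{lemma61a}.

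Part \eqref{lemma61b} then follows quickly. Since $\delta > 1 - \frac 2 p$ and $\nabla \check{u}_i \in L^{p,\delta}$, Lemma \ref{lemma212} applied componentwise yields the limit $\check{x}_i := \lim_{z \to \infty} \check{u}_i(z)$; it lies in $\mu^{-1}(0)$ because $|\mu(\check{u}_i)|^2 \le 2\, e(\check{\bf v}_i) \to 0$. The oscillation estimate \eqref{eqn213} bounds $\sup_{x,y \in C_r}|\check{u}_i(x) - \check{u}_i(y)|$ by $c\, r^{1 - \frac 2 p - \delta}\| \nabla \check{u}_i \|_{L^{p,\delta}(C_r)}$, whose exponent is negative and whose norm factor is uniformly bounded, so $\check{u}_i(z) \to \check{x}_i$ uniformly in $i$. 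Combining this uniform convergence at infinity with the compact convergence $\check{u}_i \to \check{u}_\infty$ gives $\check{x}_i \to \check{x}_\infty$.

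The remaining second-order bound, $\| \nabla\nabla \check{u}_i \|_{L^{p,\delta}} < \infty$, is the step I expect to be the main obstacle. Writing $\check{a}_i = \check\phi_i\, ds + \check\psi_i\, dt$, the holomorphicity equation becomes the quasilinear Cauchy--Riemann system $\partial_s \check{u}_i + J(\check{u}_i)\,\partial_t \check{u}_i = -{\mc X}_{\check\phi_i}(\check{u}_i) - J(\check{u}_i)\,{\mc X}_{\check\psi_i}(\check{u}_i) =: f_i$ in the ${\mb R}^N$ picture. On each unit ball $B_1(z)$ one has $f_i \in W^{1,p}$, since $\check{a}_i \in W^{1,p}$, $\check{u}_i \in L^\infty$ and $\nabla \check{u}_i \in L^p$ there (and $p>2$ makes $\check{a}_i$ and the coefficient $J(\check{u}_i)$ continuous), so interior elliptic regularity upgrades $\check{u}_i$ to $W^{2,p}(B_1(z))$ with a constant uniform in $z$ and $i$. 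I would then cover $C_R$ by unit balls $B_1(z_k)$, weight each local estimate by $|z_k|^{\delta p}$ and sum, dominating the result by $\| \check{a}_i \|_{W^{1,p,\delta}}$ and $\| \nabla \check{u}_i \|_{L^{p,\delta}}$ (using Lemma \ref{lemma216} to interchange $\nabla$ and $\nabla^{\check{a}_i}$ in the weighted norms), both already uniformly bounded. The delicate points are keeping the interior elliptic constant uniform over the cover and controlling the nonlinear product $\check{a}_i \cdot \nabla \check{u}_i$ appearing in $\nabla f_i$, for which one uses that $\| \check{a}_i \|_{W^{1,p}(B_2(z_k))} \to 0$ as $|z_k| \to \infty$ and is therefore uniformly bounded in $k$ and $i$.
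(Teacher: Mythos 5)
Your overall strategy---Proposition \ref{prop36} for part \eqref{lemma61a}, energy decay plus Lemma \ref{lemma212} for the limits at infinity in part \eqref{lemma61b}, and elliptic bootstrapping of the Cauchy--Riemann equation on unit balls with weighted summation for part \eqref{lemma61c}---is the same as the paper's. But there is a genuine gap at the step where you prove $\lim_i x_i = x_\infty$. You assert that the gauge transformations of Proposition \ref{prop36} ``may be further arranged'' so that $\check{\bf v}_i$ converges to $\check{\bf v}_\infty$ smoothly on compact subsets. This is not part of the statement of Proposition \ref{prop36} and does not follow from it; moreover, some additional input here is unavoidable, because the conclusion $\lim_i x_i = x_\infty$ is not stable under replacing $\check{g}_i$ by $k_i \check{g}_i$ with arbitrary constants $k_i \in K$: such a replacement preserves part \eqref{lemma61a}, part \eqref{lemma61c}, and the uniform existence of the limits in part \eqref{lemma61b} (all the norms involved are $K$-invariant), yet it destroys the convergence $x_i \to x_\infty$ whenever $k_i$ fails to converge. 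Hence no argument that uses only the conclusion of Proposition \ref{prop36} can close this step. The paper closes it with a gauge-invariant device instead: by Corollary \ref{cor211} the evaluation $\ev_\infty$ is continuous with respect to c.c.t., so the orbits of $x_i$ converge to the orbit of $x_\infty$ in $\bar{X} = \mu^{-1}(0)/K$, and one then corrects each $\check{g}_i$ by a constant $g_i' \in K$ chosen so that $g_i' x_i \to x_\infty$ in $\mu^{-1}(0)$; this correction is harmless for the other two parts. Substituting that argument for your unproven claim repairs the proof.

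A second, more technical flaw is in part \eqref{lemma61c}: you cover $C_R$ by unit balls $B_1(z_k)$ and invoke interior elliptic regularity, but for centers within distance $2$ of the inner circle $|z| = R$ the concentric balls $B_2(z_k)$ are not contained in the domain $C_R$, so interior estimates are unavailable there and no boundary condition on $|z| = R$ can replace them. The paper runs the weighted elliptic argument only on $C_{R'}$ for a suitably large $R' \geq R$ (chosen, in addition, so that the images $\check{u}_i(C_{R'})$ lie in a single coordinate chart around $x_\infty$, which keeps the coefficients of the quasilinear system uniformly controlled), and it bounds the remaining compact annulus $C_R \setminus C_{R'}$ directly from the c.c.t.\ convergence, which gives smooth convergence up to gauge on compact subsets. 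Your argument needs the same splitting; as written, the uniform bound near the inner boundary does not follow.
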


\begin{proof}
By Proposition \ref{prop36}, we can assume that $\| a_i \|_{W^{1, p, \delta}(C_R)}$ is uniformly bounded. Since the sequence ${\bf v}_i$ is uniformly bounded, we have
\beqn
\sup_i  \Big( \| {\mc X}_{a_i}(u_i) \|_{L^\infty(C_R)} +  \| {\mc X}_{a_i}(u_i) \|_{L^{p, \delta}(C_R)} \Big) < +\infty.
\eeqn
Using bound on the energy densities of ${\bf v}_i$ provided by Proposition \ref{prop26}, we have
\beq\label{eqn61}
\begin{split}
\sup_i  \| d u_i \|_{L^\infty(C_R)} \leq &\ \sup_i \Big( \| d_{a_i} u_i \|_{L^\infty(C_R)} + \| {\mc X}_{a_i}(u_i) \|_{L^\infty(C_R)} \Big) < \infty; \\
\sup_i  \| d u_i \|_{L^{p, \delta}(C_R)} \leq &\ \sup_i \Big( \| d_{a_i} u_i \|_{L^{p, \delta} (C_R)} + \| {\mc X}_{a_i}(u_i) \|_{L^{p, \delta} (C_R)} \Big) < \infty.
\end{split}
\eeq
Notice that the norm of the 1-form $d u_i$ is comparable to the norm of $\nabla u_i$ computed as an ${\mb R}^N$-valued function, the above gives an upper bound on the corresponding norms of $\nabla u_i$. Then by Lemma \ref{lemma212}, each $u_i$ has a limit $u_i(\infty) \in X \subset {\mb R}^N$ at infinity. Moreover, by \eqref{eqn213}, there is a constant $C>0$ independent of $i$ such that 
\beqn
| u_i(z) - u_i(\infty) | \leq C |z|^{1 - \frac{2}{p}- \delta} \| \nabla u_i \|_{L^{p, \delta}(C_R)},\ \forall z\in C_R.
\eeqn
Hence the convergence $\displaystyle \lim_{i \to \infty} u_i(z) = u_i(\infty)$ is uniform in $i$. Further, by Corollary \ref{cor211}, the corresponding sequence $\bar{x}_i \in \bar{X}$ in the symplectic quotient $\bar{X}$ converges to $\bar{x}_\infty \in \bar{X}$. Therefore we can modify each $g_i$ by a constant element $g_i' \in K$ such that 
\beqn
\lim_{i \to \infty} g_i' u_i(\infty) = g_\infty' u_\infty(\infty).
\eeqn
Denote $\check{g}_i(z) = g_i' g_i(z)$ and $x_i = g_i' u_i(\infty)$ for all $i$ including $i = \infty$. Then the sequence $\check{v}_i:= \check{g}_i \cdot {\bf v}_i$ satisfies Part \eqref{lemma61a} and \eqref{lemma61b}. 

Now we bound the second derivatives of $\check{u}_i$. By Part \eqref{lemma61b}, there exist $i_0$, $R' \geq R$ such that for all $i \geq i_0$, $\check{u}_i(C_{R'})$ is contained in a local coordinate chart of $x_\infty$. The c.c.t. convergence of ${\bf v}_i \to {\bf v}_\infty$ implies that up to gauge transformation $\check{u}_i|_{C_R \smallsetminus C_{R'}}$ converges to $\check{u}_\infty|_{C_R \smallsetminus C_{R'}}$ up to the second order, which implies the uniform bound of $\| \nabla \check{u}_i\|_{W^{1, p, \delta}(C_R \smallsetminus C_{R'})}$. Hence it suffices to estimate $\| \nabla \cu_i\|_{W^{1, p, \delta}(C_{R'})}$. 

Using the local coordinate chart around $x_\infty$ one can view $\check{u}_i|_{C_{R'}}$ as maps into ${\mb R}^{2n}$. By the holomorphicity equation, one has
\begin{align*}
\partial_s \check{u}_i + J( \check{u}_i)\partial_t \check{u}_i= - {\mc X}_{\check{a}_i}^{0,1}( \check{u}_i):= - {\mc X}_{\check\phi_i}( \cu_i) - J(u_i) {\mc X}_{\check\psi_i}( \cu_i).
\end{align*}
Differentiating with respect to $s$, 
\begin{align}\label{eqn62}    
(\partial_s + J \partial_t) \partial_s \cu_i=-(\partial_s J)\partial_t \cu_i-  \partial_s {\mc X}_{\check{a}_i}^{0,1} (\cu_i).
\end{align}
On any radius 2 disk $B_2(z)$, centered at $z \in C_{R'}$, using \eqref{eqn61}, the terms in the right-hand-side of \eqref{eqn62} can be bounded as 
\beqn
\| \partial_s {\mc X}_{\check{a}_i}^{0,1}(\cu_i) \|_{L^p(B_2(z))} \leq c_1 \Big( \| d \cu_i \|_{L^\infty(B_2(z))}  \| \check{a}_i \|_{L^p(B_2(z))} + \| \nabla \check{a}_i \|_{L^p(B_2(z))} \Big) \leq c_2  \| \check{a}_i \|_{W^{1, p}(B_2(z))};
\eeqn
\beqn
\| (\partial_s J)\partial_t \cu_i \|_{L^p(B_2(z))} \leq c_3 \| d \cu_i \|_{L^\infty(B_2(z))}  \| d \cu_i \|_{L^p(B_2(z))} \leq c_4 \|  d \cu_i \|_{L^p(B_2(z))}.
\eeqn 
Here $c_1, c_2, c_3, c_4$ are independent of $i$. By \eqref{eqn62}, and the interior elliptic estimate for the $\ov\partial$ operator (see \cite[Proposition B.4.9]{McDuff_Salamon_2004}), there is $c_5>0$ such that for all $i \geq i_0$, 
\begin{align}\label{eqn63}
\| \partial_s u_i \|_{W^{1,p}(B_1(z))} \leq c_5 \Big( \| a_i \|_{W^{1, p}(B_2(z))} + \| d u_i\|_{L^p(B_2(z))}  \Big).
\end{align}
A similar estimate holds for $\partial_t u_i$. Cover $C_{R'+2}$ by countably many $B_1(z_k)$ ($k=1, 2, \ldots$), we obtain
\begin{multline*}
\| d u_i \|_{W^{1, p, \delta}(C_{R'+2})} \leq c_6 \sum_{k=1}^\infty |z_k|^\delta \| d u_i \|_{W^{1, p}(B_1(z_k))} \\
\leq  c_7 \sum_{k=1}^\infty |z_k|^\delta \Big( \| a_i \|_{W^{1, p}(B_2(z_k))} +  \| d u_i \|_{L^p(B_2(z_k ))} \Big)\\
\leq c_8  \Big( \| a_i \|_{W^{1, p, \delta}(C_{R'})} + \| d u_i \|_{L^{p, \delta}(C_{R'})} \Big)< +\infty.
\end{multline*}
This proves Part \eqref{lemma61c} and hence this proposition.
\end{proof}

\begin{lemma}\label{lemma62}
Given $R\geq 1$. Suppose we have a sequence $\check{\bf v}_i = (\cu_i, \check{a}_i) \in \wt{M}^K(C_R, X)^{1, p}_{\rm loc}$ (including $i=\infty$) and a sequence $x_i \in \mu^{-1}(0)$ (including $i=\infty$) that satisfying the following conditions.
\begin{enumerate}
\item\label{lemma62a} $\displaystyle \sup_{1\leq i\leq \infty}  \Big( \| \check{a}_i \|_{W^{1, p, \delta}(C_R)} + \| \nabla \cu_i \|_{W^{1, p, \delta}(C_R)} \Big) < +\infty$.
	
\item \label{lemma62b} $\displaystyle \lim_{z \to \infty} \cu_i(z) = x_i$ uniformly in $i$ (including $i = \infty$) and $\displaystyle \lim_{i \to \infty} x_i = x_\infty$.
\end{enumerate}
Then there exist a sequence $g_i \in {\mc K}^{2, p}_{\rm loc}(C_R)$ 
(including $i=\infty$) such that \eqref{lemma62a} and \eqref{lemma62b} continue to hold for $g_i \cdot \check{\bf v}_i$ and moreover
\begin{enumerate}
\item[(c)] after removing finitely many items from the sequence, for certain $R' \geq R$, $\check\xi_i:= \exp_{\check{u}_\infty}^{-1} (g_i \cu_i)$ is well-defined on $C_{R'}$ such that $ d \mu( \cu_\infty) \cdot J \xi_i = 0$.

\item[(d)] if $\check{\bf v}_i$ converges to $\check{\bf v}_\infty$ in c.c.t., then $\check\xi_i$ converges to $0$ uniformly on any compact subset of ${\mb C}$.
\end{enumerate}
\end{lemma}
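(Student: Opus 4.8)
The plan is to construct $g_i$ pointwise by means of the local slice theorem for the free $K$-action near $\mu^{-1}(0)$, using the reference map $\cu_\infty$ as the base point. First I would fix $R'\geq R$ large enough that, by hypothesis \eqref{lemma62b}, the maps $\cu_i|_{C_{R'}}$ and $\cu_\infty|_{C_{R'}}$ (for all large $i$, including $i=\infty$) take values in a small neighborhood of $x_\infty$ on which the $K$-action is free. Since $x_i\to x_\infty$ and the convergence $\cu_i\to x_i$ is uniform in $i$, for $R'$ and $i$ large the point $\cu_i(z)$ lies within a fixed slice-neighborhood of $\cu_\infty(z)$ for every $z\in C_{R'}$, uniformly in $z$ and $i$. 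Recall that at a point $y$ near $\mu^{-1}(0)$ the operator $L_y^*=d\mu(y)\cdot J(\cdot)$ has kernel $S_y:=\ker\bigl(d\mu(y)\circ J\bigr)$ transverse to the orbit tangent space, so $(k,s)\mapsto k\cdot\exp_y(s)$ is a local diffeomorphism from a neighborhood of $({\rm id},0)$ in $K\times S_y$ onto a neighborhood of $y$. Applying this with $y=\cu_\infty(z)$ yields, for each $z\in C_{R'}$, unique $g_i(z)\in K$ near ${\rm id}$ and $\check\xi_i(z)\in S_{\cu_\infty(z)}$ near $0$ with
\[
g_i(z)\cdot\cu_i(z)=\exp_{\cu_\infty(z)}\bigl(\check\xi_i(z)\bigr).
\]
This produces $g_i$ on $C_{R'}$ with $d\mu(\cu_\infty)\cdot J\check\xi_i=0$, which is conclusion (c); I would then extend $g_i$ over the compact annulus $C_R\setminus C_{R'}$ by any $W^{2,p}_{\rm loc}$ gauge transformation agreeing with it on $\partial C_{R'}$.

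Next I would check regularity and the persistence of \eqref{lemma62a} and \eqref{lemma62b}. Since $p>2$ and \eqref{lemma62a} gives $\nabla\cu_i\in W^{1,p,\delta}\subset W^{1,p}_{\rm loc}$, the maps $\cu_i,\cu_\infty$ are of class $W^{2,p}_{\rm loc}\hookrightarrow C^1$; because the slice decomposition is obtained from a smooth finite-dimensional implicit function theorem, $g_i,\check\xi_i$ inherit $W^{2,p}_{\rm loc}$ regularity, so $g_i\in{\mc K}^{2,p}_{\rm loc}(C_R)$. Differentiating the defining relation bounds $\nabla g_i$ and $\nabla\check\xi_i$ pointwise by $|\nabla\cu_i|+|\nabla\cu_\infty|$ times constants depending only on the geometry of $X$ near $x_\infty$; feeding these into $g_i\cdot\check a_i=g_i\check a_i g_i^{-1}-(dg_i)g_i^{-1}$ and the analogous second-order computation, and invoking Lemma \ref{lemma216}, gives $\sup_i\|g_i\cdot\check a_i\|_{W^{1,p,\delta}(C_R)}<\infty$ and $\sup_i\|\nabla(g_i\cu_i)\|_{W^{1,p,\delta}(C_R)}<\infty$, so \eqref{lemma62a} persists. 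For \eqref{lemma62b}, letting $z\to\infty$ in the defining relation gives $g_i\cu_i\to\exp_{x_\infty}(\check\xi_i(\infty))=:x_i'$; since $g_i(\infty)\in K$ preserves $\mu^{-1}(0)$ one has $x_i'\in\mu^{-1}(0)$, the convergence is uniform in $i$ by the same reasoning as before, and $\check\xi_i(\infty)\to0$ forces $x_i'\to x_\infty$.

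Finally, for (d) I would assume the c.c.t.\ convergence $\check{\bf v}_i\to\check{\bf v}_\infty$, which supplies gauge transformations $h_i$ with $h_i\check{\bf v}_i\to\check{\bf v}_\infty$ smoothly on compact subsets; in particular $h_i\cu_i\to\cu_\infty$ uniformly on any compact $S\subset C_{R'}$. The key observation is that $g_i\cu_i=\exp_{\cu_\infty}\check\xi_i$ is, by construction, the unique representative of the $K$-orbit of $\cu_i$ lying in the slice through $\cu_\infty$. Since $h_i(z)\cu_i(z)\in K\cdot\cu_i(z)$ converges to $\cu_\infty(z)$, the orbit of $\cu_i(z)$ comes arbitrarily close to the base point $\cu_\infty(z)$; transversality and uniqueness of the slice then force its slice representative $g_i(z)\cu_i(z)$ to converge to $\cu_\infty(z)$ as well, uniformly on $S$. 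Hence $\check\xi_i=\exp_{\cu_\infty}^{-1}(g_i\cu_i)\to0$ uniformly on compact subsets, establishing (d).

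I expect the main obstacle to be precisely this last step: c.c.t.\ convergence only controls $\cu_i$ modulo a possibly uncontrolled gauge $h_i$, whereas the slice gauge $g_i$ is canonically tied to the limit $\cu_\infty$, so one must argue that the canonical slice representative detects the genuine convergence of orbits rather than being spoiled by $h_i$. This is resolved by the transversality and uniqueness of the slice, but it is the conceptually delicate point; the weighted-norm bookkeeping of the second paragraph, though lengthy, is routine given Lemma \ref{lemma216} and the uniform bounds \eqref{lemma62a}.
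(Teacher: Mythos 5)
Your proposal is correct and follows essentially the same route as the paper: the paper also decomposes the pair $(\cu_\infty(z),\cu_i(z))$ via the slice diffeomorphism $((x,\xi),s)\mapsto (x,e^s\exp_x\xi)$ onto $\ker(d\mu\circ J)\oplus{\mf k}$, bounds $ds_i$ by $\nabla\cu_i$ to preserve (a)--(b), and proves (d) by observing that the slice condition is a pointwise gauge fixing, so c.c.t.\ convergence modulo gauge forces genuine convergence of the slice representatives. The only point to tighten is your extension of $g_i$ over the annulus $C_R\setminus C_{R'}$: it cannot be ``any'' extension but must be chosen with $W^{2,p}$-norms uniformly bounded in $i$ (as the paper does), since otherwise condition (a) could fail on the compact annulus.
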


\begin{proof}
Let $U_X$ be a $K$-invariant neighborhood of $\mu^{-1}(0)$ where the $K$-action is free. One can split $TU_X$ into the direct sum of the tangent direction of $K$-orbits and its orthogonal complement with respect to the metric $\omega_X(\cdot, J \cdot)$, i.e.,
\beqn
TU_X = ({\mf k}_X)^\bot \oplus {\mf k}_X = {\rm ker} (d\mu \circ J ) \oplus {\mf k}_X.
\eeqn
By shrinking $U_X$ if necessary, and taking $\epsilon>0$ small enough, the map
\beqn
{\mc L}: B_\epsilon ( {\rm ker}(d\mu\circ J) ) \times B_\epsilon ( {\mf k} ) \to X \times X, \quad ((x,\xi), s) \mapsto (x,e^s \exp_x \xi)
\eeqn	
is a diffeomorphism onto its image and its image contains a neighborhood $N^\epsilon(\Delta  U_X)$ of $U_X \times U_X$ in $X \times X$. By hypothesis \eqref{lemma62b}, there exist $i_0\in {\mb N}$ and $R' \geq R$ such that 
\beqn
i \geq i_0,\ |z| \geq R' \Longrightarrow \Big( \cu_\infty(z), \cu_i(z) \Big) \in U_X \times U_X.
\eeqn
Therefore, there exist unique $s_i: C_{R'} \to {\mf k}$ and $\xi_i \in \Gamma( C_{R'}, u_\infty^* {\rm ker} (d\mu\circ J ) )$ such that 
\beqn
\Big(  \cu_\infty(z), \cu_i(z) \Big) = {\mc L} \Big( ( \cu_\infty(z), \check\xi_i(z)), s_i(z) \Big) = \Big( \cu_\infty(z),\  e^{s_i(z)} \exp_{\cu_\infty(z)} \check\xi_i(z) \Big). 
\eeqn
Since $\| \nabla \cu_i \|_{W^{1, p, \delta}(C_{R'})}$ is uniformly bounded, so is $\| d s_i \|_{W^{1, p, \delta}(C_{R'})}$. Extend $s_i$ to $C_R$ of regularity $W^{2, p}_{\rm loc}$ such that $\| ds_i \|_{W^{1, p, \delta}(C_R)}$ is uniformly bounded. We claim that $g_i:= e^{s_i}$ satisfy the requirement.

Indeed, for $g_i \cdot \check{\bf v}_i$, \eqref{lemma62a} and \eqref{lemma62b} still hold because of the uniform bounds on $\| d s_i \|_{W^{1, p, \delta}(C_R)}$ and $\| s_i \|_{L^\infty}$. (c) follows from the construction. Further, (c) gives a pointwise gauge fixing over $C_{R'}$. Hence if $\ck{\bf v}_i$ converges to $\ck{\bf v}_\infty$ in c.c.t., which implies that $g_i \cdot \cu_i$ converges to $\cu_\infty$ modulo $K$-action (uniformly over compact sets), $g_i \cdot \cu_i|_{C_{R'}}$ converges to $\cu_\infty|_{C_{R'}}$ without applying any further gauge transformations (uniformly over compact sets). This proves (d). 
\end{proof}

\begin{lemma}\label{lemma63} 
Suppose ${\bf v}_i = (u_i, a_i) \in \wt{M}^K({\mb C}, X)^{1, p}_{\rm loc}$ converges to ${\bf v}_\infty = (u_\infty, a_\infty) \in \wt{M}^K({\mb C}, X)^{1, p}_{\rm loc}$ in c.c.t. and $u_i$ converges to $u_\infty$ uniformly on ${\mb C}$. Then\
\beq\label{eqn64}
\lim_{i \to \infty} \Big( \| F_{a_i} - F_{a_\infty}\|_{L^{p, \delta}} +  \| d\mu(u_i) \cdot ( \exp_{u_\infty}^{-1} u_i) \|_{L^{p, \delta}}  \Big) = 0.
\eeq
\end{lemma}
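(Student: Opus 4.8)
The plan is to reduce both terms to the moment map and then exploit two features of the problem: the uniform quartic decay of the energy density from Proposition \ref{prop26}, and the fact that $\mu^{-1}(0)$ is totally geodesic for the metric fixed in Section \ref{section2}.

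The curvature term is, after using the equation, purely a moment-map statement. The second vortex equation in \eqref{eqn21} reads $F_a = \partial_s\psi - \partial_t\phi + [\phi,\psi] = -\mu(u)$, so $F_{a_i} = -\mu(u_i)$ and $F_{a_\infty} = -\mu(u_\infty)$, whence $\|F_{a_i} - F_{a_\infty}\|_{L^{p,\delta}} = \|\mu(u_i) - \mu(u_\infty)\|_{L^{p,\delta}}$. I would estimate this by splitting ${\mb C} = B_R \cup C_R$. On the compact disk $B_R$ the weight $\rho^\delta$ is bounded and $u_i \to u_\infty$ uniformly, so $\mu(u_i) \to \mu(u_\infty)$ in $L^p(B_R)$ and this contribution tends to $0$ as $i \to \infty$. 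On $C_R$ the uniform version of Proposition \ref{prop26} gives constants $C, R_0 > 0$, independent of $i$, with $e({\bf v}_i)(z) \le C|z|^{-4+\eps}$ for $|z| \ge R_0$; since $|\mu(u_i)|^2 \le 2e({\bf v}_i)$ and $\delta < 1 < 2 - \tfrac 2 p$, I may fix $\eps$ small enough that $\rho^\delta\mu(u_i) \in L^p(C_R)$ with $\|\mu(u_i)\|_{L^{p,\delta}(C_R)} \to 0$ as $R \to \infty$, uniformly in $i$; the same tail bound holds for the single function $\mu(u_\infty)$. A three-$\eps$ argument then gives $\|\mu(u_i) - \mu(u_\infty)\|_{L^{p,\delta}} \to 0$.

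The second term is where the totally geodesic hypothesis must be used to cancel a non-decaying contribution. Here $\xi_i = \exp_{u_\infty}^{-1} u_i$ is a section of $u_\infty^* TX$, and I read $d\mu(u_i)\cdot\xi_i$ via parallel transport $\Pi$ of $\xi_i$ to $u_i$ along the connecting geodesic, so that $d\mu(u_i)\cdot\xi_i = \Phi(u_\infty, u_i)$, where for $(y,y')$ near the diagonal in a neighborhood of $\mu^{-1}(0)$ I set $\Phi(y,y') := d\mu(y')\big(\Pi_{y\to y'}\exp_y^{-1}(y')\big)$. The key point is that $\Phi$ \emph{vanishes} on $\mu^{-1}(0)\times\mu^{-1}(0)$: if $y,y'\in\mu^{-1}(0)$ then, because $\mu^{-1}(0)$ is totally geodesic, the minimizing geodesic from $y$ to $y'$ stays inside $\mu^{-1}(0)$, so $\exp_y^{-1}(y')\in T_y\mu^{-1}(0)$ and its parallel transport lies in $T_{y'}\mu^{-1}(0) = \ker d\mu(y')$. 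Since $\Phi$ is smooth and vanishes on this submanifold, $|\Phi(y,y')| \le C\,\mathrm{dist}\big((y,y'),\,\mu^{-1}(0)\times\mu^{-1}(0)\big)$; and as the $K$-action on $\mu^{-1}(0)$ is free, $\mu$ is a submersion near $\mu^{-1}(0)$, so $\mathrm{dist}(\cdot,\mu^{-1}(0)) \approx |\mu(\cdot)|$. Hence on $C_R$, for $R$ and $i$ large enough that $u_\infty, u_i$ lie near $x_\infty$ and $|\xi_i|$ is small, I obtain the pointwise bound $|d\mu(u_i)\cdot\xi_i| \le C(|\mu(u_\infty)| + |\mu(u_i)|)$.

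With this in hand I would conclude as before: on $B_R$ the factor $\xi_i \to 0$ uniformly while $d\mu(u_i)$ stays bounded, so the contribution tends to $0$; on $C_R$ the pointwise bound reduces $\|d\mu(u_i)\cdot\xi_i\|_{L^{p,\delta}(C_R)}$ to $C\big(\|\mu(u_i)\|_{L^{p,\delta}(C_R)} + \|\mu(u_\infty)\|_{L^{p,\delta}(C_R)}\big)$, which is uniformly small for large $R$ by the decay estimate used for the first term. The main obstacle is precisely this $C_R$ estimate: a naive Taylor expansion only yields $|d\mu(u_i)\cdot\xi_i| \le |\mu(u_i)-\mu(u_\infty)| + C|\xi_i|^2$, and since $\xi_i$ need not decay at infinity — it tends to $\exp_{x_\infty}^{-1} x_i \ne 0$ whenever the limiting base points satisfy $x_i \ne x_\infty$ — the quantity $\||\xi_i|^2\|_{L^{p,\delta}}$ may fail to be finite, let alone small. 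The totally geodesic property is exactly what replaces the spurious $|\xi_i|^2$ by the genuinely decaying quantity $|\mu(u_\infty)| + |\mu(u_i)|$, and isolating this cancellation is the heart of the proof.
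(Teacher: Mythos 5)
Your proposal is correct, and its overall architecture is exactly the paper's: reduce the curvature term to $\mu(u_i)-\mu(u_\infty)$ via the vortex equation $F_{a_i}=-\mu(u_i)$ from \eqref{eqn21}, split ${\mb C}=B_R\cup C_R$, use the uniform decay of Proposition \ref{prop26} to make the tails $\|\mu(u_i)\|_{L^{p,\delta}(C_R)}\leq c R^{-\gamma}$ small uniformly in $i$, use uniform convergence on the compact piece, and finish with a three-$\epsilon$ argument.

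Where you genuinely diverge is the justification of the pointwise bound on the tail, and this is worth recording. The paper asserts, without proof, that on a suitable $K$-invariant neighborhood $S$ of $\mu^{-1}(0)$ there exist $c_2,\delta>0$ with $|d\mu(x)\cdot v|\leq c_2|\mu(\exp_x v)-\mu(x)|$ for $x\in S$, $|v|\leq\delta$, and then immediately weakens the right-hand side to $c_2\bigl(|\mu(u_i)|+|\mu(u_\infty)|\bigr)$. You instead prove the sum-form bound directly: the function $\Phi(y,y')=d\mu(y')\bigl(\Pi_{y\to y'}\exp_y^{-1}(y')\bigr)$ vanishes on $\mu^{-1}(0)\times\mu^{-1}(0)$ near the diagonal (totally geodesic implies the connecting geodesic stays in $\mu^{-1}(0)$ and that ambient parallel transport along it preserves $T\mu^{-1}(0)=\ker d\mu$), so smoothness plus the submersion comparison $\mathrm{dist}(\cdot,\mu^{-1}(0))\approx|\mu(\cdot)|$ give $|\Phi(u_\infty,u_i)|\leq C\bigl(|\mu(u_\infty)|+|\mu(u_i)|\bigr)$. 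Your route is in fact the more robust one: the paper's difference-form inequality is delicate for $x$ off $\mu^{-1}(0)$, since the nearby level sets $\mu^{-1}(c)$, $c\neq 0$, need not be totally geodesic, so one can have $\mu(\exp_x v)=\mu(x)$ while $d\mu(x)\cdot v\neq 0$ (of size $O(|\mu(x)||v|^2)$); only the sum-form bound — which is all either proof needs — survives this, and that is precisely what you establish. So your proposal matches the paper's proof where they overlap and supplies a complete argument for the step the paper leaves implicit, correctly identifying the totally geodesic choice of metric from Section \ref{section2} as the mechanism that eliminates the non-decaying $|\xi_i|^2$ error of the naive Taylor expansion.
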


\begin{proof}
By Proposition \ref{prop26}, for any $\nu>0$, one has
\beqn
\sup_i \limsup_{z \to \infty}  \Big( |z|^{2-\nu} |\mu(u_i(z))| \Big) < \infty.
\eeqn
Take $\nu<2-\frac 2 p -\delta$ and $\gamma:=2-\frac 2 p -\delta -\nu$. Then for all $r$ and $i$ including $\infty$, 
\beqn
\| \mu(u_i) \|_{L^{p,\delta}( C_r)} \leq c_1 r^{-\gamma}
\eeqn
for certain $c_1>0$ independent of $i$. On the other hand, for $r$ sufficiently large, $u_\infty(C_r)$ is contained in a $K$-invariant open subset of $X$ for which there exist $c_2, \delta > 0$ such that 
\beqn
x \in S,\ v \in T_x X,\ |v| \leq \delta \Longrightarrow | d \mu(x) \cdot v | \leq c_2 |\mu(\exp_xv)-\mu(x)|.
\eeqn
For large $i$, we can define $\xi_i$ by $u_i = \exp_{u_\infty} \xi_i$ which satisfies $\| \xi_i \|_{L^\infty} \leq \delta$. Therefore
\beq\label{eqn67}
\begin{split}
\| F_{a_i} - F_{a_\infty}\|_{L^{p, \delta}(C_r)} \leq &\ \|\mu(u_\infty)\|_{L^{p, \delta}(C_r)} + \| \mu(u_i) \|_{L^{p, \delta}(C_r)}  \leq 2 c_1 r^{-\gamma}; \\
\| d \mu(u_\infty) \cdot \xi_1\|_{L^{p, \delta}(C_r)}  \leq &\ c_2 \Big( \|\mu(u_\infty)\|_{L^{p, \delta}(C_r)} + \| \mu(u_i) \|_{L^{p, \delta}(C_r)} \Big) \leq 2 c_1 c_2 r^{-\gamma}.
\end{split}
\eeq
Moreover, for any fixed $r$, by the uniform convergence $u_i|_{B_r} \to u_\infty|_{B_r}$, $\| F_{a_i} - F_{a_\infty} \|_{L^{p, \delta}(B_r)}$ and $\| d \mu(u_\infty)\cdot \xi_i \|_{L^{p, \delta}(B_r)}$ can be arbitrarily small. Together with \eqref{eqn67}, we prove \eqref{eqn64}.
\end{proof}

\begin{proof}[Proof of Proposition \ref{prop37}]
Choose $\delta_+$ slightly bigger than $\delta$. Apply Lemma \ref{lemma61} and \ref{lemma62} to the restrictions of ${\bf v}_i^o$ on the complement of a disk, one can find $R \geq 1$, a sequence $\ck{g}_i \in {\mc K}^{2, p}_{\rm loc}(C_R)$ (including $i = \infty$) and a sequence $x_i \in \mu^{-1}(0)$ (including $i=\infty$) satisfying the following conditions after removing finitely many items from the sequence. Denote $\ck{\bf v}_i:= (\ck{u}_i, \ck{a}_i):= \ck{g}_i \cdot {\bf v}_i^o$.
\begin{enumerate}

\item $\displaystyle \sup_{1\leq i\leq \infty} \big( \| \ck{a}_i \|_{W^{1, p, \delta_+}(C_R)} + \|  \nabla \ck{u}_i \|_{W^{1, p, \delta_+}(C_R)} \big) < +\infty$.

\item $\displaystyle \lim_{z \to \infty} \ck{u}_i (z) = x_i$ uniformly in $i$ and $\displaystyle \lim_{i\to \infty} x_i = x_\infty$.

\item $\ck\xi_i:= \exp_{\ck u_\infty'}^{-1} \ck u_i \in W^{2, p}_{\rm loc}(C_R, \ck u_\infty^* TX)$ is well-defined and $d \mu( \ck u_\infty) \cdot J \ck \xi_i = 0$.

\item $\ck u_i$ converges to $\ck u_\infty$ uniformly on any compact subsets of $C_R$.
\end{enumerate}

By Corollary \ref{cor211}, for large $i$, ${\rm hol}({\bf v}_i^o) = {\rm hol}({\bf v}_\infty^o) \in \pi_1(K)$. Choose a geodesic representative $e^{\lambda \theta}$, one can see that $e^{\lambda \theta} \ck g_i$ is null-homotopic, hence extends to some $g_i \in {\mc K}_{\rm loc}^{2, p}({\mb C})$. Moreover, since ${\bf v}_i^o$ converges to ${\bf v}_\infty^o$ in c.c.t., we can choose $g_i$ in such a way that ${\bf v}_i:= g_i \cdot {\bf v}_i^o:= (u_i, a_i)$ converges to ${\bf v}_\infty:= (u_\infty, a_\infty):= g_\infty \cdot {\bf v}_\infty^o$ in $W^{2, p}(B_R)$. It implies that $u_i$ converges to $u_\infty$ uniformly on compact subsets of ${\mb C}$.

We first prove that the uniform convergence $u_i \to u_\infty$ over ${\mb C}$. Indeed, since we have the uniform convergence $\displaystyle \ck u_i (z) = x_i$ and the convergence $\displaystyle \lim_{i \to \infty} x_i =x_\infty$, for any $\epsilon>0$, there exist $R_\epsilon>0$ and $i_\epsilon \in {\mb N}$ such that
\begin{multline*}
\sup_{i \geq i_\epsilon} \sup_{z \in C_{R_\epsilon}} |  u_i(z) - u_\infty(z) | = \sup_{i \geq i_\epsilon} \sup_{z \in C_{R_\epsilon}} |  \ck u_i (z) - \ck u_\infty (z) | \\
\leq \sup_{i \geq i_\epsilon} \sup_{z \in C_{R_\epsilon}} \Big( | \ck u_i(z) - x_i | + | x_i - x_\infty | + | x_\infty - \ck u_\infty (z) | \Big) \leq \epsilon.
\end{multline*}
Since $u_i \to u_\infty$ uniformly over $B_{R_\epsilon}$, we obtain the uniform convergence $u_i \to u_\infty$ on ${\mb C}$. 

To prove the $L^{p, \delta}$-convergence $a_i \to a_\infty$, notice that for some $c_1 >0$,
\beqn
\| a_i - a_\infty\|_{W^{1, p, \delta_+}} \leq c_1  \| a_i - a_\infty\|_{W^{1, p}(B_R)} + \| e^{\lambda \theta} \cdot \ck a_i - e^{\lambda \theta} \cdot \ck a_\infty \|_{W^{1, p, \delta_+}(C_R)}.
\eeqn
The right hand side is uniformly bounded in $i$. Hence $a_i - a_\infty$ has subsequential weak limits. We prove that the weak limit can only be zero. It suffices to show that $a_i$ converges to $a_\infty$ as distributions, which is already the case over $B_R$. On the other hand, for any compact subset $S \subset C_R$, by the condition that ${\bf v}_i |_S \to {\bf v}_\infty |_S$ modulo gauge transformation, there exist $g_{S, i}: S \to K$ such that $g_{S, i} \cdot {\bf v}_i |_S$ converges to ${\bf v}_\infty |_S$ uniformly. However since $u_i$ converges to $u_\infty$ uniformly, and the fact that $u_i(C_R)$ are all contained in a region where the $K$-action is free, $g_{S, i}$ must converge uniformly to the identity. Therefore, $a_i|_S$ converges to $a_\infty|_S$ as distributions and hence $a_i - a_\infty$ converges to zero weakly in $W^{1,p, \delta_+}$. Since $\delta_+ > \delta$, by Lemma \ref{lemma215}, 
\beqn
\lim_{i \to \infty} \| a_i - a_\infty\|_{L^{p, \delta}} = 0.
\eeqn

Now consider the norm $\nabla^{a_\infty} \xi_i$ where $\xi_i = \exp_{u_\infty}^{-1} u_i$. The $W^{2, p}$ convergence of ${\bf v}_i|_{B_R}$ to ${\bf v}_\infty|_{B_R}$ already tells that
\beq\label{eqn66}
\lim_{i \to \infty} \| \nabla^{a_\infty} \xi_i \|_{L^p(B_R)} = 0. 
\eeq
To estimate the norm on $C_R$, by the condition that $\| \nabla \ck u_i \|_{W^{1, p, \delta}_+}$ is uniformly bounded, one has subsequential weak limits of $\nabla \ck u_i$. However since $\ck u_i$ converges to $\ck u_\infty$ uniformly, one has the distributional convergence $\nabla \ck u_i \to \nabla \ck u_\infty$. Therefore the subsequential weak limits of $\nabla \ck u_i$ can only be $\nabla \ck u_\infty$, which, by Lemma \ref{lemma215}, implies $\| \nabla (\ck u_i- \ck u_\infty)\|_{L^{p, \delta}(C_R)} \to 0$. This is equivalent to 
\beqn
\lim_{i \to \infty} \| \nabla \ck \xi_i\|_{L^{p, \delta}(C_R)} = 0
\eeqn
which refers to no embedding into ${\mb R}^N$. On the other hand, by the $K$-equivariance of the covariant derivative and Lemma \ref{lemma216}, there is $c_2>0$ such that 
\beqn
\lim_{i \to \infty} \| \nabla^{a_\infty} \xi_i \|_{L^{p, \delta}(C_R)} = \lim_{i \to \infty}  \| \nabla^{\ck a_\infty} \ck \xi_i\|_{L^{p, \delta}(C_R)} \leq c_2 \lim_{i\to \infty} \Big( \| \nabla \ck \xi_i \|_{L^{p, \delta}(C_R)} + \| \ck\xi_i \|_{L^\infty} \Big) = 0.
\eeqn
Together with \eqref{eqn66} this shows 
\beqn
\lim_{i \to \infty} \| \nabla^{a_\infty} \xi_i\|_{L^{p, \delta}} = 0.
\eeqn

Lastly, by the uniform convergence $u_i \to u_\infty$ and the fact $d\mu(u_\infty) \cdot J \xi_i$ vanishes on $C_R$,
\beqn
\lim_{i \to \infty} \| d\mu(u_\infty) \cdot J \xi_i \|_{L^{p, \delta}({\mb C})} = \lim_{i \to \infty} \| d\mu(u_\infty) \cdot J \xi_i\|_{L^{p, \delta}(B_R)} = 0.
\eeqn
Together with Lemma \ref{lemma63} this finishes the proof of Proposition \ref{prop37}.
\end{proof}

\section{Proof of Proposition \ref{prop38}}\label{section7}

Our proof is a typical application of the implicit function theorem (Proposition \ref{prop76}) and condition \eqref{eqn36} is to guarantee the invertibility of certain linearized operator.

\begin{proof}[Proof of Proposition \ref{prop38}]
We recall that vortices close to ${\bf v}=(u,a)$ can be written as $(\exp_u\xi,a+\alpha)$. For any value of ${\bm \xi} = (\xi, \alpha) \in \hat{\mc B}_{\bf v}^\epsilon$, we define an operator, called the {\em Coulomb operator} as
\begin{multline}\label{eqn71}
{\mc T}^{\bm \xi}: B_\epsilon \big( W^{k+1,p,\delta}({\mb C}, {\mf k}) \big) \to W^{k-1,p,\delta}({\mb C}, {\mf k})\\
s \mapsto \dad \Big( e^s \cdot (a+\alpha) - a \Big) + d\mu(u) \cdot  \Big( J \exp_u^{-1} ( e^s \exp_u \xi ).  \Big)
\end{multline}
This operator is well-defined for a small $\epsilon = \epsilon(u) > 0$ (depending on the injectivity radius near the image of $u$). We remark that the condition $T^\xi(s)=0$ corresponds to the vortex $e^s(\exp_u\xi,a+\alpha)$ being in Coulomb gauge with respect to ${\bf v}$.

To simplify the notations, for $x \in X$, denote
\begin{align*}
&\ R_x:T_x X \to  {\rm End} ( {\mf k } ), & R_x(\xi) (\xi') = d\mu(x) \cdot \Big( J \Psi_x(\xi)^{-1} {\mc X}_{\xi'} ({\exp_x \xi}) \Big),
\end{align*}
where $\Psi_x (\xi): T_x X \to T_{\exp_x \xi} X$ is the parallel transport along the geodesic $\exp_x (t\xi)$ with respect to the Levi-Civita connection of $\omega_X( \cdot, J \cdot)$. $R_x$ varies smoothly with $x$ and induces a bundle map
\beqn
R_u: \Gamma({\mb C}, E_u) \to \Gamma({\mb C}, {\rm End} ({\mf k})).
\eeqn
The linearization of ${\mc T}^{\bm \xi}$ at $s \in B_\epsilon ( W^{k+1,p,\delta}({\mb C},{\mf k}) )$ is given by 
\beqn
\begin{split}
D{\mc T}^{\bm \xi}_s:W^{k+1,p,\delta}({\mb C},{\mf k}) &\to W^{k-1,p,\delta}({\mb C}, {\mf k}), \\
\xi' &\mapsto \dad d_{ e^s \cdot ( a+\alpha)} ( \xi' ) + R_u(\xi_s) (\xi')
\end{split}
\eeqn
where $\xi_s:=\exp_u^{-1}(e^s\exp_u\xi)$. It is continuous in $s$, so ${\mc T}^{\bm \xi}$ is differentiable. Denote
\beq\label{eqn72}
{\mc L}_{\bf v} (\xi'):= D{\mc T}_0^{\bf 0} (\zeta) = \dad d_a \xi' + d\mu(u) \cdot J {\mc X}_{\xi'}. 
\eeq

\begin{lemma}\label{lemma71}
There exist constants $\epsilon_1, c_1 >0$ such that if 
\beqn
\| \alpha \|_{W^{k,p,\delta}} +  \| \xi \|_{L^\infty} + \| s \|_{W^{k+1, p, \delta}} \leq \epsilon_1,
\eeqn
then 
\beqn
\| D {\mc T}^{\bm \xi}_s - D{\mc T}_0^{\bf 0} \| \leq c_1 \Big( \| \alpha \|_{W^{k, p, \delta}} + \| \xi \|_{L^\infty} +  \| s \|_{W^{k+1, p, \delta}} \Big).
\eeqn
\end{lemma}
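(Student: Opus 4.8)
The plan is to apply both operators to an arbitrary test section $\xi' \in W^{k+1,p,\delta}({\mb C}, {\mf k})$ and to estimate the difference in the target norm, splitting it into a connection contribution and a section contribution. Writing $b := e^s \cdot (a + \alpha)$ and recalling that two connections acting on $0$-forms differ by the adjoint action of their difference, $d_b - d_a = {\rm ad}_{b - a}$, while $R_u({\bf 0})(\xi') = d\mu(u) \cdot J {\mc X}_{\xi'}$ so that $D{\mc T}_0^{\bf 0} = {\mc L}_{\bf v}$, one obtains
\beqn
\big( D{\mc T}^{\bm \xi}_s - D{\mc T}_0^{\bf 0} \big)(\xi') = \dad \big( [\, b - a, \xi'\,] \big) + \big( R_u(\xi_s) - R_u({\bf 0}) \big)(\xi'),
\eeqn
where $\xi_s = \exp_u^{-1}(e^s \exp_u \xi)$. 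I would estimate the two summands separately and, in each, extract one factor that is linear in $(\alpha, \xi, s)$ and one controlled by $\|\xi'\|_{W^{k+1,p,\delta}}$.

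For the connection term, the key quantity is $\| b - a \|_{W^{k,p,\delta}}$. Using the gauge transformation formula $b = {\rm Ad}_{e^s}(a + \alpha) - (d e^s) e^{-s}$, I would write $b - a = \alpha + ({\rm Ad}_{e^s} - {\rm id})(a + \alpha) - (de^s) e^{-s}$ and expand ${\rm Ad}_{e^s} - {\rm id} = {\rm ad}_s + O(|s|^2)$ and $(de^s)e^{-s} = ds + O(|s|\,|ds|)$. The only non-decaying part of the background is $-\lambda\, d\theta$, but $|\lambda\, d\theta| = |\lambda|/r \to 0$ and $|\nabla(\lambda\, d\theta)| = O(r^{-2})$ lies in $L^{p,\delta}$ because $\delta < 1 < 2 - \tfrac 2p$; so the pointwise bound $|{\rm Ad}_{e^s} - {\rm id}| \lesssim |s|$ together with the weighted Sobolev multiplication inequality (valid here since $p > 2$ forces $W^{1,p} \hookrightarrow C^0$ and, via Lemma \ref{lemma212}, $W^{1,p,\delta} \hookrightarrow L^\infty$) yields $\| b - a \|_{W^{k,p,\delta}} \leq C\big( \|\alpha\|_{W^{k,p,\delta}} + \|s\|_{W^{k+1,p,\delta}}\big)$ once $\|s\|_{W^{k+1,p,\delta}} \leq \epsilon_1$. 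A further product estimate gives $\| [\,b-a,\xi'\,]\|_{W^{k,p,\delta}} \leq C \|b - a\|_{W^{k,p,\delta}} \|\xi'\|_{W^{k+1,p,\delta}}$, and since $\dad$ (interpreted distributionally as $d_a^{\,\dagger}: L^{p,\delta} \to W^{-1,p,\delta}$ when $k = 0$) is bounded from $W^{k,p,\delta}$ to $W^{k-1,p,\delta}$, the connection term has operator norm $\leq C(\|\alpha\|_{W^{k,p,\delta}} + \|s\|_{W^{k+1,p,\delta}})$.

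For the section term, which is a zeroth order multiplication operator, I would first bound $\| \xi_s\|_{L^\infty} \leq C(\|\xi\|_{L^\infty} + \|s\|_{L^\infty}) \leq C(\|\xi\|_{L^\infty} + \|s\|_{W^{k+1,p,\delta}})$ from the pointwise smooth dependence of $\exp_u^{-1}(e^s \exp_u \xi)$ on $(s,\xi)$ and the embedding $W^{k+1,p,\delta} \hookrightarrow L^\infty$. Since $x \mapsto R_x$ is smooth and the image of $u$ is precompact, the mean value inequality gives $\|R_u(\xi_s) - R_u({\bf 0})\|_{L^\infty} \leq C\|\xi_s\|_{L^\infty}$; multiplication by this bounded coefficient sends $W^{k+1,p,\delta}$ into $W^{k-1,p,\delta}$ (using $L^{p,\delta} \hookrightarrow W^{-1,p,\delta}$ in the case $k=0$) with norm $\leq C\|R_u(\xi_s) - R_u({\bf 0})\|_{L^\infty}$. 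Combining the two estimates and taking $c_1$ to be the sum of the constants and $\epsilon_1$ small enough for the exponential expansions and the injectivity radius of $u$ proves the lemma. I expect the main obstacle to be the weighted-norm control of $b - a$: the background connection only decays to $-\lambda\, d\theta$ rather than to zero, so the conjugation terms acting on this non-decaying part must be shown to lie in $L^{p,\delta}$, which is where the decay of $|d\theta|$ and the constraint $\delta < 1$ are essential, and one must simultaneously keep the argument uniform across the distributional ($k=0$) and classical ($k=1$) regimes.
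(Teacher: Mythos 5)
Your proof is correct and takes essentially the same route as the paper's: the identical decomposition of $D{\mc T}^{\bm \xi}_s - D{\mc T}_0^{\bf 0}$ into the commutator term $\dad\,[\, e^s \cdot (a+\alpha) - a, \xi'\,]$ and the multiplication term $(R_u(\xi_s) - R_u(0))(\xi')$, each handled by weighted product estimates together with the embedding $W^{1,p,\delta} \hookrightarrow C^0$. The paper merely organizes the same bounds by inserting the intermediate terms $[\alpha, \xi']$ and $R_u(\xi)(\xi')$ instead of expanding ${\rm Ad}_{e^s}$ explicitly; your extra attention to the non-decaying $-\lambda\, d\theta$ part of $a$ is consistent with, and implicit in, the paper's terser estimate.
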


\begin{proof}
Given $\xi' \in W^{k+1, p, \delta}({\mb C}, {\mf k})$, we have the following computation.
\beqn
\begin{split}
&\ \| ( D{\mc T}^{\bm \xi}_s - D{\mc T}_0^{\bf 0} ) (\xi') \|_{W^{k-1, p, \delta}} \\
&\ \leq \| \dad\, [\, e^s \cdot ( a+ \alpha) - a, \xi'\, ] \|_{W^{k-1, p, \delta}} + \| R_u( \xi_s)(\xi') - R_u(0)(\xi') \|_{W^{k-1, p, \delta}}\\
&\ \leq \| \dad\,  [\, e^s \cdot (a + \alpha) - ( a+ \alpha), \xi'\,	 ] \|_{W^{k-1, p, \delta}} +  \| \dad\, [\, \alpha, \xi' \,  ] \|_{W^{k-1, p, \delta}} \\
&\ +  \| ( R_u(\xi_s) - R_u(\xi) ) (\xi' ) \|_{W^{k-1, p, \delta}} + \| R_u(\xi)(\xi' ) - R_u(0)(\xi' )  \|_{W^{k-1, p, \delta}} \\
&\ \leq c_1 \big(  \| s  \|_{W^{k+1, p, \delta}} +  \| \alpha  \|_{W^{k, p, \delta}} +  \| \xi  \|_{L^\infty} \big) \| \xi' \|_{W^{k + 1, p, \delta}}.
\end{split}
\eeqn
The last inequality is true if $\epsilon_1$ is small enough. We also used the Sobolev embedding $W^{1, p, \delta}\hookrightarrow C^0$ implicitly. So the lemma is proved.
\end{proof}

\begin{prop}\label{prop72}
Given $k=0,1$, $p>2$ and $\delta \in (1-\frac 2 p, 1)$. Suppose ${\bf v} = (u, a)$ is an affine vortex of regularity $W^{1, p}_{\rm loc}$, with 
$a- \lambda d\theta \in W^{1,p,\delta}(C_R, \Lambda^1 \otimes {\mf g})$ 
for some $\lambda \in \Lambda_K$ and $R \geq 1$. Then the operator ${\mc L}_{\bf v}$ given by \eqref{eqn72} is an isomorphism.
\end{prop}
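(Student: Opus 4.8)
The plan is to recognize ${\mc L}_{\bf v}$ as a formally self-adjoint, nonnegative elliptic operator of Laplace type and to obtain invertibility from the combination ``Fredholm of index zero'' together with ``injective''. With the notation of Remark \ref{remark44}, write $L_u(\xi') = {\mc X}_{\xi'}(u)$ and $L_u^*(\xi) = d\mu(u)\cdot J\xi$; then the zeroth-order term of \eqref{eqn72} is exactly $L_u^* L_u$, so
\beqn
{\mc L}_{\bf v} = \dad d_a + L_u^* L_u = {\mc D}^* {\mc D}, \qquad {\mc D}\xi' := (d_a\xi',\, L_u\xi').
\eeqn
In particular ${\mc L}_{\bf v}$ is elliptic with scalar-Laplacian principal part on each component of the adjoint bundle, it is formally nonnegative, and the lower-order term $L_u^* L_u$ is positive definite near infinity because $u(z)\to x\in\mu^{-1}(0)$ and the eigenvalues $\lambda_1,\dots,\lambda_m$ of $L_x^* L_x$ are strictly positive.

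First I would show ${\mc L}_{\bf v}$ is Fredholm of index zero. Following the template of Section \ref{section4}, I freeze the coefficients at infinity, using $u\to x$ and $a+\lambda d\theta\in W^{1,p,\delta_+}(C_R)$, to produce a model operator ${\mc L}_\infty = \dad d_a + L_x^* L_x$ on ${\mb C}$ that agrees with ${\mc L}_{\bf v}$ near infinity, built from the completed adjoint bundle (whose holonomy ${\rm Ad}_{e^{2\pi\lambda}}={\rm Id}$, so it completes to degree zero over ${\mb P}^1$) together with the constant positive mass $L_x^* L_x$. The point is that ${\mc L}_\infty$ is a massive Laplacian $-\Delta_A + L_x^* L_x$: on each Fourier mode the radial part is a modified-Bessel operator whose bounded solution decays exponentially, so the positivity argument of the next paragraph applied to ${\mc L}_\infty$ and its adjoint shows ${\mc L}_\infty$ is an isomorphism $W^{k+1,p,\delta}\to W^{k-1,p,\delta}$, hence of index zero. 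The difference ${\mc L}_{\bf v}-{\mc L}_\infty$ has coefficients tending to zero at infinity and is compact by Proposition \ref{prop214} and Lemma \ref{lemma215}, exactly as in the proof of Lemma \ref{lemma45}. Thus ${\mc L}_{\bf v}$ is Fredholm of index zero.

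Next I would prove injectivity by an energy identity, and here the hypothesis $\delta>1-\tfrac2p$ enters decisively. Suppose ${\mc L}_{\bf v}\xi'=0$ with $\xi'\in W^{k+1,p,\delta}$. Since $p>2$ and $\delta>1-\tfrac2p$, Hölder's inequality against the weight gives an embedding $L^{p,\delta}({\mb C})\hookrightarrow L^2({\mb C})$ (because $\rho^{-2\delta}\in L^{p/(p-2)}$ precisely when $\delta>1-\tfrac2p$), whence $d_a\xi'$ and $L_u\xi'$ lie in $L^2$. As $\xi'\in W^{k+1,p,\delta}$, it tends to $0$ at infinity by Lemma \ref{lemma212}, so the boundary terms on $|z|=R$ vanish along a sequence $R\to\infty$, and integration by parts yields
\beqn
0 = \langle {\mc L}_{\bf v}\xi',\xi'\rangle = \|d_a\xi'\|_{L^2}^2 + \|L_u\xi'\|_{L^2}^2 .
\eeqn
Therefore $d_a\xi'=0$ and ${\mc X}_{\xi'}(u)=0$. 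Since the metric on ${\mf k}$ is ${\rm Ad}$-invariant, $d|\xi'|^2 = 2\langle d_a\xi',\xi'\rangle = 0$, so $|\xi'|$ is constant; vanishing at infinity forces $\xi'\equiv 0$. Combined with the index-zero Fredholm property, ${\mc L}_{\bf v}$ is a continuous bijection, hence an isomorphism by the bounded inverse theorem.

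The main obstacle I anticipate is the model computation: verifying that ${\mc L}_\infty$ is genuinely invertible on the weighted spaces for the full range $\delta\in(1-\tfrac2p,1)$, i.e. carrying out the indicial analysis of the massive Laplacian on the adjoint bundle and organizing the degree bookkeeping so that it is compatible with Lemma \ref{lemma42} and Lemma \ref{lemma46}. A secondary technical point is the case $k=0$, where the target $W^{-1,p,\delta}$ is a space of distributions: there the energy pairing must be read weakly, and the estimate is most cleanly reduced to the $k=1$ case by elliptic regularity and duality.
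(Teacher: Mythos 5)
Your overall architecture --- freeze the coefficients at infinity to get a model operator, show the difference is compact via Proposition \ref{prop214}, prove injectivity of ${\mc L}_{\bf v}$ by positivity using the weight condition $\delta > 1-\tfrac{2}{p}$, and conclude from index zero --- is exactly the paper's strategy in Section \ref{section7}, and your injectivity argument is sound: your embedding $L^{p,\delta}({\mb C}) \hookrightarrow L^2({\mb C})$ is the same H\"older computation that underlies the paper's Lemma \ref{lemma75} (the paper instead embeds $L^{p,\delta} \subset L^{q,-\delta}$ and pairs ${\mc L}_{\bf v} s$ with $s$ distributionally, which avoids your boundary-terms-along-a-sequence step and treats $k=0$ directly). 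The paper also concludes from $d_a s = 0$, ${\mc X}_s = 0$ via freeness of the $K$-action near infinity rather than via constancy of $|s|$, but both closings are correct.

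The genuine gap is in your treatment of the model operator ${\mc L}_\infty$, which is the load-bearing step: without its invertibility you do not even know ${\mc L}_{\bf v}$ is Fredholm. You propose to deduce that ${\mc L}_\infty$ is an isomorphism of $W^{k+1,p,\delta} \to W^{k-1,p,\delta}$ from ``the positivity argument applied to ${\mc L}_\infty$ and its adjoint.'' This fails twice. First, injectivity of an operator together with injectivity of its adjoint gives only dense range; without a closed-range a priori estimate you cannot conclude surjectivity, and for $p \neq 2$ such an estimate is precisely the hard content here. Second, the positivity argument cannot even be run for the adjoint: the adjoint acts on dual spaces carrying the weight $-\delta$, and $L^{q,-\delta}({\mb C})$ with $q = \tfrac{p}{p-1}$ does \emph{not} embed in $L^2$ --- for instance a function comparable to $\rho^{-s}$ with $2 - \tfrac{2}{p} - \delta < s < 1$ (a nonempty range exactly when $\delta > 1 - \tfrac{2}{p}$) lies in $L^{q,-\delta}$ but not in $L^2$ --- so the energy identity is unavailable on that side. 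The paper fills this hole with Lemma \ref{lemma73} (Calderon's theorem: $-\Delta + 1 : W^{k+1,p} \to W^{k-1,p}$ is an isomorphism for \emph{all} $p>1$): after conjugating by ${\rm Ad}_{e^{\lambda\theta}}$ and splitting ${\mf k}$ into eigenlines of $L_x^* L_x$, each block is $-\Delta + c_i$, and invertibility of the model on \emph{unweighted} spaces follows by patching an exterior inverse with an interior Dirichlet solution (Lemma \ref{lemma74}); injectivity is then obtained by self-adjointness against the already-established surjectivity at the conjugate exponent $q$ --- this is what replaces your ``adjoint injectivity.'' The weighted case is reached not by positivity but by conjugating with $\rho^\delta$, observing the conjugated operator is a compact perturbation of ${\mc L}_\infty$ (hence index zero), and using the inclusion $W^{k+1,p,\delta} \subset W^{k+1,p}$ (valid since $\delta \geq 0$) to transfer the trivial unweighted kernel. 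If you prefer your Bessel-kernel route, you must actually construct the parametrix --- convolution with the exponentially decaying kernel of $(-\Delta + c)^{-1}$ --- and prove it bounded $W^{k-1,p,\delta} \to W^{k+1,p,\delta}$, which again requires Calder\'on--Zygmund input for the top-order derivatives; positivity alone does not close the argument.
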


The proof is given in Subsection \ref{subsection71}. Let ${\mc K}_{\bf v}: W^{k-1, p, \delta}({\mb C}, {\mf k}) \to W^{k+1, p, \delta}({\mb C}, {\mf k})$ be the inverse of ${\mc L}_{\bf v}$. Define 
\beqn
C({\bf v}) = \max \Big\{\  \| {\mc K}_{\bf v}\|,\ 1\ \Big\} .
\eeqn
Then by Lemma \ref{lemma71}, there exist $\delta_{\rm coul} = \delta({\bf v})>0$ such that
\beqn
\| \alpha  \|_{W^{k, p, \delta}} +  \| \xi \|_{L^\infty} +  \| s \|_{W^{k+1,p,\delta}} \leq \delta( {\bf v}) \Longrightarrow  \| D{\mc T}^{\bm \xi}_s - D{\mc T}^{\bm \xi}_0  \| +  \| D {\mc T}^{{\bm \xi}}_0 - D{\mc T}^{{\bm 0}}_0 \|  \leq \frac{1}{2C({\bf v})}.	
\eeqn
If this holds, then $D{\mc T}^{\bm \xi}_0$ has a bounded inverse ${\mc Q}^{\bm \xi}_0$ with $\| {\mc Q}^{\bm \xi}_0 \| \leq 2 C({\bf v})$. Moreover, taking $\epsilon_{\rm coul} = \delta({\bf v}) / 8 C({\bf v})$. Then we can apply Proposition \ref{prop76} to the situation where
\beqn
X = W^{k+1, p, \delta}({\mb C}, {\mf g}),\ Y = W^{k-1, p, \delta}({\mb C}, {\mf g}),\ F = {\mc T}^{(\alpha, \xi)},\ \delta = \delta({\bf v}),\ C = 2C( {\bf v}).
\eeqn
It follows that there is a unique $s\in W^{k+1, p, \delta}({\mb C}, {\mf k})$ satisfying
\beqn
{\mc T}^{\bm\xi} (s) = 0,\ \| s \|_{W^{k+1, p, \delta}} \leq \delta({\bf v}).
\eeqn
Moreover, for $c_{\rm coul} = 4 C({\bf v})$, one has
\beqn
\| s \|_{W^{k+1, p, \delta}} \leq c_{\rm coul} \| {\mc T}^{{\bm \xi}}(0)\| =  c_{\rm coul} \| \dad \alpha + d\mu(u) \cdot J\xi \|_{W^{k-1, p, \delta}}.
\eeqn
This finishes the proof of Proposition \ref{prop38}.
\end{proof}

\subsection{Proof of Proposition \ref{prop72}}\label{subsection71}

Firstly we recall the invertibility of $-\Delta + 1$. 
\begin{lemma} \label{lemma73}{\rm(Calderon's theorem, \cite[Theorem V.3]{Stein_70})} The operator
\beq\label{eqn73}
- \Delta + 1: W^{k+1,p}({\mb C},{\mb R}) \to W^{k-1,p}({\mb C},{\mb R})
\eeq
is an isomorphism for all $p>1$.
\end{lemma}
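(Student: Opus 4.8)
The plan is to identify the inverse of $-\Delta+1$ on $\mb C\cong\mb R^2$ as the Bessel potential with Fourier symbol $(1+|\xi|^2)^{-1}$ and to deduce the isomorphism property from Fourier multiplier theory. Boundedness of the map \eqref{eqn73} itself is immediate, since $-\Delta+1$ is a constant-coefficient second order differential operator: for $|\beta|\leq k-1$ the component $\partial^\beta(-\Delta+1)f$ is a linear combination of derivatives of $f$ of order at most $k+1$, so $\|(-\Delta+1)f\|_{W^{k-1,p}}\leq C\|f\|_{W^{k+1,p}}$ (for $k=0$ one reads this off the bilinear form $\langle(-\Delta+1)f,\varphi\rangle=\int\nabla f\cdot\nabla\varphi+f\varphi$ pairing $W^{1,p}$ against $W^{1,q}$, $q=p/(p-1)$). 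All the content is therefore the construction of a bounded two-sided inverse $T:=(-\Delta+1)^{-1}$, defined on the Schwartz class by $\widehat{Tg}(\xi)=(1+|\xi|^2)^{-1}\hat g(\xi)$, between the two pairs of spaces that actually occur: $T:L^p\to W^{2,p}$ (the case $k=1$) and $T:W^{-1,p}\to W^{1,p}$ (the case $k=0$, where $W^{-1,p}=(W^{1,q})^*$). Conceptually this is the statement that $1-\Delta:H^{s+2,p}\to H^{s,p}$ is an isomorphism of Bessel potential spaces for every real $s$, matched to integer-order Sobolev spaces; that identification is exactly the substance of \cite[Theorem V.3]{Stein_70}.

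The engine of the whole argument is the $L^p$-boundedness of the operators $\partial^\rho T$ for $|\rho|\leq 2$, whose symbols are
\[
m_\rho(\xi)=\frac{(\ii\xi)^\rho}{1+|\xi|^2},\qquad |\rho|\leq 2.
\]
Each $m_\rho$ is smooth on all of $\mb R^2$ (the denominator never vanishes) and satisfies the Mikhlin–Hörmander estimates $\sup_{\xi\neq 0}|\xi|^{|\sigma|}\,|\partial^\sigma m_\rho(\xi)|<\infty$ for every multi-index $\sigma$ with $|\sigma|\leq 2$, which is the order required in dimension $n=2$; this is a routine check, using smoothness near the origin and the degree count (numerator of degree $\leq 2$ over denominator of degree $2$) at infinity. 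The Mikhlin–Hörmander multiplier theorem then promotes each of these symbol bounds to a bounded operator $\partial^\rho T$ on $L^p(\mb C)$ for every $1<p<\infty$. This is the one nontrivial external input I would invoke, and it is precisely the step where the hypothesis $p>1$ (as opposed to $p=1$) is essential.

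With these multiplier bounds in hand I would assemble the two required cases. For $k=1$: given $g\in L^p$ and $|\beta|\leq 2$, the estimate $\|\partial^\beta Tg\|_{L^p}=\|(\partial^\beta T)g\|_{L^p}\leq C\|g\|_{L^p}$ shows $T:L^p\to W^{2,p}$ is bounded. For $k=0$: using the standard representation of a negative-order distribution, write $g\in W^{-1,p}$ as $g=g_0+\sum_j\partial_j g_j$ with $g_0,g_j\in L^p$ and $\|g_0\|_{L^p}+\sum_j\|g_j\|_{L^p}$ comparable to $\|g\|_{W^{-1,p}}$; then $Tg=Tg_0+\sum_j(\partial_j T)g_j$, and the Mikhlin bounds on $\partial^\beta T$ and $\partial^\beta\partial_j T$ (all with $|\beta|\leq 1$, hence total order $\leq 2$) give $\|Tg\|_{W^{1,p}}\leq C\|g\|_{W^{-1,p}}$, so $T:W^{-1,p}\to W^{1,p}$ is bounded. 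In both cases the identities $T(-\Delta+1)=(-\Delta+1)T=\on{id}$ hold on the Schwartz class by a direct Fourier computation and extend by density of the Schwartz functions in each space, so $-\Delta+1$ is injective and surjective with bounded inverse, i.e. an isomorphism.

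The main obstacle is entirely concentrated in the Mikhlin–Hörmander multiplier theorem, which carries all of the Calderón–Zygmund machinery; once its $L^p$ conclusion is granted, the symbol estimates, the factorization of derivatives, the representation of $W^{-1,p}$, and the density arguments are all routine bookkeeping. Taking that theorem as given, Lemma \ref{lemma73} follows as indicated, uniformly for $1<p<\infty$ and for the relevant orders $k=0,1$.
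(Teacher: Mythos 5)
Your proof is correct and is essentially the argument the paper relies on: the paper gives no proof of this lemma at all, citing it directly to Stein's Theorem V.3, whose content is exactly your identification of the inverse as the Bessel potential with symbol $(1+|\xi|^2)^{-1}$ together with Mikhlin--H\"ormander bounds on the symbols $({\ii}\xi)^\rho/(1+|\xi|^2)$ for $|\rho|\leq 2$ (order $\lfloor n/2\rfloor+1=2$ in dimension two), which is where $p>1$ enters. The one place you go slightly beyond the literal cited statement is the $k=0$ case: Stein's theorem identifies Bessel potential spaces with $W^{k,p}$ only for nonnegative integer $k$, and your reduction of $g\in W^{-1,p}$ to the representation $g=g_0+\sum_j\partial_j g_j$ with $\|g_0\|_{L^p}+\sum_j\|g_j\|_{L^p}$ comparable to $\|g\|_{W^{-1,p}}$ correctly supplies the negative-order case, which the paper does in fact use (in Lemma \ref{lemma74} and Proposition \ref{propa9}).
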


\begin{lemma}\label{lemma74} 
Given $k=0$ or $1$, $\lambda \in \Lambda_K$, $x \in \mu^{-1}(0)$. Let ${\bf v} = (u, a)$ be a smooth gauged map from ${\mb C}$ to $X$ such that over $C_R$, $u = e^{\lambda \theta} x$ and $a = - \lambda d\theta$. Then the operator 
\begin{align*}
& {\mc L}_{\bf v}: W^{k+1, p, \delta}({\mb C}, {\mf k}) \to W^{k-1, p, \delta}({\mb C}, {\mf k}), & {\mc L}_{\bf v}(s) \mapsto \dad d_a s + d\mu(u) \cdot J {\mc X}_s (u)
\end{align*}
is invertible for all $p>1$ and $\delta \geq 0$.
\end{lemma}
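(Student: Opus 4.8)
The plan is to show that ${\mc L}_{\bf v} = \dad d_a + L_u^* L_u$, where $L_u^* L_u s = d\mu(u)\cdot J{\mc X}_s(u)$ in the notation of Remark \ref{remark44}, is Fredholm of index zero and injective, hence an isomorphism. The operator is second order elliptic with principal part $-\Delta$ and a pointwise nonnegative zeroth order term. The first thing I would record is its structure at infinity: over $C_R$, where $u = e^{\lambda\theta}x$ and $a = -\lambda d\theta$, I would conjugate by the gauge transformation $e^{-\lambda\theta}$, which is smooth and single valued on $C_R$ because $\lambda \in \Lambda_K$. Since $\dad d_a$ and $L_u^* L_u$ are gauge covariant, and since $\mu$ and $J$ are $K$-equivariant with $x \in \mu^{-1}(0)$, this transforms ${\mc L}_{\bf v}|_{C_R}$ into the constant coefficient operator $-\Delta + P$, where $P = L_x^* L_x$ is the symmetric positive definite endomorphism of ${\mf k}$ from Remark \ref{remark44}, with eigenvalues $\lambda_1, \dots, \lambda_m > 0$.

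Next I would prove that the constant coefficient model $-\Delta + P : W^{k+1,p,\delta}({\mb C}, {\mf k}) \to W^{k-1,p,\delta}({\mb C}, {\mf k})$ is an isomorphism for every $p > 1$ and $\delta \geq 0$. Diagonalizing $P$ reduces this to the scalar operators $-\Delta + \lambda_j$. For $\delta = 0$ each is an isomorphism by Calderon's theorem (Lemma \ref{lemma73}) after the rescaling $z \mapsto z/\sqrt{\lambda_j}$. For $\delta > 0$ the isomorphism persists because the Green's kernel of $-\Delta + \lambda_j$ decays exponentially, so convolution against it is bounded $L^{p,\delta} \to W^{2,p,\delta}$, absorbing the polynomial weight via $\rho(x)^\delta \leq c(\rho(y)^\delta + |x-y|^\delta)$. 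The important feature is that positivity of $P$ makes the model invertible for the entire range $\delta \geq 0$, with no indicial root obstruction; this is precisely what permits the wider weight range here compared with the $\ov\partial$ operators of Section \ref{section4}.

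Because ${\mc L}_{\bf v}$ coincides with the (conjugate of the) invertible model $-\Delta + P$ outside the compact set $B_R$, standard weighted elliptic theory --- a parametrix patched from the interior parametrix on $B_R$ and the model inverse at infinity, with remainder compact by Proposition \ref{prop214} and Lemma \ref{lemma215} --- shows ${\mc L}_{\bf v}$ is Fredholm for all $p > 1$, $\delta \geq 0$, with index constant on this range (no indicial root is crossed, as the model is invertible throughout). At $(p,\delta) = (2,0)$ the operator ${\mc L}_{\bf v} = \dad d_a + L_u^* L_u$ is formally self-adjoint and nonnegative, so its index is zero, hence the index is zero for all $(p,\delta)$. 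Finally I would prove injectivity: if ${\mc L}_{\bf v} s = 0$ then $s$ is smooth by elliptic regularity and decays exponentially because its asymptotic model $-\Delta + P$ is invertible, so $s \in L^2$ and the integration by parts $0 = \langle {\mc L}_{\bf v} s, s\rangle_{L^2} = \| d_a s\|_{L^2}^2 + \| {\mc X}_s(u)\|_{L^2}^2$ is valid. Thus $d_a s = 0$ and ${\mc X}_s(u) = 0$; on $C_R$, with $u = e^{\lambda\theta}x$ and the $K$-action free on $\mu^{-1}(0)$ (Assumption \ref{assumption21}), injectivity of $L_x$ forces $s = 0$ there, and being $\nabla^a$-parallel $s$ then vanishes on all of ${\mb C}$. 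Index zero together with injectivity gives surjectivity, so ${\mc L}_{\bf v}$ is an isomorphism.

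The step I expect to be the main obstacle is the weighted invertibility of the model operator uniformly for all $p > 1$ and $\delta \geq 0$: upgrading the unweighted Calderon theorem to the weighted spaces $W^{k\pm 1,p,\delta}$ requires genuine control of the Bessel-type kernel against the weight $\rho^\delta$, or equivalently controlling the decaying lower order terms generated by the conjugation $\rho^\delta(-\Delta+P)\rho^{-\delta}$ and checking that they neither break Fredholmness nor shift the index. A secondary subtlety is justifying the integration by parts in the injectivity step across all admissible $p$, which is why I would extract decay from the positivity of $P$ (exponential decay of solutions of the homogeneous equation) rather than from a weighted Sobolev embedding, the latter failing for small $p$ or small $\delta$.
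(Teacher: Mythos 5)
Your proposal is correct, but it takes a genuinely different route from the paper in each of the three key steps. The paper first disposes of the unweighted case $\delta = 0$: surjectivity is proved by explicitly patching an inverse --- the gauge-conjugated inverse of $-\Delta + L_x^*L_x$ (from Lemma \ref{lemma73} plus scaling) applied to a cut-off of $f$ at infinity, corrected by solving a Dirichlet problem on $B_{2R}$ --- and injectivity is proved by a soft duality argument: if ${\mc L}_{\bf v}u = 0$ then $u$ pairs to zero against the image of ${\mc L}_{\bf v}$ on the conjugate-exponent spaces, which is everything by the surjectivity just established, so $u = 0$. The weighted case $\delta > 0$ is then obtained for free: $\rho^{-\delta}{\mc L}_{\bf v}\rho^{\delta}$ is an isomorphism $W^{k+1,p,\delta} \to W^{k-1,p,\delta}$ by conjugation, differs from ${\mc L}_{\bf v}$ by a compact operator (Proposition \ref{prop214}), so ${\mc L}_{\bf v}$ is Fredholm of index zero, and its kernel sits inside the trivial unweighted kernel since $W^{k+1,p,\delta}\subset W^{k+1,p}$ for $\delta \geq 0$. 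You instead prove the weighted invertibility of the model directly (exponential decay of the Bessel kernel), patch parametrices for Fredholmness, get index zero from $L^2$ self-adjointness plus constancy of the index in $(p,\delta)$, and prove injectivity by the positivity/integration-by-parts argument with exponential decay of kernel elements. Both work; what the paper's route buys is economy --- the $\rho^{\pm\delta}$-conjugation-plus-compactness trick removes any need for weighted kernel estimates or an index-constancy discussion, and the duality injectivity needs no decay theory at all --- while your route buys stronger conclusions (exponential decay of kernel elements, invertibility of the asymptotic model on the whole weight range) and is the template that generalizes to Lockhart--McOwen-type settings. Two points in your version deserve the extra care you partly flag: for $p \leq 2$ one cannot pass from $s \in W^{k+1,p,\delta}$ to boundedness directly, so the exponential-decay claim for kernel elements must be preceded by local elliptic estimates showing $s \to 0$ at infinity before the barrier/Bessel argument applies; and for $k=0$ the model inverse must be shown bounded on the negative-order target $W^{-1,p,\delta}$, which your convolution estimate gives only after a duality step (it is here, and not in Proposition \ref{prop72} where Lemma \ref{lemma75} is available, that the paper's switch to the duality argument is doing real work, since Lemma \ref{lemma74} allows small $\delta \geq 0$).
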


\begin{proof}
The proof is firstly carried out for $\delta=0$. By Remark \ref{remark44}, $L_x^* L_x$ is diagonalizable with positive eigenvalues $c_i$. Further, by the assumption of this lemma, 
\beq\label{eqn74}
{\mc L}_{\bf v}|_{C_R} = {\rm Ad}_{e^{\lambda \theta}} ( - \Delta + L_x^* L_x ) {\rm Ad}_{e^{- \lambda \theta}}.
\eeq
Therefore, the bundle $C_R \times {\mf k}$ splits into trivial line bundles $\ell_i$ ($1\leq i \leq n$) with $- \Delta + L_x^* L_x|_{\ell_i} = - \Delta + c_i$. By Lemma \ref{lemma73} and a scaling argument, the operator 
\beqn
{\mc L}_i = - \Delta  + c_i: W^{k+1, p}({\mb C}, \ell_i) \to W^{k-1, p}({\mb C}, \ell_i)
\eeqn
is an isomorphism. So there is an inverse ${\mc K}: W^{k-1,p}({\mb C}, {\mf k}) \to W^{k+1,p}( {\mb C}, {\mf k})$ of $- \Delta  + L_x^* L_x$.

We will now construct an inverse outside the ball $B_{2R}$ using ${\mc K}$.  Consider $f \in W^{k-1,p}( {\mb C}, {\mf k})$. Choose a radially symmetric cut-off function $\beta_R : {\mb C} \to [0,1]$ that is $0$ in $B_R$ and $1$ in $C_{2R}$. Since $k \leq 1$, we can define $\hat f \in W^{k-1,p}( {\mb C}, {\mf k})$ as
\beqn
\hat{f}(z) = \left\{ \begin{array}{cc} {\rm Ad}_{e^{- \lambda \theta}} (f), & |z|\geq R;\\
                                          0, & |z|\leq R.
\end{array} \right.
\eeqn
Take $\hat{u} = {\mc K}(\hat{f})$ and $u_{out}: = {\rm Ad}_{e^{- \lambda \theta}}(\beta_R \hat u)$. By \eqref{eqn74}, we have
\beqn
 {\mc L}_{\bf v} (u_{out})|_{C_{2R}} = {\rm Ad}_{e^{\lambda \theta}} ( - \Delta + L_x^* L_x) ( \hat{u}) = f.
\eeqn
Moreover, since $\beta_R \hat{u}$ is supported in $C_R$, where $e^{- \lambda \theta}$ and $\beta_R$ are uniformly bounded with derivatives up to order $k+1$, there are constants $c_1(R), c_2(R)>0$ such that 
\beqn
\| u_{out} \|_{W^{k+1, p}} \leq c(R) \| \hat{u} \|_{W^{k+1, p}} \leq c_1 (R) \| {\mc K} \| \| \hat{f} \|_{W^{k-1, p}({\mb C})} \leq c_2(R) \| f \|_{W^{k-1, p}}. 
\eeqn
Now, we extend the inverse to inside the ball $B_{2R}$. Consider the equation   
\beqn
{\mc L}_{\bf v}(u_{in}) = f - {\mc L}_{\bf v} (u_{out}) \text{ on $B_{2R}$},\ u_{in}|_{\partial B_{2R}}=0,\ u_{in} \in W^{k+1,p}(B_{2R},{\mf k}).
\eeqn
The above equation is an elliptic PDE with Dirichlet boundary condition and the differential operator is nonnegative. Hence it has a unique solution $u_{in}$. We extend $u_{in}$ by zero outside the ball $B_{2R}$ and set $u:=u_{in} + u_{out}$. The section $u$ is in $W^{1,p}$ and ${\mc L}_{\bf v} u=f$ holds weakly. In case $k=1$, by an elliptic regularity argument, we can show that $u$ is in $W^{2,p}$. By construction, the solution $u$ satisfies the norm bound $\| u \|_{W^{k+1, p}} \leq c \| f \|_{W^{k-1, p}}$.

Next, we show that ${\mc L}_{\bf v}$ is injective using the self-adjointness of the operator. Suppose $u \in W^{k+1,p}( {\mb C}, {\mf k})$ and ${\mc L}_{\bf v} u=0$. Then, $\int_{\mb C} \langle {\mc L}_{\bf v} u , f \rangle ds dt =0$ for all $f \in W^{k+1,q}$ where $q=\frac p {p-1}$. Integrating by parts, we see that $u$ satisfies $\int_{\mb C} \langle u , {\mc L}_{\bf v} f \rangle ds dt =0$ for all $f \in W^{k+1,q}$. But, we know that ${\mc L}_{\bf v}: W^{k+1, q} \to W^{k-1, q}$ is onto. Therefore, $u=0$.
  
Finally, we extend the result to the case of nonzero $\delta$. The map $W^{k,p} \ni u \mapsto \rho^{-\delta} u \in W^{k,p,\delta}$ is an isomorphism for $k=-1, 0, 1$. The map ${\mc L}_{{\bf v}, \delta}$ defined by
\beqn
\xymatrix{W^{k+1,p,\delta} \ar[r]^{\rho^\delta} & W^{k+1,p} \ar[r]^{{\mc L}_{\bf v}} &  W^{k-1,p} \ar[r]^{\rho^{-\delta}} & W^{k-1,p,\delta}},
\eeqn
is an isomorphism, and it differs from ${\mc L}_{\bf v}$ by a compact perturbation (see Proposition \ref{prop214}). Hence ${\mc L}_{\bf v} :W^{k+1,p,\delta} \to W^{k-1,p,\delta}$ is a Fredholm operator with index zero. Its kernel is contained in ${\rm ker} ( {\mc L}_{\bf v}: W^{k+1,p} \to W^{k-1,p})$, because $W^{k+1,p,\delta} \subset W^{k+1,p}$. This proves that the map ${\mc L}_{\bf v} : W^{k+1,p,\delta} \to W^{k-1,p,\delta}$ is invertible.
\end{proof}

\begin{lemma}\label{lemma75}
Suppose $p>2$, $\delta > 1 - \frac 2 p$ and $q=\frac p {p-1}$ is the conjugate exponent of $p$. Then, $L^{p,\delta}({\mb C})$ is contained in $L^{q,-\delta}({\mb C})$.
\end{lemma}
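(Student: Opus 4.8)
The plan is to derive the inclusion directly from H\"older's inequality, exploiting that $q<p$ (which holds because $p>2$ forces $q=\frac{p}{p-1}<2<p$). First I would record that it suffices to prove a uniform estimate $\|\sigma\|_{L^{q,-\delta}}\le c\,\|\sigma\|_{L^{p,\delta}}$ for all $\sigma\in C_0^\infty(\mb C)$: since both $L^{p,\delta}(\mb C)$ and $L^{q,-\delta}(\mb C)$ are by definition the completions of $C_0^\infty(\mb C)$ under the respective norms, such a bound on the dense subspace extends the identity map to a bounded inclusion of the completions.

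For such a $\sigma$ I would rewrite the integrand of $\|\sigma\|_{L^{q,-\delta}}^q=\int_{\mb C}\rho^{-\delta q}|\sigma|^q$ as
\[
\rho^{-\delta q}|\sigma|^q=\bigl(\rho^{\delta}|\sigma|\bigr)^q\cdot\rho^{-2\delta q},
\]
and apply H\"older with the conjugate pair of exponents $\frac{p}{q}$ and $\frac{p}{p-q}$. The first factor then contributes $\bigl(\int_{\mb C}\rho^{\delta p}|\sigma|^p\bigr)^{q/p}=\|\sigma\|_{L^{p,\delta}}^q$, while the remaining weight is collected into the single integral
\[
\Bigl(\int_{\mb C}\rho^{-2\delta q\,\frac{p}{p-q}}\,ds\,dt\Bigr)^{(p-q)/p}.
\]

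The crux of the argument---and the only point where the hypothesis is used sharply---is the finiteness of this last weight integral. Since $\rho\ge 1$ and $\rho(z)=|z|$ outside a compact set, on $\mb C=\mb R^2$ the integral $\int\rho^{-s}\,ds\,dt$ converges precisely when $s>2$, the relevant radial integral being $\int^\infty r^{1-s}\,dr$. Here $s=2\delta q\,\frac{p}{p-q}$, and a short computation using $q=\frac{p}{p-1}$ gives $\frac{pq}{p-q}=\frac{p}{p-2}$, so the convergence condition $s>2$ reduces exactly to $\delta>1-\frac{2}{p}$, which is the standing hypothesis.

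I expect this threshold bookkeeping to be the main (and essentially the only) obstacle: everything else is a mechanical application of H\"older, but one must verify that the weight exponent produced by H\"older matches the borderline decay rate $1-\frac{2}{p}$ rather than some nearby value, which is what makes the hypothesis on $\delta$ exactly the right one. With the weight integral shown to be finite, say equal to $c^q$, one obtains $\|\sigma\|_{L^{q,-\delta}}\le c\,\|\sigma\|_{L^{p,\delta}}$ for all $\sigma\in C_0^\infty(\mb C)$ and concludes by the density extension described above.
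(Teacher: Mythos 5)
Your proof is correct and follows essentially the same route as the paper's: the same splitting of the integrand $\rho^{-q\delta}|\sigma|^q = (\rho^{\delta}|\sigma|)^q\cdot\rho^{-2q\delta}$, H\"older with the same conjugate pair (your $\frac{p}{p-q}=\frac{p-1}{p-2}$ is exactly the paper's $r/q$ with $\frac1r=1-\frac2p$), and the same verification that the weight exponent $\frac{2\delta p}{p-2}>2$ is equivalent to $\delta>1-\frac2p$. The only cosmetic difference is that you phrase the conclusion as a bounded inclusion obtained by density from $C_0^\infty$, while the paper estimates directly for $f\in L^{p,\delta}$; both are fine.
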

\begin{proof} 
Define $r$ by $\frac{1}{r} + \frac{2}{p} = 1$. Then $\frac{1}{r/q} + \frac{1}{p-1} = 1$. Moreover, $\delta>1 - \frac 2 p$ implies that $\rho^{-2\delta} \in L^r$. Then for $f\in L^{p, \delta}$, by H\"older's inequality, 
\beqn
\| \rho^{-\delta} f \|_{L^q}^q = \int_{{\mb C}} \rho^{-2q \delta} \rho^{q\delta} |f|^q  \leq \left[ \int_{\mb C}  ( \rho^{-2q \delta})^{\frac{r}{q}} \right]^{\frac{q}{r}} \left[ \int_{\mb C} ( \rho^{q\delta} |f|^q )^{p-1} \right]^{\frac{1}{p-1}} \leq\| \rho^{-2\delta} \|_{L^r}^q \| f \|_{L^{p, \delta}}^q.
\eeqn
So $f \in L^{q, -\delta}({\mb C})$.
\end{proof}

\begin{proof}[Proof of Proposition \ref{prop72}]
For $R>0$ sufficiently large, choose a gauged map ${\bf v}' = (u', a')$ such that $a'|_{C_R} = - \lambda d \theta$ and $u'|_{C_R} = e^{- \lambda \theta} x$. Denote $\alpha = a' - a$. We claim that the difference 
\beq\label{eqn75}
{\mc L}_{{\bf v}'} (s)  - {\mc L}_{{\bf v}} (s) = ( d_{a'}^{\, *} d_{a'} s - \dad d_a s ) + ( d\mu(u') \cdot J {\mc X}_s - d \mu(u) \cdot J {\mc X}_s )
\eeq
is a compact operator. Indeed, the first term on the right-hand-side above is
\beq\label{eqn76}
( d_{a + \alpha}^{\, *} d_{a+\alpha} - \dad d_a ) (s) = * [ \alpha \wedge * d_a s ] + \dad [ \alpha, s ] + * [ \alpha \wedge * [ \alpha, s] ].
\eeq
Then since $\alpha \in W^{1, p, \delta}$, by Proposition \ref{prop214} it follows easily that \eqref{eqn76} is compact. For similar reason the second term of the right-hand-side of \eqref{eqn75} is compact. 

Now by Lemma \ref{lemma74}, ${\mc L}_{{\bf v}'}: W^{k+1, p, \delta}({\mb C}, {\mf k}) \to W^{k-1, p, \delta}({\mb C}, {\mf k})$ is an isomorphism. Therefore ${\mc L}_{\bf v}$ is Fredholm and has index zero. So in order to prove the current proposition, it suffices to show that ${\mc L}_{\bf v}$ is injective. Suppose $s \in {\rm ker} ({\mc L}_{\bf v}) \subset W^{k+1, p, \delta}$. By Lemma \ref{lemma75}, $s \in W^{1, q, -\delta} = (W^{-1, p, \delta})^*$. By the definition of the (distributional) adjoint $\dad$, we have
\beqn
\int_{\mb C} \langle \dad d_a s +  d\mu(u) \cdot J {\mc X}_s, s \rangle dsdt = \int_{\mb C} \langle d_a s, d_a s \rangle dsdt +  \int_{\mb C} \omega_X( {\mc X}_s, J {\mc X}_s )dsdt	. 
\eeqn
The right hand side vanishes exactly if $d_a s$ and ${\mc X}_s$ vanish. Since the $K$-action is free near $u(C_R)$, it implies that $s =0$ so ${\mc L}_{\bf v}$ is injective. 
\end{proof}

The following version of implicit function theorem is used in the proof of Proposition \ref{prop38}. 

\begin{prop}\cite[Proposition A.3.4]{McDuff_Salamon_2004}\label{prop76}
Let $X, Y$ be Banach spaces and $U\subset X$ is an open neighborhood of $0\in X$. Let $F: U \to Y$ be a $C^1$ map. Suppose that $DF(0)$ is surjective and has a right inverse $Q$, with $\| Q \|  \leq C$. Suppose $\delta>0$ such that $B_\delta(X	) \subset U$ and
\beqn
x\in B_\delta(X) \Longrightarrow \| DF(x) - DF(0) \| \leq \frac{1}{2C}.
\eeqn
Suppose $\| F(0) \| < \frac{\delta}{4C}$. Then there exists a unique $x \in B_\delta(X)$ such that 
\begin{align*}
& F(x) = 0,\ &  x \in {\rm Im} Q \cap B_\delta(X).
\end{align*}
Moreover, $\| x \| \leq 2C \| F(0) \|$.
\end{prop}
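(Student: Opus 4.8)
The plan is to prove this as the standard quantitative inverse function theorem, via a Newton--Picard (contraction mapping) argument, reformulated so that the solution automatically lies in ${\rm Im}\,Q$. Since $DF(0)$ is surjective with right inverse $Q$, we have the identity $DF(0)Q = {\rm id}_Y$, and in particular $Q$ is injective (if $Q\eta=0$ then $\eta = DF(0)Q\eta = 0$). I would therefore seek the solution in the form $x = Q\eta$ with $\eta\in Y$, reducing the equation $F(x)=0$ to finding a fixed point of the map $\Phi(\eta):=\eta - F(Q\eta)$ on $Y$: indeed $\Phi(\eta)=\eta$ is equivalent to $F(Q\eta)=0$, and $\Phi(0)=-F(0)$.

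First I would establish the contraction estimate. Because $F$ is $C^1$, for any $\eta_1,\eta_2$ with $Q\eta_1,Q\eta_2\in B_\delta(X)$ I can write $F(Q\eta_1)-F(Q\eta_2)=\int_0^1 DF(Q\eta_t)\,Q(\eta_1-\eta_2)\,dt$ with $\eta_t=\eta_2+t(\eta_1-\eta_2)$, the whole segment remaining in the convex set $\{\eta:Q\eta\in B_\delta(X)\}$. Using $DF(0)Q(\eta_1-\eta_2)=\eta_1-\eta_2$ I get $\Phi(\eta_1)-\Phi(\eta_2)=\int_0^1\big[DF(0)-DF(Q\eta_t)\big]Q(\eta_1-\eta_2)\,dt$, hence $\|\Phi(\eta_1)-\Phi(\eta_2)\|\le \|Q\|\,\sup_t\|DF(0)-DF(Q\eta_t)\|\,\|\eta_1-\eta_2\|\le C\cdot\tfrac1{2C}\|\eta_1-\eta_2\|=\tfrac12\|\eta_1-\eta_2\|$, using the hypothesis on $DF$ over $B_\delta(X)$.

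For existence I would set $r:=2\|F(0)\|$ and work on the closed (hence complete) ball $\overline{B}_r(Y)$. For $\|\eta\|\le r$ one has $\|Q\eta\|\le Cr=2C\|F(0)\|<\delta/2<\delta$, so $Q\eta\in B_\delta(X)$ and the contraction estimate applies; moreover $\|\Phi(\eta)\|\le\|\Phi(\eta)-\Phi(0)\|+\|\Phi(0)\|\le\tfrac12\|\eta\|+\|F(0)\|\le\tfrac12 r+\tfrac12 r=r$, so $\Phi$ maps $\overline{B}_r(Y)$ into itself. The Banach fixed point theorem then yields a unique fixed point $\eta_\ast\in\overline{B}_r(Y)$, and setting $x:=Q\eta_\ast$ gives $F(x)=0$, $x\in{\rm Im}\,Q$, and $\|x\|\le C\|\eta_\ast\|\le Cr=2C\|F(0)\|$, which is the asserted norm bound and also forces $\|x\|<\delta$.

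Finally, for uniqueness within ${\rm Im}\,Q\cap B_\delta(X)$, I would note that any solution $x'=Q\eta'$ of $F=0$ produces a fixed point $\eta'=DF(0)x'$ of $\Phi$ (here $Q\eta'=x'$ since $x'\in{\rm Im}\,Q$), and that $\{\eta:Q\eta\in B_\delta(X)\}$ is convex, being the preimage of a ball under the linear map $Q$. Applying the $\tfrac12$-Lipschitz estimate along the segment joining two such fixed points forces them to coincide, so the solution in ${\rm Im}\,Q\cap B_\delta(X)$ is unique. The only genuinely delicate point is precisely this uniqueness step: since ${\rm Im}\,Q$ need not be closed and $\eta'=DF(0)x'$ is not controlled by $r$, one cannot simply invoke uniqueness inside $\overline{B}_r(Y)$; the convexity of $\{\eta:Q\eta\in B_\delta(X)\}$ is exactly what lets the contraction estimate reach \emph{all} solutions in the image, not merely the small ones.
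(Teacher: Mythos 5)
Your proof is correct. Note that the paper itself does not prove this proposition at all: it is quoted verbatim from McDuff--Salamon \cite[Proposition A.3.4]{McDuff_Salamon_2004}, and your Newton--Picard argument --- substituting $x = Q\eta$ and running the contraction mapping principle for $\Phi(\eta) = \eta - F(Q\eta)$ on a closed ball in $Y$ --- is essentially the standard proof given in that reference. Your handling of the one delicate point is also right: uniqueness is asserted on all of ${\rm Im}\,Q \cap B_\delta(X)$, not just on the small ball where the fixed point was produced, and your use of the convexity of $\{\eta : Q\eta \in B_\delta(X)\}$ together with injectivity of $Q$ (which follows from $DF(0)Q = {\rm id}_Y$) closes that gap correctly.
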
 

\appendix

\section{Local Model for ${\mb H}$-vortices (by Guangbo Xu)}\label{appendixa}

In the last section we provide the parallel discussion on the local model for vortices over the upper half plane, or ${\mb H}$-vortices as we call in \cite{Wang_Xu}. An important difference from the case of ${\mb C}$-vortices is that there is no notion of ``limiting holonomy'' at infinity. This fact greatly reduces the technicality in the discussion. Beside this difference, a lot of arguments can be done in a completely parallel fashion. So we only provide sketches of proofs whose details are easy to fill in.

This appendix is organized as follows. We first recall the concepts of ${\mb H}$-vortices and results proved in \cite{Wang_Xu}. Then we state and prove technical results analogous to Proposition \ref{prop36}, \ref{prop37} and \ref{prop38}.

\begin{defn} \hfill
\begin{enumerate}

\item A {\bf $K$-Lagrangian} of the Hamiltonian $K$-manifold $(X, \omega_X, \mu)$ is a compact $K$-invariant Lagrangian submanifold $L$ which is contained $\mu^{-1}(0)$.

\item A gauged map from an open subset $\Sigma \subset {\mb H}$ to $X$ (resp. a vortex on $(\Sigma, \partial \Sigma)$) is a gauged map ${\bf v} = (u, a)$ from $\Sigma \to X$ (resp. a vortex on $\Sigma$) satisfying 
\beq\label{eqna1}
u (\partial \Sigma) \subset L.
\eeq

\item An {\bf ${\mb H}$-vortex} is a bounded vortex over $({\mb H}, \partial {\mb H})$.
\end{enumerate}
\end{defn}

When use exponential maps to identify a small infinitesimal deformation with a geometric object (and for some other conveniences) we need a metric on $X$ satisfying special conditions. 
\begin{defn}\label{defna2}
A Riemannian metric $h_X$ on $X$ is called $(\omega_X, \mu, J, L)$-admissible if 
\begin{enumerate}
\item $h_X$ is $K$-invariant;

\item $J$ is isometric with respect to $h_X$;

\item $TL$ and $J TL$ are orthogonal with respect to $h_X$;

\item $T\mu^{-1}(0)$ is orthogonal to $J {\mc X}_a$ with respect to $h_X$, for all $a \in {\mf k}$.

\item $L$ is totally geodesic with respect to $h_X$.
\end{enumerate}
\end{defn}

By \cite[Lemma A.3]{Wang_Xu} there exist such admissible metrics on $X$.

\subsection{Asymptotic Behavior of ${\mb H}$-vortices}

Now we consider the asymptotic behavior of ${\mb H}$-vortices, which is very much analogous to the case of ${\mb C}$-vortices. We denote 
\beqn
S_R:= {\mb H}\smallsetminus B_R.
\eeqn

\begin{prop}{(cf. \cite{Wang_Xu})\label{propa3}}
\begin{enumerate}
\item \label{propa3a} If ${\bf v} = (u, a)$ is a bounded vortex on $S_R$, then there is a $K$-orbit $O \subset L$ such that
\beqn
\lim_{z \to \infty} d(u(z), O) = 0.
\eeqn

\item \label{propa3b} If ${\bf v}_i = (u_i, a_i)$ is a sequence of vortices on $S_R$ that converges to ${\bf v}_\infty = (u_\infty, a_\infty)$ in c.c.t., then there exist $K$-orbits $O_i \subset L$, including $i=\infty$, such that 
\beq\label{eqna2}
\lim_{z \to \infty} d (u_i(z), O_i) = 0,
\eeq
uniformly in $i$. Moreover, $O_i$ converges to $O_\infty$ in $\bar{L}$.
\end{enumerate}
\end{prop}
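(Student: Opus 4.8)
The plan is to descend to the symplectic quotient $\bar X = \mu^{-1}(0)/K$, where the assertion becomes the statement that a map with rapidly decaying energy over a half-infinite strip converges to a single point. The essential simplification over the $\mb C$-case (Proposition \ref{prop27}) is exactly the one flagged in the introduction to this appendix: the arc at infinity is an interval $\theta \in [0,\pi]$ whose two endpoints are pinned to $L$ by the Lagrangian boundary condition \eqref{eqna1}. This arc is contractible, so there is no limiting holonomy to twist away, and the limiting object is directly a single $K$-orbit $O \subset L$ rather than a point modulo a loop $k_0$. Since $K$ acts by isometries, the quantity $d(u(z), O)$ is gauge invariant, so the statement requires no gauge fixing.

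First I would record the energy decay: for a bounded ${\mb H}$-vortex on $S_R$ the density satisfies $\limsup_{z \to \infty} |z|^{4-\eps} e({\bf v})(z) < \infty$ for every $\eps > 0$, the exact analogue of Proposition \ref{prop26}, valid over ${\mb H}$ by the methods of \cite{Wang_Xu} (a reflection across $\partial {\mb H}$, legitimate because the admissible metric of Definition \ref{defna2} makes $L$ totally geodesic, reduces the estimate to the interior case). Passing to cylindrical coordinates $z = e^{\tau + \ii \theta}$ with $(\tau,\theta) \in [\log R, \infty) \times [0,\pi]$ and accounting for the conformal factor, this pointwise bound converts into exponential decay of $|d_a u|$ measured in the cylindrical metric, i.e. $|d_a u|_{\rm cyl}(\tau,\theta) \lesssim e^{-c\tau}$ for some $c > 0$.

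Next I would descend to $\bar X$. Because $|\mu(u)| \to 0$, for $|z|$ large the point $u(z)$ lies in a $K$-invariant tubular neighborhood $N$ of $\mu^{-1}(0)$ on which the action is free (Assumption \ref{assumption21}); let $\pi : N \to \mu^{-1}(0)$ be the nearest-point projection, which is $K$-equivariant by $K$-invariance of the metric, and $q : \mu^{-1}(0) \to \bar X$ the quotient. Set $\bar u := q \circ \pi \circ u$ on $S_{R'}$. This map is independent of the connection, and since $dq$ annihilates the orbit directions one has $|d\bar u| \lesssim |d_a u|$, so $|d\bar u|_{\rm cyl} \lesssim e^{-c\tau}$. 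The boundary condition forces $\bar u(\partial S_{R'}) \subset \bar L := L/K$. As $\on{diam}\,\bar u(\{\tau\}\times[0,\pi]) \le \int_0^\pi |\partial_\theta \bar u|\, d\theta \lesssim e^{-c\tau}$ and the radial displacements are summable, $\bar u$ converges uniformly to a single point $\bar x_\infty$, which lies in the closed set $\bar L$ as a limit of boundary values. Taking $O := q^{-1}(\bar x_\infty) \subset L$ and using $d(u(z), \pi(u(z))) \to 0$ gives part \eqref{propa3a}.

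For part \eqref{propa3b} I would rerun this argument with the sequential, uniform-in-$i$ form of the Proposition \ref{prop26}-analogue, so that the decay rate of $|d\bar u_i|_{\rm cyl}$, and hence the tail estimate \eqref{eqna2}, is uniform in $i$ (including $i = \infty$). The c.c.t.\ convergence ${\bf v}_i \to {\bf v}_\infty$ yields, modulo gauge, uniform-on-compacts convergence $u_i \to u_\infty$, hence $\bar u_i \to \bar u_\infty$ on compact sets; combining this with the uniform control of the tails forces the limit points to satisfy $O_i \to O_\infty$ in $\bar L$. The main obstacle I anticipate is not any individual step but the bookkeeping required to make the passage from pointwise energy decay to decay of $|d\bar u_i|_{\rm cyl}$ \emph{uniform simultaneously in $\tau$ and $i$}; in particular one must ensure that the estimates near the boundary arc $\theta \in \{0,\pi\}$ carry constants independent of $i$, which is precisely where the admissible, totally geodesic metric of Definition \ref{defna2} is needed.
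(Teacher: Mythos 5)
Your overall strategy (descend to the symplectic quotient and show the projected map converges) is reasonable in outline, and the projection step itself is sound: $K$-equivariance of the nearest-point projection kills the ${\mc X}_a$-component exactly, so $|d\bar u| \lesssim |d_a u|$. But your first step contains a genuine gap that makes the whole argument circular. You begin by asserting the decay $\limsup_{z\to\infty} |z|^{4-\eps} e({\bf v})(z) < \infty$ for ${\mb H}$-vortices, justified by the claim that reflection across $\partial{\mb H}$ "reduces the estimate to the interior case." Reflection (using the admissible metric of Definition \ref{defna2}) only yields the \emph{local} differential inequality for the energy density, i.e.\ the mean value estimate; the doubled configuration is not a vortex on ${\mb C}$, so \cite[Corollary 1.4]{Ziltener_decay} cannot be applied to it, and the mean value inequality alone gives $|d_a u|_{\rm cyl} = o(1)$, which is not integrable in $\tau$ and hence not enough for your summation argument. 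The $|z|^{4-\eps}$ rate is a global statement: it requires the isoperimetric-inequality/annulus argument, and over ${\mb H}$ that argument (carried out in Proposition \ref{propa4} of this paper) works with the local equivariant action of \emph{paths with endpoints on $L$}, and it begins by invoking Proposition \ref{propa3} --- the very statement you are proving --- in order to place $u(\tau,\cdot)$ near a single $K$-orbit in $L$ and choose a slice orthogonal to that orbit. In the paper's logical order, the decay of Proposition \ref{propa4} is a consequence of Proposition \ref{propa3}, not an input to it.

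The paper's own proof avoids this: Part (1) is quoted from \cite{Wang_Xu}, and Part (2) follows from convergence of energy together with the annulus lemma over ${\mb H}$ (\cite[Proposition A.11]{Wang_Xu}), whose estimates can be made uniform in $i$; the annulus lemma gives energy decay on half-annuli, hence decay of the diameters of the arcs $u(\{\tau\}\times[0,\pi])$, without presupposing any pointwise $|z|^{4-\eps}$ rate. Your argument could be repaired by substituting that annulus lemma for your step 1 --- it supplies exactly the integrable decay of $\sup_\theta |d\bar u|_{\rm cyl}(\tau,\cdot)$ that your Cauchy-sequence argument needs, and its uniform version handles Part (2) --- but as written, the proposal assumes the strongest decay result of the appendix at the outset, with an insufficient justification.
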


\begin{proof}
Only Part \eqref{propa3a} of this proposition is included in \cite{Wang_Xu}. Part \eqref{propa3b} follows from the convergence of energy and the annulus lemma for vortices over ${\mb H}$ (see \cite[Proposition A.11]{Wang_Xu}), where the estimates can be established uniformly for each element of the sequence. Therefore the convergence \eqref{eqna2} is uniform for all $i$. Since $u_i$ converges to $u_\infty$ uniformly on compact subsets, it follows that $O_i$ converges to $O_\infty$.
\end{proof}

\begin{prop}\label{propa4}
Let ${\bf v}$ be a bounded vortex on $S_R$. Then for any $\eps>0$, we have
\beqn
\limsup_{z \to \infty} \Big[ |z|^{4 - \epsilon} e({\bf v})(z) \Big] < +\infty.
\eeqn
Further, if ${\bf v}_i$ is a sequence of vortices on $S_R$ that converges to ${\bf v}_\infty$ in c.c.t., then 
\beqn
\sup_i \limsup_{z \to \infty} \Big[ |z|^{4 - \epsilon} e({\bf v}_i)(z) \Big] < +\infty.
\eeqn
\end{prop}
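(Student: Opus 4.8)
The plan is to follow the same two-step scheme that produces the $\mb C$-case estimate in Proposition \ref{prop26} (Ziltener \cite{Ziltener_decay}): first establish decay of the \emph{total} energy on the ends $S_r := {\mb H}\bs B_r$, and then upgrade this to the pointwise energy-density bound by a mean value inequality. The only genuinely new feature relative to the $\mb C$-case is the Lagrangian boundary $\partial{\mb H}$, which forces the mean value inequality to hold up to the boundary; this is precisely where the admissibility of the metric (Definition \ref{defna2}) will be used. Throughout, both halves of the statement will be handled together, since every constant below can be tracked for a convergent sequence.

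First I would pass to cylindrical coordinates $z = e^{\tau + {\bm i}\theta}$, so that $S_R$ becomes the half-cylinder $[\log R,\infty)\times[0,\pi]$ whose two boundary arcs $\theta = 0,\pi$ map into $L$. By the annulus lemma for ${\mb H}$-vortices (\cite[Proposition A.11]{Wang_Xu}), once $R$ is large enough that the energy on $S_R$ lies below the threshold $\hbar$, the energy over the bands $[\tau,\infty)\times[0,\pi]$ decays exponentially in $\tau$. Translated back to the $z$-coordinate this yields, for every $\eps>0$, a constant $c$ with $E({\bf v}; S_r) \leq c\, r^{-2+\eps}$ for all large $r$. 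The value $2$ of the exponent is the expected one: by Proposition \ref{propa3} the section $u$ converges to a single $K$-orbit, so its leading asymptotics are those of a holomorphic map with a removable singularity at infinity, whose energy density decays like $|z|^{-4}$ and whose annular energy therefore decays like $r^{-2}$.

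Next I would invoke a mean value inequality for the vortex equation: on a ball (or half-ball) of radius $r$ about a point $z$ with $|z|$ large, provided the local energy is below the threshold, one has $e({\bf v})(z) \leq C r^{-2}\, E({\bf v}; B_r(z))$. Choosing $r\sim|z|/2$ and feeding in the annular bound gives $e({\bf v})(z) \leq C'|z|^{-2}\cdot|z|^{-2+\eps} = C'|z|^{-4+\eps}$, which is the desired estimate. For interior points this is the standard sub-mean-value (Heinz-type) estimate: the density $e({\bf v})$ satisfies a differential inequality $\Delta e \geq -\kappa(e + e^2)$ coming from a Bochner-type computation, so under a small-energy hypothesis the elliptic mean value inequality applies. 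The hard part will be the mean value inequality for points $z$ near $\partial{\mb H}$, where only a half-ball is available. Here I would use the doubling/reflection construction permitted by admissibility: because $L$ is totally geodesic, $J$ is $h_X$-isometric, and $TL$ is orthogonal to $JTL$, the boundary condition $u(\partial{\mb H})\subset L$ lets one reflect $u$ through $L$ and extend the connection form by the corresponding reflection (after a boundary gauge fixing) to a vortex of the same regularity on the doubled full ball; the interior differential inequality and sub-mean-value estimate then apply to the doubled object and restrict back to the half-ball. This is the exact analogue of the boundary estimates for $J$-holomorphic curves with totally real boundary.

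Finally, for the uniform statement over a c.c.t.-convergent sequence ${\bf v}_i$, I would note that convergence of the energies $E({\bf v}_i)$ together with the common compactness of the images $u_i({\mb H})$ renders the threshold $\hbar$, the annulus-lemma constants, and the mean value constant $C$ all independent of $i$. The uniform version of the annulus lemma (again \cite[Proposition A.11]{Wang_Xu}) then gives $E({\bf v}_i; S_r)\leq c\, r^{-2+\eps}$ with $c$ independent of $i$, and the same mean value argument yields the uniform pointwise bound. This mirrors the remark following Proposition \ref{prop26} in the ${\mb C}$-case, so the details are routine once the boundary mean value inequality is in hand.
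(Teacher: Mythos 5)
Your two-step skeleton (annular energy decay on the ends, then a mean value inequality to upgrade to a pointwise density bound) is exactly the strategy of the paper's proof, but each of your two steps has a genuine gap. The first and more serious one is the decay \emph{rate}. The whole content of the proposition is the exponent $4-\eps$ for every $\eps>0$, equivalently $E({\bf v}; S_r)\lesssim r^{-2+\eps}$, and you obtain this by citing the annulus lemma \cite[Proposition A.11]{Wang_Xu}. But that lemma is what the paper invokes in Proposition \ref{propa3} for the purely qualitative statement that $u$ converges to a $K$-orbit, a purpose for which \emph{any} exponential rate suffices; you give no argument that it produces a rate arbitrarily close to $2$, and your justification (comparison with a holomorphic map having a removable singularity) is a heuristic, not a proof --- note that even in the ${\mb C}$-case the true exponent is $4-\eps$ rather than $4$, precisely because the gauge-theoretic corrections are not negligible. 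The paper does not take this shortcut: it proves the decay from scratch by introducing the local equivariant action ${\mc A}_{\rm loc}^K(x_\tau,s_\tau)$, identifying it with $E({\bf v}; S_{e^\tau})$, and applying the isoperimetric inequality with constant $c$ arbitrarily close to $1/2\pi$ (\cite[Theorem 4.4.1 (ii)]{McDuff_Salamon_2004}). The delicate point, absent from your proposal, is the cross term $\int_0^\pi \langle x_\tau'(\theta), {\mc X}_{s_\tau(\theta)}(x_\tau(\theta))\rangle\, d\theta$, which would ruin the near-sharp constant; the paper kills it by gauging $u(\tau,\cdot)$ into a slice $S_x$ \emph{orthogonal} to the $K$-orbit at $x$, so that this pairing is of size $d(x_\tau(\theta),x)\,|x_\tau'|\,|{\mc X}_{s_\tau}|$ and can be absorbed for large $\tau$. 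This yields $\frac{d}{d\tau}\big[e^{\tau/c\pi}E({\bf v}; S_{e^\tau})\big]\le 0$ with $1/c\pi$ arbitrarily close to $2$, which is what the statement requires.

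The second gap is your boundary mean value inequality. Doubling the vortex by ``reflecting $u$ through $L$'' would require an anti-symplectic, $J$-anti-linear involution of $X$ fixing $L$, and admissibility of the metric (Definition \ref{defna2}) provides nothing of the sort: $L$ totally geodesic together with $TL\perp JTL$ does not produce an isometric anti-holomorphic reflection of a neighborhood of $L$, so the ``doubled vortex'' does not exist in general (this is also why boundary regularity for $J$-holomorphic disks with totally real boundary is never proved by doubling the curve). What admissibility does buy --- and what the paper actually uses (see the proof of Proposition \ref{propa5}, relying on \cite[Lemma A.8]{Wang_Xu}) --- is reflection of the \emph{scalar} energy density: $e$ satisfies $\Delta e \ge e^{4\tau}\omega({\mc X}_\mu, J{\mc X}_\mu) - c_1(e+e^2)$, and the admissibility conditions guarantee that its extension $\wt{e}$ across the boundary by reflection still satisfies this differential inequality, so the interior Heinz-type estimate applies to $\wt{e}$ and restricts back. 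Replacing your doubling construction by this scalar reflection repairs the second step; the first step, however, needs the isoperimetric argument above rather than a citation.
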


\begin{proof}
For paths $(x, s): [0, \pi] \to X \times {\mf k}$ such that $x(0), x(\pi) \in L$ and such that $x([0, \pi])$ has sufficiently small diameter, one can define the local equivariant symplectic action
\beqn
{\mc A}_{\rm loc}^K (x, s) := - \int_{\mb D} u^* \omega + \int_0^\pi \mu(x(\theta)) \cdot s(t) d\theta.
\eeqn
This local action is invariant under gauge transformations $g\in C^\infty(S^1, K)$. Notice that the first term is the ordinary local action ${\mc A}_{\rm loc}$ for paths. 

Let $(\tau, \theta)\in [1, +\infty) \times [0, \pi]$ be the cylindrical coordinate on ${\mb H}$ near infinity. By Theorem \ref{propa3}, by using a gauge transformation, one can assume that for $\tau$ sufficiently large, $u(\tau, \theta)$ lies in a neighborhood of the $K$-orbit of a point $x \in L$. Choose a slice of the $K$-action through $x$, which is an embedded submanifold $S_x$ through $x$ that is transverse to $K$-orbits. We choose $S_x$ in such a way that it is orthogonal to the $K$-orbit at $x$. Since the $K$-action on $L$ is free, for $\tau$ large, $u(\tau, t)$ is contained in a region where the $K$-action is free. Then there exists a unique smooth map $g(\tau, \theta)\in K$ such that $g(\tau, \theta) u(\tau, \theta) \in S_x$. Then we replace $(u, a)$ by $g \cdot (u, a)$ and denote $(x_\tau, s_\tau) = ( u(\tau, \cdot), \psi(\tau, \cdot))$, where $\psi$ is the $d\theta$ component of $a$. The local equivariant action of $(x_\tau, s_\tau)$ is then defined.

By the usual isoperimetric inequality (\cite[Theorem 4.4.1 (ii)]{McDuff_Salamon_2004}), for any $c > 1/2\pi$, there exists $\delta>0$ such that for $l(x) \leq \delta$, ${\mc A}_{\rm loc}(x) \leq c l(x)^2$. Choose $c'\in (1/2\pi, c)$. For appropriate choice of $\epsilon$ and large $s$, one has
\begin{multline*}
{\mc A}_{\rm loc}^K ( x_\tau, s_\tau ) =  {\mc A}_{\rm loc}(x_\tau) + \int_0^\pi \mu(x_\tau(\theta)) \cdot s_\tau(\theta) d\theta \\
\leq  c' l(x_\tau )^2 + \epsilon^2 \int_0^\pi |s_\tau(\theta)|^2 d \theta + \frac{1}{2\epsilon^2} \int_0^\pi |\mu(x_\tau(\theta)|^2 d\theta \\
\leq  \pi c' \left[ \int_0^\pi \big( |x_\tau'(\theta)|^2 + | {\mc X}_{s_\tau} (x_\tau(\theta))|^2 + e^{2\tau} |\mu(x_\tau( \theta))|^2 \big) d\theta \right]\\
=  \pi c'\left[ \int_0^\pi \big( |x_\tau'(\theta) + {\mc X}_{\xi_\tau(\theta)} (x_\tau(\theta))|^2 + e^{2\tau} |\mu(x_\tau(\theta))|^2 \big) d\theta  - 2 \int_0^\pi \langle x_\tau'(\theta), {\mc X}_{\xi_\tau(\theta)} (x_\tau (\theta)) \rangle d\theta \right].
\end{multline*}
Note that since $x_\tau(\theta) \in S_x$ and $S_x$ is orthogonal to $K$-orbits at $x$, one has
\beqn
\langle \ x_\tau'(\theta), {\mc X}_{s_\tau(\theta)}(x_\tau(\theta))\   \rangle \leq c_1 d( x_\tau(\theta), x ) | x_\tau'(\theta) | |{\mc X}_{s_\tau(\theta)}(x_\tau(\theta)) |
\eeqn
for some constant $c_1 >0$. Then for $\tau$ large enough so that $d(x_\tau (\theta), x)$ is sufficiently small, one has
\beqn
{\mc A}_{\rm loc}^K ( x_\tau, s_\tau ) \leq \pi c \left[ \int_0^\pi \big(|x_\tau'(\theta) + {\mc X}_{s_\tau(\theta)} (x_\tau(\theta)) |^2 + e^{2\tau} |\mu(x_\tau(\theta))|^2 \big) d \theta\right].
\eeqn
Lastly, it is standard to check that for $\tau$ large enough, ${\mc A}_{\rm loc}^K ( x_\tau, s_\tau )$ is equal to the energy of the restriction of ${\bf v}$ to $S_{e^\tau}$. Then the above implies that
\beq\label{eqna3}
\frac{d}{d\tau }\Big[ e^{\tau / c\pi} E({\bf v}; S_{e^\tau }) \Big] \leq 0 \Longrightarrow \limsup_{\tau \to +\infty} \Big[ e^{ \tau /c\pi} E( {\bf v};  S_{e^\tau} )  \Big] < +\infty. 
\eeq
Notice that $1/c\pi$ can be arbitrarily close to $2$. By the mean-value estimate and the relation between Euclidean and cylindrical coordinates, one obtain the pointwise bound
\beqn
\lim_{z\to \infty} \Big[ |z|^{4- \epsilon}  e({\bf v})(z) \Big] <  + \infty.
\eeqn

Finally, the convergence in c.c.t. implies that the above estimates can all be extended uniformly to the sequence ${\bf v}_i$. This finishes the proof of the proposition.
\end{proof}

Now we prove an analogue of Proposition \ref{prop27}.

\begin{prop}\label{propa5}
Let ${\bf v} = (u, a)$ be a bounded vortex on $S_R$ in temporal gauge, i.e., $a = f d\theta$. Then there exist $x_0 \in L$ and a map $k_0 \in W^{1, p}([0, \pi], G)$ such that 
\begin{enumerate}
\item 
\beq\label{eqna4}
\lim_{r \to +\infty} \sup_{\theta \in [0, \pi]} d \Big( x_0, k_0(\theta)^{-1} u(r \cos \theta, r \sin \theta) \Big) = 0;
\eeq

\item For any $p>2$ and $0 < \gamma <\frac{2}{p}$, there is a constant $c>0$ such that 
\beq\label{eqna5}
\sup_{r \geq 1}  \left[ r^{\gamma} \Big\| a(r, \cdot) + (\partial_\theta k_0) k_0^{-1} \Big\|_{L^p([0, \pi])}\right] < +\infty.
\eeq

\item If ${\bf v}_i$ is a uniformly bounded sequence of vortices over $S_R$ in radial gauge, then the above upper limits are uniformly bounded for all $i$.
\end{enumerate}
\end{prop}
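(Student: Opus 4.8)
The plan is to follow the proof of Proposition~\ref{prop27}, with the simplification afforded by the absence of a limiting holonomy: since the angular variable now ranges over the interval $[0,\pi]$ rather than the circle, the gauge transformation $k_0$ we produce is merely a path, subject to no periodicity (hence no $\pi_1(K)$) constraint. Working in cylindrical coordinates $(\tau,\theta)\in[\log R,\infty)\times[0,\pi]$ via $z=e^{\tau+\ii\theta}$, the two inputs are Proposition~\ref{propa4}, giving the energy-density decay $e({\bf v})(z)\lesssim |z|^{-4+\eps}$, and Proposition~\ref{propa3}, providing the limiting $K$-orbit $O\subset L$. All constants below originate from these two results, whose estimates extend uniformly to uniformly bounded families, so the uniformity asserted in part~(3) will be automatic.

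First I would construct $x_0$ and $k_0$. The energy bound controls the covariant derivative $d_a u$; its radial component satisfies $|\partial_r u|\lesssim r^{-2+\eps/2}$, which is integrable in $r$, so that $u(r,\theta)$ converges, uniformly in $\theta$, to a limit $u_\infty(\theta)$ as $r\to\infty$. By Proposition~\ref{propa3} this limit lies on the orbit $O\subset L$; fixing any $x_0\in O$ and using that the $K$-action is free on a neighborhood of $\mu^{-1}(0)\supset L$, there is a unique $k_0(\theta)\in K$ with $u_\infty(\theta)=k_0(\theta)\cdot x_0$, and $k_0\in W^{1,p}([0,\pi],K)\subset W^{1,p}([0,\pi],G)$ because $u_\infty\in W^{1,p}$. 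This is precisely~\eqref{eqna4}, with the rate of convergence inherited from the integral of $|\partial_r u|$.

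Next I would establish the angular estimate~\eqref{eqna5}. The angular covariant derivative $D_\theta u=\partial_\theta u+{\mc X}_f(u)$ decays like $r^{-1+\eps/2}$ by the same energy bound. Letting $r\to\infty$ gives the limiting identity $\partial_\theta u_\infty+{\mc X}_{f_\infty}(u_\infty)=0$; since $\partial_\theta u_\infty=\partial_\theta\big(k_0\cdot x_0\big)={\mc X}_{(\partial_\theta k_0)k_0^{-1}}(u_\infty)$ and the action is free near $L$, this forces $f_\infty=-(\partial_\theta k_0)k_0^{-1}$. Subtracting the limiting identity from the radius-$r$ relation $\partial_\theta u(r,\cdot)+{\mc X}_{f(r,\cdot)}(u(r,\cdot))=D_\theta u(r,\cdot)$ and inverting the infinitesimal action ${\mc X}_{(\cdot)}(u(r,\cdot))$, which is uniformly invertible for large $r$ because $u(r,\cdot)$ stays in the free region, I can bound
\[
\big\| f(r,\cdot)+(\partial_\theta k_0)k_0^{-1}\big\|_{L^p([0,\pi])}=\big\|f(r,\cdot)-f_\infty\big\|_{L^p([0,\pi])}\lesssim r^{-1+\eps/2},
\]
where the right-hand side also absorbs $\|\partial_\theta u(r,\cdot)-\partial_\theta u_\infty\|_{L^p}$. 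Since $\gamma<\tfrac2p<1$, choosing $\eps$ small makes $r^\gamma$ times this quantity bounded, giving~\eqref{eqna5}.

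The main obstacle I anticipate is the inversion step just described: one must convert the geometric decay of $D_\theta u$ into an $L^p$ decay of the \emph{connection coefficient} $f-f_\infty$, which requires a uniform-in-$r$ (and, for part~(3), uniform-in-$i$) lower bound on the infinitesimal action ${\mc X}_{(\cdot)}(u(r,\cdot))$ together with simultaneous control of $\|\partial_\theta u(r,\cdot)-\partial_\theta u_\infty\|_{L^p}$; these follow from freeness of the $K$-action near $L$ and the uniform asymptotic estimates of Proposition~\ref{propa4}, but assembling them with constants independent of $r$ and of the sequence index is the part needing care. A minor additional point is to verify, using the boundary condition $u(\partial\Sigma)\subset L$, that $u_\infty(0),u_\infty(\pi)\in L$, so that indeed $x_0\in L$ and $k_0$ is genuinely $K$-valued.
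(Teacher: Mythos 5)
There is a genuine gap, and it sits exactly at the step you flagged as ``needing care''. Your argument presupposes that the limit $f_\infty = \lim_{r\to\infty} f(r,\cdot)$ exists (you write down the ``limiting identity'' $\partial_\theta u_\infty + {\mc X}_{f_\infty}(u_\infty)=0$ before proving any convergence of $f$), and it also requires $\| \partial_\theta u(r,\cdot)-\partial_\theta u_\infty \|_{L^p}\to 0$ with a rate. Neither of these follows from the pointwise energy-density decay of Proposition \ref{propa4}, which is the only quantitative input you invoke. Indeed, in cylindrical coordinates the second vortex equation reads $\partial_\tau \wt\psi = -e^{2\tau}\mu(u)$ with $\wt\psi = f$, and the pointwise bound $e({\bf v})(z)\lesssim |z|^{-4+\eps}$ only gives $|\mu(u)|\lesssim e^{(-2+\eps)\tau}$, hence $|\partial_\tau f|\lesssim e^{\eps\tau}$, which is \emph{not} integrable in $\tau$: the existence of the limiting connection is not established at all, and consequently neither is $\partial_\theta u_\infty \in L^p$ (so even your claim $k_0\in W^{1,p}$ is unproven, since $\partial_\theta u_\infty = -{\mc X}_{f_\infty}(u_\infty)$ is exactly what needs $f_\infty$). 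Uniform convergence $u(r,\cdot)\to u_\infty$, which you do obtain correctly from the integrable radial bound $|\partial_r u|\lesssim r^{-2+\eps/2}$, gives no control whatsoever on angular derivatives. A second warning sign is that your claimed rate $r^{-1+\eps/2}$ for $\|f(r,\cdot)-f_\infty\|_{L^p}$ is strictly stronger than the rate $r^{-\gamma}$, $\gamma<\tfrac 2p<1$, asserted in the proposition; if it were provable this way, the $\gamma<\tfrac2p$ restriction would be superfluous.

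The paper's proof supplies precisely the missing ingredient: an \emph{integral} decay estimate for $\mu(u)$ that is strictly better than the pointwise one. Using an admissible metric (Definition \ref{defna2}) so that the energy density can be reflected across $\partial{\mb H}$, the differential inequality $\Delta e \geq c^{-1} e^{4\tau}|\mu(u)|^2 - c\, e$ from \cite[Lemma A.8]{Wang_Xu}, the Gaio--Salamon mean-value sublemma, and the exponential tail-energy decay \eqref{eqna3} (an intermediate product of the proof of Proposition \ref{propa4}, not of its statement) combine to give $\int_{D_{\tau_0}} e^{4\tau}|\mu(u)|^2 \lesssim e^{-(2-\eps)\tau_0}$; interpolating this $L^2$ bound against the $L^\infty$ bound yields $\|\mu(u)\|_{L^p(D_{\tau_0})}\lesssim e^{-(2+\frac2p-\eps)\tau_0}$. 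Only then does integrating $\partial_\tau \wt\psi = - e^{2\tau}\mu(u)$ show that $\wt\psi(\tau,\cdot)$ is $L^p$-Cauchy with rate $e^{-(\frac2p-\eps)\tau}$ — which is exactly where the exponent $\tfrac2p$ in \eqref{eqna5} comes from. The logical order is then the reverse of yours: the limit connection $\wt\psi_\infty$ is produced first, $k_0$ is defined by solving the ODE $(\partial_\theta k_0)k_0^{-1}=-\wt\psi_\infty$, $k_0(0)={\rm Id}$, and $x_0$ and \eqref{eqna4} are obtained afterwards. To repair your proof you would have to import this entire chain of estimates; the construction of $k_0$ from $u_\infty$ via freeness of the action cannot replace it.
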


\begin{proof}
Let $(\tau, \theta)$ be the cylindrical coordinate on ${\mb H}$, i.e., $s + it = e^{\tau + i \theta}$. Over the strip $D_{\tau_0} = [\tau_0, \tau_0 + 1] \times [0, \pi]$, one write $a = \wt\psi d\theta$. Then the affine vortex equation reads 
\beqn
\partial_\tau u  + J ( \partial_\theta u + {\mc X}_{\wt\psi})  = 0,\ \partial_\tau \wt\psi + e^{2\tau} \mu(u) = 0.
\eeqn
Let $e(\tau, \theta)$ be the energy density function with respect to the cylindrical coordinates, i.e.,
\beqn
e(\tau, \theta) = |\partial_\tau u |^2 + e^{2\tau} |\mu(u)|^2.
\eeqn
In order to carry out the following estimate, instead of the standard metric $\omega_X (\cdot, J \cdot)$, we use an admissible metric (see Definition \ref{defna2}) to define the energy density. Then, by \cite[Lemma A.8]{Wang_Xu}, there exists a constant $c_1 >0$ such that 
\beqn
\Delta e \geq e^{4\tau} \omega({\mc X}_\mu, J {\mc X}_\mu) - c_1 (e + e^2). 
\eeqn
By Proposition \ref{propa4}, the energy density function $e$ decays uniformly as $s \to +\infty$. Hence for $s$ sufficiently large, we may assume for some $c_2>0$,
\beq\label{eqna6}
\Delta e \geq \frac{1}{c_2} e^{4 \tau} |\mu(u)|^2 - c_2 e.
\eeq
On the other hand, since the metric is admissible, one can extend $e$ across the boundary of the strip by reflection. Let $\wt{e}$ be the extension of $e$, which still satisfies the differential inequality \eqref{eqna6}.

\noindent {\bf Sublemma.} \cite[Lemma 9.2]{Gaio_Salamon_2005} Suppose $h, f: B_{R+r} \to [0, +\infty)$ satisfy $\Delta h \geq f - c h$. Then
\beqn
\int_{B_R} f \leq ( c + \frac{4}{r^2}) \int_{B_{R+r}} h.
\eeqn

Apply this sublemma to $f = c_2^{-1} e^{4\tau} |\mu(u)|^2$ (which is extended across the boundary by reflection), $h = \wt{e}$, by \eqref{eqna3}, for any $\epsilon>0$ and some $c_3>0$, 
\beqn
\int_{D_{\tau_0}}  e^{4 \tau} |\mu(u)|^2 d\tau d \theta \leq c_3 e^{-(2 -\epsilon) \tau_0}.
\eeqn
Then for some $c_4>0$, 
\beqn
\| \mu(u) \|_{L^p(D_{\tau_0})} \leq \| \mu(u) \|_{L^2}^{\frac{2}{p}} \| \mu(u)\|_{L^\infty}^{1-\frac{2}{p}} \leq c_4 e^{ - (2 + \frac{2}{p} - \epsilon) \tau_0}.
\eeqn
Then if $\tau_0 \leq \tau_1 \leq \tau_2 \leq \tau_0 + 1$, we have
\begin{multline*}
\big| \wt\psi( \tau_2, \theta) - \wt\psi( \tau_1, \theta) \big|  =  \big| \int_{\tau_1}^{\tau_2}	 \partial_\tau \wt\psi ( \tau, \theta) d \tau \big| = \big| \int_{\tau_1}^{\tau_2} e^{2 \tau} \mu(u) d \tau \big| \\
\leq e^{2 \tau_0 + 2} \int_{\tau_0}^{\tau_0 + 1} | \mu(u(\tau, \theta)) | d \tau \leq e^{2 \tau_0 + 2}  \left[ \int_{\tau_0}^{\tau_0 + 1} |\mu(u( \tau, \theta)) \big|^p d\tau \right]^{\frac{1}{p}}.
\end{multline*}
Hence for some $c_5>0$,
\beqn
\| \wt\psi(\tau_2, \cdot) - \wt\psi(\tau_1, \cdot) \|_{L^p([0, \pi])} \leq e^{2 \tau_0 + 2} \| \mu(u) \|_{L^p(D_{\tau_0})} \leq c_5 e^{- (\frac{2}{p} - \epsilon) \tau_0}.
\eeqn
This implies that $\wt\psi( \tau, \cdot)$ converges in $L^p$ to a function $\wt\psi_\infty \in L^p([0, \pi], {\mf k})$. Define $k_0$ by 
\beqn
(\partial_\theta k_0) k_0^{-1} = - \wt\psi_\infty,\ k_0 (0) = {\rm Id}.
\eeqn
Then, similar to the case of ${\mb C}$-vortices, we can find $x_0 \in L$ and to see that \eqref{eqna4} and \eqref{eqna5} hold.
\end{proof}

\subsection{More technical results}\label{appendixa2}

Let $p>2$ and $\delta \in (1- \frac{2}{p}, 1)$. 

For any open subset $S \subset {\mb H}$, let $\wt{M}^K(S, X, L)$ be the set of smooth vortices ${\bf v} =(u, a)$ in $X$ with boundary condition $u(\partial S) \subset L$ and let $\wt{M}^K(S, X, L)^{1, p}_{\rm loc}$ be the set of vortices of regularity $W^{1, p}_{\rm loc}$. Let $M^K(S, X, L)$ be the set of gauge equivalence classes, which is equipped with the compact convergence topology.

\begin{figure}[ht]
\label{figure1}
\includegraphics{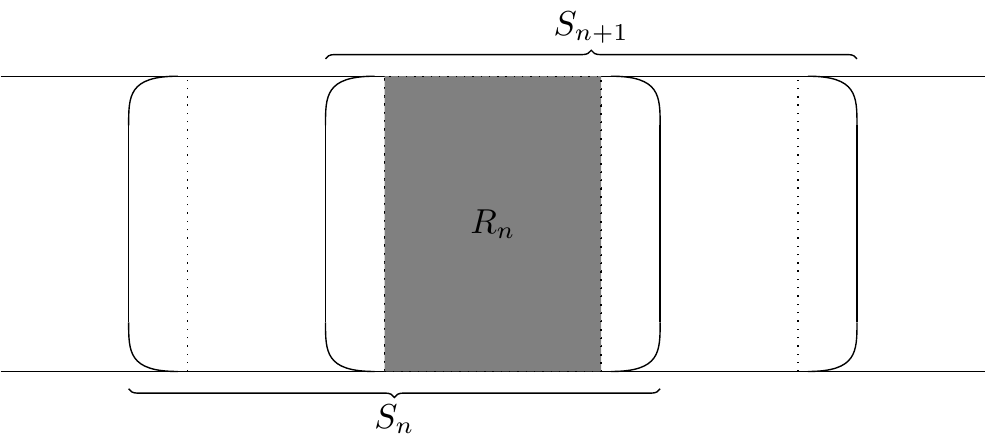}
\caption{}
\end{figure}

\begin{prop}\label{propa6}
Given $R \geq 1$. Assume a sequence $\ud {\bf v}_i  = (\ud u_i, \ud a_i) \in \wt{M}^K(S_R, X, L)$ converge to $\ud {\bf v}_\infty = (\ud u_\infty, \ud a_\infty) \in \wt{M}^K(S_R, X, L)$ in c.c.t.. Then there exist a sequence of gauge transformations $g_i \in {\mc K}^{2, p}_{\rm loc}(S_R)$ (including $i=\infty$) such that, if we write $g_i\cdot a_i = \ck a_i$, then 
\begin{align*}
&\ * \ck a_i|_{\mb R} = 0,&\ \sup_{1\leq i\leq \infty} \| \ck a_i \|_{W^{1, p, \delta}(S_R)} < +\infty.
\end{align*}

\end{prop}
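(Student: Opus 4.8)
The plan is to run the proof of Proposition~\ref{prop36} almost verbatim, replacing the circle $S^1$ by the interval $[0,\pi]$ and carrying the boundary condition $*a|_{\mb R}=0$ through every step. As in Section~\ref{section5}, all the constants below come from Propositions~\ref{propa4} and~\ref{propa5}, both of which contain uniform-in-$i$ statements, so it suffices to estimate a single element and the bounds will automatically be uniform in $i$; I drop the subscript $i$. First I would pass to cylindrical coordinates $(\tau,\theta)\in[\log R,\infty)\times[0,\pi]$ via $s+\ii t=e^{\tau+\ii\theta}$, under which the edges $\theta=0$ and $\theta=\pi$ cover $\partial{\mb H}={\mb R}$, and put $\ud a$ in radial gauge $\ud a=\wt\psi\,d\theta$. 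With $k_0\in W^{1,p}([0,\pi],G)$ the map of Proposition~\ref{propa5}, fixing $\tfrac12<\sig<1$ and working on the bands $D_n:=[n\sig,n\sig+1]\times[0,\pi]$, part~(2) of Proposition~\ref{propa5} gives for $0<\gamma<\tfrac2p$ the decay
\beqn
\big\| \wt\psi\,d\theta+(\partial_\theta k_0)k_0^{-1} \big\|_{L^p(D_n)}\leq c\,e^{-\gamma n\sig},
\eeqn
and the same estimate with $(2p,\gamma/2)$; applying $k_0$ (viewed as $\tau$-independent) turns these into $L^p$ and $L^{2p}$ bounds on the connection itself.

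Next I would gauge-fix on each band. Each $D_n$ is a rectangle two of whose sides, $\theta=0,\pi$, lie on $\partial{\mb H}$, so I apply the boundary version of Uhlenbeck's theorem (the $*a|_{\partial}=0$ statements of \cite{Wehrheim_Uhlenbeck}, i.e.\ the boundary analogues of Lemmas~\ref{lemma51} and~\ref{lemma52}) to produce a gauge transformation $g_n$ carrying the connection to $\wt a_n$ in Coulomb gauge with $*\wt a_n|_{\partial D_n}=0$; in particular the $d\theta$-component of $\wt a_n$ vanishes along $\theta=0,\pi$, which is exactly $*\wt a_n|_{\mb R}=0$ there. Then \eqref{eqn51} gives $\|\wt a_n\|_{L^p(D_n)}\leq c\,e^{-\gamma n\sig}$, and the pointwise curvature decay of Proposition~\ref{propa4} together with the boundary elliptic estimate~\eqref{eqn52} (the quadratic term being controlled by the $L^{2p}$ bound) yields, as in~\eqref{eqn57}, $\|\wt a_n\|_{W^{1,p}(D_n)}\leq c\,e^{\eps n\sig}$ for every $\eps>0$. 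Alternatively one may double the half-annuli across ${\mb R}$ by reflection, which is legitimate because the metric is admissible and $L$ is totally geodesic, reducing this step to the situation of Proposition~\ref{prop36}.

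Finally I would glue the band gauges $k_0g_n$ by interpolation with cut-off functions $\beta_n=\beta_n(\tau)$ depending only on $\tau$, as in~\eqref{eqn510}. Here the genuine simplification of the ${\mb H}$-case enters: $k_0$ is a path on $[0,\pi]$, not a loop on $S^1$, so there is no homotopy class to track and no topological obstruction to the gluing. The overlap transition functions $s_n$ on $D_n\cap D_{n+1}$ obey the bounds~\eqref{eqn58} with $S^1$ replaced by $[0,\pi]$, and from $\wt a_{n+1}=e^{-s_n}\wt a_n e^{s_n}+ds_n$ and the vanishing of the $d\theta$-components of $\wt a_n,\wt a_{n+1}$ along $\theta=0,\pi$ one reads off $\partial_\theta s_n|_{\theta=0,\pi}=0$; since $\beta_n$ depends on $\tau$ only, $\partial_\theta(\beta_n s_n)$ still vanishes there, so the glued connection $\ck a$ keeps $*\ck a|_{\mb R}=0$. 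Converting back to Euclidean coordinates and choosing $\gamma,\eps$ with $\delta-1+\tfrac2p-\gamma<0$ and $\delta-2+\tfrac2p+\eps<0$ gives $\ck a\in W^{1,p,\delta}(S_R,\Lambda^1\otimes{\mf k})$ with the required uniform bound. The one point needing real care is precisely this last compatibility --- that the Lagrangian boundary condition survives the $\tau$-interpolation --- and the computation just indicated is what settles it.
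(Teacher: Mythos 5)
Your proposal is correct and follows essentially the same route as the paper's own sketch --- cylindrical coordinates, radial gauge, the decay of Proposition \ref{propa5}, band-wise Uhlenbeck gauge fixing with $*\wt a_n|_{\partial}=0$, and gluing via cut-offs depending only on $\tau$; in fact you supply the computation ($\partial_\theta s_n|_{\theta=0,\pi}=0$, hence $\partial_\theta(\beta_n s_n)|_{\theta=0,\pi}=0$) that the paper merely asserts when it says ``the boundary condition persists under the gluing.'' The one place the paper is more careful is the choice of bands: it takes compact sets $S_n$ with \emph{smooth} boundary (Figure \ref{figure1}) so that Lemmas \ref{lemma51} and \ref{lemma52} apply verbatim, whereas your rectangles $D_n$ have corners at $\partial{\mb H}$ --- so you should either round the corners or invoke your reflection alternative before applying the Uhlenbeck theorems.
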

\begin{proof}[Sketch of Proof] This proposition is analogous to Proposition \ref{prop36}. Again we only work with the case of a single vortex but the estimates below can be extended uniformly to the sequential case. Identify $S_R$ with an infinite strip $[\log R, +\infty) \times [0, \pi]$ via the exponential map. Transform $a$ into radial gauge, i.e., $a(\tau, \theta) = f_\tau(\theta) d \theta$. By Proposition \ref{propa5}, there exists $k_0\in W^{1, p}([0, \pi], K)$ such that 
\beqn	
\sup_{r \geq R} \Big[ e^{\tau \gamma} \| f_\tau + (\partial_\theta k_0) k_0^{-1} \|_{L^p([0, \pi])} \Big] < +\infty.
\eeqn
In other words, $k_0^{-1}$, viewed as a gauge transformation of regularity $W^{1, p}$, which is independent of $\tau	$, transforms $a$ to an $L^p$ connection with exponential decay. Cover $\wt{S}_R$ by countably many compact subsets $S_n$ with smooth boundaries such that $S_n$ and $S_{n+1}$ differ by a translation of $\wt{S}_R$ which is independent of $n$ (see Figure \ref{figure1}). Moreover, each intersection $S_n \cap S_{n+1}$ contains a rectangle $R_n$ which can cover $\wt{S}_R$. Then Lemma \ref{lemma52} implies that one can transform $k_0^{-1} \cdot a$ by a gauge transformation $g_n \in {\mc K}^{1, p}(S_n)$ such that over each $S_n$, $k_0 g_n$ is smooth and $(k_0 g_n)^{-1} \cdot a$ has similar estimates as in the proof of Proposition \ref{prop36}, where all constants are independent of $n$. Moreover, choose cut-off functions $\beta_n: S_n \cap S_{n+1} \to [0, 1]$ which equals 1 to the right of $R_n$ and equals 0 to the left of $R_n$ and such that different $\beta_n$'s are translations from each other. So when we glue $g_n$ with $g_{n+1}$ by using $\beta_n$, we can get similar estimates as in the proof of Proposition \ref{prop36}. Moreover, if we require that $\beta_n |_{R_n}$ only depends on the horizontal coordinate, then the boundary condition persists under the gluing. 
\end{proof}

\begin{prop}\label{propa7}
Suppose we have a sequence ${\bf v}_i^o \in \wt{M}^K({\mb H}, X, L)$ that converges to ${\bf v}_\infty^o \in \wt{M}^K({\mb H}, X, L)$ in c.c.t.. Then there exists a sequence of gauge transformations $g_i \in {\mc K}^{2, p}_{\rm loc}({\mb H})$ such that if we denote $g_i \cdot {\bf v}_i^o = (u_i, a_i)$, then $u_i$ converges to $u_\infty$ uniformly. Moreover, for $i$ sufficiently large, define $\xi_i \in W^{2, p}_{\rm loc}({\mb H}, u_\infty^* TX)_L$ by $\xi_i = \exp_{u_\infty}^{-1} u_i$. Then 
\beqn
\lim_{i \to \infty} \Big[ \| a_i - a_\infty \|_{L^{p, \delta}} + \| F_{a_i} - F_{a_\infty}\|_{L^{p, \delta}} + \| \nabla^{a_\infty} \xi_i \|_{L^{p, \delta}} + \| d\mu(u_\infty) \cdot J \xi_i \|_{L^{p, \delta}} + \| d\mu(u_\infty) \cdot \xi_i \|_{L^{p, \delta}} \Big] = 0.
\eeqn
\end{prop}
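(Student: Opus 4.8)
The plan is to mirror the proof of Proposition \ref{prop37} essentially line by line, exploiting the simplification flagged in the appendix introduction: the asymptotic ``holonomy'' $k_0$ produced by Proposition \ref{propa5} is a path on $[0,\pi]$ rather than a loop, hence automatically contractible, so the $\pi_1(K)$-matching step (the analogue of Corollary \ref{cor211}\eqref{cor211b}) is unnecessary and the local gauge transformation extends directly over all of ${\mb H}$ with no geodesic twist. First I would establish the boundary-adapted analogue of Lemma \ref{lemma61}. Working on $S_R$, I would apply Proposition \ref{propa6} to bound $\|\check a_i\|_{W^{1,p,\delta}(S_R)}$ uniformly while keeping the Coulomb-type normalization $*\check a_i|_{\mb R}=0$, then use Proposition \ref{propa4} (energy-density decay, hence control of $\|d_{a_i}u_i\|$) together with the connection bound to control $\|du_i\|_{L^\infty}$ and $\|du_i\|_{L^{p,\delta}}$ via $du_i = d_{a_i}u_i + {\mc X}_{a_i}(u_i)$. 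Lemma \ref{lemma212} and \eqref{eqn213} then give a limit at infinity with an $i$-uniform modulus of convergence, and Proposition \ref{propa3}\eqref{propa3b} (in place of Corollary \ref{cor211}\eqref{cor211a}) shows the limiting $K$-orbits $O_i$ converge to $O_\infty$. After straightening by $k_0^{-1}$, applying a constant element of $K$, and slicing, I obtain $\check u_i \to x_i$ uniformly at infinity with $x_i \to x_\infty \in L$; the interior elliptic estimate for $\ov\partial$ (as in \eqref{eqn62}--\eqref{eqn63}, now valid up to $\partial{\mb H}$) yields the uniform $W^{1,p,\delta}$ bound on $\nabla \check u_i$.

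Next I would reproduce the slice construction of Lemma \ref{lemma62} using the splitting $TU_X = {\rm ker}(d\mu\circ J) \oplus {\mf k}_X$, but arranged to respect the Lagrangian boundary. Because the admissible metric $h_X$ of Definition \ref{defna2} makes $L$ totally geodesic and forces $TL \perp JTL$, the resulting deformation $\check\xi_i = \exp_{\check u_\infty}^{-1}(g_i\check u_i)$ lies in $(u_\infty^* TX)_L$ and satisfies $d\mu(\check u_\infty)\cdot J\check\xi_i = 0$ on some $S_{R'}$. The extension step is where the half-plane is genuinely easier: since the path $k_0$ is null-homotopic on $[0,\pi]$, the transformation $\check g_i$ extends to $g_i \in {\mc K}^{2,p}_{\rm loc}({\mb H})$ with no twist by $e^{\lambda\theta}$, chosen so that $g_i\cdot{\bf v}_i^o =: (u_i,a_i)$ converges to ${\bf v}_\infty$ in $W^{2,p}(B_R\cap{\mb H})$. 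Combining $\check u_i \to x_i$ at infinity with $x_i \to x_\infty$ and the compact convergence then gives uniform convergence $u_i \to u_\infty$ over all of ${\mb H}$, exactly as in the displayed three-line estimate in the proof of Proposition \ref{prop37}.

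The convergence of the five norms then follows the same four arguments. For $\|a_i - a_\infty\|_{L^{p,\delta}}$ I would fix $\delta_+ > \delta$, use the uniform $W^{1,p,\delta_+}$ bound to extract a weak limit, and identify it as $a_\infty$ by distributional convergence: uniform convergence $u_i \to u_\infty$ into a region where the $K$-action is free forces the local gauge transformations relating $u_i$ to $u_\infty$ to converge to the identity, so $a_i \to a_\infty$ as distributions, and Lemma \ref{lemma215} (via Proposition \ref{prop214}, which applies verbatim to $\Omega = {\mb H}$) upgrades this to strong $L^{p,\delta}$ convergence. The term $\|\nabla^{a_\infty}\xi_i\|_{L^{p,\delta}}$ is handled identically, using $W^{2,p}$ convergence on $B_R$, weak-limit identification on $S_R$, and Lemma \ref{lemma216} to pass between $\nabla^{a_\infty}$ and the bare covariant derivative. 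The term $\|d\mu(u_\infty)\cdot J\xi_i\|_{L^{p,\delta}}$ vanishes on $S_{R'}$ by the gauge fixing and is controlled on $B_{R'}$ by uniform convergence, while $\|F_{a_i}-F_{a_\infty}\|_{L^{p,\delta}}$ and $\|d\mu(u_\infty)\cdot\xi_i\|_{L^{p,\delta}}$ come from the ${\mb H}$-analogue of Lemma \ref{lemma63}, whose proof uses the pointwise energy decay of Proposition \ref{propa4} to bound $\|\mu(u_i)\|_{L^{p,\delta}(S_r)}$ by $c\,r^{-\gamma}$.

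I expect the main obstacle to be bookkeeping at the boundary rather than anything conceptually new. At each stage — the normalization $*\check a_i|_{\mb R}=0$ of Proposition \ref{propa6}, the slice construction, and the interior-to-boundary elliptic estimates — one must verify that the condition $u(\partial{\mb H})\subset L$ is preserved and that the estimates extend up to $\partial{\mb H}$. This is precisely what the admissible metric of Definition \ref{defna2} is designed to guarantee, through the reflection/doubling device already used in the proof of Proposition \ref{propa4}. Once the boundary-adapted versions of Lemmas \ref{lemma61}--\ref{lemma63} are in place, the remainder is a faithful transcription of the proof of Proposition \ref{prop37}.
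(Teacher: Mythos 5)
Your proposal is correct and follows exactly the route the paper intends: the paper's own proof of Proposition \ref{propa7} is a two-line sketch stating that the argument is ``almost the same word-by-word'' as that of Proposition \ref{prop37}, with boundary conditions addressed in the local estimates, and your transcription of Lemmas \ref{lemma61}--\ref{lemma63} to the half-plane (via Propositions \ref{propa3}, \ref{propa4}, \ref{propa5}, \ref{propa6}, the admissible metric of Definition \ref{defna2}, and reflection at $\partial{\mb H}$) is precisely the detail the authors leave to the reader. Your observation that the absence of limiting holonomy lets $\check g_i$ extend over ${\mb H}$ without the $e^{\lambda\theta}$ twist is also the simplification the appendix itself flags.
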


\begin{proof}[Sketch of Proof] This proposition is analogous to Proposition \ref{prop37} and the proof is almost the same word-by-word. The only difference is that when we do local estimates, the boundary conditions need to be addressed. We left the details to the reader.
\end{proof}

\subsubsection{Coulomb gauge}

Assume $a \in W^{1, p}_{\rm loc}({\mb H}, \Lambda^1 \otimes {\mf k})$. For $\alpha \in W^{1, p}_{\rm loc} ({\mb H}, \Lambda^1 \otimes {\mf k})$, one has the formal adjoint 
\beqn
\dad \alpha \in L^p_{\rm loc}({\mb H}, {\mf k}).
\eeqn
We also have the distributional adjoint $d_a^{\,\dagger}$ of $d_a$ defined by 
\beqn
(d_a^{\, \dagger} u )(\varphi) = \langle u, d_a \varphi\rangle
\eeqn
for smooth $u$ and $\varphi$. $\dad \neq d_a^{\, \dagger}$ classically because of the boundary contribution. Hence we distinguish the following analogue of Proposition \ref{prop38} in two cases.

\begin{prop}\label{propa8}
Suppose ${\bf v} = (u, a) \in \wt{M}^K({\mb H}, X, L)^{1, p}_{\rm loc}$ such that $a \in W^{1,p,\delta}({\mb H}, \Lambda^1 \otimes {\mf g})$. Then there are constants $\epsilon_{\rm coul}, c_{\rm coul}, \delta_{\rm coul}>0$ satisfying the following condition. 

\begin{enumerate}

\item \label{propa8a} For any $\alpha \in W^{1, p, \delta}({\mb H}, \Lambda^1 \otimes {\mf k})$ and $\xi \in W^{1, p}_{\rm loc}({\mb H}, u^* TX)_L$ satisfying
\begin{align}\label{eqna7}
&\ * \alpha|_{\mb R} = 0,&\ \| \alpha \|_{L^{p,\delta}} +  \| \xi \|_{L^\infty} + \| \dad \alpha + d\mu(u) \cdot J \xi \|_{L^{p, \delta}} \leq \epsilon_{\rm coul},
\end{align}
there exists a unique $s \in W^{2, p, \delta}_n ({\mb C}, {\mf k})$ satisfying 
\beqn
\dad ( e^s \cdot ( a + \alpha)  - a ) + d\mu(u) \cdot ( J \exp_u^{-1} ( e^s \exp_u \xi)) = 0,\ \| s \|_{W^{2, p, \delta}} \leq \delta_{\rm coul}.
\eeqn
Further, $\| s \|_{W^{2,p,\delta}} \leq c_{\rm coul} \| \dad \alpha  + d\mu(u) \cdot J \xi \|_{L^{p,\delta}}$.

\item \label{propa8b} For any $\alpha \in L_{\rm loc}^p ({\mb H}, \Lambda^1 \otimes {\mf k})$ and $\xi \in L^p_{\rm loc}({\mb H}, u^* TX)$ satisfying
\beq\label{eqna8}
 \| \alpha \|_{L^{p,\delta}} +  \| \xi \|_{L^\infty} + \| d_a^{\, \dagger} \alpha + d\mu(u) \cdot J \xi \|_{W^{-1, p, \delta}} \leq \epsilon_{\rm coul},
\eeq
there exists a unique $s\in W^{1, p, \delta}({\mb H}, {\mf g})$ satisfying
\beqn
d_a^{\, \dagger} ( e^s \cdot ( a + \alpha) - a) + d\mu(u) \cdot ( J \exp_u^{-1} (e^s \exp_u \xi)) =  0,\ \| s\|_{W^{1, p, \delta}} \leq \delta_{\rm coul}.
\eeqn
Further, $\| s \|_{W^{1, p, \delta}} \leq c_{\rm coul} \| d_a^{\, \dagger} \alpha + d\mu(u) \cdot J \xi \|_{W^{-1, p, \delta}}$.

\end{enumerate}
\end{prop}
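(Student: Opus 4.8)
The plan is to transcribe the proof of Proposition \ref{prop38} almost verbatim, running the two cases in parallel and attending only to the new boundary terms along ${\mb R}$. For each $k$ I would introduce the Coulomb operator ${\mc T}^{\bm \xi}$ exactly as in \eqref{eqn71}, using the formal adjoint $\dad$ together with the Neumann-type condition $* \alpha|_{\mb R} = 0$ for Part \eqref{propa8a} and the distributional adjoint $d_a^{\,\dagger}$ for Part \eqref{propa8b}, so that ${\mc T}^{\bm \xi}(s) = 0$ is equivalent to $e^s \cdot (\exp_u \xi, a + \alpha)$ being in the relevant Coulomb gauge relative to ${\bf v}$. Its linearization at the origin is
\beqn
{\mc L}_{\bf v}(s') = \dad d_a s' + d\mu(u) \cdot J {\mc X}_{s'}(u)
\eeqn
(with $d_a^{\,\dagger}$ replacing $\dad$ when $k=0$), mapping $W^{2,p,\delta}_n \to L^{p,\delta}$ in Part \eqref{propa8a} and $W^{1,p,\delta} \to W^{-1,p,\delta}$ in Part \eqref{propa8b}. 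The continuity estimate for $D{\mc T}^{\bm\xi}_s - {\mc L}_{\bf v}$ is then obtained by the same computation as Lemma \ref{lemma71} (the boundary condition playing no role there), after which the conclusion follows from the implicit function theorem (Proposition \ref{prop76}), provided ${\mc L}_{\bf v}$ is invertible.

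The heart of the argument, and the analogue of Proposition \ref{prop72}, is therefore the invertibility of ${\mc L}_{\bf v}$. I would first treat a model gauged map ${\bf v}' = (u', a')$ with $u' \equiv x$ for a fixed $x \in L \subset \mu^{-1}(0)$ and $a' \equiv 0$ over $S_R$; the absence of a limiting holonomy (Proposition \ref{propa3}) means there is no $\lambda\, d\theta$ term, which is the main simplification over the ${\mb C}$ case. Over $S_R$ the operator ${\mc L}_{{\bf v}'}$ becomes $-\Delta + L_x^* L_x$, which by Remark \ref{remark44} splits into scalar operators $-\Delta + c_i$ with $c_i > 0$. The new point is that these act on the half-plane with Neumann boundary conditions; I would dispose of this by even reflection across ${\mb R}$, which is legitimate because the admissible metric (Definition \ref{defna2}) makes the reflection isometric, thereby reducing to the full-plane operator $-\Delta + c_i$ on ${\mb C}$ that is invertible by Calderon's theorem (Lemma \ref{lemma73}). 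Patching the resulting exterior inverse with the solution of an interior mixed Dirichlet--Neumann problem on a large half-disk produces a two-sided inverse for ${\mc L}_{{\bf v}'}$, exactly as in the proof of Lemma \ref{lemma74}.

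With the model operator invertible, the general case follows by a compact perturbation argument. After gauge fixing via Propositions \ref{propa3} and \ref{propa6} we may assume $a \in W^{1,p,\delta}({\mb H})$ and $u \to x$, so the difference ${\mc L}_{\bf v} - {\mc L}_{{\bf v}'}$ is compact by Proposition \ref{prop214}; hence ${\mc L}_{\bf v}$ is Fredholm of index zero and it suffices to prove injectivity. For this I would integrate by parts in the identity
\beqn
\int_{\mb H} \langle {\mc L}_{\bf v} s, s \rangle = \int_{\mb H} | d_a s |^2 + \int_{\mb H} \omega_X({\mc X}_s(u), J {\mc X}_s(u)),
\eeqn
which is meaningful once the embedding $L^{p,\delta} \subset L^{q,-\delta}$ of Lemma \ref{lemma75} is used to pair $s$ against ${\mc L}_{\bf v}s$. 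The crucial check is that the boundary contribution along ${\mb R}$ vanishes: for $k=1$ this is guaranteed by the Neumann condition $* d_a s|_{\mb R} = 0$ encoded in the domain $W^{2,p,\delta}_n$, and for $k=0$ it is built into the definition of $d_a^{\,\dagger}$, whose test functions are not required to vanish on ${\mb R}$. Freeness of the $K$-action near $u({\mb H})$ then forces ${\mc X}_s(u) = 0$ and hence $s = 0$.

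I expect the main obstacle to be precisely this boundary analysis in the model case: verifying that the reflection trick genuinely produces the Neumann inverse of $-\Delta + c_i$ on ${\mb H}$, and that the boundary term in the injectivity identity really vanishes under the two different boundary conventions for $k=1$ and $k=0$. Everything else — the continuity estimate, the Fredholm reduction, and the final application of the implicit function theorem with $c_{\rm coul}$, $\delta_{\rm coul}$, $\epsilon_{\rm coul}$ determined as in Section \ref{section7} — transcribes directly from the ${\mb C}$ case.
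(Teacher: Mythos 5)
Your proposal follows essentially the same route as the paper: the implicit function theorem scheme of Section \ref{section7} is transcribed with the two boundary conventions ($\dad$ with $*\alpha|_{\mb R}=0$ for Part \eqref{propa8a}, $d_a^{\,\dagger}$ for Part \eqref{propa8b}), and the key new ingredient you identify --- the invertibility of the model operator on ${\mb H}$ via even reflection across ${\mb R}$ reducing to Calderon's theorem (Lemma \ref{lemma73}) --- is exactly the content and proof method of the paper's Proposition \ref{propa9}, after which the compact perturbation and injectivity arguments transcribe Proposition \ref{prop72} as you describe.
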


To prove this theorem, one needs to apply the implicit function theorem. The various estimates similar to those in the proof of Proposition \ref{prop38} can be proved without much difference. However, a key ingredient is Calderon's theorem (Lemma \ref{lemma73}), which is not completely the same.

\begin{prop}\label{propa9}
 Assume $p \geq 2$.
\begin{enumerate}
\item \label{propa9a} $d^{\, *} d + {\rm Id}: W^{2, p}_n ({\mb H}) \to L^p({\mb H})$ is an isomorphism. 

\item \label{propa9b} $d^{\, \dagger} d + {\rm Id}: W^{1, p}({\mb H}) \to W^{-1, p}({\mb H})$ is an isomorphism.
\end{enumerate}
\end{prop}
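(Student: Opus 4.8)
The plan is to deduce both statements from Calderon's theorem on the whole plane (Lemma \ref{lemma73}, applied with $k=1$ for part \eqref{propa9a} and $k=0$ for part \eqref{propa9b}) by an even-reflection argument across $\partial{\mb H} = {\mb R}$. Write $\mathcal{R}$ for the reflection $(x,y)\mapsto(x,-y)$ and, for $s$ on ${\mb H}$, let $s^{\rm ev}(x,y) := s(x,|y|)$ be its even extension to ${\mb C}$. Since $d^{\,*}d = -\Delta$ on ${\mf k}$-valued functions, the operator in both parts is $-\Delta + 1$, which commutes with the pullback by $\mathcal{R}$. Now $\mathcal{R}$ is a bounded involution on each $W^{k,p}({\mb C})$ (and, acting by $(\mathcal{R}F)(\varphi)=F(\mathcal{R}\varphi)$, on $W^{-1,p}({\mb C})$), splitting it topologically into the $\pm 1$ eigenspaces $W^{k,p}({\mb C})^{\pm}$ of even and odd elements. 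A measure-preserving change of variables shows $(-\Delta+1)\mathcal{R}=\mathcal{R}(-\Delta+1)$ as maps $W^{k+1,p}({\mb C})\to W^{k-1,p}({\mb C})$, so $-\Delta+1$ respects both splittings; being an isomorphism by Lemma \ref{lemma73}, it restricts to an isomorphism $W^{k+1,p}({\mb C})^{+}\to W^{k-1,p}({\mb C})^{+}$ of the even subspaces.

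For part \eqref{propa9a} I would check that even reflection identifies $W^{2,p}_n({\mb H})$ with $W^{2,p}({\mb C})^{+}$ and $L^p({\mb H})$ with $L^p({\mb C})^{+}$. The key point is that $s^{\rm ev}$ lies in $W^{2,p}({\mb C})$ exactly when the odd function $\partial_y s$ extends across ${\mb R}$ as a $W^{1,p}$ function, i.e. precisely when the Neumann condition $\partial_\nu s|_{{\mb R}}=0$ defining $W^{2,p}_n({\mb H})$ holds; the reverse restriction is immediate. (This Neumann condition is also the one that preserves $\ast\alpha|_{{\mb R}}=0$ under the gauge transformation $e^s$, since $\ast\, ds|_{{\mb R}} = -\partial_y s|_{{\mb R}}\,dx$.) Transporting the even-subspace isomorphism above (with $k=1$) through these identifications gives that $d^{\,*}d+{\rm Id}: W^{2,p}_n({\mb H})\to L^p({\mb H})$ is an isomorphism: given $f\in L^p({\mb H})$ one solves $(-\Delta+1)\sigma=f^{\rm ev}$ on ${\mb C}$, uniqueness in Lemma \ref{lemma73} forces $\sigma$ to be even because $\sigma\circ\mathcal{R}$ solves the same equation, and $\sigma|_{{\mb H}}$ is the desired solution, with injectivity and the norm estimate inherited from Lemma \ref{lemma73}.

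For part \eqref{propa9b} no boundary condition is imposed, and even reflection is already a bounded isomorphism $W^{1,p}({\mb H})\to W^{1,p}({\mb C})^{+}$ since a $W^{1,p}$ function is continuous across ${\mb R}$ while $\partial_x s^{\rm ev}$ is even and $\partial_y s^{\rm ev}$ is odd, both in $L^p$. The remaining task is to identify the target $W^{-1,p}({\mb H})$ with $W^{-1,p}({\mb C})^{+}$. Here I would use, with $q=\frac{p}{p-1}$, that $W^{-1,p}({\mb C})=(W^{1,q}({\mb C}))^{*}$ dualizes the splitting $W^{1,q}({\mb C})=W^{1,q}({\mb C})^{+}\oplus W^{1,q}({\mb C})^{-}$, so that $W^{-1,p}({\mb C})^{+}$ — the functionals annihilating odd test functions — is canonically $(W^{1,q}({\mb C})^{+})^{*}$, which even reflection identifies with $(W^{1,q}({\mb H}))^{*}=W^{-1,p}({\mb H})$. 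A parity computation then shows that for even $s^{\rm ev}$ the functional $(d^{\,\dagger}d+{\rm Id})s^{\rm ev}$ annihilates odd test functions and, on an even test function $\varphi^{\rm ev}$, equals twice the ${\mb H}$-pairing $\langle ds,d\varphi\rangle_{L^2({\mb H})}+\langle s,\varphi\rangle_{L^2({\mb H})}=\langle (d^{\,\dagger}d+{\rm Id})s,\varphi\rangle$; the mixed terms integrate to zero over ${\mb C}$ by the even/odd parity of the factors. This is exactly the statement that the $k=0$ even-subspace isomorphism corresponds, under the reflection identifications, to $d^{\,\dagger}d+{\rm Id}: W^{1,p}({\mb H})\to W^{-1,p}({\mb H})$ (up to the harmless factor $2$), which is therefore an isomorphism.

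The main obstacle is the duality bookkeeping in part \eqref{propa9b}: one must set up the pairing between $W^{-1,p}({\mb H})$ and $W^{-1,p}({\mb C})^{+}$ with the correct class of test functions — those \emph{not} required to vanish on ${\mb R}$, which is exactly what makes the distributional adjoint $d^{\,\dagger}$ differ from the formal adjoint $d^{\,*}$ — and verify that the parity cancellation of the mixed derivative terms is consistent across the identification. Part \eqref{propa9a} is routine once the Neumann condition is recognized as the precise obstruction to $W^{2,p}$-regularity of the even extension.
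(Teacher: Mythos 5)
Your proposal is correct and follows essentially the same route as the paper: both reduce to Calderon's theorem on ${\mb C}$ (Lemma \ref{lemma73}) by even reflection across $\partial{\mb H}$, with the Neumann condition appearing as exactly the obstruction to $W^{2,p}$-regularity of the even extension in part \eqref{propa9a}, and the duality bookkeeping for $W^{-1,p}({\mb H})$ (test functions not vanishing on ${\mb R}$) handled in part \eqref{propa9b}. Your $\pm 1$-eigenspace splitting of the reflection involution is just a tidier packaging of what the paper does by hand via restriction/extension and its ``union of distributions'' $f_+\cup f_-$ construction.
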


\begin{proof}
For Part \eqref{propa9a}, if $u_+ \in {\rm ker}( d^{\,*} d + {\rm Id})\subset W^{2, p}_n({\mb H})$, then extend $u_+$ to the lower half plane $\ov{\mb H}$ by reflection, i.e., define
\beq\label{eqna9}
u (z) = \left\{ \begin{array}{c} u_+ (z),\ z \in {\mb H}\\
                                     u_+ (\bar{z}),\ z \in \ov{\mb H}.
\end{array} \right.
\eeq
Since $\partial_t u_+ (s, 0) = 0$, $u \in W^{2, p}({\mb C})$. By Lemma \ref{lemma73}, $u = 0$. Hence ${\rm ker}( d^{\, *} d + {\rm Id}) = \{0\}$. To prove it is surjective, for any $f_+ \in L^p({\mb H})$, extends it to $f \in L^p({\mb C})$ by the same formula \eqref{eqna9}. By Lemma \ref{lemma73}, there is $u \in W^{2, p}({\mb C})$ such that $- \Delta u + u = f$. By the reflection symmetry of the operator $\Delta$ and $f$ and the uniqueness of $u$, we know that $\partial_t u_+ (s, 0) = 0$. Then $d^{\, *} d u_+ + u_+ = f_+$ where $u_+$ is the restriction of $u$ to ${\mb H}$.

The proof of Part \eqref{propa9b} is essentially the same. For $u_+ \in {\rm ker}( d^{\, \dagger} d + {\rm Id})\subset W^{1, p}({\mb H})$, extends it to $u$ by the reflection \eqref{eqna9}, which lies in $W^{1, p}({\mb C})$. By Lemma \ref{lemma73}, $u = 0$ so the kernel is trivial. To prove it is surjective, first notice that any pair of distributions $f_+\in W^{-1, p}({\mb H})$, $g_- \in W^{-1, p}(\ov{\mb H})$ define a distribution $f_+ \cup g_- \in W^{-1, p}({\mb C})$ by 
\beqn
\langle f_+ \cup g_-, \varphi_+ \cup \varphi_- \rangle = \langle f_+, \varphi_+ \rangle_{\mb H} + \langle g_-, \varphi_- \rangle_{\ov{\mb H}},\ \forall \varphi \in W^{1, q}({\mb C}),
\eeqn
where $q = \frac{p}{p-1}$ and $\varphi_+$ and $\varphi_-$ are the restrictions of $\varphi$ to the upper and lower half plane, respectively. We call $f$ the union of $f_-$ and $f_+$. 

Let ${\mc R}: {\mb H} \to \ov{\mb H}$ be the complex conjugation. We claim that if $f_+\in W^{-1, p}({\mb H})$ and $f_-  = {\mc R}^* f_+ \in W^{-1, p}(\ov{\mb H})$ such that $f_+ \cup f_- = 0$, then $f_+ = 0$. Indeed, for any $\varphi_+ \in W^{1, q}({\mb H})$ define $\varphi_- = {\mc R}^* \varphi_+ \in W^{1, q}(\ov{\mb H})$. Then $\varphi_+ \cup \varphi_- \in W^{1, q}({\mb H})$. Therefore, 
\beqn
0 = \langle f_+ \cup f_-, \varphi_+ \cup \varphi_- \rangle = \langle f_+, \varphi_+ \rangle_{\mb H} + \langle f_-, \varphi_- \rangle_{\ov{\mb H}}. 
\eeqn
The two summands on the right hand side are equal. Hence $\langle f_+, \varphi_+ \rangle_{\mb H} = 0$.

Now take any $f_+ \in W^{-1, p}({\mb H})$ and we would like to find $u_+ \in W^{1, p}({\mb H})$ such that $d^{\, \dagger} d u_- + u_- = f_+$. By Lemma \ref{lemma73}, there is a unique $u \in W^{1, p}({\mb C})$ such that $- \Delta u + u = f_+ \cup f_-$ and ${\mc R}^* u = u$. Then for any smooth test function $\varphi \in C_0^\infty({\mb C})$, we see that
\beqn
\begin{split}
&\ \langle d^{\,\dagger} d u_+ + u_+, \varphi_+ \rangle_{\mb H} + \langle d^{\, \dagger} d u_- + u_-, \varphi_- \rangle_{\ov{\mb H}} \\
&\ = \langle u, \varphi\rangle + \langle du_+, d \varphi_+ \rangle_{{\mb H}} + \langle du_-, d\varphi_- \rangle_{\ov{\mb H}}\\
&\ =  \langle u, \varphi\rangle + \langle u_+, d^{\, *} d \varphi_+ \rangle  + \langle u_-, d^{\, *} d\varphi_- \rangle_{\ov{\mb H}}\\
&\ = \langle u + d^{\, *} d u, \varphi \rangle \\
&\ = \langle f, \varphi \rangle.
\end{split}
\eeqn
Here the second equality holds because the boundary contributions cancel. Hence $f_+ \cup f_- = (d^{\, \dagger} d u_+ + u_+) \cup (d^{\, \dagger} d u_- + u_-)$, or 
\beqn
0 = (f_+ - d^{\, \dagger} d u_+ - u_+) \cup ( f_- - d^{\, \dagger} d u_- - u_-).
\eeqn
However the two parts are reflections from each other. Therefore $d^{\, \dagger} d u_- + u_- = f_+$.
\end{proof}

\subsection{The local model}

Given an ${\mb H}$-vortex ${\bf v} = (u, a)\in \wt{M}^K({\mb H}, X, L)^{1, p}_{\rm loc}$, its infinitesimal deformations are pairs $(\xi, \alpha)$ where $\xi \in W^{1, p}_{\rm loc}({\mb H}, u^* TX)_L$ and $\alpha \in W^{1, p}_{\rm loc}({\mb H}, \Lambda^1 \otimes {\mf k})_N$. The space of gauge transformations is ${\mc K}^{2, p}_{\rm loc}({\mb H})_N$ which contains $g \in {\mc K}^{2, p}_{\rm loc}({\mb H})$ satisfying the Neumann boundary condition $\partial_t g|_{\mb R} = 0$. 

We define the norm $\| \cdot \|_{p, \delta}$ of $(\xi, \alpha)$ by the same formula as \eqref{eqn31}, and let $\ud{\hat{\mc B}}_{\bf v}$ be the space of all $(\xi, \alpha)$ with finite $\| \cdot \|_{p, \delta}$ norm. 

Take a small $\epsilon>0$ and denote $\ud{\hat{\mc B}}{}_{\bf v}^\epsilon$ the $\epsilon$-ball of $\ud{\hat{\mc B}}{}_{\bf v}$ centered at the origin. For $\epsilon$ small enough, we use the exponential map $\exp$ of an admissible metric (see Definition \ref{defna2}) to identify an element $\ud{\bm \xi} \in \ud{\hat{\mc B}}{}_{\bf v}^\epsilon$ with concrete geometric objects. Notice that since $L$ is totally geodesic, for any $\ud{\bm \xi} \in \ud{\hat{\mc B}}{}_{\bf v}^\epsilon$ and $\exp_{{\bf v}} \ud{\bm \xi} = (u', a')$, $u'$ still satisfies the Lagrangian boundary condition.

Define $\ud{\hat{\mc E}} \to \ud{\hat{\mc B}}{}_{\bf v}^\epsilon$ the Banach space bundle whose fibre at ${\bf v}' = (u', a')$ is 
\beqn
\ud{\hat{\mc E}}{}_{{\bf v}'} = L^{p, \delta}({\mb H}, E_{u'}^+) = L^{p, \delta}({\mb H}, (u')^* TX \oplus {\mf k} \oplus {\mf k}).
\eeqn
The vortex equation plus the Coulomb gauge condition relative to ${\bf v}$ gives the section $\hat{\mc F}_{\bf v}: \ud{\hat{\mc B}}{}_{\bf v}^\epsilon \to \ud{\hat{\mc E}}$ defined by the same formula as \eqref{eqn32}. 

On the other hand, one use the Levi-Civita connection of $\omega_X(\cdot, J \cdot)$ to differentiate $\hat{\mc F}_{\bf v}$. The linearization at ${\bf v}$, denoted by $\ud{\hat{\mc D}}{}_{\bf v}: \ud{\hat{\mc B}}{}_{\bf v} \to \ud{\hat{\mc E}}{}_{\bf v}$, has the same expression as \eqref{eqn25}.

For any ${\bf v} = (u, a) \in \wt{M}^K({\mb H}, X, L)$, by Proposition \ref{propa5}, one can use a gauge transformation $g \in {\mc K}^{1, p}_{\rm loc}({\mb H})$ to transform it so that $g\cdot u$ can be extended to a continuous map from the unit disk ${\mb D}$ to $X$ with boundary mapped into $L$. Hence ${\bf v}$ represents a homology class $\ud\beta \in H_2(X, L)$. It is easy to see that $\ud \beta$ only depends on the gauge equivalence class of ${\bf v}$. For any $\ud\beta \in H_2(X, L)$, let $\wt{M}^K({\mb H}, X, L; \ud\beta) \subset \wt{M}^K({\mb H}, X, L)$ be the subset of vortices that represent $\ud\beta$, and let $M^K({\mb H}, X, L;\ud \beta)$ be the corresponding set of gauge equivalence classes, which is closed under the compact convergence topology.

Let $\nu(\ud\beta) \in {\mb Z}$ be the Maslov index of $\ud\beta$.

\begin{prop}
Given ${\bf v}\in \wt{M}^K({\mb H}, X, L; \ud \beta)$, $\hat{\mc D}_{\bf v}$ is Fredholm with index 
\beqn
{\rm ind} \big( \hat{\mc D}_{\bf v} \big) = \bar{n} + \nu(\ud\beta).
\eeqn
\end{prop}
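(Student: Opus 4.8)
The plan is to follow the strategy of Section~\ref{section4} essentially verbatim, replacing the completion over $\mb P^1$ by a completion over the closed disk $\mb D$ (the compactification of $\ov{\mb H}$) carrying a totally real boundary condition, and replacing the closed-surface Riemann--Roch count by its counterpart for bundle pairs over $(\mb D, \partial\mb D)$. First, using the gauge equivariance of $\hat{\mc D}_{\bf v}$ together with Proposition~\ref{propa6} (in place of Lemma~\ref{lemma61}), I would reduce to the case where ${\bf v} = (u, a)$ is of regularity $W^{1,p}_{\rm loc}$, with $a \in W^{1,p,\delta_+}(S_R, \Lambda^1 \otimes {\mf k})$ for $\delta_+$ slightly bigger than $\delta$, and, by Proposition~\ref{propa5}, with $u$ converging at infinity to a single point $x_0 \in L$ on one $K$-orbit. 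Since there is no limiting holonomy in the ${\mb H}$-case, no twisting factor is needed here; this is the simplification advertised at the start of the appendix.

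Next I would complete $u^* TX$ to a Hermitian bundle $E$ over $\mb D$ together with the totally real subbundle $F \subset E|_{\partial\mb D}$ furnished by (the completion of) $u^* TL$, so that $(E,F) \to (\mb D, \partial\mb D)$ has boundary Maslov index equal to $\nu(\ud\beta)$. Using the eigenvalue decomposition of ${\mf k}$ at $x_0$ from Remark~\ref{remark44}, I would pick a unitary frame $L_{x_0}(e_1), \ldots, L_{x_0}(e_m), v_{m+1}, \ldots, v_n$ of $T_{x_0}X$ adapted to an admissible metric (Definition~\ref{defna2}), with the first $m$ vectors spanning the complexified infinitesimal $K$-orbit. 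An analogue of Lemma~\ref{lemma43} for bundle pairs then splits $(E,F)$ into $n$ line bundle pairs $(\ell_k, \lambda_k)$ with boundary Maslov numbers $\mu_k$. The key geometric point to verify is that the $m$ gauge directions $k \le m$ carry the real-form boundary condition ${\mf k}_X \subset TL$ (this uses $L \subset \mu^{-1}(0)$, $K$-invariance, and the admissibility conditions $TL \perp JTL$ and $TL \perp J{\mc X}_a$); because the $K$-action trivializes these directions and there is no holonomy, each such $(\ell_k, \lambda_k)$ is trivial with $\mu_k = 0$, so all of $\nu(\ud\beta)$ is carried by the transverse factors, $\sum_{k>m}\mu_k = \nu(\ud\beta)$.

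As in Lemma~\ref{lemma45}, I would then replace $\hat{\mc D}_{\bf v}$ by a diagonalized operator $\hat{\mc D}_B$ differing from it by a compact operator, invoking Proposition~\ref{prop214} and the decay of $a$ and of $\partial_t u + {\mc X}_\psi(u)$ at infinity; the local estimates acquire boundary contributions, but these are controlled by the totally real and Neumann boundary conditions. The index of $\hat{\mc D}_B$ splits into a transverse part and a gauge part. For each transverse line bundle pair ($k > m$), the boundary Riemann--Roch formula (the disk analogue of Lemma~\ref{lemma42}) gives ${\rm ind}(\ov\partial_{B,\ell_k}) = \mu_k + 1$, so summing over the $\bar n = n-m$ transverse factors yields $\sum_{k>m}(\mu_k + 1) = \nu(\ud\beta) + \bar n$. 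The gauge part, which couples $\Lambda^1 \otimes {\mf k}$ with the two ${\mf k}$-factors through $L_X, L_X^{-1}$ under the boundary conditions $*\alpha|_{\mb R} = 0$ and the Neumann condition on the ${\mf k}$-factors, is the ${\mb H}$-analogue of $\ov\partial_K$ in Lemma~\ref{lemma46}; I would show it has index zero by the reflection-across-$\partial{\mb H}$ argument of Proposition~\ref{propa9}, which reduces it to the already-established ${\mb C}$-statement. Adding the two contributions gives ${\rm ind}(\hat{\mc D}_{\bf v}) = \bar n + \nu(\ud\beta)$.

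The main obstacle I expect is twofold: establishing the boundary Riemann--Roch statement --- the exact real index $\mu_k + 1$ for the Cauchy--Riemann operator on a line bundle pair over $(\mb D, \partial\mb D)$ in the weighted setting $L^{1,p,\delta}_+ \to L^{p,\delta}$ with $\delta \in (1-\frac 2 p, 1)$ --- and verifying the index-zero property of the gauge operator under its mixed boundary conditions. Both require checking that the weight at the single puncture and the reflection extension interact correctly with the totally real boundary condition, in the spirit of Proposition~\ref{propa9}. Finally, as in the ${\mb C}$-case, when $\bar n = 0$ the splitting has no transverse factors; I would treat this degenerate case by the same stabilization trick, replacing $X$ by $X \times {\mb C}$ (with trivial $K$-action and Lagrangian $L \times {\mb R}$) and subtracting the index of the extra Cauchy--Riemann block over $(\mb D, \partial\mb D)$, which equals $1$ by the boundary Riemann--Roch formula.
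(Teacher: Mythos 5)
Your proposal is correct and follows essentially the same route as the paper: the paper's own proof is only a sketch stating that the argument is analogous to Proposition \ref{prop32}, with the two key ingredients being Lagrangian boundary value problem versions of Lemma \ref{lemma42} and Lemma \ref{lemma46} --- precisely the two ingredients (disk Riemann--Roch with Maslov index, and the index-zero gauge block via reflection) that you identify and flesh out. Your additional details, including the observation that the gauge directions carry the real-form boundary condition with vanishing Maslov contribution and the stabilization trick for $\bar{n}=0$ with Lagrangian $L \times {\mb R}$, are consistent with what the paper leaves implicit.
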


\begin{proof}[Sketch of Proof]
It is analogous to Proposition \ref{prop32} and the proof is similar. Two key ingredients, the Lagrangian boundary value problem versions of Lemma \ref{lemma42} and Lemma \ref{lemma46} can be established without difficulty.
\end{proof}

An ${\mb H}$-vortex ${\bf v}$ is {\bf regular} if $\ud{\hat{\mc D}}{}_{\bf v}: \ud{\hat{\mc B}}{}_{\bf v} \to \ud{\hat{\mc E}}{}_{\bf v}$ is surjective for all $p>2$ and $\delta \in (1- \frac{2}{p}, 1)$. For $\beta \in H_2(X, L)$, denote by $\wt{M}^K ({\mb H}, X, L; \beta)^{\rm reg}$ the set of regular ${\mb H}$-vortices representing the class $\beta$, and by $M^K ({\mb H}, X, L; \beta)^{\rm reg}$ the set of gauge equivalence classes, with the compact convergence topology which is Hausdorff. Given ${\bf v} \in \wt{M}^K ({\mb H}, X, L; \beta)^{\rm reg}$, for $\epsilon$ sufficiently small, the subset
\beqn
\ud{U}{}_{\bf v}^{\epsilon}:= ( \ud{\hat{\mc F}} )^{-1}(0) \cap \ud{\hat{\mc B}}{}_{\bf v}^{\epsilon}
\eeqn
is a smooth manifold with dimension equal to ${\rm ind} ( \ud{\hat{\mc D}}{}_{\bf v} )$. We have the continuous map 
\beqn
\ud{\hat\phi}{}_{\bf v}^{\epsilon}: \ud{U}{}_{\bf v}^{\epsilon} \to M^K ({\mb H}, X; \beta)
\eeqn
by $\ud{\bm \xi} \mapsto \exp_{\bf v} \ud{\bm \xi}$. As in the case of ${\mb C}$-vortices, we need to prove the following proposition.
\begin{prop}
There exists $\epsilon>0$ such that $\ud{\hat\phi}{}_{\bf v}^{\epsilon}: \ud{U}{}_{\bf v}^\epsilon \to M^K({\mb H}, X, L;\beta)$ is a homeomorphism onto its image.
\end{prop}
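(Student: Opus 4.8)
The plan is to deduce this proposition exactly as in the ${\mb C}$-case, where the corresponding statement was split into the injectivity result (Proposition \ref{prop34}) and the local surjectivity result (Proposition \ref{prop35}); combined with the fact that $M^K({\mb H}, X, L; \beta)$ is Hausdorff in the compact convergence topology, these show that $\ud{\hat\phi}{}_{\bf v}^\epsilon$ is a homeomorphism onto its image for $\epsilon$ small enough. All of the technical inputs now have their ${\mb H}$-analogues in place---Proposition \ref{propa6} (uniform bound on the connection form), Proposition \ref{propa7} (c.c.t. convergence implies weighted Sobolev convergence modulo gauge), Proposition \ref{propa8} (Coulomb gauge relative to the limit over ${\mb H}$), and Lemma \ref{lemma310}, which was already stated for compact domains in ${\mb H}$ carrying the boundary condition $*(a-b)|_{\Omega\cap{\mb R}}=0$---so the two arguments transcribe almost verbatim, and I will only indicate where the boundary enters.

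For injectivity I would argue by contradiction: given $\epsilon_i \to 0$ and $\ud{\bm \xi}_i, \ud{\bm \xi}_i' \in \ud U{}_{\bf v}^{\epsilon_i}$ with $[\exp_{\bf v}\ud{\bm \xi}_i] = [\exp_{\bf v}\ud{\bm \xi}_i']$, there are gauge transformations $e^{s_i} \in {\mc K}^{2,p}_{\rm loc}({\mb H})_N$ (now satisfying the Neumann condition $\partial_t s_i|_{\mb R} = 0$) relating the two. Using a local slice for the free $K$-action near the asymptotic orbit furnished by Proposition \ref{propa3}, together with the $L^\infty$-control built into $\|\cdot\|_{p,\delta}$, one bounds $\|s_i\|_{W^{1,p,\delta}(\cdot)}$ on a neighborhood of infinity, extracts a weak limit $s_\infty$ solving a second-order linear equation, and---since $s_\infty$ vanishes near infinity---invokes unique continuation to get $s_\infty \equiv 0$, so that $\|s_i\|_{W^{1,p,\delta_-}} \to 0$ for some $\delta_- < \delta$. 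This contradicts the uniqueness clause of Proposition \ref{propa8}\eqref{propa8a} (the $k=1$ case), exactly as in the proof of Proposition \ref{prop34}. The only new bookkeeping is that the slice and the gauge transformations must respect the Lagrangian and Neumann boundary conditions, which the admissible metric of Definition \ref{defna2} is tailored to guarantee.

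For local surjectivity I would again suppose the contrary, so that a sequence ${\bf v}_i^o \to {\bf v}$ in c.c.t. has no $[{\bf v}_i^o]$ in the image of $\ud{\hat\phi}{}_{\bf v}^\epsilon$. Proposition \ref{propa7} regauges the sequence so that $u_i \to u_\infty$ uniformly and the pieces $\xi_i = \exp_{u_\infty}^{-1}u_i$, $\alpha_i = a_i - a_\infty$ satisfy the appropriate weighted $L^{p,\delta}$-decay. Feeding these into Proposition \ref{propa8}\eqref{propa8b} (the $k=0$ case, using the distributional adjoint $d_a^{\,\dagger}$) produces $s_i$ with $\|s_i\|_{W^{1,p,\delta}} \to 0$ placing $\wt{\bf v}_i = e^{s_i}\cdot{\bf v}_i$ in Coulomb gauge relative to ${\bf v}$ and preserving $*\wt\alpha_i|_{\mb R} = 0$. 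To upgrade this to control of $\nabla^{a_\infty}\wt\alpha_i$ I would cover ${\mb H}$ by unit disks in the interior and unit half-disks centered on ${\mb R}$, apply the $(\Omega,\Omega') = (B_2(z)\cap{\mb H},\, B_1(z)\cap{\mb H})$ case of Lemma \ref{lemma310} with the boundary condition in force, and sum the weighted estimates as in the proof of Proposition \ref{prop35}. This yields $\wt{\bm \xi}_i \to {\bm 0}$ in $\ud{\hat{\mc B}}{}_{\bf v}$, so the implicit function theorem places $\wt{\bm \xi}_i \in \ud U{}_{\bf v}^\epsilon$ for large $i$, a contradiction. I expect the main obstacle to lie entirely in the boundary bookkeeping: verifying that the local slice, the Coulomb projection, and the half-disk covering in the elliptic estimate are all compatible, so that the reflection across ${\mb R}$ used in Proposition \ref{propa9} and the admissibility of the metric keep every object in the correct boundary-adapted Sobolev space. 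None of this is conceptually new beyond the ${\mb C}$-case, but it is where the few genuinely different checks occur.
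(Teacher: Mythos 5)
Your proposal is correct and follows essentially the same route as the paper: injectivity by contradiction via the Proposition \ref{prop34} argument together with the uniqueness clause of Proposition \ref{propa8} part \eqref{propa8a}, and local surjectivity via Proposition \ref{propa7}, the $k=0$ Coulomb gauge statement of Proposition \ref{propa8} part \eqref{propa8b}, and the boundary-adapted extension of Lemma \ref{lemma310}. The boundary bookkeeping you flag (Neumann condition on the gauge transformations, half-disk covering, admissible metric) is exactly what the paper's sketch relies on.
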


\begin{proof}[Sketch of Proof]
We first prove injectivity by contradiction. Suppose on the contrary that there exist a sequence $\epsilon_i \to 0$ and two sequences 
\beqn
\ud{\bm \xi}{}_i = (\xi_i, \alpha_i),\ \ud {\bm \xi}{}_i' = (\xi_i', \alpha_i') \in \hat{\mc B}_{{\bf v}}^{\epsilon_i}
\eeqn
such that $\exp_{{\bf v}} {\bm \xi}_i$ is gauge equivalent to $\exp_{{\bf v}} {\bm \xi}_i'$, via a smooth gauge transformation $e^{s_i}: {\mb H} \to K$. Then by the boundary condition on $\alpha_i, \alpha_i'$, one has $s_i \in \Gamma_n({\mb H},{\mf g})$. By a similar argument as in the proof of Proposition \ref{prop34}, we can show that for $i$ sufficiently large, $s_i$ must be zero (we need to use the uniqueness of Part \eqref{propa8a} of Proposition \ref{propa8}), which is a contradiction.	

For surjectivity, suppose on the contrary that a sequence ${\bf v}_i^o \in \wt{M}^K ({\mb H}, X, L; \beta)$ converge to ${\bf v}$ in c.c.t., and all $[{\bf v}_i^o]$ are not contained in the image of $\hat\phi_{\bf v}^{\epsilon}$ for any $\epsilon$. Then by Proposition \ref{propa7}, there exists a sequence of gauge transformations $g_i \in {\mc K}^{2, p}_{\rm loc}({\mb H})$ such that the sequence ${\bf v}_i:= g_i \cdot {\bf v}_i^o$ satisfy conditions listed in Proposition \ref{propa7}. Then for $i$ large, ${\bf v}_i$ satisfies \eqref{eqna8} relative to ${\bf v}$. Hence by Proposition \ref{propa8}, there exists $s_i\in W^{1, p, \delta} ({\mb H}, {\mf k})$ such that $e^s \cdot {\bf v}_i$ is in weak Coulomb gauge relative to ${\bf v}_\infty$. Elliptic regularity shows that $s_i \in W^{2, p}_{\rm loc}$ and $* d s_i |_{\mb R} = 0$. Then we can write $g_i \cdot {\bf v}_i = \exp_{{\bf v}} {\bm \xi}_i$. By extending the regularity result Lemma \ref{lemma310}, one can prove that ${\bm \xi}_i\in \hat{\mc B}_{{\bf v}}$ and $\|{\bm \xi}_i\|_{p, \delta} \to 0$. Then for large $i$, $[{\bf v}_i^o]$ should in the image of $\hat\phi_{\bf v}^{\epsilon}$, which is a contradiction. \end{proof}

This implies that $\hat\phi_{{\bf v}}^\epsilon$ provides a local coordinate chart on $M^K ({\mb H}, X, L; \beta)$, for any $\beta \in H_2(X, L)$. It is routine to argue like in Subsection \ref{ss33} that the transition functions among these charts are smooth. Therefore we finishes the construction of the smooth manifold structure on $M^K ({\mb H}, X, L; \beta)^{\rm reg}$.

\bibliography{symplectic_ref,physics_ref}

\bibliographystyle{amsplain}

\end{document}